\let\oldmarginpar\marginpar
\renewcommand\marginpar[1]{\-\oldmarginpar[\raggedleft\footnotesize #1]%
{\raggedright\tiny #1}}
\begin{document}
\setlength{\unitlength}{2.5cm}

\newtheorem{thm}{Theorem}[section]
\newtheorem{lm}[thm]{Lemma}
\newtheorem{prop}[thm]{Proposition}
\newtheorem{cor}[thm]{Corollary}

\theoremstyle{definition}
\newtheorem{dfn}[thm]{Definition}
\newtheorem{eg}[thm]{Example}
\newtheorem{rmk}[thm]{Remark}

\newcommand{\F}{\mathbf{F}}
\newcommand{\N}{\mathbf{N}}
\newcommand{\R}{\mathbf{R}}
\newcommand{\C}{\mathbf{C}}
\newcommand{\Z}{\mathbf{Z}}
\newcommand{\Q}{\mathbf{Q}}
\newcommand{\T}{\mathbf{T}}
\newcommand{\K}{\mathbf{K}}
\newcommand{\LL}{\mathcal{L}}

\newcommand{\G}{\text{G}}
\newcommand{\re}{\text{Re}}
\newcommand{\im}{\text{Im}}
\newcommand{\gal}{\text{Gal}}
\newcommand{\ke}{\text{Ker}}
\newcommand{\ma}{\text{Max}}
\newcommand{\Spec}{\text{Spec}}
\newcommand{\Pic}{\text{Pic}}
\newcommand{\ord}{\text{ord}}
\newcommand{\app}{\thickapprox}
\newcommand{\deh}{H_\mca{D}}
\newcommand{\moh}{H_\mca{M}}
\newcommand{\ab}{\text{ab}}
\newcommand{\Mp}{\text{Mp}}
\newcommand{\Sp}{\text{Sp}}
\newcommand{\GL}{\text{GL}}
\newcommand{\PGL}{\text{PGL}}
\newcommand{\SL}{\text{SL}}
\newcommand{\Ind}{\text{Ind}}
\newcommand{\Res}{\text{Res}}
\newcommand{\vs}{\vec{s}}
\newcommand{\Hom}{\text{Hom}}
\newcommand{\msc}[1]{\mathscr{#1}}
\newcommand{\mfr}[1]{\mathfrak{#1}}
\newcommand{\mca}[1]{\mathcal{#1}}
\newcommand{\mbf}[1]{\mathbbmss{#1}}

\newcommand{\dual}[1]{{}^L\wt{#1}}
\newcommand{\cd}[1]{{\wt{#1}}^\vee}
\newcommand{\noa}{\mathbf{n}_o^\alpha}
\newcommand{\koa}{\mathbf{k}_o^\alpha}
\newcommand{\Hs}{\textsf{Hs}}
\newcommand{\Inc}{\textsf{Inc}}
\newcommand{\CExt}{\textsf{CExt}}
\newcommand{\Bis}{\textsf{Bis}}
\newcommand{\Rec}{\text{Rec}}
\newcommand{\s}{\mathbf{s}}
\newcommand{\w}{\mathbf{w}}
\newcommand{\A}{\mbf{A}_F}
\newcommand{\p}{\mathbf{p}}
\newcommand{\q}{\mathbf{q}}
\newcommand{\WD}{\text{WD}}
\newcommand{\W}{\text{W}}

\newcommand{\cu}[1]{\textsc{\underline{#1}}}
\newcommand{\set}[1]{\bigl\{#1\bigr\}}
\newcommand{\wt}[1]{\overline{#1}}
\newcommand{\angb}[2]{\langle #1, #2 \rangle}
\newcommand{\seq}[3]{\xymatrix{
#1 \ar@{^(->}[r] & #2 \ar@{>>}[r] & #3
}}
\newcommand{\wm}[1]{\wt{\mbf{#1}}}
\newcommand{\elt}[1]{\pmb{\big[} #1\pmb{\big]} }

\title{\textsc{The Gindikin-Karpelevich Formula and Constant Terms of Eisenstein Series for Brylinski-Deligne Extensions}}
\author{
 \\  \textsc{\Large Fan GAO}
 \\  (B.Sc., NUS)
\\  \\  \\  \\ \\ \\ \\  \\ \\ \\   \textsc{\Large A Thesis Submitted} 
 \\ \textsc{\Large for the Degree of Doctor of Philosophy} 
  \\  \textsc{\Large Department of Mathematics} 
\\ \textsc{\Large National University of Singapore}   \\ \\ \textsc{\Large 2014}
}
\date{}
\maketitle

\newpage

\setcounter{page}{1}
\pagenumbering{roman}

\chapter*{}
\centerline{\textsc{\Large Declaration}}
\vspace{1cm}

I hereby declare that the thesis is my original work and it has been written by me in its entirety. I have duly acknowledged all the sources of information which have been used in the thesis.

This thesis has also not been submitted for any degree in any university previously.

\vspace{2cm}

\centerline{\underline{\hspace{3cm}}}

\centerline{Fan Gao}

\centerline{14/Oct/2014}

\newpage
\chapter*{Acknowledgement}
I would like to take this opportunity to thank those whose presence has helped make this work possible.

First and foremost, I am deeply grateful to my supervisor Professor Wee Teck Gan, for numerous discussions and inspiring conversations. I would like to thank Prof. Gan for his patience, guidance and encouragement through the whose course of study, and also for sharing some of his insightful ideas. The suggestions and corrections to an earlier version by Prof. Gan have been very helpful in improving the exposition of this thesis. The gratitude I owe not only arises from the formal academic supervision that I receive; more importantly, it has been due to the sense of engagement in the enterprise of modern number theory that Prof. Gan has bestowed his students with, by showing in an illuminating way how problems in mathematics could be approached.

I would like to thank Professor Martin Weissman for generously sharing his letter to P. Deligne \cite{We12}
and his preprint \cite{We14}. Many discussions with Prof. Weissman have been rewarding and very helpful. I am grateful for his pioneer work in \cite{We09}-\cite{We14}, without which this dissertation would not be possible.
In parallel, I would also like to thank AIM for their support for the 2013 workshop ``Automorphic forms and harmonic analysis on covering groups'' organized by Professors Jeffrey Adams, Wee Teck Gan and Gordan Savin, during which many experts have generously shared their insights on the subjects.

Meanwhile, it is my pleasure to thank Professor Chee Whye Chin, who has always been generous to share his knowledge on mathematics, including but not restricted to arithmetic geometry. I would like to thank him for initiating my interest in number theory from the undergraduate days, and also for the efforts he devoted to in a series of courses during which I benefited tremendously from his neat and clear expositions. I would also like to thank Professor Jon Berrick, who always gives excellent illustrations of how to think about and write mathematics nicely from his courses, for sharing his broad perspectives on the subjects of topology and $K$-theory.

My sincere thanks are due to Professor Chen-Bo Zhu and Professor Hung Yean Loke for many enlightening and helpful conversations on both academic and non-academic affairs, also for their efforts devoted to the SPM program and the courses therein.  Meanwhile, I thank Professor Yue Yang for sharing in a series of his courses the joy of mathematical logic, the content and theorems of which still remain like magic to me. I also benefit from various courses from Prof. Ser Peow Tan, Prof. De-Qi Zhang, Prof. Denny Leung, Prof. Seng Kee Chua, Prof. Graeme Wilkin and Prof. Jie Wu. It has been a privilege to be able to talk to them, and I thank these professors heartedly.

Mathematics would not have been so fun if without the presence of my friends and the time we have shared together. I would like to thank Minh Tran, Colin Tan, Jia Jun Ma, Heng Nan Hu, Jun Cai Lee, Jing Zhan Lee, Zhi Tao Fan, Wei Xiong, Jing Feng Lau, Heng Fei Lv, Cai Hua Luo and the fellows in my office. At the same time, thanks are due to the staff of the general office of mathematics, for their constant support and help.

Last but not the least, I am much grateful for my wife Bo Li for her love and support throughout. It has been entertaining to discuss with her on problems in mathematics as well as statistics. Moreover, the support and encouragement of my parents and parents-in-law have been crucial in the whole course of my study and in the preparation of this dissertation. I would like to thank my family, to whom I owe my truly deep gratitude.

\newpage
\chapter*{Notations and Terminology}
\

$F$: a number field or a local field with finite residue field of size $q$ in the nonarchimedean case.

$\text{Frob}$ or $\text{Frob}_v$: the geometric Frobenius class of a local field. \index{Frobenius}

$I$ or $I_v$: the inertia group of the absolute Galois group of a local field. \index{inertia group}

$\msc{O}_F$: the ring of integers of $F$.

$\mbf{G}_\text{add}$ and $\mbf{G}_\text{mul}$: the additive and multiplicative group over $F$ respectively.

$\mbf{G}$: a general split reductive group (over $F$) with root datum $(X, \Psi, Y, \Psi^\vee)$. We fix a set positive roots $\Psi^+\subseteq \Psi$ and thus also a set of simple roots $\Delta \subseteq \Psi$. Let $\mbf{G}^{sc}$ be the simply connected cover of the derived subgroup $\mbf{G}^{sc}$ of $\mbf{G}$ with the natural map denoted by $\xymatrix{\Phi: \mbf{G}^{sc} \ar[r] & \mbf{G}} $

We fix a Borel subgroup $\mbf{B}=\mbf{T} \mbf{U}$ of $\mbf{G}$ and also a Chevalley system of  \'epinglage for $(\mbf{G}, \mbf{T}, \mbf{B})$ (cf. \cite[\S 3.2.1-2]{BrTi84}), from which we have an isomorphism $\mbf{e}_\alpha: \mbf{G}_\text{add} \to \mbf{U}_\alpha$ for each $\alpha \in \Psi$ with associated root subgroup $\mbf{U}_\alpha$. Moreover, for each $\alpha\in \Psi$, there is the induced morphism $\xymatrix{\varphi_\alpha: \mbf{SL}_2 \ar[r] & \mbf{G}}$ which restricts to $\mbf{e}_{\pm \alpha}$ on the upper and lower triangular subgroup of unipotent matrices of $\mbf{SL}_2$.

$\mbf{T}$: a maximally split torus of $\mbf{G}$ with character group $X$ and cocharacter group $Y$.

$Q$: an integer-valued Weyl-invariant quadratic form on $Y$ with associated symmetric bilinear form
$$B_Q(y_1, y_2):=Q(y_1+y_2) -Q(y_1) -Q(y_2).$$

In general, notations will be explained the first time they appear in the text.

\vspace{1cm}

``character": by a \emph{character} of a group we just mean a continuous homomorphism valued in $\C^\times$, while a \emph{unitary character} refers to a character with absolute value 1.

``section'' and ``splitting": \index{section} \index{splitting}
for an exact sequence $\seq{A}{B}{C}$ of groups we call any map $\xymatrix{s: C \ar[r] & B}$ a \emph{section} if its post composition with the last projection map on $C$ is the identity map on $C$. We call $s$ a \emph{splitting} if it is a homomorphism, and write $\mfr{S}(B,C)$ for all splittings of $B$ over $C$, which is a torsor over $\Hom(C,A)$ when the extension is central.

``push-out'': \index{push-out}
for a group extension $\seq{A}{B}{C}$ and a homomorphism $f: A \to A'$ whose image is a normal subgroup of $A'$, the push-out $f_* B$ (as a group extension of $C$ by $A'$) is given by
$$f_* B:= \frac{A' \times B}{\langle (f(a), i^{-1}(a)): a\in A \rangle},$$
whenever it is well-defined. Here $\xymatrix{i: A \ar@{^(->}[r] & B}$ is the inclusion in the extension. For example, if $f$ is trivial or both $i$ and $f$ are central, i.e. the image of the map lies in the center of $B$ and $A'$ respectively, then $f_*B$ is well-defined.

``pull-back'': \index{pull-back}
for a group extension $\seq{A}{B}{C}$ and a homomorphism $h: C' \to C$, the pull-back $h^* B$ is the group
$$h^*B:= \set{(b, c'): q(b)=h(c')} \subseteq B \times C',$$
where $\xymatrix{q: B \ar@{>>}[r] & C }$ is the quotient map. The group $h^*B$ is an extension of $C'$ by $A$.

\newpage

\chapter*{Summary}
We work in the framework of the Brylinski-Deligne (BD) central covers of general split reductive groups. To facilitate the computation, we use an incarnation category initially given by M. Weissman which is equivalent to that of Brylinski-Deligne. 

Let $F$ be a number field containing $n$-th root of unity, and let $v$ be an arbitrary place of $F$. The objects of main interest will be the topological covering groups of finite degree arising from the BD framework, which are denoted by $\wt{G}_v$ and $\wm{G}(\A)$ in the local and global situations respectively. The aim of the dissertation is to compute the Gindikin-Karpelevich (GK) coefficient which appears in the intertwining operator for global induced representations from parabolic subgroups $\wm{P}(\A)=\wm{M}(\A)\mbf{U}(\A)$ of general BD-type covering groups $\wm{G}(\A)$. The result is expressed in terms of naturally defined elements without assuming $\mu_{2n}\subseteq F^\times$, and thus could be considered as a refinement of that given by McNamara etc.

Moreover, using the construction of the $L$-group ${}^L\overline{G}$ by Weissman for the global covering $\wm{G}(\A)$, we define partial automorphic $L$-functions for covers $\wm{G}(\A)$ of BD type. We show that the GK coefficient computed can be interpreted as Langlands-Shahidi type partial $L$-functions associated to the adjoint representation of ${}^L\overline{M}$ on a certain subspace $\wm{\mfr{u}}^\vee \subset \wt{\mfr{g}}^\vee$ of the Lie algebra of $\wt{}^L\wt{G}$. Consequently, we are able to express the constant term of Eisenstein series of BD covers, which relies on the induction from parabolic subgroups as above, in terms of certain partial $L$-functions of Langlands-Shahidi type.

The interpretation relies crucially on the local consideration. Therefore, along the way, we discuss properties of the local $L$-group ${}^L\overline{G}_v$ for $\wt{G}_v$, which by the construction of Weissman sits in an exact sequence $\xymatrix{\overline{G}^\vee \ar@{^(->}[r] & {}^L\overline{G}_v \ar@{>>}[r] & \W_{F_v} }$. For instance, in general ${}^L\overline{G}_v$ is not isomorphic to the direct product $\overline{G}^\vee \times \W_{F_v}$ of the complex dual group $\overline{G}^\vee$ and the Weil group $\W_{F_v}$. There is a close link between splittings of ${}^L\overline{G}_v$ over $\W_{F_v}$ which realize such a direct product and Weyl-group invariant genuine characters of the center $Z(\wt{T}_v)$ of the covering torus $\wt{T}_v$ of $\wt{G}_v$. In particular, for $\wt{G}_v$ a cover of a simply-connected group there always exist Weyl-invariant genuine characters of $Z(\wt{T}_v)$. We give a construction for general BD coverings with certain constraints. In the case of BD coverings of simply-laced simply-connected groups, our construction agrees with that given by G. Savin. It also agrees with the classical double cover $\wm{Sp}_{2r}(F_v)$ of $\mbf{Sp}_{2r}(F_v)$. Moreover, the discussion for the splitting of ${}^L\wt{G}_v$ in the local situation could be carried over parallel for the global ${}^L\wt{G}$ as well.

In the end, for illustration purpose we determine the residual spectrum of general BD coverings of $\mbf{SL}_2(\A)$ and $\mbf{GL}_2(\A)$. In the case of the classical double cover $\wm{Sp}_4(\A)$ of $\mbf{Sp}_4(\A)$, it is also shown that the partial Langlands-Shahidi type $L$-functions obtained here agree with what we computed before in another work, where the residual spectrum for $\wm{Sp}_4(\A)$ is determined completely.

\newpage
\tableofcontents

\newpage
\setcounter{page}{1}
\pagenumbering{arabic}
\chapter{Introduction}

It has been one of the central themes in the theory of automorphic forms for a \emph{split} reductive group $\mbf{G}$ to determine completely the spectral decomposition of $L^2(\mbf{G}(F)\backslash \mbf{G}(\A))$, where $F$ is a number field (or in general a global field) and $\A$ its adele ring. In rough terms, the space $L^2(\mbf{G}(F)\backslash \mbf{G}(\A))$ carries a $\mbf{G}(\A)$ action and is endowed with a representation of the group, thus the notion of automorphic representations as its constituents. It is important to be able to construct automorphic representations. Moreover, one would like to give arithmetic parametrization of such automorphic representations. All these are integrated in the enterprise of the Langlands program, which has successfully weaved different disciplines of mathematics together and proved to be a cornerstone of modern number theory (cf. \cite{Gel84}, \cite{BaKn97}).

The profound theory of Eisenstein series as developed by Langlands in \cite{Lan71} is a fundamental tool for the study of the above problem regarding the spectral decomposition of $\mbf{G}(\A)$. It enables us to answer part of the above question and provides an inductive machinery that reduces the question to the understanding of the subset of so-called cuspidal automorphic representations. More precisely, using Eisenstein series, the continuous and residual spectrum in the spectral decomposition of $L^2(\mbf{G}(F)\backslash \mbf{G}(\A))$ could be understood in terms of cuspidal representations of Levi subgroups $\mbf{M}$ of $\mbf{G}$.

The residual spectrum arises from taking residues of Eisenstein series. In this way, $L$-functions appear naturally in determining the residual spectrum, as observed by Langlands (\cite{Lan71}). The poles of such $L$-functions, which are further determined by the inducing cuspidal representation on the Levi, give precise information on the location and space of such desired residues. It is in this sense that $L$-functions play an essential role in determining the residual spectrum of $\mbf{G}(\A)$. The properties of such $L$-functions could also in turn be derived from those of the Eisenstein series formed, e.g. meromorphic continuation and crude functional equation. The theory is developed and completed to some extent by various mathematicians, notably Langlands and Shahidi, and thus bears the name Langlands-Shahidi method (cf. \cite{CKM04}, \cite{Sha10}).

Moreover, to determine completely the residual spectrum, there are local considerations and thus a good understanding of local representation theory is necessary. Such interplay between global and local problems is not surprising at all. It should be mentioned that for the parametrization problem, J. Arthur (cf. \cite{Art89}) has proposed a conjectural classification of $L^2(\mbf{G}(F)\backslash \mbf{G}(\A))$, which could be viewed as a refined Langlands parametrization for automorphic representation in the view of the spectral decomposition. The conjecture could also be formulated for the double covering of $\mbf{Sp}_{2r}(\A)$ as in \cite{GGP13}.

From the spectral theory of automorphic forms for linear algebraic groups, it is natural to wonder about what could be an analogous theory for covering groups. To start with, we will concentrate on the Brylinski-Deligne type coverings $\wm{G}$  (cf. \cite{BD01})  of general reductive group $\mbf{G}$ and determine the Langlands-Shahidi type partial $L$-functions which appear naturally in the course of determining the residues of Eisenstein series.

\section{Covering groups and $L$-groups}

Covering groups of linear algebraic groups, especially those of algebraic nature, arise naturally. For example, the beautiful construction of Steinberg (cf. \cite{Ste62}) dated back to 1962 gives a simple description of the universal coverings of certain simply-connected groups. Since then, there have been investigations of covering groups by many mathematicians such as Moore, Matsumoto and Deligne, to mention a few. Connections with arithmetic have been discovered and developed. There have also been close relations between automorphic forms on covering groups and those on linear groups since the seminal paper of Shimura (\cite{Shi73}), which concerns automorphic representation of the double cover $\wm{Sp}_2(\A)$ of $\mbf{Sp}_2(\A)$.

In some sense, these could be viewed as efforts to establish a Langlands program for covering groups. As for more examples, we can mention the works by Flicker-Kazhdan (cf.\cite{FlKa86}), Kazhdan-Patterson (\cite{KaPa84}, \cite{KaPa86}), Savin (\cite{Sav04}) and many others. Such works, despite their success in treating aspects of the theory, usually focus on particular coverings rather than a general theory.

However, recently Brylinski-Deligne has developed quite a general theory of covering groups of algebraic nature in their influential paper \cite{BD01}. In particular, they classified multiplicative $\mbf{K}_2$-torsors $\wm{G}$ (equivalently in another language, central extensions by $\mbf{K}_2$) over an algebraic group $\mbf{G}$ in the Zariski site of $\Spec(F)$:
$$\seq{\mbf{K}_2}{\wm{G}}{\mbf{G}}.$$
The extension has kernel the sheaf $\mbf{K}_2$ defined by Quillen. In fact, they actually work over general schemes and not necessarily $\Spec(F)$, but for our purpose we take this more restrictive consideration. 

There are two features among others which make the Brylinski-Deligne extension distinct:
\begin{enumerate}
\item The classification of the $\mbf{K}_2$-torsors above is functorial in terms of combinatorial data. Thus, it could be viewed as a generalization of the classification of connected reductive group by root data.
\item The category is encompassing. From $\wm{G}$, we obtain the local topological extension $\wt{G}_v:=\wm{G}(F_v)$
$$\seq{\mu_n}{\wt{G}_v}{\mbf{G}(F_v)}$$
as well as global
$$\seq{\mu_n}{\wm{G}(\A)}{\mbf{G}(\A)},$$
where we have assumed $\mu_n \subseteq F$. Though topological coverings which arise in this way do not exhaust all existing ones, such Brylinski-Deligne type does contain all classically interesting examples which are of concern to us. We could mention for example coverings for split and simply-connected $\mbf{G}$ and the Kazhdan-Patterson type extension for $\mbf{G}=\mbf{GL}_n$ (cf. \cite[\S 13.2]{GaG14}).
\end{enumerate}

In the arithmetic classification of automorphic representations and local representations in terms of Galois representations, and more generally in the formation of Langlands functoriality  which has been established for several cases, a crucial role is played by the $L$-group ${}^L\mbf{G}$ of $\mbf{G}$. However, the construction of $L$-group classically is restricted only to connected reductive linear algebraic groups (cf. \cite{Bor79}).

Due to the algebraic nature of BD extensions, it is expected that the theory of automorphic forms and representations of such coverings could be developed in line with the linear algebraic case. For this purpose, a global $L$-group ${}^L\wt{G}$ and more importantly for our purpose its local analog ${}^L\wt{G}_v$ are indispensable. The latter should fit in the exact sequence
$$\seq{\wt{G}^\vee}{{}^L\wt{G}_v}{\W_{F_v}},$$
where $\wt{G}^\vee$ is the pinned complex dual group of $\wt{G}_v$ and $\W_{F_v}$ the Weil group (cf. \cite{Tat79}) of $F_v$.

There has already been a series of work in this direction starting with P. McNamara and M. Weissman (cf. \cite{McN12}, \cite{We09}, \cite{We13}, \cite{We14}). In the geometric setting, one may refer to the work of Reich (\cite{Re11}) and Finkelberg-Lysenko (\cite{FiLy10}). In particular, McNamara gave the definition of the root data of  $\wt{G}^\vee$ in order to interpret the established Satake isomorphism for $\wt{G}_v$. The root data of $\wt{G}^\vee$ rely on the degree $n$ and the root data of $\mbf{G}$, modified using the combinatorics associated with $\wm{G}$ in the BD classification. Therefore it is independent of the place $v\in |F|$, and this justifies the absence of $v$ in the notation $\wt{G}^\vee$ we use.

Since we assume $\mbf{G}$ split, it is inclined to take ${}^L\wt{G}_v$ to be just the product of $\wt{G}^\vee \times \W_{F_v}$. However, Weissman firstly realized that such approach could be insufficient, especially in view of the role that ${}^L\wt{G}_v$ should play in the parametrization of genuine representations of $\wt{G}_v$.

In his Crelle's paper \cite{We13}, Weissman gave a construction of the $L$-group ${}^L\wt{G}_v$ of certain BD covers using the language of Hopf algebras. Later in a letter to Deligne (\cite{We12}), he gave a simple construction for all BD covers utilizing the combinatorial data associated with the $\mbf{K}_2$-torsor $\wm{G}$. In a recent work \cite{We14}, the construction is realized for covering of not necessarily split $\mbf{G}$.

The insight of \cite{We13} is that an $L$-parameter is just a splitting of ${}^L\wt{G}_v$. More generally, a Weil-Deligne parameter is just a continuous group homomorphism $\xymatrix{\text{WD}_{F_v} \ar[r] & {}^L\wt{G}_v}$ such that the following  diagram commutes:
$$\xymatrix{
\WD_{F_v} \ar[r] \ar@{>>}[rd] & {}^L\wt{G}_v \ar@{>>}[d] \\
& \W_{F_v}.
}$$
Here $\WD_{F_v}:=\mbf{SL}_2(\C)\times \W_{F_v}$ is the Weil-Deligne group and the diagonal map the projection onto its second component. Moreover, the key is that \emph{even if} the group ${}^L\wt{G}_v$ as an extension
$$\seq{\wt{G}^\vee}{{}^L\wt{G}_v}{\W_{F_v}}$$
is isomorphic to the direct product $\wt{G}^\vee \times \W_{F_v}$, it is \emph{not canonically} so. This reflects the fact, locally for instance, that there is no canonical genuine representation of $\wt{G}_v$. In fact, one could show by examples that in general ${}^L\wt{G}_v$ is only a semidirect product of $\wt{G}^\vee$ and $\W_{F_v}$, see \cite{GaG14}.

To be brief, the work of Weissman has supplied us the indispensable local ${}^L\wt{G}_v$ and global ${}^L\wt{G}$ for any further development of the theory of automorphic forms on Brylinski-Deligne covers.

\section{Main results}

We assume only (the necessary ) $\mu_n \subseteq F^\times$ as opposed to $\mu_{2n}\subseteq F^\times$ in most literature on covering groups, and consider covering groups $\wm{G}(\A)$ for $\wm{G}$ arising from the Brylinski-Deligne framework and Eisenstein series induced from genuine cuspidal representation on parabolic $\wm{P}=\wm{M}\mbf{U}$. The global analysis for the spectral decomposition for more general central covering groups is carried out in the book \cite{MW95}, which also contains details of how Eisenstein series play the fundamental role in the spectral decomposition.

In order to carry out the computation, we first introduce an incarnation category which is equivalent to the Brylinski-Delgine category of multiplicative $\mbf{K}_2$-torsors over $\mbf{G}$. The definition is motivated from Weissman's paper and generalized properly here.

The aim is to compute the GK formula and interpret it as partial $L$-functions appearing in the constant term of such Eisenstein series. The knowledge of poles of the completed $L$-functions, which is yet to be fully understood even in the linear algebraic case, together with local analysis determine completely the residual spectrum $L^2_\text{res}(\mbf{G}(F)\backslash \wm{G}(\A))$.

To explain the idea which is essentially the classical one, we note that the constant term of Eisenstein series can be written as global intertwining operators which decompose into local ones. These local intertwining operators enjoy the cocycle relation, which enables us to compute by reduction to the rank one case. The outcome is the analogous Gindikin-Karpelevich formula for intertwining operators at unramified places. The GK formula gives the coefficient in terms of the inducing unramified characters, and therefore the constant term takes a form involving the global inducing data.

We note that the GK formula has been computed in \cite{McN11} using crystal basis decomposition of the integration domain. Recently, as a consequence of the computation of the Casselman-Shalika formula, McNamara also computed the GK formula in \cite{McN14}. However, our computation is carried along the classical line and removes the condition that $2n$-th root of unity lies in the field. More importantly, our GK formula is expressed in naturally defined elements. It is precisely this fact which enables us to give an interpretation in terms local Langlands-Shahidi $L$-functions.

Now we explain more on the dual side. The construction of ${}^L\wt{G}_v$ in \cite{We14} could be recast using the languages in the incarnation category. It is important that the construction is functorial with respect to Levi subgroups of $\wt{G}_v$. In particular, if $\wt{M}_v$ is a Levi of $\wt{G}_v$, there is a natural map from $\xymatrix{{}^L\varphi: {}^L\wt{M}_v \ar[r] & {}^L\wt{G}_v}$ such that the diagram
$$\xymatrix{
\wt{M}^\vee \ar@{^(->}[r] \ar@{^(->}[d]^-{\varphi^\vee} &{}^L\wt{M}_v \ar@{>>}[r] \ar[d]^-{{}^L\varphi} &\W_{F_v} \ar@{=}[d] \\
\wt{G}^\vee \ar@{^(->}[r]   &{}^L\wt{G}_v \ar@{>>}[r]  &\W_{F_v}
}$$
commutes, where $\varphi^\vee$ is the natural inclusion by construction of the dual group. In the case $\wt{M}_v=\wt{T}_v$ is the covering torus of $\wt{G}_v$, we have an explicit description of the map in terms of the incarnation language. This turns out to be essential for our interpretation of GK formula as local Langlands-Shahidi type $L$-functions later.

After recalling the construction of $L$-groups, we discuss the problem whether ${}^L\wt{G}_v$ is isomorphic to the direct product $\wt{G}^\vee \times \W_{F_v}$, and refer to \cite{GaG14} for a discussion of more properties of the $L$-group. Thus here the question is equivalent to whether there exist splittings of ${}^L\wt{G}_v$ over $\W_{F_v}$ which take values in the centralizer $Z_{{}^L\wt{G}_v}(\wt{G}^\vee)$ of $\wt{G}^\vee$ in ${}^L\wt{G}_v$.  It is shown that such splittings, which we call admissible, could arise from certain characters of $Z(\wt{T}_v)$, which we call \emph{qualified}. There is even a subclass of qualified characters of $Z(\wt{T}_v)$ which we name as  \emph{distinguished} characters. In the simply-connected case, there is no obstruction to the existence of distinguished characters, while in general there is. One property of qualified characters is that they are Weyl-invariant, which holds in particular for distinguished characters. This could be considered as a generalization of \cite[Cor. 5.2]{LoSa10}, where the authors use global methods to show the Weyl-invarance of certain unramified principal series for degree two covers of simply-connected groups. We give an explicit construction of distinguished characters and show that they agree with those in the case of double cover $\wm{Sp}_{2r}(F_v)$ (cf. \cite{Rao93} \cite{Kud96}) and simply-laced simply-connected case treated by Savin (cf. \cite{Sav04}). 

As a consequence of the discussion above on the admissible splittings of ${}^L\wt{G}_v$ applied to the case $\wt{G}_v=\wt{T}_v$, we obtain a local Langlands correspondence (LLC) for covering tori. More precisely, any genuine character $\wt{\chi}$ of $Z(\wt{T}_v)$ is qualified and gives rise to a splitting $\rho_{\wt{\chi}}$ of ${}^L\wt{T}_v$ over $\W_{F_v}$. Coupled with the Stone von-Neumann theorem which gives a bijection between isomorphism classes of irreducible genuine characters $\Hom_\epsilon(Z(\wt{T}_v), \C^\times)$ of $Z(\wt{T}_v)$ and irreducible representations $\text{Irr}_\epsilon(\wt{T}_v)$ of $\wt{T}_v$, this correspondence could be viewed with $\Hom_\epsilon(Z(\wt{T}_v), \C^\times)$ replaced by $\text{Irr}_\epsilon(\wt{T}_v)$. In the case $n=1$, it recovers the LLC for linear tori.

Back to the case of general $\wt{G}_v$, because of the compatibility between ${}^L\wt{T}_v$ and ${}^L\wt{G}_v$ above given by ${}^L\varphi$, the splitting $\rho_{\wt{\chi}}$ could be viewed as a splitting of ${}^L\wt{G}_v$ over $\W_{F_v}$. On the $L$-group ${}^L\wt{G}_v$ we could define the adjoint representation
$$\xymatrix{
Ad: \quad {}^L\wt{G}_v \ar[r]  & GL(\wt{\mfr{g}}^\vee).
}$$
Our local Langlands correspondence for representations of covering tori gives locally a splitting of ${}^L\wt{G}_v$ over $\W_{F_v}$ which arises from a splitting $\xymatrix{{}^L\wt{T}_v \ar@{>>}[r] &\W_{F_v}}$. We express the GK formula for unramified principal series in terms of the composition $Ad\circ {}^L\varphi \circ \rho_{\wt{\chi}}$.

The discussion can be carried in parallel for the global setting; more importantly, we have local and global compatibility. For example, one can consider similarly admissible splittings of the global ${}^L\wt{G}$; there is the adjoint representation of the ${}^L\wt{G}$, which by restriction to ${}^L\wt{G}_v$ is just the adjoint representation of ${}^L\wt{G}_v$ above. 

We will define automorphic (partial) $L$-function of an automorphic representation $\wt{\sigma}$ of $\wm{H}(\A)$ of BD type associated with a finite dimensional representation $\xymatrix{R: {}^L\wt{H} \ar[r] & GL(V)}$. In particular, we are interested in the case where $\wm{H}=\wm{M}$ is a Levi of $\wm{G}$ and that $R$ is the adjoint representation of ${}^L\wt{M}$ on a certain subspace $\wt{\mfr{u}}^\vee$ of the Lie algebra $\wt{\mfr{g}}^\vee$. 

In view of this, the constant term of Eisenstein series for induction from general parabolics can be expressed in terms of certain Langlands-Shahidi type $L$-functions, by combining the formula from the unramified places. We work out the case for maximal parabolic, and the general case is similar despite the complication in notations.

As simple examples, we will determine the residual spectra of arbitrary degree BD covers $\wm{SL}_2(\A)$ and $\wm{GL}_2(\A)$ of $\mbf{SL}_2(\A)$ and $\mbf{GL}_2(\A)$ respectively. We also compute the partial $L$-functions appearing in the constant terms of Eisenstein series for induction from maximal parabolic of the double cover $\wm{Sp}_4(\A)$. It is shown to agree with that given in \cite{Gao12}.

In the end, we give brief discussions on immediate follow-up or future work that we would like to carry out. For instance, we would like to explore in details the Kazhdan-Patterson covers (cf. \cite{KaPa84}) from the BD-perspective. Also since the construction of ${}^L\wt{G}$ by Weissman is actually for $\mbf{G}$ not necessarily split, one can readily implement the computation here with proper modifications and expect same Langlands-Shahidi $L$-function appears. Moreover, to determine the residual spectrum of $\wm{G}(\A)$, a natural step in the sequel would be to develop a theory of local $L$-functions, which in the case of metaplectic extension $\wm{Sp}_{2r}(F_v)$ has been covered by the work of Szpruch. In his thesis, the Langlands-Shahidi method is extended to such groups for generic representations. Moreover, such a theory of local $L$-functions would lay foundations for the theory of converse theorems, which perhaps could be used to provide links between these completed Langlands-Shahidi $L$-functions arising from BD covering groups and those from linear algebraic groups.

\chapter{The Brylinski-Deligne extensions and their $L$-groups}
\chaptermark{BD extensions and their $L$-groups}

\section{The Brylinski-Deligne extensions and basic properties}
In this section, let $F$ be a number field or its localization. We will be more specific when the context requires so. Let $\mbf{G}$ be a split reductive group over $F$ with root data $(X, \Psi, Y, \Psi^\vee)$. We also fix a set of simple roots $\Delta\subseteq \Psi$. 

In their seminal paper \cite{BD01}, Brylinski and Degline have studied a certain category of central extensions of $\mbf{G}$ and given a classification of such objects in terms of combinatorial data. We will recall in this section the main results of that paper and state some properties which are important for our consideration later.

A central extension $\wt{\mbf{G}}$ of $\mbf{G}$ by $\mbf{K}_2$ is an extension in the category of sheaves of groups on the big Zariski site over $\Spec(F)$. It is written in the form
$$\xymatrix{
\mbf{K}_2 \ar@{^(->}[r] & \wt{\mbf{G}} \ar@{>>}[r] & \mbf{G}.
}$$
The category of such central extensions of $\mbf{G}$ is denoted by $\CExt(\mbf{G},\mbf{K}_2)$.

Any $\wt{\mbf{G}} \in \CExt(\mbf{G},\mbf{K}_2)$ gives an exact sequence of $F'$-rational points for any field extension $F'$ of $F$:
$$\xymatrix{
\mbf{K}_2(F') \ar@{^(->}[r] & \wt{\mbf{G}}(F') \ar@{>>}[r] & \mbf{G}(F').
}$$
The left exactness follows from the the fact that the extension $\wt{\mbf{G}}$ is an extension of sheaves, while the right exactness at last term is due to the vanishing of $\text{H}_\text{Zar}^1(F',\mbf{K}_2)$, an analogue of Hilbert Theorem 90.

We will recall the classification of such extensions for $\mbf{G}$ being a torus, a semi-simple simply-connected group and a general reductive group in the sequel.

\subsection{Central extensions of tori} \label{torsor/t}
Let $\mbf{T}$ be a split torus with character group $X=X(\mbf{T})$ and cocharacter group $Y=Y(\mbf{T})$. The category $\CExt (\mbf{T},\mbf{K}_2)$ of central extensions of $\mbf{T}$ by $\mbf{K}_2$ is described as follows.

\begin{thm} \label{torsor/t}
Let $\mbf{T}$ be a split torus over $F$. The category of central extensions  $\CExt(\mbf{T},\mbf{K}_2)$ is equivalent to the category of pairs $(Q, \mca{E})$, where $Q$ is a quadratic form on $Y$ and $\mca{E}$ is a central extension of $Y$ by $F^\times$
$$\seq{F^\times}{\mca{E}}{Y}$$
such that the commutator $\xymatrix{ Y\times Y \ar[r] & F^\times}$ is given by
$$\xymatrix{
[-, -]: \quad  (y_1, y_2) \ar@{|->}[r] & (-1)^{B_Q(y_1, y_2)}.
}$$
Here $B_Q$ is the symmetric bilinear form associated with $Q$, i.e. $B_Q(y_1,y_2)=Q(y_1+y_2)-Q(y_1)-Q(y_2)$.
\end{thm}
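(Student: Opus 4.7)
The plan is to construct explicit functors between $\CExt(\mbf{T}, \mbf{K}_2)$ and the category of pairs $(Q, \mca{E})$ in both directions and verify they are mutually inverse. The two invariants should be extracted independently from a given central extension: $Q$ from the ``infinitesimal'' behavior along cocharacters, and $\mca{E}$ from the arithmetic data over the generic point.

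From an extension $\wt{\mbf{T}}$, I would first construct $Q$. For each $y \in Y$, pull $\wt{\mbf{T}}$ back along the cocharacter $y : \mbf{G}_m \to \mbf{T}$ to obtain an object of $\CExt(\mbf{G}_m, \mbf{K}_2)$. A preliminary identification of the group of isomorphism classes of this category with $\mathbb{Z}$ --- via the class of the tautological extension whose cocycle is detected by the Steinberg symbol --- then defines $Q(y) \in \mathbb{Z}$. Quadraticity of $Q$ together with the identification of $B_Q$ with the commutator pairing both fall out by comparing the pullback along the multiplication $\mbf{G}_m \times \mbf{G}_m \to \mbf{T}$, $(a,b) \mapsto y_1(a) \cdot y_2(b)$, with the pullback along $y_1 + y_2$: this difference computes the commutator $[y_1(a), y_2(b)] = \set{a,b}^{B_Q(y_1, y_2)}$ in $\mbf{K}_2$. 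For $\mca{E}$, I would evaluate at the generic cocharacter: the map $Y \to \mbf{T}(F(t))$, $y \mapsto y(t)$, pulls $\wt{\mbf{T}}$ back to a central extension of $Y$ by $\mbf{K}_2(F(t))$, and pushing out along the tame symbol $\mbf{K}_2(F(t)) \to F^\times$ produces $\mca{E}$. Its commutator is then forced to be $\set{-1,-1}^{B_Q(y_1, y_2)} = (-1)^{B_Q(y_1, y_2)}$ using the previous commutator identity together with the relation $\set{-1,-1} = -1$ in $\mbf{K}_2$.

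Conversely, given $(Q, \mca{E})$, I would rebuild $\wt{\mbf{T}}$ by choosing a basis of $Y$ to reduce to the case $\mbf{T} = \mbf{G}_m^r$, writing down an explicit Steinberg-symbol cocycle on $\mbf{T} \times \mbf{T}$ whose diagonal entries record the integers $Q(e_i)$ and whose off-diagonal entries are $\set{a,b}^{B_Q(e_i, e_j)}$, and then twisting by the $F^\times$-extension underlying $\mca{E}$ to incorporate the rational sections. The cocycle and commutator identities are then direct algebraic checks from the bilinearity and symmetry of $B_Q$. Basis-independence and fully faithfulness then reduce to the observation that morphisms of central extensions are controlled by their effect on the central $\mbf{K}_2$ and on cocharacters, which is precisely the data recorded by $(Q, \mca{E})$.

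The step I expect to be most delicate is verifying that the cocycle assembled from $(Q, \mca{E})$ genuinely defines a central extension of sheaves of groups on the big Zariski site over $\Spec(F)$, not merely of abstract $F$-valued groups. This reduces to functoriality in arbitrary $F$-algebras, which is available because both the Steinberg symbol and the tame-symbol pushout are already sheaf-theoretic in $\mbf{K}_2$; nevertheless, tracking the descent data and confirming compatibility with $\mca{E}$ at the level of sections over each test scheme is the main bookkeeping burden of the argument, and this is the point at which one genuinely exploits the commutator hypothesis linking $\mca{E}$ to $Q$.
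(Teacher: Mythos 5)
The paper does not prove this theorem; it is the foundational Brylinski--Deligne classification, and the paper only recalls the forward functor: $\mca{E}$ is obtained by pulling back $\wm{T}(F(\!(\tau)\!))$ along $y\mapsto y\otimes\tau$ and pushing out by the tame symbol, with the construction of $Q$ deferred to \cite[\S 3.9--3.11]{BD01}. Your sketch follows essentially the same forward route, with a bisector-type cocycle as the quasi-inverse, which matches the description the paper later gives (the cocycle $\sigma_D$).

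The concrete error is in your computation of the commutator of $\mca{E}$. Substituting into $[y_1(a), y_2(b)] = \{a,b\}^{B_Q(y_1,y_2)}$ along the pullback $y\mapsto y\otimes\tau$ gives $\{\tau,\tau\}^{B_Q(y_1,y_2)}$, not $\{-1,-1\}^{B_Q(y_1,y_2)}$, and the alleged relation ``$\{-1,-1\}=-1$ in $\mbf{K}_2$'' is not even well-typed ($\{-1,-1\}\in\mbf{K}_2$, $-1\in F^\times$). The sign does not come from any torsion identity in $\mbf{K}_2$: it comes from the $(-1)^{\mathrm{val}(f)\,\mathrm{val}(g)}$ factor in the tame symbol, which sends $\{\tau,\tau\}$ to $(-1)^{1\cdot 1}\cdot 1=-1$, whereas $\{-1,-1\}$ --- both arguments of valuation $0$ --- maps to $1$. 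So the source you identify for the sign would in fact produce a trivial commutator, not $(-1)^{B_Q}$. (Secondarily, you should take $F(\!(\tau)\!)$ rather than $F(t)$ as the paper does, so the tame symbol is canonical rather than dependent on a choice of place; and in the converse direction make sure the cocycle comes from a genuine bisector $D$ with $D+D^T=B_Q$, not the symmetric $B_Q$ itself, which would double the quadratic form.)
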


Note that the commutator, which is defined on the group $\mca{E}$, descends to $Y$ since the extension is central. For any two pairs $(Q, \mca{E})$ and $(Q', \mca{E}')$, the group of morphisms exists if and only if $Q=Q'$, in which case it is defined to consist of the isomorphisms between the two extensions $\mca{E}$ and $\mca{E}'$.

To recall the functor $\xymatrix{
\CExt(\mbf{T},\mbf{K}_2) \ar@{~>}[r] & \set{(Q,\mca{E})}
}$, assume we are given with $\wm{T}\in \CExt(\mbf{T}, \mbf{K}_2)$. The quadratic from $Q$ thus obtained does not allow for a simple description, and we refer to \cite[\S 3.9-3.11]{BD01} for the details. However, the description of $\mca{E}$ is relatively simple and we reproduce it here.

Start with $\wm{T} \in \CExt(\mbf{T},\mbf{K}_2)$ over $F$. Taking the rational points of the Laurent field $F(\!(\tau)\!)$ gives
$$\seq{\mbf{K}_2(F(\!(\tau )\!) )}{\wt{\mbf{T}}(F(\!(\tau)\!))}{\mbf{T}(F(\!(\tau)\!))}. $$

Pull-back by $\xymatrix{Y \ar[r] & \mbf{T}(F(\!(\tau)\!) )}$ which sends $y\in Y$ to $y\otimes \tau \in \mbf{T}(F(\!(\tau)\!) )$, and then push-out by the tame symbol $\xymatrix{\mbf{K}_2(F(\!(\tau)\!) ) \ar[r] & F^\times}$ give the extension $\mca{E}$ over $Y$ by $F^\times$. Here the tame symbol is defined to be
$$\set{f, g} \mapsto (-1)^{\text{val}(f)\text{val}(g)} \frac{f^{\text{val}(g)}}{g^{\text{val}(f)}}(0).$$
In particular, $\set{a, \tau} \in \mbf{K}_2(F(\!(\tau )\!) )$ is sent to $a$ for all $a\in F^\times$. This process describes the construction of $\mca{E}$. 

For convenience, for any lifting $\wt{y\otimes \tau} \in \wm{T}(F(\!(\tau)\!))$ of $y\otimes \tau\in \mbf{T}(F(\!(\tau)\!))$, we write 
\begin{equation} \label{[] notation}
\elt{\wt{y\otimes \tau}}:= \text{ the image of } \wt{y\otimes \tau} \text{ in } \mca{E}.
\end{equation}

Moreover, any $\wm{T} \in \CExt(\mbf{T},\mbf{K}_2)$ is isomorphic to $\mbf{K}_2 \times_D \mbf{T}$, where $D$ is a (not necessarily symmetric) bilinear form on $Y$ such that $D(y_1, y_2) + D(y_2, y_1)=B_Q(y_1, y_2)$. The trivialized torsor $\mbf{K}_2 \times_D \mbf{T}$ is thus endowed with a multiplicative structure described as follows.

Write $D=\sum_i x_1^i\otimes x_2^i \in X\otimes_\Z X$. Then the cocycle of $\mbf{K}_2(F') \times_D \mbf{T}(F')$, for $F'$ any field extension of $F$, is given by
\begin{equation} \label{torus law}
\sigma_D(t_1, t_2)=\prod_i \set{x_1^i(t_1), x_2^i(t_2)},\quad  t_1, t_2 \in \mbf{T}(F').
\end{equation}

Now it is easy to check that the commutator of $\mca{E}$ is given by the formula $[y_1, y_2]=(-1)^{B_Q(y_1, y_2)}$.

\subsection{Central extensions of semi-simple simply-connected groups} \label{torsor/sc}
Let $\mbf{G}$ be a split semi-simple simply-connected group over $F$ with root data $(X, \Psi, Y, \Psi^\vee)$. Let $\mbf{T}$ be a maximal split torus of $\mbf{G}$ with character group $X$ and cocharacter group $Y$. By the perfect pairing of $X$ and $Y$, $\text{Sym}^2(X)$ is identified with integer-valued quadratic forms on $Y$. Let $W$ be the Weyl-group of $\mbf{G}$. We have the following classification theorem for $\CExt(\mbf{G}, \mbf{K}_2)$.

\begin{thm} \label{torsor/sc}
The category $\CExt(\mbf{G}, \mbf{K}_2)$ is rigid, i.e., any two objects have at most one morphism between them. The set of isomorphism classes is classified by $W$-invariant integer-valued quadratic forms $\xymatrix{Q: Y\ar[r] & \Z}$, i.e., by $Q\in \text{Sym}^2(X)^W.$
\end{thm}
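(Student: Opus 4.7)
The plan is to pass from $\mbf{G}$ to the maximal torus $\mbf{T}$ by pullback and then lean on the classification of the previous theorem together with the strong rigidity that simple-connectedness forces. Given $\wm{G}\in\CExt(\mbf{G},\mbf{K}_2)$, restriction to $\mbf{T}$ yields an object of $\CExt(\mbf{T},\mbf{K}_2)$, hence a pair $(Q,\mca{E})$. The three tasks are: (i) show $\CExt(\mbf{G},\mbf{K}_2)$ is rigid, (ii) show that once $\mbf{G}$ is simply-connected, $\mca{E}$ is uniquely determined up to canonical isomorphism by $Q$, so that the isomorphism class of $\wm{G}$ is captured by the quadratic form alone, and (iii) show every $W$-invariant $Q$ actually arises.

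For (i) and the $W$-invariance of $Q$, the driving observation is that $\mbf{K}_2$ is abelian while $\mbf{G}=\mbf{G}^{sc}$ is perfect, being generated by its root subgroups. Any morphism in $\CExt(\mbf{G},\mbf{K}_2)$ covering the identity of $\mbf{G}$ differs from the identity of $\wm{G}$ by a sheaf homomorphism $\mbf{G}\to\mbf{K}_2$, which must vanish by perfectness; this yields rigidity. The Weyl action on $\mbf{T}$ is induced by inner conjugation in $\mbf{G}$, and these inner automorphisms lift canonically to $\wm{G}$ by the same perfectness argument, forcing $(Q,\mca{E})$ to carry a canonical $W$-equivariant structure and in particular $Q\in\text{Sym}^2(X)^W$.

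For (ii), simple-connectedness makes $Y$ coincide with the coroot lattice, so each generator of $Y$ has the form $\alpha^\vee$ for a simple root $\alpha$ and arises via $\varphi_\alpha:\mbf{SL}_2\to\mbf{G}$. Pulling back $\wm{G}$ along each $\varphi_\alpha$ gives a central $\mbf{K}_2$-extension of $\mbf{SL}_2$, and the root subgroups $\mbf{U}_{\pm\alpha}$ admit canonical splittings in $\wm{G}$ by the vanishing of the relevant $\mbf{K}_2$-cohomology of unipotent groups (\cite{BD01}). Via the Steinberg--Matsumoto presentation of $\mbf{SL}_2$, these canonical unipotent splittings pin down the torus lift of $\alpha^\vee\otimes\tau$ in terms of $Q(\alpha^\vee)$; combined with the commutator relation $[y_1,y_2]=(-1)^{B_Q(y_1,y_2)}$ on $\mca{E}$ from the previous theorem, this determines $\mca{E}$ up to unique isomorphism and reduces the problem to the quadratic form.

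The main obstacle is existence, (iii): given $Q\in\text{Sym}^2(X)^W$, one must actually construct a $\wm{G}\in\CExt(\mbf{G},\mbf{K}_2)$ realizing the prescribed quadratic form. The plan is to glue the rank-one extensions of the $\varphi_\alpha(\mbf{SL}_2)$ (each controlled by $Q(\alpha^\vee)$) using Steinberg's generators-and-relations presentation of simply-connected $\mbf{G}$. The Weyl-invariance of $Q$ is precisely the compatibility needed for the Chevalley commutator relations, lifted to $\wm{G}$, to close up without extra $\mbf{K}_2$-cocycle obstruction. This is the technical heart of the proof, carried out in \cite{BD01} by explicit cocycle computations together with faithfully flat descent on the big Zariski site of $\Spec(F)$; this step is where both the Weyl-invariance hypothesis on $Q$ and the structure of $\mbf{G}$ as a simply-connected Chevalley group are essential.
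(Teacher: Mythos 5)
The paper states this result without proof, attributing it to Brylinski--Deligne \cite{BD01}, so there is no in-paper argument to compare against; your outline tracks the \cite{BD01} strategy faithfully. One small point of phrasing: on the big Zariski site, ``perfectness'' of $\mbf{G}^{sc}$ in the abstract group-theoretic sense is not quite the working hypothesis --- what forces $\Hom_{\mathrm{Zar}}(\mbf{G},\mbf{K}_2)=0$ is that $\mbf{G}$ is generated by its root subgroups $\mbf{U}_\alpha\simeq\mbf{G}_{\mathrm{add}}$ together with $\Hom_{\mathrm{Zar}}(\mbf{G}_{\mathrm{add}},\mbf{K}_2)=0$, which you do note in passing. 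The rest of the sketch --- canonical lift of inner automorphisms yielding $W$-invariance of $Q$, the Brylinski--Deligne sections $\wm{h}_\alpha^{[1]}$ to rigidify the pullback $\mca{E}$ (the same device the paper itself uses in the subsection ``Rigidifying $\mca{E}^{sc}$'' to identify $\mca{E}^{sc}$ with the abstract group $\mca{E}^{sc}_Q$ depending on $Q$ alone), and Steinberg's presentation plus cocycle descent for existence --- is the correct outline of the cited proof.
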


A special case of $\mbf{G}$ is when it is almost simple. In this case we can identify
$$\xymatrix{
\text{Sym}^2(X)^W \ar[r] &\Z, \quad Q \ar@{|->}[r] & Q(\alpha^\vee),
}$$
where $\alpha^\vee \in \Psi^\vee$ is the short coroot associated to any long root. The fact that $Q(\alpha^\vee)$ for short coroot uniquely determines the quadratic form $Q$ follows from the following easy fact.

\begin{lm} \label{BQ<>}
For any $\alpha^\vee \in \Psi^\vee$ and $y\in Y$,
$$B_Q(\alpha^\vee, y)=Q(\alpha^\vee) \cdot \angb{\alpha}{y},$$
where $\angb{-}{-}$ denotes the paring between $X$ and $Y$.
\end{lm}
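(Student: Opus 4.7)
The plan is to exploit the Weyl-invariance of $Q$, specifically invariance under the simple reflection $s_\alpha$ attached to the root $\alpha$, which acts on $Y$ by $s_\alpha(y) = y - \langle \alpha, y\rangle \alpha^\vee$ and in particular sends $\alpha^\vee$ to $-\alpha^\vee$. Since the desired identity is linear in $y$, the natural strategy is to polarize the invariance $Q\circ s_\alpha = Q$ into a statement about the associated bilinear form $B_Q$, and then specialize.

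First I would show that Weyl-invariance of $Q$ yields $B_Q(s_\alpha y_1, s_\alpha y_2) = B_Q(y_1, y_2)$ for all $y_1, y_2 \in Y$. This follows immediately by applying $Q(s_\alpha(y_1 + y_2)) = Q(y_1 + y_2)$, expanding both sides via $B_Q(u,v) = Q(u+v) - Q(u) - Q(v)$, and using $Q(s_\alpha y_i) = Q(y_i)$ to cancel the quadratic terms.

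Next I would specialize this identity by taking $y_1 = y$ and $y_2 = \alpha^\vee$. Using $s_\alpha(\alpha^\vee) = -\alpha^\vee$ together with bilinearity, the right-hand side $B_Q(s_\alpha y, s_\alpha \alpha^\vee)$ becomes
$$B_Q\bigl(y - \langle \alpha, y\rangle\alpha^\vee,\, -\alpha^\vee\bigr) = -B_Q(y, \alpha^\vee) + \langle \alpha, y\rangle\, B_Q(\alpha^\vee, \alpha^\vee).$$
Combined with the left-hand side $B_Q(y, \alpha^\vee)$, this gives $2B_Q(y, \alpha^\vee) = \langle \alpha, y\rangle\, B_Q(\alpha^\vee, \alpha^\vee)$. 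Finally, observing that $B_Q(\alpha^\vee, \alpha^\vee) = Q(2\alpha^\vee) - 2Q(\alpha^\vee) = 2Q(\alpha^\vee)$ by the definition of $B_Q$ and the fact that $Q$ is quadratic, the factor of $2$ cancels and we obtain $B_Q(\alpha^\vee, y) = Q(\alpha^\vee)\cdot \langle \alpha, y\rangle$, as claimed.

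There is no real obstacle here: the argument is a standard polarization exercise, and the only place that uses more than formal manipulation is the Weyl-invariance hypothesis. The mildly delicate point is the factor of $2$ that appears on both sides; this cancels cleanly thanks to $B_Q(\alpha^\vee,\alpha^\vee) = 2Q(\alpha^\vee)$, so no torsion or divisibility issues arise and the identity is valid as stated in $\Z$.
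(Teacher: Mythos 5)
Your proof is correct and is essentially the paper's own argument: both apply the $s_\alpha$-invariance of $B_Q$ to the pair $(\alpha^\vee, y)$, expand using $s_\alpha(\alpha^\vee)=-\alpha^\vee$ and $s_\alpha(y)=y-\angb{\alpha}{y}\alpha^\vee$, and cancel the resulting factor of $2$. The only cosmetic difference is that you make explicit the polarization step establishing $B_Q(s_\alpha y_1, s_\alpha y_2)=B_Q(y_1,y_2)$, which the paper simply asserts.
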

\begin{proof}
The Weyl invariance property of $B_Q$ follows from that of $Q$, and it gives
\begin{align*}
B_Q(\alpha^\vee, y) &= B_Q(s_{\alpha^\vee}(\alpha^\vee), s_{\alpha^\vee}(y)) \\
 &=B_Q(-\alpha^\vee, y- \angb{\alpha}{y} \alpha^\vee) \\
&= -B_Q(\alpha^\vee, y) + 2\angb{\alpha}{y} \cdot Q(\alpha^\vee).
\end{align*}
The claim follows.
\end{proof}

\begin{eg}
The classical metaplectic double cover arises from a central extension $\wm{Sp}_{2r}$ over $\mbf{Sp}_{2r}$ of this type. Let $\alpha_1^\vee, \alpha_2^\vee, ..., \alpha_r^\vee$ be the simple coroots of $\mbf{Sp}_{2r}$ with $\alpha_1^\vee$ the unique short one. Let $Q$ be the unique Weyl invariant quadratic form on $Y$ with $Q(\alpha_1^\vee)=1$, see also \cite[pg. 7-8]{BD01}. This gives the desired $\wm{Sp}_{2r}$ according to the above classification theorem.
\end{eg}

\subsection{Central extensions of general split reductive groups} \label{torsor/red}
Now we fix a split reductive group $\mbf{G}$ and a maximal split torus $\mbf{T}$ over $F$ with root data $(X, \Psi, Y, \Psi^\vee)$. The classification of $\CExt(\mbf{G}, \mbf{K}_2)$ relies on more data in the description. It is a combined result from both the classifications of $\mbf{K}_2$-torsors over tori and of semisimple simply-connected groups in \ref{torsor/t} and \ref{torsor/sc} respectively. The following is the main result by Brylinski and Deligne in the split case.

\begin{thm} \label{torsor/red}
Let $\mbf{G}$ be a split connected reductive group over $F$ with maximal split torus $\mbf{T}$. Let $X$ and $Y$ be the character and cocharacter groups of $\mbf{T}$ respectively. The category $\CExt(\mbf{G}, \mbf{K}_2)$ is equivalent to the category specified by the triples $(Q, \mca{E}, \phi)$ with the following properties:

The $Q$ is a Weyl invariant quadratic form on $Y$ and $\mca{E}$ a central extension
$$\seq{F^\times}{\mca{E}}{Y},$$
such that the commutator $\xymatrix{[-, -]: Y\times Y \ar[r] & F^\times}$ is given by
$$[y_1, y_2] =(-1)^{B_Q(y_1,y_2)}.$$

Let $\xymatrix{\Phi: \mbf{G}^{sc} \ar[r] & \mbf{G}^{der} \ar[r] & \mbf{G} }$ be the natural composition, where $\mbf{G}^{sc}$ is the simply connected cover of the derived group $\mbf{G}^{der}$ of $\mbf{G}$. Let $\mbf{T}^{sc} =\Phi^{-1} (\mbf{T})$ be a maximal split torus of $\mbf{G}^{sc}$ with cocharacter group $Y^{sc} \subseteq Y$. The restriction $Q|_{Y^{sc}}$ gives an element $\wm{G}^{sc}\in \CExt (\mbf{G}^{sc}, \mbf{K}_2)$ unique up to unique isomorphism by Theorem \ref{torsor/sc}, which by further pull-back to the torus $\mbf{T}^{sc}$ gives a central extension $\wm{T}^{sc}$ by $\mbf{K}_2$. Therefore, we have from Theorem \ref{torsor/t} a corresponding central extension
$$\seq{F^\times}{\mca{E}^{sc}}{Y^{sc}}.$$
The requirement on $\phi$ is that it is a morphism from $\mca{E}^{sc}$ to $\mca{E}$ such that the following diagram commute:
$$\xymatrix{
F^\times \ar@{^(->}[r]  \ar@{=}[d] &\mca{E}^{sc}  \ar@{>>}[r]  \ar[d]^-\phi  & Y^{sc} \ar@{^(->}[d] \\
F^\times \ar@{^(->}[r]  &\mca{E}  \ar@{>>}[r]  & Y.
}$$

Homomorphisms between two triples $(Q_1, \mca{E}_1, \phi_1)$ and $(Q_2, \mca{E}_2, \phi_2)$ exist only for $Q_1=Q_2$, in which case they are defined to be the homomorphisms between $\mca{E}_1$ and $\mca{E}_2$ which respect the above commutative diagram.
\end{thm}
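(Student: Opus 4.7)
The plan is to exhibit a pair of functors between $\CExt(\mbf{G},\mbf{K}_2)$ and the category of triples $(Q,\mca{E},\phi)$ that turn out to be mutually quasi-inverse, reducing the classification to the two cases already handled by Theorems \ref{torsor/t} and \ref{torsor/sc} together with a gluing construction. First I would construct the functor $\xymatrix{\CExt(\mbf{G},\mbf{K}_2) \ar[r] & \set{(Q,\mca{E},\phi)}}$. Given $\wm{G}$, pulling back via the inclusion $\mbf{T}\hookrightarrow\mbf{G}$ produces an object of $\CExt(\mbf{T},\mbf{K}_2)$, to which Theorem \ref{torsor/t} associates a pair $(Q,\mca{E})$. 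The Weyl invariance of $Q$ is extracted from the conjugation action on $\wm{T}$ of lifts $\wt{w}\in\wm{G}(F')$ of Weyl elements $w\in W$, which exist because the Chevalley pinning for $(\mbf{G},\mbf{T},\mbf{B})$ provides representatives inside $\mbf{G}^{sc}$; since the recipe extracting $Q$ from $\wm{T}$ is intrinsic, compatibility under these conjugations forces $w\cdot Q=Q$.

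Next, pulling back $\wm{G}$ along $\Phi:\mbf{G}^{sc}\to\mbf{G}$ yields $\Phi^*\wm{G}\in\CExt(\mbf{G}^{sc},\mbf{K}_2)$, which by Theorem \ref{torsor/sc} is determined up to unique isomorphism by a Weyl invariant quadratic form on $Y^{sc}$; functoriality of the torus assignment $\wm{T}\mapsto(Q,\mca{E})$ identifies this form with $Q|_{Y^{sc}}$, and simultaneously identifies the associated central extension over $Y^{sc}$ with $\mca{E}^{sc}$. The natural map $\Phi^*\wm{T}^{sc}\to\wm{T}$ of $\mbf{K}_2$-extensions over the common torus then induces, via the tame-symbol construction recalled after Theorem \ref{torsor/t}, the required morphism $\phi:\mca{E}^{sc}\to\mca{E}$ fitting into the prescribed commutative diagram.

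The more delicate direction is the quasi-inverse. Given a triple $(Q,\mca{E},\phi)$, Theorem \ref{torsor/t} produces $\wm{T}\in\CExt(\mbf{T},\mbf{K}_2)$ and Theorem \ref{torsor/sc} produces $\wm{G}^{sc}\in\CExt(\mbf{G}^{sc},\mbf{K}_2)$ with restriction $\wm{T}^{sc}$ over $\mbf{T}^{sc}$. The datum $\phi$ is precisely what identifies the two extensions $\wm{T}^{sc}$ and $\Phi^*\wm{T}$ over $\mbf{T}^{sc}$, so one can form the amalgamation
$$\wm{G} \;:=\; \wm{G}^{sc} \; \sqcup_{\wm{T}^{sc}} \; \wm{T}$$
as a sheaf of groups on the big Zariski site over $\Spec(F)$, gluing by $\phi$. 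The Weyl invariance of $Q$ ensures that conjugation by $\wm{T}$ on the root subgroups of $\wm{G}^{sc}$ is compatible with the cocycle (\ref{torus law}), so the amalgamation carries a natural multiplicative structure; the Bruhat decomposition, which expresses $\mbf{G}$ as generated by $\mbf{T}$ and the root subgroups already present inside $\mbf{G}^{sc}$ with relations pinned down by $\mbf{T}^{sc}$, guarantees that $\wm{G}$ is indeed a central extension of $\mbf{G}$ by $\mbf{K}_2$.

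The hard part will be this amalgamation: verifying that the sheafified pushout is genuinely an object of $\CExt(\mbf{G},\mbf{K}_2)$, i.e.\ that the candidate cocycle descends to $\mbf{G}$ and is independent of the Bruhat cell chosen, and that the construction is functorial in the triple. This is where one must invoke Matsumoto's presentation of $\mbf{G}^{sc}$ by generators and relations (the same input underlying Theorem \ref{torsor/sc}) together with the vanishing $\text{H}^1_{\text{Zar}}(F',\mbf{K}_2)=0$ to check the resulting object is a torsor, not merely a sheaf of groupoids. Once $\wm{G}$ is shown to be a well-defined central extension, the equivalence of categories follows: one direction is tautological on triples, and in the other direction any $\wm{G}$ reconstructed from its triple is canonically isomorphic to the original because $\CExt(\mbf{G}^{sc},\mbf{K}_2)$ and $\CExt(\mbf{T},\mbf{K}_2)$ are rigid enough by Theorems \ref{torsor/sc} and \ref{torsor/t} to force the gluing to match the one read off from $\wm{G}$ itself.
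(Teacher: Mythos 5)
This theorem is stated in the paper without proof: it is the main classification result of Brylinski--Deligne, cited as \cite[\S 7]{BD01} in the immediately following Theorem \ref{Pic equiv}. So there is no internal proof for your attempt to be compared against, and the question is simply whether your sketch could be fleshed out into a correct argument. Your forward direction is essentially what BD do: pull back to $\mbf{T}$ to obtain $(Q,\mca{E})$ via Theorem \ref{torsor/t}, extract Weyl invariance of $Q$ from conjugation by lifts of Weyl representatives in $\wm{G}$, pull back along $\Phi$ to recover $Q|_{Y^{sc}}$ and $\mca{E}^{sc}$, and read off $\phi$ from the comparison map $\Phi^*\wm{T}^{sc}\to\wm{T}$. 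That part is fine as a sketch.

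The gap is in the quasi-inverse. You propose to glue $\wm{G}$ as the amalgamation $\wm{G}^{sc}\sqcup_{\wm{T}^{sc}}\wm{T}$, but $\mbf{G}$ itself is \emph{not} the pushout of $\mbf{G}^{sc}\leftarrow\mbf{T}^{sc}\rightarrow\mbf{T}$ in the category of (sheaves of) groups; the categorical amalgamated coproduct over a common subgroup is a free product with amalgamation, which for nontrivial groups is vastly larger than $\mbf{G}$ and is not even an algebraic group. The actual relationship is a twisted quotient: the map $\mbf{G}^{sc}\rtimes\mbf{T}\to\mbf{G}$, $(g,t)\mapsto\Phi(g)t$, is surjective with kernel the anti-diagonal copy of $\mbf{T}^{sc}$, where $\mbf{T}$ acts on $\mbf{G}^{sc}$ through conjugation. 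So to build $\wm{G}$ from $\wm{G}^{sc}$ and $\wm{T}$ one must (a) lift the conjugation action of $\wm{T}$ on $\wm{G}^{sc}$ in a way compatible with the $\mbf{K}_2$-centers, (b) form $\wm{G}^{sc}\rtimes_{\mbf{K}_2}\wm{T}$ as a central extension of $\mbf{G}^{sc}\rtimes\mbf{T}$ by $\mbf{K}_2\times\mbf{K}_2$, and (c) quotient both by the appropriate anti-diagonals, using $\phi$ precisely to match the two central extensions over $\mbf{T}^{sc}$. Your write-up collapses this into the single word ``amalgamation'' and then explicitly concedes that verifying the result is a central extension is ``the hard part''; but that verification is the entire content of the reverse direction, and the pushout formalism you invoke does not even produce the right underlying group. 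Additionally, the Weyl invariance of $Q$ is not, as you suggest, what makes the gluing cocycle descend --- it is needed to show that the \emph{torus} part is compatible with the $N(\mbf{T})$-action, but the core descent issue is whether the $\mbf{T}$-action on $\wm{G}^{sc}$ lifts coherently (BD handle this via the structure of $\mathrm{Hom}_{\mathrm{Zar}}(\mbf{G}_{\mathrm{mul}},\mbf{K}_2)$ and the analysis of the big cell, not via any amalgamation). As written the proposal does not constitute a proof and cannot be repaired without replacing the pushout construction by the correct quotient-of-semidirect-product construction and carrying out the descent argument.
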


In fact, the theorem stated here could be strengthened, since the category $\CExt(\mbf{G}, \mbf{K}_2)$ and the category consisting of $(Q,\msc{E}, \phi)$ are both commutative Picard categories with respect to the Baer sum operation. 

In general, for any group $C$ and an abelian group $A$, we view $A$ as a trivial $C$-module. Then, the second cohomology group $H^2(C, A)$ classifies the isomorphism classes of central extensions of $C$ by $A$. The group law on $H^2(C, A)$ is then realized as the Baer sum.

Recall the definition of Baer sum. \index{Baer sum}
Let $\seq{A}{E_i}{C}$ be two central extensions with $A$ an abelian group (written multiplicatively say). Let
$$\xymatrix{\delta: C \ar[r] & C \times C }$$
be the diagonal map and let 
$$\xymatrix{m: A \times A \ar[r] & A }$$ be the multiplication map. 
From $E_1$ and $E_2$, we obtain the extension $E_1\times E_2$ of $C\times C$ by $A\times A$ by forming the Cartesian product. Then by definition the Baer sum of $E_1$ and $E_2$ is given by
$$E_1\oplus_B E_2 :=\Big(\delta^*\circ m_* (E_1\times E_2) \simeq m_* \circ \delta^* (E_1\times E_2) \Big).$$

Thus for example, the additive structure for the category $\set{(Q,\mca{E}, \phi)}$ is given such that the sum of two $(Q_i,\mca{E}_i, \phi_i)$ for $i=1, 2$ is by definition
$$(Q_1+ Q_2, \ \mca{E}_1 \oplus_B \mca{E}_2, \ \phi_1 \oplus_B \phi_2),$$
where $\phi_1\oplus_B \phi_2$ is the obvious induced map.

\begin{thm}[{\cite[\S 7]{BD01}}] \label{Pic equiv}
The equivalence of the two categories in Theorem \ref{torsor/red} respects the Picard structure, i.e., it establishes an equivalence between the two commutative Picard categories.
\end{thm}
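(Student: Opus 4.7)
The plan is to upgrade the categorical equivalence of Theorem \ref{torsor/red} to an equivalence of commutative Picard categories by producing, for any two objects $\wm{G}_1, \wm{G}_2 \in \CExt(\mbf{G},\mbf{K}_2)$, natural isomorphisms
$$F(\wm{G}_1 \oplus_B \wm{G}_2) \cong F(\wm{G}_1) \oplus_B F(\wm{G}_2),$$
where $F$ denotes the functor of Theorem \ref{torsor/red}, and to verify that these intertwine the associativity, commutativity, and unit constraints on both sides. The underlying principle is that the Baer sum is itself built from a Cartesian product followed by pull-back along the diagonal $\delta$ and push-out along the multiplication $m$, while each of the three ingredients $Q$, $\mca{E}$, and $\phi$ extracted from $\wm{G}$ is assembled out of pull-backs, push-outs, and restrictions to subgroups. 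Since these operations commute up to canonical isomorphism and are additive for Baer sum, compatibility should cascade component by component.

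First I would verify the additivity on the $\mca{E}$ component. Recalling the recipe from Section \ref{torsor/t}, one takes $F(\!(\tau)\!)$-points of $\wm{T}$, pulls back along the homomorphism $Y \to \mbf{T}(F(\!(\tau)\!))$ sending $y \mapsto y \otimes \tau$, and then pushes out along the tame symbol $\mbf{K}_2(F(\!(\tau)\!)) \to F^\times$. Each of these operations is an additive endofunctor on the appropriate category of central extensions, and pull-back along a fixed homomorphism commutes with push-out along another, so the composite carries Baer sum to Baer sum and supplies a canonical natural isomorphism $\mca{E}_{\wm{G}_1 \oplus_B \wm{G}_2} \cong \mca{E}_{\wm{G}_1} \oplus_B \mca{E}_{\wm{G}_2}$. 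The additivity of $\phi$ then follows by applying this same argument after first pulling back along $\Phi: \mbf{G}^{sc} \to \mbf{G}$, itself an additive operation on $\CExt$.

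For the $Q$ component, the associated symmetric bilinear form $B_Q$ is recovered from the commutator on $\mca{E}$, and commutators of central extensions are bilinear with respect to Baer sum, so one immediately obtains the identity $B_{Q_{\wm{G}_1 \oplus_B \wm{G}_2}} = B_{Q_1} + B_{Q_2}$. To pin down the integer-valued $Q$ itself, rather than its polarization modulo $2$-torsion, I would invoke the explicit construction of \cite[\S 3.9--3.11]{BD01}, in which $Q(y)$ is read off from the $\mbf{K}_2$-torsor obtained by pulling $\wm{G}$ back along the cocharacter $y: \mbf{G}_\text{mul} \to \mbf{T} \hookrightarrow \mbf{G}$. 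Being once again a pull-back construction, it is additive under Baer sum, which settles $Q_{\wm{G}_1 \oplus_B \wm{G}_2} = Q_1 + Q_2$ on the nose.

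The main obstacle I anticipate is not any single additivity verification but the check of the coherence axioms: the natural isomorphisms just constructed must intertwine the associators, the symmetries, and the units on both sides. These ultimately reduce to diagram chases in the underlying Picard category of central extensions of abelian groups, resting on the standard but tedious coherence properties of pull-back and push-out with respect to Baer sum. Once these coherences are in place, the componentwise additive structure on the category of triples $(Q,\mca{E},\phi)$ matches the Baer sum on $\CExt(\mbf{G},\mbf{K}_2)$ transported through $F$, and the Picard equivalence follows.
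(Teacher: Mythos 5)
The paper states this result with the attribution \cite[\S 7]{BD01} and supplies no proof of its own, so there is no internal argument to compare your sketch against; you are filling a gap the paper leaves open. Your overall strategy is sound: each of the three ingredients $(Q,\mca{E},\phi)$ is extracted from $\wm{G}$ by a composite of operations (taking $F(\!(\tau)\!)$-points, pulling back along subgroups or cocharacters, pushing out along the tame symbol) which individually commute with Baer sum up to canonical natural isomorphism, so the classification functor inherits a lax symmetric monoidal structure, and what remains is the coherence bookkeeping. This matches the architecture of Brylinski and Deligne's own argument.

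One step should be tightened. You correctly observe that the commutator on $\mca{E}$ only recovers $B_Q$ modulo $2$, and you then propose to pin down the integer-valued $Q$ by pulling $\wm{G}$ back along cocharacters $y\colon \mbf{G}_\text{mul}\to\mbf{T}$ and asserting that "being a pull-back construction, it is additive under Baer sum." But the assignment from $\mbf{K}_2$-torsors on $\mbf{G}_\text{mul}$ to integers is itself the rank-one instance of the Brylinski--Deligne classification, so its additivity under Baer sum is precisely a special case of the theorem you are proving, not something that falls out of general functoriality. You need to establish this rank-one additivity directly as a base case (which is a manageable explicit computation via $\mbf{K}_2(F[t,t^{-1}])$, along the lines already invoked for the proof of (\ref{Phi}) in the paper) and then bootstrap. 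With that base case supplied, the reduction of $Q$'s additivity to it via pull-back is fine, and the remaining coherence checks, while the bulk of the labor, rest on the standard compatibilities of $\delta^*$ and $m_*$ that you cite.
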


\subsection{The Brylinski-Deligne section} \label{BD section}
Assume $\mbf{G}$ reductive and $\mbf{G}^{sc}$ the semisimple simply-connected group as before. From the $\mbf{K}_2$-torsor $\wt{\mbf{T}}$ over $\mbf{T}$ we have constructed the extension $\mca{E}$. By pull-back, any $\mbf{K}_2$-torsor $\wt{\mbf{G}}$ gives a $\mbf{K}_2$-torsor $\wm{G}^{sc}$ over $\mbf{G}^{sc}$. Further restriction gives the covering $\wm{T}^{sc}$ of $\mbf{T}^{sc} \subseteq \mbf{G}^{sc}$.  Similarly one obtains $\mca{E}^{sc}$ in the same way starting from $\wm{T}^{sc}$. It is possible to characterize $\wt{\mbf{T}}^{sc}$ and the corresponding extension 
$$\seq{F^\times}{\mca{E}^{sc}}{Y^{sc}}$$
given by Theorem \ref{torsor/t} which arise in this way, among general $\mbf{K}_2$-torsors over $\mbf{T}^{sc}$ associated with the same $Q$ (cf. \cite[\S 11]{BD01}).

For the time being, we also denote by $\xymatrix{\Phi: \wm{G}^{sc} \ar[r] & \wm{G}}$ the natural pull-back map, and which restricts to the tori to give the middle map of the following diagram 
$$\xymatrix{
\mbf{K}_2 \ar@{^(->}[r] \ar@{=}[d] & \wm{T}^{sc} \ar@{>>}[r] \ar[d]^-{\Phi} &\mbf{T}^{sc} \ar[d]^-{\Phi} \\
\mbf{K}_2 \ar@{^(->}[r] &\wt{\mbf{T}} \ar@{>>}[r] &\mbf{T}.
}$$

Let $\alpha \in \Psi$, and let $\wm{T}_\alpha^{sc}$ be the pull-back of $\wm{G}^{sc}$ to the one-dimensional torus $\mbf{T}_\alpha^{sc} \subseteq \mbf{T}^{sc}$. What is important to us is that $\wm{T}_\alpha^{sc}$ is endowed with a natural section over $\mbf{T}_\alpha^{sc}$, which depends on the \'epinglage we fix for $\mbf{G}$.

\subsubsection{The Bylinski-Deligne section of $\wm{T}^{sc}_\alpha$} \index{Brylinski-Deligne section}
Recall that the extension $\wt{\mbf{G}}$ splits uniquely over the unipotent subgroup $\mbf{U}\subseteq \mbf{G}$, and this splitting is $\mbf{B}=\mbf{T}\mbf{U}$-equivariant (cf. \cite[Prop. 11.3]{BD01}). Here $\mbf{U}$ could be viewed as the unipotent radical of the Borel  subgroup $\mbf{B}^{sc}=\mbf{T}^{sc} \mbf{U}$ of $\wm{G}^{sc}$, which splits uniquely in $\wm{G}^{sc}$ and thus is compatible with the map $\xymatrix{\Phi: \wm{G}^{sc} \ar[r] & \wm{G}}$. We denote by $\wm{e}\in \wm{U}$ the image of this splitting of any element $\mbf{e} \in \mbf{U}$, and no confusion will arise on the context of such definition, i.e. with respect to $\mbf{G}$ or $\mbf{G}^{sc}$.

We fix a Chevalley system of e\'pinglage for $(\mbf{G}, \mbf{T}, \mbf{B})$ (cf. \cite[\S 3.2.1-2]{BrTi84}). In particular, for each $\alpha\in \Psi$ with associated root subgroup $\mbf{U}_\alpha$, we have a fixed isomorphism $\xymatrix{\mbf{e}_\alpha: \mbf{G}_\text{add} \ar[r] & \mbf{U}_\alpha}$. Also, there is the induced morphism $\xymatrix{\varphi_\alpha: \mbf{SL}_2 \ar[r] & \mbf{G}}$. In fact, the data for the e\'pinglage above gives an e\'pinglage of $(\mbf{G}^{sc}, \mbf{T}^{sc}, \mbf{B}^{sc})$, and we still denoted by $\xymatrix{\varphi_\alpha: \mbf{SL}_2 \ar@{^(->}[r] & \mbf{G}^{sc}}$ the induced morphism which is an injection in this case.

Let $a\in \mbf{G}_\text{mul}$, consider $\mbf{e}_+(a), \mbf{e}_-(a), \mbf{w}_o(a)$ of $\mbf{SL}_2$ as follows:
\begin{equation*}
    \mbf{e}_+(a) = \left(
      \begin{array}{cccc}
        1 & a \\
        0 & 1
      \end{array} \right),  \quad
\mbf{e}_-(a) = \left(
      \begin{array}{cccc}
        1 & 0 \\
        -a & 1
      \end{array} \right),
\end{equation*}
\begin{equation*}
\mbf{w}_o(a) =\mbf{e}_+(a)\mbf{e}_-(a^{-1})\mbf{e}_+(a)=\left(
      \begin{array}{cccc}
        0 & a \\
        -a^{-1} & 0
      \end{array} \right), \quad
\mbf{h}_o(a) = \mbf{w}_o(a) \mbf{w}_o(-1)= \left(
      \begin{array}{cccc}
        a & 0 \\
        0  &  a^{-1}
      \end{array} \right).
\end{equation*}

By the Tits trijection (cf. \cite[\S 11]{BD01}) we mean the triple $\mbf{e}_\alpha(a), \mbf{e}_{-\alpha}(a^{-1}), \mbf{w}_\alpha(a)  \in \mbf{G}^{sc}$ given by
$$\mbf{e}_\alpha(a)=\varphi_\alpha(\mbf{e}_+(a)),\quad  \mbf{e}_{-\alpha}(a^{-1})=\varphi_\alpha(\mbf{e}_-(a^{-1})), \quad \mbf{w}_\alpha(a):=\varphi_\alpha(\mbf{w}_o(a)).$$
We also write $\mbf{h}_\alpha(a):=\varphi_\alpha(\mbf{h}_o(a))$ and thus $\mbf{h}_\alpha(a)=\mbf{w}_\alpha(a) \mbf{w}_\alpha(-1)$.

Now we can proceed to describe the Brylinski-Deligne (BD) section $\wm{h}_\alpha^{[b]}$ (which depends on $b\in \mbf{G}_\text{mul}$) of $\wm{T}^{sc}_\alpha$ over $\mbf{T}^{sc}_\alpha \simeq \mbf{G}_\text{mul}$.

In particular, we describe the BD section at the level of $F'$-rational points, where $F'/F$ is a field extension. That is, for any $b\in \mbf{G}_\text{mul}(F')=(F')^\times$, we have the BD section $\wm{h}_\alpha^{[b]}$:
$$\xymatrix{
\mbf{K}_2(F') \ar@{^(->}[r] & \wm{T}_\alpha^{sc}(F') \ar@{>>}[r] & \mbf{T}_\alpha^{sc}(F') \ar@/_1.3pc/[l]_-{\wm{h}_\alpha^{[b]}}.
}$$

Recall the definition of $\wm{h}_\alpha^{[b]}$ as follows. For any $a\in (F')^\times$, first define a lifting $\wm{w}_\alpha(a) \in \wm{G}^{sc}(F')$  of the element $\mbf{w}_\alpha(a) \in N(\mbf{T})(F')$ by
$$\xymatrix{
\mbf{w}_\alpha(a) \ar@{|->}[r] & \wm{w}_\alpha(a):= \wm{e}_\alpha(a)\cdot \wm{e}_{-\alpha}(a^{-1}) \cdot \wm{e}_\alpha(a).
}$$ 

The BD section $\wm{h}_\alpha^{[b]}(a)$ of $\wt{\mbf{T}}^{sc}(F')$ over $\mbf{T}^{sc}(F')$ is then by definition (cf. \cite[\S 11.1]{BD01})
$$\wm{h}_\alpha^{[b]}(a):= \wm{w}_\alpha(ab) \cdot \wm{w}_\alpha(b)^{-1}.$$

Two important properties of this section are
\begin{align} 
\wm{h}^{[b]}_\alpha(a) \cdot \wm{h}^{[b]}_\alpha(c) & =\wm{h}^{[b]}_\alpha(ac) \cdot \set{a, c}^{Q(\alpha^\vee)},  \label{h ppty1} \\
\wm{h}^{[db]}_\alpha(a) &= \wm{h}^{[b]}_\alpha(a) \cdot \set{d, a}^{Q(\alpha^\vee)}.  \label{h ppty2}
\end{align}

This section described above gives rise to an inherited lifting into $\mca{E}^{sc}$ of the one-dimension lattice $Y_\alpha^{sc} \subseteq Y^{sc}$ spanned by $\alpha^\vee \in \Psi^\vee$. More precisely, apply the case $F'=F(\!(\tau)\!)$ and pick any nonzero $f\in F(\!(\tau)\!)$. Let $\wm{h}_\alpha^{[f]}$ be the BD section $\wm{T}_\alpha^{sc}(F(\!(\tau)\!))$ over $\mbf{T}_\alpha^{sc}(F(\!(\tau)\!))$. It induces a section over $Y_\alpha^{sc}(\tau) \subseteq \mbf{T}^{sc}(F(\!(\tau)\!))$. Thus, we obtain an inherited section of $\mca{E}^{sc}$ over $Y_\alpha^{sc}$, which is still denoted by $\wm{h}_\alpha^{[f]}$. Write $\mca{E}_\alpha^{sc}$ for the pull-back of $\mca{E}^{sc}$ via $Y_\alpha^{sc}\subseteq Y^{sc}$, then we have the section
$$\xymatrix{
F^\times \ar@{^(->}[r] & \mca{E}_\alpha^{sc} \ar@{>>}[r] & Y_\alpha^{sc} \ar@/_1.2pc/[l]_-{\wm{h}_\alpha^{[f]}}.
}$$

Recall that for any lifting $\wt{y\otimes \tau} \in \wm{T}^{sc}(F(\!(\tau)\!))$ of $y\otimes \tau\in \mbf{T}^{sc}(F(\!(\tau)\!))$, we have denoted by $\elt{\wt{y\otimes \tau}}\in \mca{E}^{sc}$ its image in $\mca{E}^{sc}$, as in (\ref{[] notation}). In particular for any $k\in \Z$,
\begin{equation}
\elt{\wm{h}^{[f]}_\alpha(\tau^k)}:=\text{ the image of } \wm{h}^{[f]}_\alpha(\tau^k) \in \wm{T}^{sc}(F(\!(\tau)\!)) \text{ in } \mca{E}^{sc}.
\end{equation}

\begin{dfn} \label{dfn BD section}
Consider 
$$\wm{h}_\alpha^{[f]}(k\cdot \alpha^\vee):= \elt{\wm{h}^{[f]}_\alpha(\tau^k)},$$
which is a lifting of $k\alpha^\vee \in Y_\alpha^{sc}$. We call it the Brylinski-Deligne section of $\mca{E}_\alpha^{sc}$ over $Y_\alpha^{sc}$.
\end{dfn}

\subsubsection{Rigidifying $\mca{E}^{sc}$}
Let $\mca{E}_Q^{sc}$ be the abstract group generated by $\set{a}_{a\in F^\times} \cup \set{\gamma_\alpha}_{\alpha^\vee \in \Delta^\vee}$ subject to the conditions:
\begin{align*}
& (i) \quad  F^\times \text{ is contained in the center of } \mca{E}_Q^{sc}, \\
& (ii) \quad [\gamma_\alpha, \gamma_\beta]=(-1)^{B_Q(\alpha^\vee, \beta^\vee)} \text{ for any } \alpha^\vee, \beta^\vee \in \Delta^\vee.
\end{align*} 
We obtain the exact sequence $\seq{F^\times}{\mca{E}^{sc}_Q}{Y}$, which is uniquely determined by requiring that $a \in F^\times$ sent to the generator $a$ of $\mca{E}^{sc}_Q$ and $\gamma_\alpha$ to $\alpha^\vee$.

Thus it is possible to rigidify the extension $\mca{E}^{sc}$ obtained above by using the unique isomorphism given by
$$\xymatrix{
\mca{E}^{sc} \ar[r]^-\simeq & \mca{E}_Q^{sc}, \quad \wm{h}^{[1]}_\alpha(\alpha^\vee) \ar@{|->}[r] & \gamma_\alpha,
}$$
where $\wm{h}^{[1]}_\alpha(\alpha^\vee)$ is just $\wm{h}^{[f]}_\alpha(\alpha^\vee)$ for $f=1$.

\section{Incarnation functor and an equivalent category}

\subsection{Equivalence between the incarnation category and the BD category}

By adapting and modifying the definition in \cite{We13}, one can define a Picard category $\Bis_{\mbf{G}}=\bigsqcup_Q \Bis_{\mbf{G}}^Q$ and an incarnation functor 
$$\xymatrix{
\Inc_{\mbf{G}}: \quad \Bis_{\mbf{G}} \ar@{~>}[r] &\CExt(\mbf{G},\mbf{K}_2),
}$$
which gives an equivalence of Picard categories. That is, $\Inc_{\mbf{G}}$ is fully faithful and essentially surjective, namely,  surjective onto the isomorphism classes of $\CExt(\mbf{G},\mbf{K}_2)$. Moreover, it respects the Picard structures on both categories. Because of this equivalence, we can concentrate and work in the category $\Bis_{\mbf{G}}$.

It is important for us to have a description of $\Bis_{\mbf{G}}^Q$, and so we recall the definition here.

\begin{dfn} \index{incarnation category}
The category $\Bis_{\mbf{G}}^Q$ consists of pairs $(D, \eta)$, where $D$ is a $\Z$-valued bilinear (not necessarily symmetric) form on $Y$ such that $D(y_1, y_2) + D(y_2, y_1) =B_Q(y_1, y_2)$ and $\xymatrix{\eta: Y^{sc} \ar[r] & F^\times}$ a group homomorphism. In particular, 
$$D(y, y)=Q(y).$$

We call $D$ a bisector of $Q$. \index{bisector}
Morphisms of pairs $(D_i, \eta_i)$ for $i=1, 2$ consist of maps $\xymatrix{H: Y \ar[r] & F^\times}$ (not necessarily a homomorphism) such that
\begin{align*}
& (i) \quad  (-1)^{D_2(y_1,y_2)- D_1(y_1,y_2)}=H(y_1+y_2) \cdot H(y_1)^{-1} \cdot H(y_2)^{-1},\\
& (ii) \quad  \eta_2(\alpha^\vee)/\eta_1(\alpha^\vee)= H(\alpha^\vee) \text{ for all } \alpha^\vee \in \Delta^\vee.
\end{align*}

The composition of two morphisms is given by multiplication, i.e., $H_1\circ H_2(y)=H_1(y)\cdot H_2(y)$. 
\end{dfn}

\begin{rmk}
Note that in $(ii)$ we do not require $\eta_2/\eta_1 =H|_{Y^{sc}}$ which will enforce $H|_{Y^{sc}}$ to be a homomorphism, which is unnecessary and in fact not true for the desired equivalence of categories mentioned. Also the definition given above is not exactly the same as in \cite{We13}, where $D$ is assumed with some fairness condition. The morphism between two objects here is also defined in a less restrictive way.
\end{rmk}

It is shown in \cite[Prop. 2.4]{We13} that for $(D_i, \eta_i), i=1, 2$ associated with the same $Q$, there always exists $H$ satisfying $(i)$. Consequently, we see that up to isomorphism we could always fix a base $D$ and allow $\eta$ to be varied. More precisely, we have the following.

\begin{eg} \label{D1 to D2}
Let $D_1, D_2$ be two bisectors of $Q$. Then $(D_1, \eta_1)$ for any $\eta_1$ is isomorphic to $(D_2, \eta_2)$ for some $\eta_2$. We explain how the $\eta_2$ can be obtained. Pick $\xymatrix{H: Y\ar[r] & F^\times}$ such that the property $(i)$ is satisfied with respect to $D_1$ and $D_2$. Define $\eta_2$ to be such that
$$\eta_2(\alpha^\vee)/\eta_1(\alpha^\vee) = H(\alpha^\vee) \text{ for all } \alpha^\vee \in \Delta^\vee.$$
Then $(D_1,\eta_1)\simeq (D_2, \eta_2)$ for $\eta_2$ obtained in this way.

On the other hand, we observe that not every $(D_2, \eta)$ is isomorphic to $(D_1, \mbf{1})$ for some $D_1$. Suppose on the contrary there is a $H$ which realizes an isomorphism from $(D_1, \mbf{1})$ to $(D_2, \eta)$. Then property $(i)$ implies $H(ky)=H(y)^k$ for all $k\in \Z, y\in Y$. If $\alpha^\vee =ky$ for some $y\in Y$, this implies necessarily by $(b)$
$$\eta(\alpha^\vee)=H(y)^k \in (F^\times)^k.$$
However, in general $\eta$ may not satisfy such a condition. A concrete example is for $\mbf{G}=\mbf{PGL}_2$ with $Q=0$ with $\eta(\alpha^\vee)\in F^\times\backslash (F^\times)^2$ where $\alpha^\vee \in Y$ is the coroot.
\end{eg}

\begin{eg}
Assume that $\mbf{G}$ has a simply-connected derived group $\mbf{G}^\text{der}$. Then $Y/Y^{sc}$ is a free $\Z$-module. Let $(D, \eta)\in \Bis_{\mbf{G}}^Q$, then $(D,\eta) \simeq (D, \mbf{1})$. In fact, any $H\in \Hom(Y, F^\times)$ extending $\eta$ will provide a morphism from $(D,\mbf{1})$ to $(D, \eta)$.

We also claim that for such $\mbf{G}$ any  two $(D_i, \eta_i) \in \Bis_{\mbf{G}}^Q, i=1,2$ are isomorphic. This can be seen from the composition of isomorphisms:
\begin{equation} \label{D-eta free}
\xymatrix{
(D_1, \eta_1) \ar[r] & (D_1, \mbf{1}) \ar[r]^-H & (D_2, \eta_H) \ar[r] & (D_2, \mbf{1}) \ar[r] & (D_2, \eta_2),  
}
\end{equation}
\noindent where the second isomorphism exists and depends on $H$ as in previous example.
\end{eg}

The incarnation functor from $\Bis_{\mbf{G}}^Q$ to $\CExt(\mbf{G},\mbf{K}_2)$ is realized by first defining a functor from $\Bis_{\mbf{G}}^Q$ to the category of $\set{(Q, \mca{E}, \phi)}$, in which the target object of $(D, \eta)$ is denoted as $(Q, \mca{E}_D, \phi_{D,\eta})$. 

The extension $\mca{E}_D$ is described as $F^\times \times_D Y$ with group law given by
$$(a, y_1) \cdot (b, y_2)= \big( ab\cdot (-1)^{D(y_1, y_2)}, y_1+y_2\big).$$

The map $\xymatrix{\phi_{D,\eta}: \mca{E}^{sc}_Q \ar[r] & \mca{E}}$ is the one uniquely determined by
$$\gamma_\alpha \mapsto (\eta(\alpha^\vee), \alpha^\vee) \text{ for all } \alpha^\vee \in \Delta^\vee.$$

\begin{lm}
The map $\phi_{D,\eta}$ is a homomorphism.
\end{lm}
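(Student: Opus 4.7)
The plan is to observe that since $\mca{E}^{sc}_Q$ is given by an explicit presentation — generators $\{a\}_{a\in F^\times} \cup \{\gamma_\alpha\}_{\alpha^\vee \in \Delta^\vee}$ modulo the two families of relations (i) and (ii) — the assignment $\gamma_\alpha \mapsto (\eta(\alpha^\vee), \alpha^\vee)$ together with $a \mapsto (a, 0)$ extends to a homomorphism into $\mca{E}_D$ as soon as each defining relation is sent to a valid identity in $\mca{E}_D$. So the proof reduces to two direct verifications inside $\mca{E}_D = F^\times \times_D Y$, using only the explicit multiplication rule $(a, y_1)\cdot (b, y_2) = (ab\cdot (-1)^{D(y_1, y_2)}, y_1 + y_2)$.

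First I would check relation (i), namely that the image $\{(a, 0): a\in F^\times\}$ lies in the center of $\mca{E}_D$. By bilinearity of $D$ we have $D(0, y) = D(y, 0) = 0$ for every $y\in Y$, so the group law immediately gives $(a, 0)\cdot (b, y) = (ab, y) = (b, y)\cdot (a, 0)$. Next, for relation (ii), set $x = (\eta(\alpha^\vee), \alpha^\vee)$ and $z = (\eta(\beta^\vee), \beta^\vee)$. Both $xz$ and $zx$ have second coordinate $\alpha^\vee + \beta^\vee$, so the commutator $[x, z] = xz(zx)^{-1}$ lies in $F^\times$ and is simply the ratio of the two first coordinates, namely $(-1)^{D(\alpha^\vee, \beta^\vee) - D(\beta^\vee, \alpha^\vee)}$. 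Using $(-1)^{-k} = (-1)^k$, this equals $(-1)^{D(\alpha^\vee, \beta^\vee) + D(\beta^\vee, \alpha^\vee)}$, which by the bisector identity $D(y_1, y_2) + D(y_2, y_1) = B_Q(y_1, y_2)$ equals $(-1)^{B_Q(\alpha^\vee, \beta^\vee)}$, matching relation (ii) exactly.

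There is no genuine obstacle: the entire argument is an unwinding of the group law in $\mca{E}_D$ coupled with the bisector condition built into the definition of $\Bis_\mbf{G}^Q$. The only point requiring any care is the sign accounting in the commutator and the parity observation that exponents of $-1$ may be freely negated. Once these two relations are verified, the universal property of the presented group $\mca{E}^{sc}_Q$ furnishes the homomorphism $\phi_{D,\eta}$ unambiguously.
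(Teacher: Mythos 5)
Your proof is correct and takes essentially the same approach as the paper: reduce to verifying that the two defining relations of the presented group $\mca{E}^{sc}_Q$ are respected, and check the commutator relation using the bisector identity $D(y_1,y_2)+D(y_2,y_1)=B_Q(y_1,y_2)$. You are slightly more thorough than the paper in two small ways: you explicitly verify relation (i) (centrality of the $F^\times$-image, which the paper leaves tacit), and you derive the commutator formula $[x,z]=(-1)^{B_Q(\alpha^\vee,\beta^\vee)}$ directly from the group law on $\mca{E}_D$, whereas the paper invokes the already-established commutator formula $[y_1,y_2]=(-1)^{B_Q(y_1,y_2)}$ for $\mca{E}_D$ without recomputing it.
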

\begin{proof}
By the definition of $\mca{E}^{sc}_Q$ using generators and relations, it suffices to check that for $\alpha, \beta\in \Delta$ the equality
$$\phi_{D,\eta}([\gamma_\alpha, \gamma_\beta])=[\phi_{D,\eta}(\gamma_\alpha), \phi_{D,\eta}(\gamma_\beta)].$$
Now the left hand side is $\phi_{D,\eta}\big((-1)^{B_Q(\alpha^\vee, \beta^\vee)}\big)=(-1)^{B_Q(\alpha^\vee, \beta^\vee)}$. On the other hand, the right hand side is $[\alpha^\vee, \beta^\vee]=(-1)^{B_Q(\alpha^\vee, \beta^\vee)}$.

This completes the proof.
\end{proof}

Hence, the incarnation functor is well-defined. Now we show it gives an equivalence of categories. Any morphism $H\in \Hom\big((D_1, \eta_1), (D_2, \eta_2) \big)$ gives a map  $\Inc_\mbf{G}(H)$ from $\mca{E}_{D_1}$ to $\mca{E}_{D_2}$ given by
$$\xymatrix{
\Inc_\mbf{G}(H): \quad (b, y)_{\mca{E}_{D_1}} \ar@{|->}[r] & (b\cdot H(y), y)_{\mca{E}_{D_2}}.
}$$

It is easy to see that $\Inc_{\mbf{G}}$ is essentially sujective. To show the equivalence between the category of $\Bis_{\mbf{G}}^Q$ and $\set{(Q, \mca{E}, \phi)}$, it suffices to prove the following.

\begin{prop}
The functor $\xymatrix{ H \ar@{|~>}[r] & \Inc_\mbf{G}(H)} $ gives an isomorphism
$$\Hom\big((D_1, \eta_1), (D_2, \eta_2) \big) \simeq \Hom\big( (\mca{E}_{D_1}, \phi_{D_1, \eta_1}), (\mca{E}_{D_2}, \phi_{D_2,\eta_2}) \big).$$
\end{prop}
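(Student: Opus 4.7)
The plan is to unwind the definitions on both sides and show that each piece of data for $H$ matches exactly a piece of data for $\Inc_\mbf{G}(H)$. There is no deep obstruction here; the content is that the cocycle condition (i) on $H$ is precisely the homomorphism condition for $\Inc_\mbf{G}(H)$, and the compatibility condition (ii) is precisely the requirement that $\Inc_\mbf{G}(H)$ intertwines the maps $\phi_{D_i, \eta_i}$.

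First I would verify that the functor is well-defined: given $H$ satisfying (i) and (ii), one checks directly that the map
$$\Inc_\mbf{G}(H): (b, y)_{\mca{E}_{D_1}} \mapsto (b \cdot H(y), y)_{\mca{E}_{D_2}}$$
is a group homomorphism. Multiplying $(a, y_1) \cdot (b, y_2) = (ab \cdot (-1)^{D_1(y_1, y_2)}, y_1+y_2)$ in $\mca{E}_{D_1}$ and comparing with the product of the images in $\mca{E}_{D_2}$, one sees that the equality reduces exactly to condition (i). The restriction to $F^\times$ is the identity and the induced map on $Y$ is the identity, so the commutative diagram of the extensions holds. To see compatibility with the rigidified maps $\phi_{D_i, \eta_i}$, evaluate at a generator $\gamma_\alpha$ of $\mca{E}_Q^{sc}$: one obtains $(\eta_1(\alpha^\vee) \cdot H(\alpha^\vee), \alpha^\vee)$ on one side and $(\eta_2(\alpha^\vee), \alpha^\vee)$ on the other, and these agree by (ii).

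Next I would establish injectivity. If $\Inc_\mbf{G}(H_1) = \Inc_\mbf{G}(H_2)$, then reading off the first coordinate of the image of $(1, y)$ immediately gives $H_1(y) = H_2(y)$ for every $y \in Y$.

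Finally, for surjectivity, let $\psi: (\mca{E}_{D_1}, \phi_{D_1, \eta_1}) \to (\mca{E}_{D_2}, \phi_{D_2, \eta_2})$ be any morphism. By definition $\psi$ restricts to the identity on $F^\times$ and descends to the identity on $Y$, so it must send $(1, y) \in \mca{E}_{D_1}$ to an element of the form $(H(y), y) \in \mca{E}_{D_2}$ for a uniquely determined function $H: Y \to F^\times$. Since $\psi$ is $F^\times$-equivariant, $\psi(b, y) = (b \cdot H(y), y)$, so $\psi = \Inc_\mbf{G}(H)$ as maps. It remains to check that $H$ satisfies (i) and (ii): expanding the identity $\psi((a, y_1)(b, y_2)) = \psi(a, y_1)\psi(b, y_2)$ in $\mca{E}_{D_2}$ yields condition (i) after cancellation, while the equality $\psi \circ \phi_{D_1, \eta_1}(\gamma_\alpha) = \phi_{D_2, \eta_2}(\gamma_\alpha)$ for each simple coroot $\alpha^\vee \in \Delta^\vee$ yields $H(\alpha^\vee) = \eta_2(\alpha^\vee)/\eta_1(\alpha^\vee)$, i.e., condition (ii). Since all three steps are purely formal manipulations once the definitions are aligned, there is no genuine obstacle, only the bookkeeping of keeping the two group laws straight.
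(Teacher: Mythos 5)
Your proof follows the same route as the paper: condition (i) gives the homomorphism property, condition (ii) gives compatibility with the $\phi_{D_i,\eta_i}$ on the generators $\gamma_\alpha$, and injectivity/surjectivity are read off from the first coordinate of $(1,y)$. The only difference is that you spell out the injectivity and surjectivity checks, which the paper merely asserts are easy.
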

\begin{proof}
Property (i) of $H$ implies that $\Inc_\mbf{G}(H)$ is a homomorphism. Now, we check
$$\phi_{D_2,\eta_2} =\Inc_\mbf{G}(H) \circ \phi_{D_1,\eta_1}.$$
For any $\gamma_\alpha \in \mca{E}_Q$,
\begin{align*}
\Inc_\mbf{G}(H) \circ \phi_{D_1,\eta_1}(\gamma_\alpha)= &\Inc_\mbf{G}(H) \big( (\eta_1(\alpha^\vee), \alpha^\vee)_{\mca{E}_{D_1}} \big) \\
= & ( \eta_1(\alpha^\vee) \cdot H(\alpha^\vee), \alpha^\vee)_{\mca{E}_{D_2}} \\
=& (\eta_2(\alpha^\vee), \alpha^\vee)_{\mca{E}_{D_2}} \\
=& \phi_{D_2,\eta_2}(\gamma_\alpha).
\end{align*}

It is easy to see from definition that if $\Inc_{\mbf{G}}(H_1)=\Inc_{\mbf{G}}(H_2)$, then $H_1=H_2$. Moreover, one can check easily that any morphism in $\Hom\big( (\mca{E}_{D_1}, \phi_{D_1, \eta_1}), (\mca{E}_{D_2}, \phi_{D_2,\eta_2}) \big)$ arises in this way, i.e., is equal to $\Inc_\mbf{G}(H)$ for some $H$.
\end{proof}

On the category $\Bis_\mbf{G}=\bigsqcup_Q \Bis_{\mbf{G}}^Q$ there is  a commutative Picard structure. For $(D_i, \eta_i) \in \Bis_\mbf{G}^{Q_i}, i=1, 2$, define the sum of the two to be
$$(D_1 + D_2, \eta_1 \cdot \eta_2).$$ 

With respect to this structure on $\Bis_\mbf{G}$, the functor $\xymatrix{ (D, \eta) \ar@{|~>}[r] & (Q, \mca{E}_D, \phi_{D,\eta}) }$ gives an equivalence between the two Picard categories. We consider the following composition (still denoted by $\Inc_\mbf{G}$)
\begin{equation} \label{inc functor}
\xymatrix{
\Inc_\mbf{G} : \quad \Bis_\mbf{G} \ar@{~>}[r] & \set{(Q, \mca{E}, \phi)} \ar@{~>}[r] & \CExt(\mbf{G}, \mbf{K}_2),
}
\end{equation}
where the second functor is a quasi-inverse of the Brylinski-Deligne classification functor, well-defined up to natural equivalence. 

To summarize,
\begin{prop}
The incarnation functor $\Inc_\mbf{G}$ establishes an equivalence of commutative Picard categories between $\Bis_\mbf{G}$ and $\CExt(\mbf{G}, \mbf{K}_2)$.
\end{prop}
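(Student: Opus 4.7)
The plan is to factor $\Inc_\mbf{G}$ as in (\ref{inc functor}). The second arrow is a chosen quasi-inverse to the Brylinski-Deligne classification and is already a Picard equivalence by Theorems \ref{torsor/red} and \ref{Pic equiv}, so it suffices to show the first arrow $(D, \eta) \mapsto (Q, \mca{E}_D, \phi_{D,\eta})$ is an equivalence of commutative Picard categories. Full faithfulness is precisely the preceding proposition, which provides a bijection $H \mapsto \Inc_\mbf{G}(H)$ on Hom-sets, so only essential surjectivity and compatibility with the additive structure remain to be established.

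For essential surjectivity, given a triple $(Q, \mca{E}, \phi)$ I would first produce a bisector $D$ of $Q$: ordering a basis $\set{y_i}$ of $Y$, set $D(y_i, y_i) = Q(y_i)$, $D(y_i, y_j) = B_Q(y_i, y_j)$ for $i < j$ and $D(y_j, y_i) = 0$, so that $D + D^t = B_Q$. The central extension $F^\times \times_D Y$ has commutator $(-1)^{B_Q}$ matching that of $\mca{E}$, hence is isomorphic to $\mca{E}$ by Theorem \ref{torsor/t}. Transporting $\phi$ across such an isomorphism, the composition $\mca{E}_Q^{sc} \to F^\times \times_D Y$ sends each generator $\gamma_\alpha$ to an element $(\eta(\alpha^\vee), \alpha^\vee)$ in the fiber over $\alpha^\vee$, and since $Y^{sc}$ is a free abelian group on $\Delta^\vee$ (as $\mbf{G}^{sc}$ is simply-connected semisimple), the assignment $\alpha^\vee \mapsto \eta(\alpha^\vee)$ extends uniquely to a homomorphism $\eta: Y^{sc} \to F^\times$. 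The resulting $(D, \eta) \in \Bis_\mbf{G}^Q$ then incarnates to a triple isomorphic to $(Q, \mca{E}, \phi)$.

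For compatibility with the Picard structures, I would observe that Baer sum of central extensions of $Y$ by $F^\times$ corresponds to multiplication of cocycles, and the cocycles defining $\mca{E}_{D_i}$ satisfy
$$(-1)^{D_1(y_1, y_2)} \cdot (-1)^{D_2(y_1, y_2)} = (-1)^{(D_1 + D_2)(y_1, y_2)},$$
which yields a canonical isomorphism $\mca{E}_{D_1} \oplus_B \mca{E}_{D_2} \simeq \mca{E}_{D_1 + D_2}$. Under this identification, $\phi_{D_1,\eta_1} \oplus_B \phi_{D_2,\eta_2}$ sends $\gamma_\alpha$ to $\big( \eta_1(\alpha^\vee)\eta_2(\alpha^\vee), \alpha^\vee \big)$, which is exactly $\phi_{D_1+D_2,\, \eta_1 \eta_2}(\gamma_\alpha)$. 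The object $(0, \mbf{1})$ maps to the trivial extension with its canonical splitting and hence serves as a unit; the associativity, commutativity and unit constraints then transport across these canonical isomorphisms by naturality. The main obstacle is essential surjectivity, more precisely the passage from the abstract $\phi$ to an honest homomorphism $\eta$ on $Y^{sc}$; this hinges both on the presentation of $\mca{E}_Q^{sc}$ by $\set{\gamma_\alpha}_{\alpha^\vee \in \Delta^\vee}$ with relations matching the commutators of $F^\times \times_D Y$, and on the simple-connectedness of $\mbf{G}^{sc}$ forcing $Y^{sc}$ to be free on $\Delta^\vee$.
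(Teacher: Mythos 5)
Your proof is correct and takes essentially the same approach as the paper: factoring $\Inc_\mbf{G}$ through the intermediate category of triples $(Q,\mca{E},\phi)$, invoking the preceding proposition for full faithfulness of the first factor, and citing the Brylinski–Deligne Picard equivalence for the second. The paper merely asserts that $\Inc_\mbf{G}$ is essentially surjective and that the Picard structures are respected; your argument supplies the missing details (the upper-triangular bisector construction together with the freeness of $Y$ and $Y^{sc}$ for essential surjectivity, and the cocycle-multiplication identification for the monoidal compatibility), and these fill-ins are sound.
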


\subsection{Description of $\wt{\mbf{G}}_{D,\eta}$}

It follows from the above equivalence of categories that no information is lost when working with $(D, \eta)$. From now, we fix a quasi-inverse functor $\xymatrix{\set{(Q, \mca{E}, \phi)} \ar@{~>}[r] &\CExt(\mbf{G}, \mbf{K}_2)}$ in (\ref{inc functor}). We will work with $\wt{\mbf{G}}$ which is incarnated by $(D, \eta)$, and write it as $\wt{\mbf{G}}_{D, \eta}$.

It is desirable and in fact crucial to have a more precise description of $\wm{G}_{D,\eta}$, including some structure facts useful for later considerations. Note that the following diagram of functors commutes (upto to equivalence of categories):
\begin{equation} \label{inc comp}
\xymatrix{
(D, \eta) \ar@{|~>}[rr] \ar@{|~>}[rd] & & \wm{G}_{D,\eta}:=\Inc_\mbf{G}\big((D,\eta)\big) \ar@{|~>}[ld]^-{\text{ BD classif.}} \\
& (Q, \mca{E}_D, \phi_{D, \eta}).
}
\end{equation}

That is, given $(D, \eta)$, we may assume that $\wm{G}_{D,\eta}$ gives rise to $(Q,\mca{E}_D, \phi_{D,\eta})$ from the BD classification.

First of all, we consider the case $\mbf{G}=\mbf{T}$ and a $\mbf{K}_2$-torsor $\wm{T}_D$ which is incarnated by $D$ (there is no $\eta$ for torus). Equivalently, there is a section of $\wm{T}_D$ over $\mbf{T}$ and therefore a bisector $D$ as above, with respect to which we can write $\wm{T}_D=\mbf{K}_2 \times_D \mbf{T}$ with the group law given by (\ref{torus law}). In particular, for any field extension $F'$ of $F$, one can write $\wm{T}_D(F')=\mbf{K}_2(F') \times_D \mbf{T}(F')$ with the group law given by:
\begin{align}
& i) \quad \big[ y_1\otimes a_1, y_2\otimes a_2 \big] =\set{a_1, a_2}^{B_Q(y_1, y_2)}, \label{tlaw1}\\
& ii)  \quad (1, y_1 \otimes a) \cdot (1, y_2 \otimes a) =\big(\set{a, a}^{D(y_1, y_2)}, (y_1+y_2)\otimes a \big),  \label{tlaw2}\\
& iii) \quad (1, y \otimes a_1) \cdot (1, y \otimes a_2) =\big(\set{a_1, a_2}^{Q(y)}, y\otimes (a_1 a_2) \big). \label{tlaw3}
\end{align}

Now back to the case of a general reductive group $\mbf{G}$. We will describe properties of $\wm{G}_{D,\eta}$.

The covering torus of $\wm{G}_{D,\eta}$ is incarnated by $D$ as above, for which we have written as $\wm{T}_D$. Note that implicitly the incarnation depends on a certain section over $\mbf{T}$, although notationally we are only using the resulting bilinear from $D$. We have assumed that the extension $\wm{G}_{D,\eta}$ gives rise to the Brylinski-Deligne data $Q$ and $\mca{E}_D$.

Recall the pull-back $\wm{T}^{sc}$ of $\wm{T}_D$ to $\mbf{T}^{sc}$ and the natural pull-back map $\Phi$ from $\wm{T}^{sc}$ to $\wm{T}_D$. Here $\wm{T}^{sc}$ inherits a description in terms of $D$; however, since this fact is never used, we will refrain from considering it. For the same reason, we use the notation $\wm{T}^{sc}$ without the subscript $D$.

We have the sheaves of extensions 
$$\xymatrix{
\mbf{K}_2 \ar@{^(->}[r] \ar@{=}[d] & \wm{T}^{sc} \ar@{>>}[r] \ar[d]^-{\Phi} &\mbf{T}^{sc} \ar[d]^-{\Phi} \\
\mbf{K}_2 \ar@{^(->}[r] &\wt{\mbf{T}}_D \ar@{>>}[r] &\mbf{T}.
}$$

Consider the Brylinski-Deligne lifting $\wm{h}^{[b]}_\alpha(a) \in \wm{T}^{sc}(F')$ of $\mbf{h}_\alpha(a) \in \mbf{T}^{sc}(F')$ as in section \ref{BD section}, which depends on $b\in (F')^\times$. We are interested in the element $\Phi\big( \wm{h}^{[b]}_\alpha(a)\big)$ for $\alpha\in \Delta$ expressed in terms of $\mbf{K}_2(F') \times_D \mbf{T}_D(F')$.

We have the following explicit description for $\Phi$ at the level of $F'$-rational points.
\begin{prop}
Keep notations as above. Then for all $\alpha \in \Delta$, we have
\begin{equation} \label{Phi}
\Phi\big(\wm{h}_\alpha^{[b]}(a)\big)= \big( \set{b^{Q(\alpha^\vee)}\cdot \eta(\alpha^\vee), a}, \mbf{h}_\alpha(a) \big),
\end{equation}
where $a, b \in (F')^\times$ are nonzero elements of $F'$. Here the right hand side is written in terms of $\mbf{K}_2(F') \times_D \mbf{T}(F')$.
\end{prop}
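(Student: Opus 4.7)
My strategy is a two-step reduction. First I would reduce to the case $b=1$ using the elementary dependence of the BD section on $b$. Then, in the case $b=1$, I would analyze $\Phi\bigl(\wm{h}^{[1]}_\alpha(a)\bigr)$ by combining the cocycle identity (\ref{h ppty1}) for the BD section with the commutative diagram (\ref{inc comp}) that relates the bisector data $(D,\eta)$ to the triple $(Q,\mca{E}_D,\phi_{D,\eta})$ from the BD classification of $\wm{G}_{D,\eta}$.

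\textbf{Reduction to $b=1$.} Applying (\ref{h ppty2}) with base $b\leftarrow 1$ and shift $d\leftarrow b$ gives
$$\wm{h}^{[b]}_\alpha(a)=\wm{h}^{[1]}_\alpha(a)\cdot \set{b,a}^{Q(\alpha^\vee)},$$
where $\set{b,a}^{Q(\alpha^\vee)}\in \mbf{K}_2(F')$ is central and is fixed by $\Phi$ (since $\Phi$ restricts to the identity on the common kernel $\mbf{K}_2$). By bilinearity of the Steinberg symbol, $\set{b,a}^{Q(\alpha^\vee)}=\set{b^{Q(\alpha^\vee)},a}$. Hence it suffices to prove
$$\Phi\bigl(\wm{h}^{[1]}_\alpha(a)\bigr)= \bigl(\set{\eta(\alpha^\vee),a},\,\mbf{h}_\alpha(a)\bigr),$$
the general formula then following by multiplying both sides by the central factor $\set{b^{Q(\alpha^\vee)},a}$ and absorbing it into the first coordinate via bimultiplicativity.

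\textbf{The case $b=1$.} Write $\Phi\bigl(\wm{h}^{[1]}_\alpha(a)\bigr)=(\xi(a),\mbf{h}_\alpha(a))$ for an unknown $\xi(a)\in\mbf{K}_2(F')$. Applying $\Phi$ to the cocycle (\ref{h ppty1}) at $b=1$, and using the torus group law on $\wm{T}_D=\mbf{K}_2\times_D\mbf{T}$ with cocycle $\sigma_D$ from (\ref{torus law}) satisfying $\sigma_D(\mbf{h}_\alpha(a),\mbf{h}_\alpha(c))=\set{a,c}^{D(\alpha^\vee,\alpha^\vee)}=\set{a,c}^{Q(\alpha^\vee)}$, I obtain $\xi(ac)=\xi(a)\xi(c)$, so $\xi$ is a group homomorphism. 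To pin it down I specialize to $F'=F(\!(\tau)\!)$ and $a=\tau$: by the rigidification of $\mca{E}^{sc}$ recalled in section \ref{BD section}, $\elt{\wm{h}^{[1]}_\alpha(\tau)}\in \mca{E}^{sc}$ corresponds to $\gamma_\alpha\in \mca{E}^{sc}_Q$. Commutativity of (\ref{inc comp}) identifies its image in $\mca{E}_D$ both with $\phi_{D,\eta}(\gamma_\alpha)=(\eta(\alpha^\vee),\alpha^\vee)$ and with $(\mathrm{tame}(\xi(\tau)),\alpha^\vee)$, where the latter comes from the push-out construction of $\mca{E}_D$ from $\wm{T}_D$ via the tame symbol. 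Consequently $\mathrm{tame}(\xi(\tau))=\eta(\alpha^\vee)$.

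\textbf{Main obstacle.} The genuinely non-formal step is upgrading $\mathrm{tame}(\xi(\tau))=\eta(\alpha^\vee)$ to the exact equality $\xi(\tau)=\set{\eta(\alpha^\vee),\tau}$, since the tame symbol has a nontrivial kernel. To close this gap I would unwind the specific quasi-inverse functor chosen in (\ref{inc functor}): in its standard construction, an element $(c,\alpha^\vee)\in \mca{E}_D$ is canonically lifted back to $\wm{T}_D(F(\!(\tau)\!))$ as $(\set{c,\tau},\alpha^\vee\otimes \tau)$, which forces $\xi(\tau)=\set{\eta(\alpha^\vee),\tau}$. Once this is established, the homomorphism property of $\xi$ together with naturality of $\Phi$ under the specialization map $F(\!(\tau)\!)\to F'$ sending $\tau\mapsto a$ (equivalently, the observation that a sheaf homomorphism $\mbf{G}_{\mathrm{mul}}\to \mbf{K}_2$ is determined by its tame symbol at the generic point) yields $\xi(a)=\set{\eta(\alpha^\vee),a}$ for every $a\in (F')^\times$, completing the proof.
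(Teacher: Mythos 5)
Your skeleton is the same as the paper's: reduce to $b=1$ via (\ref{h ppty2}), observe that the $\mbf{K}_2$-component $\xi$ of $\Phi\bigl(\wm{h}^{[1]}_\alpha(\cdot)\bigr)$ is a homomorphism of sheaves $\mbf{G}_{\mathrm{mul}}\to\mbf{K}_2$, and exploit the compatibility (\ref{Phi to phi}) between $\Phi$ and $\phi_{D,\eta}$ to pin it down. The reduction to $b=1$, the homomorphism computation from (\ref{h ppty1}) together with $\sigma_D(\mbf{h}_\alpha(a),\mbf{h}_\alpha(c))=\set{a,c}^{Q(\alpha^\vee)}$, and the tame-symbol identity $\mathrm{tame}(\xi(\tau))=\eta(\alpha^\vee)$ are all correct and identical to the paper's.

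The \textbf{main obstacle} paragraph, however, contains a real gap. The claim that ``unwinding the quasi-inverse functor'' gives a canonical lift $(c,\alpha^\vee)\mapsto(\set{c,\tau},\alpha^\vee\otimes\tau)$ which ``forces'' $\xi(\tau)=\set{\eta(\alpha^\vee),\tau}$ does not hold. The map $\elt{\cdot}$ from the fibre of $\wm{T}_D(F(\!(\tau)\!))$ over $Y\otimes\tau$ to $\mca{E}_D$ is a push-out along the tame symbol $\mbf{K}_2(F(\!(\tau)\!))\to F^\times$, whose kernel is nontrivial (it contains, for instance, all symbols $\set{a,b}$ with $a,b\in F^\times$). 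Producing one set-theoretic section $(c,\alpha^\vee)\mapsto(\set{c,\tau},\alpha^\vee\otimes\tau)$ does nothing to identify which preimage $\Phi(\wm{h}^{[1]}_\alpha(\tau))$ is; equality of images in $\mca{E}_D$ only gives $\mathrm{tame}(\xi(\tau))=\eta(\alpha^\vee)$, not $\xi(\tau)=\set{\eta(\alpha^\vee),\tau}$, and these are not the same because $\mathrm{tame}$ is not injective on $\mbf{K}_2(F(\!(\tau)\!))$.

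Your closing parenthetical --- that a sheaf homomorphism $\mbf{G}_{\mathrm{mul}}\to\mbf{K}_2$ is determined by its tame symbol at the generic point --- is the correct repair, but it is not ``equivalent'' to the unwinding argument, and it is not an observation: it is precisely Bloch's theorem $\mbf{K}_1(F)\xrightarrow{\ \sim\ }\textsf{Hom}_{\mathrm{Zar}}(\mbf{G}_{\mathrm{mul}},\mbf{K}_2)$ (\cite[Thm.\ 1.1]{Blo78}), which the paper cites explicitly. The paper applies it \emph{before} any tame-symbol computation, to force $\aleph_\alpha=\set{\lambda_\alpha,-}$ for some $\lambda_\alpha\in F^\times$; only then does evaluating at $a=\tau$ and taking the tame symbol produce $\lambda_\alpha=\eta(\alpha^\vee)$. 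In that order there is no ``obstacle'' to bridge. If you move Bloch's theorem to the front and drop the unwinding claim, your argument matches the paper's and is correct; as currently organized, the key step is asserted rather than proved.
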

\begin{proof}
Fix $\alpha^\vee \in \Delta^\vee$. Recall the property $\wm{h}^{[b]}_\alpha(a)=\wm{h}^{[1]}_\alpha(a) \cdot \set{b, a}^{Q(\alpha^\vee)} $ from (\ref{h ppty2}). It follows that it suffices to show (\ref{Phi}) for $b=1$ i.e. $\Phi (\wm{h}_\alpha^{[1]}(a))= \big( \set{\eta(\alpha^\vee), a}, \mbf{h}_\alpha(a) \big)$.

Note that for fixed $\alpha\in \Delta$, $\Phi$ takes a general form
$$\Phi(\wm{h}_\alpha^{[1]}(a))= \big( \aleph_\alpha(a), \mbf{h}_\alpha(a) \big),$$
where $\aleph_\alpha \in \textsf{Hom}_\text{Zar}(\mbf{G}_\text{mul}, \mbf{K}_2)$ is a (sheaf) homomorphism of abelian sheaves for the big Zariski site, by property (\ref{h ppty1}). In particular, 
$$\xymatrix{
a \in (F')^\times \ar@{|->}[r] & \aleph_\alpha(a) \in \mbf{K}_2(F')
}$$
is a homomorphism from $\mbf{G}_\text{mul}(F')$ to $\mbf{K}_2(F')$ for any field extension $F'/F$. Note that since $\mbf{G}_\text{mul}$ is represented by $F[t, t^{-1}]$, we have $\textsf{Mor}_\text{Zar}(\mbf{G}_\text{mul}, \mbf{K}_2)\simeq \mbf{K}_2\big(F[t, t^{-1}]\big)$ (the algebraic-geometric morphisms which do not necessarily respect the group structure) by Yoneda's lemma.

However, by \cite[\S 3.7-3.8]{BD01}, or in more details \cite[Thm 1.1]{Blo78}, we have the following commutative diagram:
$$\xymatrix{
\mbf{K}_1(F) \ar[r]^-\simeq  \ar@{^(->}[d]_-{-\otimes  t } & \textsf{Hom}_\text{Zar}(\mbf{G}_\text{mul}, \mbf{K}_2) \ar@{^(->}[d] \\
\mbf{K}_2\big(F[t, t^{-1}]\big)  \ar[r]^-\simeq & \textsf{Mor}_\text{Zar}(\mbf{G}_\text{mul}, \mbf{K}_2) .
}$$

That is, the left vertical cup product with $t \in \mbf{K}_1\big(F[t, t^{-1}]\big)$ induces an isomorphism of the top row. More precisely, every $\aleph_\alpha \in \textsf{Hom}_\text{Zar}(\mbf{G}_\text{mul}, \mbf{K}_2)$ arises from a certain $\lambda_\alpha \in F^\times=\mbf{K}_1(F)$, which at the level $F'$-points is given by
$$\aleph_\alpha(a)=\set{\lambda_\alpha, a}, a\in F'.$$

However, since we have assumed that $\wm{G}_{D,\eta}$ gives rise to $\phi_{D,\eta}$ from the Brylinski-Deligne classification, we have that $\Phi$ realizes $\phi_{D,\eta}$ by passing to $\mca{E}^{sc}$ and $\mca{E}$. More precisely, once we identify $\gamma_\alpha$ with $\elt{\wm{h}_\alpha^{[1]}(\tau)}$, we must have
\begin{equation} \label{Phi to phi}
\phi_{D,\eta}\big(\elt{\wm{h}_\alpha^{[1]}(\tau)}\big)=\elt{\Phi\big(\wm{h}^{[1]}_\alpha(\tau)\big)} \in \mca{E} .
\end{equation}

Tracing through the definition of tame symbols, it gives $\phi_{D,\eta}(\gamma_\alpha)=(\lambda_\alpha, \alpha^\vee) \in \mca{E}$ for $\alpha \in \Delta$. But by definition $\phi_{D,\eta}(\gamma_\alpha)=(\eta(\alpha^\vee), \alpha^\vee)$. Therefore, $\lambda_\alpha=\eta(\alpha^\vee)$ for all simple root $\alpha\in \Delta$, and this completes the proof.
\end{proof}

To summarize, for $\wm{G}_{D,\eta}$ incarnated by $(D, \eta)$, its covering torus is incarnated by $D$ with the group law given by (\ref{tlaw1})-(\ref{tlaw3}). Moreover, if we write $\Phi_{D,\eta}$ for $\Phi$ instead to emphasize the dependence of the target expressed using $(D, \eta)$ , then we have
\begin{equation} \label{Phi-eta}
\Phi_{D,\eta}\big(\wm{h}_\alpha^{[b]}(a)\big)= \big( \set{b^{Q(\alpha^\vee)}\cdot \eta(\alpha^\vee), a}, \mbf{h}_\alpha(a) \big) \text{ for all } \alpha^\vee \in \Delta^\vee.
\end{equation}

\section{Finite degree topological covers: local and global}
The goal of this section is to introduce the local and global covering groups which arise from BD framework and to give a brief discussion on some of their properties.

\subsection{Local topological central extensions of finite degree}
Assume first that $F$ is a local field with residual characteristic $p$. Let $n\in \N_{\ge 1}$ be a natural number and assume $\mu_n \subseteq F$. The Hilbert symbol \index{Hilbert symbol}
$$\xymatrix{ (-,-)_n: \quad F^\times \times F^\times \ar[r] &\mu_n}$$
descends to a bilinear from on $F^\times/(F^\times)^n$ and factors through $\mbf{K}_2(F)$. In the tame case, i.e. when $\text{gcd}(n, p)=1$, we have the formula
$$(x, y)_n=\kappa(x,y)^\frac{q-1}{n}, \quad \kappa(x,y)=\wt{(-1)}^{\text{val}(x)\text{val}(y)} \wt{\bigg( \frac{x^{\text{val}(y)}}{y^{\text{val}(x)})} \bigg)} \in \mathbf{f},$$
where $\wt{(-)}$ denotes the reduction of $\msc{O}_F$ into the residue field $\mathbf{f}$ of $F$.

Let $\wt{\mbf{G}} \in \CExt(\mbf{G}, \mbf{K}_2)$ be a central extension over $F$. It gives a central extension of $F$-groups. From the push-out by the $n$-th Hilbert symbol we get a central extension of $G=\mbf{G}(F)$:
$$\xymatrix{
\mbf{K}_2(F) \ar@{^(->}[r] \ar[d]^-{(-,-)_n} &\wt{\mbf{G}}(F)  \ar@{>>}[r] \ar[d]  &\mbf{G}(F) \ar@{=}[d] \\
\mu_n \ar@{^(->}[r] &\wt{G}  \ar@{>>}[r]  & G.
}$$

Consider the category $\CExt(\mbf{G}(F),\mu_n)$ of central extensions of $\mbf{G}(F)$ by $\mu_n$. Then we have defined a functor
$$\xymatrix{
\Hs:  \quad \CExt(\mbf{G},\mbf{K}_2) \ar@{~>}[r] & \CExt(\mbf{G}(F),\mu_n).
}$$

For any morphism $\xymatrix{\mbf{G}' \ar[r] & \mbf{G}}$, let $\wm{G}'$ be the pull-back of $\wm{G}$ to $\mbf{G}'$ and write $\xymatrix{r: \wt{\mbf{G}} \ar@{|->}[r] &\wm{G}' }$ for the map. Then the following diagram commutes
$$\xymatrix{
\CExt(\mbf{G},\mbf{K}_2) \ar@{~>}[r]^-\Hs \ar@{~>}[d]^-r    &\CExt(\mbf{G}(F),\mu_n) \ar@{~>}[d]^-{\Hs(r)}\\ 
 \CExt(\mbf{G}',\mbf{K}_2) \ar@{~>}[r]^-\Hs    &\CExt(\mbf{G}'(F),\mu_n),
}$$
where $\Hs(r)$ is the induced map on the central extension of $F$-points.

\subsubsection{When $r$ is induced from the inclusion $\mbf{T} \to \mbf{G}$}

Let $\wm{G}=\wt{\mbf{G}}_{D,\eta} \in \CExt(\mbf{G},\mbf{K}_2)$ be incarnated by $(D,\eta) \in \Bis_{\mbf{G}}^Q$, we call $\wt{G}=\Hs(\wm{G})$ incarnated by $(D,\eta)$ also. Let $\wt{\mbf{T}}$ be the restriction (i.e. pull-back) of $\wt{\mbf{G}}$ to $\mbf{T}$.

Now consider the torus $T:=\mbf{T}(F)\subseteq G$, the group $\wt{G}$ arising from $\wt{\mbf{G}}$ gives a central extension $\wt{T}$ of $T$ by pull-back:
$$\seq{\mu_n}{\wt{T}}{T}.$$

Equivalently, this $\wt{T}$ is also the $F$-points of $\wm{T}=r(\wt{\mbf{G}}) \in \CExt(\mbf{T}, \mbf{K}_2)$. Note $T=Y\bigotimes_\Z F^\times$. Since $\wm{T}$ is incarnated by $D$, it follows that $\wt{T}$ can be described as $\mu_n \times_D T$ with the group law inherited from (\ref{tlaw1})-(\ref{tlaw2}):
\begin{align}
& (i)\quad   \big[(\zeta_1, y_1\otimes a), (\zeta_2, y_2 \otimes b)\big]=(a, b)_n^{B_Q(y_1, y_2)}, \label{Ftlaw1}\\
& (ii)\quad   (1, y_1\otimes a)\cdot (1, y_2 \otimes a)=\big( (a, a)_n^{D(y_1, y_2)}, (y_1 + y_2)\otimes a \big), \label{Ftlaw2}\\
& (iii)\quad   (1, y\otimes a)\cdot (1, y\otimes b)=\big( (a, b)_n^{Q(y)}, y\otimes (ab) \big). \label{Ftlaw3}
\end{align}

\subsubsection{When $r$ is induced from $\varphi_\alpha: \mbf{SL}_2 \to \mbf{G}^{sc}$ }

Let $\alpha \in \Psi$ be any root of $\mbf{G}$ and $\xymatrix{\varphi_\alpha: \mbf{SL}_2 \ar[r] & \mbf{G}^{sc}}$ the map in section \ref{BD section}. We may also denote by $\varphi_\alpha$ the composition $\xymatrix{\mbf{SL}_2 \ar[r] & \mbf{G}^{sc} \ar[r] & \mbf{G}}$. For any $\wt{\mbf{G}} \in \CExt(\mbf{G} ,\mbf{K}_2)$, we get $\wt{\mbf{SL}}_2^\alpha=\varphi_\alpha^*(\wt{\mbf{G}})$. By Brylinski-Deligne classification as in Theorem \ref{torsor/sc}, the extension $\wt{\mbf{SL}}_2^\alpha$ is determined up to a unique isomorphism by the quadratic form $Q_{\alpha^\vee}:=Q|_{\Z\alpha^\vee}$, or equivalently by $Q(\alpha^\vee)\in \Z$. The map $\varphi_\alpha$ induces a central extension $\wt{SL}_2^\alpha:=\wm{SL}_2^\alpha(F)$ of $SL_2:=\mbf{SL}_2(F)$ by $\mu_n$ and we have
$$\xymatrix{
\mu_n \ar@{^(->}[r] & \wt{G}  \ar@{>>}[r] &G \\
\mu_n \ar@{=}[u] \ar@{^(->}[r] &\wt{SL}_2^\alpha  \ar[u]_-{\Hs(r)} \ar@{>>}[r]  & SL_2 \ar[u]\ .
}$$

If $\wt{G}$ is incarnated by $(D, \eta)$, then $\wt{SL}_2^\alpha$ is incarnated by $D|_{\Z\alpha^\vee}$, i.e., by $Q_{\alpha^\vee}$. Now the central extension $\wt{SL}_2^\alpha$ is isomorphic by a unique isomorphism to the extension described by Matsumuto (cf. \cite{Mat69}). We can write $ \wt{SL}_2^\alpha \simeq \mu_n\times_{Q_{\alpha^\vee}} SL_2$ with respect to the cocycle  given by
$$\sigma(g_1, g_2)=\Bigg( \frac{\mathrm{x}(g_1g_2)}{\mathrm{x}(g_2)}, \frac{\mathrm{x}(g_1g_2)}{\mathrm{x}(g_1)} \Bigg)_n^{Q(\alpha^\vee)},$$
where
\begin{equation*}
\mathrm{x} \left(
\begin{array}{cccc}
a & b\\
c & d
\end{array}
\right)
=
\begin{cases}
c &\text{if } c\ne 0,\\
d &\text{else}.
\end{cases}
\end{equation*}

This cocycle, when restricted to the one dimensional torus $T$ of $SL_2$, gives a description of the covering torus $\wt{T}^\alpha \subseteq \wt{SL}_2^\alpha$ as $\mu_n \times_{Q_{\alpha^\vee}} F^\times$. Such description  agrees with the group law given by (\ref{Ftlaw1})-(\ref{Ftlaw3}), which states exactly as
$$\big( \zeta_1, \mbf{h}_\alpha(a) \big) \cdot \big(\zeta_2, \mbf{h}_\alpha(c) \big)=\big(\zeta_1\zeta_2 (a, c)_n^{Q(\alpha^\vee)}, \mbf{h}_\alpha(ac) \big).$$

That is, the cocycle $\sigma$ on $SL_2$ given by Matsumuto above is an extension to the whole group $SL_2$ of the cocycle on $T$ specified from incarnation.

\subsection{Local splitting properties}

\subsubsection{Splitting over a maximal compact group}
Continue to assume $F$ a local field, and we fix the e\'pinglage for $(\mbf{G}, \mbf{T}, \mbf{B})$  as before. The Bruhat-Tits building of $G=\mbf{G}(F)$ over $F$ has a hyperspecial point determined by the e\'pinglage, which gives an associated group scheme $\mathbf{G}$ over $\msc{O}_F$ with generic fibre $\mbf{G}$ via the Bruhat-Tits theory, i.e., we have $\mbf{G}=\mathbf{G} \times_{\msc{O}_F} F$. 

Let $K=\mathbf{G}(\msc{O}_F)$, which is a maximal compact subgroup of $G$. We are interested in the case when $K$ splits into $\wt{G}$. 

More precisely, assume $n$ prime to the residue characteristic of $F$, the Hilbert symbol $(-,-)_n$ becomes a power of the tame symbol, and it gives a degree $n$ central cover $\wt{G}$ of $G$. We are interested in the case when there exists a splitting $s_K$ of $K$ into $\wt{G}$:
$$\xymatrix{
\mu_n \ar@{^(->}[r] &\wt{G}  \ar@{>>}[r]  & G \\
  & & K \ar@{^(-->}[lu]^-{s_K} \ar@{^(->}[u] \ .
}$$

\begin{dfn} \label{G-bar unram} \index{unramified!covering group}
The group $\wt{G}$ called $s_K$-unramified (or simply unramified) if $\text{gcd}(n, p)=1$ and there exists a splitting $s_K$ of $K$ into $\wt{G}$.
\end{dfn}

Note that the $\mbf{K}_2$-torsor $\wt{\mbf{G}}$ defined over $F$ may not be the base change of some $\mathbf{K}_2$-torsor $\wt{\mathbf{G}}$ over $\Spec(\msc{O}_F)$. Otherwise suppose $\wt{\mbf{G}}=\wt{\mathbf{G}} \times_{\msc{O}_F} F$, since the tame Hilbert symbol (if we assume $\text{gcd}(n, d)=1$) vanishes on $\mathbf{K}_2(\msc{O}_F)$, the existence of the splitting of $K$ into $\wt{G}$ is then automatic (cf. \cite[\S 10.7]{BD01}). This suggest that the existence of splitting of $K$, relies on other data besides the condition $\text{gcd}(n, d)=1$. We refer to the recent work of Weissman (cf. \cite{We11}, \cite{We14-1}) on integral models for  Brylinski-Deligne covering groups over $F$. 

In the language of incarnations, if one starts in general with $\wt{\mbf{G}}_{D,\eta}$, then the existence of splitting $s_K$ as above holds only conditionally. In fact, one sufficient condition is that the homomorphism $\eta$ has image in the units $\msc{O}_F^\times$ of $F^\times$:
$$\xymatrix{
\eta: & Y^{sc} \ar[r] \ar@{-->}[d] & F^\times \\
        & \msc{O}_F^\times \ar@{^(->}[ru] .
}$$
In particular, the condition $\text{gcd}(n, q)=1$ is not sufficient to guarantee the splitting of $K$ in general. For more detailed discussions, see \cite[\S 4]{GaG14}.

\subsubsection{Unipotent splitting of $\wt{G}$}

In section \ref{BD section}, we have seen that the unipotent subgroup $\mbf{U}$ of the Borel subgroup $\mbf{B}^{sc}$ of $\mbf{G}^{sc}$ splits in $\wm{G}^{sc}$, and the splitting is $\mbf{B}^{sc}$-equivariant. In fact, the same holds with $\mbf{G}^{sc}$ replaced by $\mbf{G}$ (cf. \cite[Prop. 11.3]{BD01}): there exists a unique $\mbf{B}$-equivariant splitting of $\mbf{U}$ into $\wm{G}$. Taking $F$-rational points, we obtain a $B$-equivariant splitting of  $\mbf{U}(F)$ into $\wt{G}$. It is important that such splitting is unique, see also \cite[App. A]{MW95}.

In fact, the above splitting on the unipotent radical of the Borel subgroup could be extended to a section of the set $G_u$ of unipotent elements of $G$ with certain properties. As before, the group $\wt{G}$ acts on $G_u$ by conjugation, which descends to $G$. We have the following useful fact, whose proof we refer to \cite[Prop. 2.2.1]{Li12}. 

\begin{prop} \label{unip splitting}
There exists uniquely a continuous set section $\xymatrix{i_u: G_u \ar[r] & \wt{G}}$ such that
\begin{enumerate} 
\item[(i)] for all unipotent subgroup $U$ of $G$, the restriction $i_u|_U$ is a homomorphism (i.e. a splitting), and
\item[(ii)] $i_u$ is $G$-conjugation invariant. That is, $i_u(g^{-1}u g) =g^{-1} i_u(u) g$ for all $g\in G$.
\end{enumerate}
\end{prop}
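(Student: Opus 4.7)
The plan is to bootstrap from the unique $\mbf{B}$-equivariant splitting $s_0\colon \mbf{U}(F)\to \wt{G}$ of the maximal unipotent subgroup $U_0:=\mbf{U}(F)$ (recalled just above the proposition from \cite[Prop.~11.3]{BD01}) to a continuous section defined on all unipotent elements. Three inputs drive the argument: every unipotent element of $G$ is contained in some maximal unipotent subgroup; every maximal unipotent subgroup is $G$-conjugate to $U_0$; and the normalizer of $U_0$ in $G$ equals the Borel subgroup $B:=\mbf{B}(F)$. Combined with the uniqueness clause of the $\mbf{B}$-equivariant splitting, these are enough for both existence and uniqueness of $i_u$.

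Concretely, for any maximal unipotent subgroup $U'\subseteq G$, choose $g\in G$ with $U'=gU_0g^{-1}$ and define $s_{U'}(gug^{-1}):=\wt{g}\,s_0(u)\,\wt{g}^{-1}$ for $u\in U_0$, where $\wt{g}\in\wt{G}$ is any lift of $g$; the conjugation on the right is well-defined because $\mu_n$ is central. To see that $s_{U'}$ does not depend on the choice of $g$, suppose $g_1 U_0 g_1^{-1}=g_2 U_0 g_2^{-1}$; then $b:=g_2^{-1}g_1$ lies in $N_G(U_0)=B$, and $\mbf{B}$-equivariance of $s_0$ together with centrality of $\mu_n$ yields $\wt{g}_1 s_0(u)\wt{g}_1^{-1}=\wt{g}_2 s_0(bub^{-1})\wt{g}_2^{-1}$, so the two candidate values agree on $g_1 u g_1^{-1}$. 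Thus a canonical splitting $s_{U'}$ is attached to every maximal unipotent subgroup.

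Next, set $i_u(u):=s_{U'}(u)$ for any maximal unipotent $U'$ containing $u$; if $u$ lies in two maximal unipotents $U_1,U_2$, pick $g_j$ with $U_j=g_j U_0 g_j^{-1}$ and apply the independence argument above to the common element. Property (i) for an arbitrary unipotent subgroup $U\subseteq G$ then follows because $U$ sits in some maximal unipotent $U_{\max}$ (standard Borel-theoretic fact) and $i_u|_U=s_{U_{\max}}|_U$ is a homomorphism. Property (ii) is immediate from the construction, since the recipe defining $i_u$ is manifestly equivariant under simultaneous conjugation by $g\in G$ on $G_u$ and on $\wt{G}$. Continuity reduces, via $G$-equivariance together with the fact that every unipotent element lies in some conjugate of $U_0$, to continuity of $s_0$.

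For uniqueness, let $i_u'$ be any continuous section obeying (i) and (ii). Property (ii) applied to $g\in B$, combined with (i), forces $i_u'|_{U_0}$ to be $\mbf{B}$-equivariant, so the uniqueness clause in \cite[Prop.~11.3]{BD01} gives $i_u'|_{U_0}=s_0$; property (ii) then propagates this equality throughout $G_u$. The main technical obstacle is the well-definedness step for $s_{U'}$: it genuinely hinges on the identification $N_G(U_0)=B$ for split reductive groups, without which the conjugation transport from $U_0$ would carry an ambiguity that (i) and (ii) alone could not collapse.
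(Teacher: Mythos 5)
The paper does not prove this proposition itself; it cites \cite[Prop.~2.2.1]{Li12}. Your strategy---transport the $\mbf{B}$-equivariant splitting $s_0$ on $U_0=\mbf{U}(F)$ to every $G$-conjugate of $U_0$ and glue---is the natural one, but there is a genuine gap at the gluing step. The ``independence argument'' you appeal to only treats the case $g_1U_0g_1^{-1}=g_2U_0g_2^{-1}$, where $g_2^{-1}g_1\in N_G(U_0)=B$ and $\mbf{B}$-equivariance of $s_0$ applies. When $u$ lies in two \emph{distinct} maximal unipotent subgroups $U_1\neq U_2$, the element $g_2^{-1}g_1$ is not in $B$, and the cited argument does not show $s_{U_1}(u)=s_{U_2}(u)$. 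This case genuinely arises: already for $\mbf{G}$ of semisimple rank $\geq 2$, take $U_2=wU_0w^{-1}$ for a simple reflection $w$; then $U_0\cap wU_0w^{-1}$ is a nontrivial unipotent group, and the conjugator relating $U_0$ to $U_2$ lies outside $B$. Your closing comment, that the construction ``genuinely hinges on the identification $N_G(U_0)=B$,'' therefore locates the difficulty in the wrong place: that identity only pins down $s_{U'}$ for a single $U'$; it says nothing about compatibility across different maximal unipotent subgroups, and that compatibility is the substance of the proposition.

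The missing ingredient is the stronger fact that a unipotent subgroup $N$ of $G$ admits \emph{at most one} splitting into $\wt{G}$, with no equivariance hypothesis. Since $F$ has characteristic zero, such an $N$ carries a central filtration whose successive quotients are isomorphic to the additive group $(F,+)$, hence $N$ is divisible as an abstract group, so $\Hom(N,\mu_n)=0$; as the set of splittings of $\wt{G}$ over $N$ is a $\Hom(N,\mu_n)$-torsor when nonempty, it is at most a singleton. Granting this, well-definedness of $i_u$ is immediate: $s_{U_1}|_{U_1\cap U_2}$ and $s_{U_2}|_{U_1\cap U_2}$ are both splittings of the unipotent group $U_1\cap U_2$, hence coincide, and in particular agree at $u$. (This observation also subsumes your $N_G(U_0)=B$ step, since $s_{U'}$ is simply \emph{the} splitting over $U'$.) With this supplied, the rest of your argument---(i) via Borel--Tits, (ii) by construction, continuity by transport of $s_0$, uniqueness via the $\mbf{B}$-equivariance clause on $U_0$---goes through.
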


Note that in section \ref{BD section}, we have used the notation $\wm{e}_\alpha(-)$ for the splitting of $\mbf{U}(F)$ into $\wm{G}^{sc}(F)$:
$$\xymatrix{
\mbf{e}_\alpha(a) \ar@{|->}[r] &\wm{e}_\alpha(a).
}$$
We also recall that we have denoted by $\Phi$ for the natural map $\xymatrix{\mbf{G}^{sc}(F) \ar[r] & \mbf{G}(F)}$, and also $\Phi_{D,\eta}$ for the induced natural pull-back map $\xymatrix{\wm{G}^{sc}(F) \ar[r] & \wm{G}(F)}$. Write
$$e_\alpha(a):=\Phi(\mbf{e}_\alpha(a)), \quad \wt{e}_\alpha(a):=\Phi_{D,\eta}(\wm{e}_\alpha(a)).$$

Since $\Phi$ is injective on the unipotent elements, the map
$$\xymatrix{
e_\alpha(a) \ar@{|->}[r] &\wt{e}_\alpha(a),
}$$
is a splitting of $\mbf{U}(F)$ into $G$.

As a consequence of above proposition,
\begin{cor} \label{sK=iu}
For $\alpha \in \Psi$, the unique splitting $i_u$ of the unipotent subgroup $U_\alpha$ is just given by
$$\xymatrix{
e_\alpha(a) \ar@{|->}[r] &\wt{e}_\alpha(a),
}$$
which is $G$-equivariant. 

Moreover, assume $\wt{G}$ $s_K$-unramified. Let $U:=\mbf{U}(F)$ be the unipotent radical of some parabolic $P\subseteq G$, then
$$s_K|_{K\cap U}=i_u|_{K\cap U}.$$
\end{cor}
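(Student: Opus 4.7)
The proof splits naturally into two independent assertions.

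For the first assertion, I plan to invoke the uniqueness of the Brylinski-Deligne unipotent splitting. By condition (i) of Proposition~\ref{unip splitting}, the restriction $i_u|_{\mbf{U}(F)}$ is a splitting, and by condition (ii) it is $\mbf{B}(F)$-conjugation equivariant, since the Borel normalizes its own unipotent radical. On the other hand, \cite[Prop.~11.3]{BD01}, recalled at the start of this subsection, provides the \emph{unique} $\mbf{B}$-equivariant splitting $\mbf{U}\hookrightarrow\wt{\mbf{G}}$; passing to $F$-points and pushing out by the $n$-th Hilbert symbol, this splitting sends $e_\alpha(a)$ to $\wt{e}_\alpha(a)=\Phi_{D,\eta}(\wm{e}_\alpha(a))$ by the very construction of $\wt{e}_\alpha(a)$. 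The two $\mbf{B}(F)$-equivariant splittings of $\mbf{U}(F)$ into $\wt{G}$ must therefore coincide, and restricting to $U_\alpha$ gives $i_u(e_\alpha(a))=\wt{e}_\alpha(a)$. The $G$-equivariance of this splitting is then simply a restatement of condition (ii).

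For the second assertion, I would compare $s_K|_{K\cap U}$ and $i_u|_{K\cap U}$ via the continuous homomorphism
$$f\colon K\cap U\longrightarrow \mu_n,\qquad f(u):=s_K(u)\cdot i_u(u)^{-1},$$
which measures their ratio, and argue that $f\equiv 1$. The crucial observation is that $K\cap U=\mathbf{U}(\msc{O}_F)$ is a pro-$p$ group, where $p$ denotes the residue characteristic of $F$: a split unipotent $\mbf{U}$ admits a composition series by copies of $\mbf{G}_\text{add}$, so $\mathbf{U}(\msc{O}_F)$ is built as an iterated extension of copies of $\msc{O}_F^+=\varprojlim_k \msc{O}_F/\mfr{m}^k$, each of whose quotients is a finite $p$-group. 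The unramifiedness hypothesis $\gcd(n,p)=1$ then forces every continuous homomorphism from a pro-$p$ group into $\mu_n$ to be trivial, giving $f\equiv 1$.

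The first assertion is essentially a formal consequence of Proposition~\ref{unip splitting} combined with the Brylinski-Deligne uniqueness result, and I foresee no real difficulty. The only step demanding slight care in the second assertion is the pro-$p$-ness of $\mathbf{U}(\msc{O}_F)$, but this reduces via the composition series of $\mbf{U}$ to the abelian case $\msc{O}_F^+$ and is standard; no further obstacle is expected.
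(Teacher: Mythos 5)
Your proof is correct and follows essentially the same approach as the paper. The paper treats the first assertion as immediate from the preceding discussion and proves only the second, exactly as you do: $s_K|_{K\cap U}$ and $i_u|_{K\cap U}$ differ by a continuous homomorphism $K\cap U\to\mu_n$, which vanishes since $K\cap U$ is pro-$p$ and $\gcd(n,p)=1$; your argument for the first assertion via the uniqueness of the $\mbf{B}$-equivariant splitting is the intended one, though as written it only treats $\alpha\in\Psi^+$ directly --- for $\alpha\in\Psi^-$ one applies the same reasoning to the opposite Borel, or invokes the $G$-equivariance (ii) of Proposition \ref{unip splitting} together with conjugation by a representative of a Weyl element.
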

\begin{proof}
We only need to show the second assertion. Note that $K\cap U$ is a pro-$p$ group. The two splittings $s_K|_{K\cap U}$ and $i_u|_{K\cap U}$ differ by a homomorphism from $K\cap U$ to $\mu_n$, which has to be trivial since $n$ is prime to $p$.
\end{proof}

\subsection{Global topological central extensions of finite degree} \label{G-bar(A)}

Back to the global situation now and assume $F$ is a number field with $\mu_n\subseteq  F^\times$. Write $F_v$ for the completion of $F$ with respect to any place $v\in |F|$. Let $\A$ be the adele ring of $F$. Let $\wm{G}_{D,\eta}\in \CExt(\mbf{G}, \mbf{K}_2)$ be a $\mbf{K}_2$-torsor over $F$.

In \cite{BD01}, it is shown that one has the following inherited data:
\begin{enumerate}
\item[$\bullet$] For all $v$, a central extension $\seq{\mu_n}{\wt{G}_v}{G_v}$. For almost all $v$, there is a splitting $s_{K_v}$ of the group $K_v:=\mbf{G}(\msc{O}_v)$, which is well-defined, into $\wt{G}_v$. In this case, $K_v$ is equal to $\mathbf{G}_v(\msc{O}_v)$, where $\mathbf{G}_v$ is the integral model of $\mbf{G}_v:=\mbf{G}\times_F F_v$ given by the Bruhat-Tits theory.
 
\item[$\bullet$] An adelic group $\wt{\mbf{G}}(\A):=\prod'_v \wt{G}_v \Big/ (\bigoplus_v \mu_n)^o$, where $\prod'_v \wt{G}_v$ is the restricted product with respect to the $K_v$'s and $(\bigoplus_v \mu_n)^o:=\set{\bigoplus \xi_v: \prod_v \xi_v=1}$. For $v\in |F|$, there is the natural inclusion
$$\xymatrix{
\mu_n \ar@{^(->}[r] & \wt{\mbf{G}}(\A) \ar@{>>}[r] & \mbf{G}(\A) \\
\mu_n \ar@{^(->}[r] \ar@{=}[u] & \wt{G}_v \ar@{^(->}[u] \ar@{>>}[r] & G_v \ar@{^(->}[u]\ .
}$$
\item[$\bullet$] There is a natural splitting of $\mbf{G}(F)$ into $\wm{G}(\A)$,
$$\xymatrix{
\mu_n \ar@{^(->}[r] & \wt{\mbf{G}}(\A) \ar@{>>}[r] & \mbf{G}(\A) \\
       & & \mbf{G}(F) \ar@{^(->}[lu] \ar@{^(->}[u],
}$$
which allows us to define the notion of automorphic forms on $\wm{G}(\A)$.
\item[$\bullet$] For any unipotent subgroup $\mbf{U}$ of a parabolic $\mbf{P}=\mbf{M}\mbf{U}$ of $\mbf{G}$, there is a unique $\mbf{P}(F)$-equivariant splitting of $\mbf{U}(\A)$ into $\wm{G}(\A)$
$$\xymatrix{
\mu_n \ar@{^(->}[r] & \wt{\mbf{G}}(\A) \ar@{>>}[r] & \mbf{G}(\A) \\
       & & \mbf{U}(\A) \ar@{^(->}[lu] \ar@{^(->}[u],
}$$

such that its restriction to $\mbf{U}(F)$ is the fixed splitting of $\mbf{G}(F)$ restricted to $\mbf{U}(F)$.
\end{enumerate}

\section{Dual groups and $L$-groups for topological extensions}
The references are \cite{FiLy10}, \cite{McN12}, \cite{We13} and \cite{We14}. We will only recall here some facts important to us and refer to the original papers for details.

\subsection{The dual group $\wt{\mbf{G}}^\vee$ \`a la Finkelberg-Lysenko-McNamara-Reich}
Let $\mbf{G}$ be a split reductive group over $F$ with root datum $(X, \Psi, Y, \Psi^\vee)$. We have also fixed $\Delta \subseteq \Psi$, a set of simple roots. For any $\wt{\mbf{G}}$ in the Brylinski-Deligne framework, it gives rise to local and global topological degree $n$ covers, depending on whether $F$ is local or global. Several authors have defined a group scheme $\wt{\mbf{G}}^\vee$ over $F$, whose complex points $\wt{G}^\vee=\wt{\mbf{G}}^\vee(\C)$ would be called the complex dual group for the degree $n$ topological covers. In fact $\wt{\mbf{G}}^\vee$ could be defined over a much smaller ring (cf. \cite{We13} \cite{We14}); but for simplicity here, we will refrain from stating the construction in its most general form.

First we describe the root datum of $\wt{\mbf{G}}^\vee$. Let $Q$ be the quadratic forms associated with $\wt{\mbf{G}}$. Define
$$Y_{Q,n}=\set{y\in Y| \ B_Q(y, y')\in n\Z \text{ for all } y'\in Y}.$$
It is a sublattice of $Y$ and clearly contains $nY$. For any $\alpha\in \Psi$, we also write
$$n_\alpha =\frac{n}{\text{gcd}(Q(\alpha^\vee),n)}, \quad \alpha_{[n]}^\vee=n_\alpha \alpha^\vee.$$

Let $Y_{Q,n}^{sc}$ be the lattice generated by $\alpha^\vee_{[n]}, \alpha^\vee \in \Psi^\vee$ with relations inherited from $Y$ (cf. \cite[Lm. 11.5]{BD01}).  Now consider the quadruple given by
\begin{align*}
Y_1 &= Y_{Q,n}, \\
\Psi_1^\vee &=\set{\alpha_{[n]}^\vee: \alpha^\vee \in \Psi^\vee}, \\
X_1 &= \text{Hom}(Y_1,\Z) \subseteq X\otimes \Q,\\
\Psi_1 &=\set{n_\alpha^{-1}\alpha: \alpha\in \Psi}.
\end{align*}

\begin{thm}
The quadruple $(Y_1, \Psi_1^\vee, X_1, \Psi_1)$ forms a root datum.
\end{thm}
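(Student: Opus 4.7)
The plan is to verify the four defining properties of a root datum $(Y_1, \Psi_1^\vee, X_1, \Psi_1)$: (a) $Y_1$ is a lattice of full rank with $X_1$ its $\Z$-dual under a perfect pairing, (b) $\Psi_1 \subset X_1$ and $\Psi_1^\vee \subset Y_1$ with a bijection $\alpha_1 \leftrightarrow \alpha_1^\vee$, (c) $\langle \alpha_1, \alpha_1^\vee \rangle = 2$, and (d) the associated reflections stabilize $\Psi_1$ and $\Psi_1^\vee$ respectively. The key input will be Lemma \ref{BQ<>}, which supplies the fundamental identity $B_Q(\alpha^\vee, y) = Q(\alpha^\vee) \cdot \angb{\alpha}{y}$.

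First I would note that $Y_1 = Y_{Q,n}$ contains $nY$, hence is of full rank, so $X_1 := \Hom(Y_1, \Z)$ is naturally embedded in $X \otimes \Q$ and the pairing $X_1 \times Y_1 \to \Z$ is perfect. Next I would verify that $\Psi_1^\vee \subset Y_1$: for any $y' \in Y$, Lemma \ref{BQ<>} gives $B_Q(n_\alpha \alpha^\vee, y') = n_\alpha Q(\alpha^\vee) \angb{\alpha}{y'}$, and since $n_\alpha Q(\alpha^\vee) = n \cdot Q(\alpha^\vee)/\gcd(Q(\alpha^\vee),n)$ is divisible by $n$, we get $\alpha_{[n]}^\vee \in Y_{Q,n}$ as required.

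The crucial verification is that $n_\alpha^{-1} \alpha \in X_1$, i.e.\ that $\langle n_\alpha^{-1}\alpha, y \rangle \in \Z$ for all $y \in Y_1$. For such $y$ we have $Q(\alpha^\vee) \langle \alpha, y \rangle = B_Q(\alpha^\vee, y) \in n\Z$ by Lemma \ref{BQ<>}. Writing $n = \gcd(Q(\alpha^\vee),n) \cdot n_\alpha$ and $Q(\alpha^\vee) = \gcd(Q(\alpha^\vee),n)\cdot q$ with $\gcd(q, n_\alpha) = 1$, cancellation yields $q \cdot \angb{\alpha}{y} \in n_\alpha \Z$, hence $n_\alpha \mid \angb{\alpha}{y}$. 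This shows $\Psi_1 \subset X_1$, and the computation $\langle n_\alpha^{-1} \alpha, n_\alpha \alpha^\vee \rangle = \langle \alpha, \alpha^\vee \rangle = 2$ is then immediate.

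Finally, for reflection invariance, I would observe that the reflection on $X_1 \otimes \Q$ attached to the pair $(n_\alpha^{-1}\alpha, n_\alpha \alpha^\vee)$ sends $x$ to $x - \langle x, n_\alpha \alpha^\vee \rangle n_\alpha^{-1}\alpha = x - \langle x, \alpha^\vee\rangle \alpha$, so it coincides with the original reflection $s_\alpha$; analogously on $Y \otimes \Q$. Weyl-invariance of $Q$ gives $Q((s_\alpha \beta)^\vee) = Q(\beta^\vee)$, hence $n_{s_\alpha \beta} = n_\beta$, so $s_\alpha$ permutes $\Psi_1 = \{n_\beta^{-1}\beta\}$ and $\Psi_1^\vee = \{n_\beta \beta^\vee\}$. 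Weyl-invariance of $B_Q$ shows $s_\alpha$ preserves $Y_{Q,n} = Y_1$, completing the verification. I expect the main (though still modest) obstacle to be the divisibility argument ensuring $\Psi_1 \subset X_1$; the reflection stability will then follow cleanly from the Weyl-invariance of $Q$ and from the fact that rescaling $\alpha$ by $n_\alpha^{-1}$ and $\alpha^\vee$ by $n_\alpha$ leaves the associated reflection unchanged.
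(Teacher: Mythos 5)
Your proof is correct, but there is nothing in the paper to compare it against: the paper does not give an argument for this theorem at all, it simply cites McNamara \cite[\S 13.11]{McN12}. Your self-contained verification is therefore a genuine addition. The route you take---reducing every axiom of a root datum to Lemma~\ref{BQ<>} (the identity $B_Q(\alpha^\vee, y) = Q(\alpha^\vee)\angb{\alpha}{y}$) together with the Weyl-invariance of $Q$---is the natural one, and the only step with real content is the divisibility showing $\Psi_1 \subset X_1$. Your handling of it is right: for $y \in Y_{Q,n}$ one has $Q(\alpha^\vee)\angb{\alpha}{y} = B_Q(\alpha^\vee, y) \in n\Z$, and cancelling $d=\gcd(Q(\alpha^\vee),n)$ gives $q\angb{\alpha}{y} \in n_\alpha\Z$ with $q = Q(\alpha^\vee)/d$ coprime to $n_\alpha = n/d$ (the usual fact that $a/\gcd(a,b)$ and $b/\gcd(a,b)$ are coprime), forcing $n_\alpha \mid \angb{\alpha}{y}$. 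The degenerate case $Q(\alpha^\vee)=0$ is harmless since then $n_\alpha = 1$ and the conclusion is vacuous. The inclusion $\Psi_1^\vee \subset Y_1$ is likewise immediate from $n \mid n_\alpha Q(\alpha^\vee)$, and your observation that the reflection attached to the rescaled pair $(n_\alpha^{-1}\alpha, n_\alpha\alpha^\vee)$ literally coincides with the original $s_\alpha$, combined with $n_{s_\alpha\beta}=n_\beta$ from $W$-invariance of $Q$, gives the stability of $\Psi_1$ and $\Psi_1^\vee$ under the Weyl group cleanly. One minor point you could add for completeness: the assignment $n_\alpha^{-1}\alpha \mapsto n_\alpha\alpha^\vee$ is a well-defined bijection because positive proportionality of two roots in a reduced root system forces equality, so distinct $\alpha, \beta \in \Psi$ yield distinct $n_\alpha^{-1}\alpha, n_\beta^{-1}\beta$.
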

\begin{proof}
See \cite[\S 13.11]{McN12}.
\end{proof}

Define the dual group $\wt{\mbf{G}}^\vee$ to be the split pinned reductive group associated with this root datum. Let $\wt{G}^\vee=\wt{\mbf{G}}^\vee(\C)$. In particular, if $\mbf{G}=\mbf{T}$ is a torus, then $\wt{T}^\vee=X_1\otimes \C$.

\subsection{Local $L$-group \`a la Weissman}
Let $F$ be a number field or a local field obtained as the localization of a number field. Let $(Q, \mca{E}, \phi)$ be the BD classification data associated with $\wm{G} \in \CExt(\mbf{G}, \mbf{K}_2)$, then the construction of $\wm{G}^\vee$ uses the data $n$ and $B_Q$ alone. The construction of the $L$-group for covering groups is due to M. Weissman and utilizes the full data $(Q, \mca{E}, \phi)$. In \cite{We13} and moreover \cite{We14}, he has constructed a proalgebraic group scheme ${}^L\wt{\mbf{G}}$ over $\Z[\mu_n]$. For our purpose, it suffices to have ${}^L\wm{G}$ defined over certain subfield of $\C$ which sits in the exact sequence
$$\seq{\wt{\mbf{G}}^\vee}{ {}^L\wt{\mbf{G}} }{\mbf{W}_F}.$$
Here $\mbf{W}_F$ is the constant group scheme over the field of definition whose complex point is the global Weil group $\W_F$ if $F$ is a number field; and it could be the local Weil group $\W_F$ or local Weil-Deligne group $\WD_{F}=\mbf{SL}_2(\C) \times \W_F$ if $F$ is a local field.

The freedom for different candidates in the place of $\mbf{W}_F$ in the local case is due to the fact that the essential part of the construction of ${}^L\wm{G}$ is a 
fundamental extension over $F^\times$. However, for our interest we will concentrate on $\mbf{W}_F$ associated with the local and global Weil group respectively.

The constructions for the dual group $\wt{\mbf{G}}^\vee$ and the $L$-group ${}^L\wt{\mbf{G}}$ are both compatible with Levi subgroups. More precisely, consider any Levi subgroup $\mbf{H}$ of $\mbf{G}$ embedded in the latter by $\varphi$. It induces a morphism $\xymatrix{\Hs(\varphi): \wt{H} \ar[r] &\wt{G}}$. Then the construction of dual group and $L$-group gives the commutative diagram
$$\xymatrix{
\wt{\mbf{G}}^\vee \ar@{^(->}[r]  &{}^L\wt{\mbf{G}}  \ar@{>>}[r] &\mbf{W}_F \\
\wt{\mbf{H}}^\vee \ar@{^(->}[u]^-{\varphi^\vee} \ar@{^(->}[r]  &{}^L\wt{\mbf{H}} \ar@{^(->}[u]^-{{}^L \varphi} \ar@{>>}[r] &\mbf{W}_F \ar@{=}[u]
}$$

In the case of $\mbf{H}=\mbf{T}\subseteq \mbf{G}$, by taking $\C$-point we obtain
$$\xymatrix{
\wt{G}^\vee \ar@{^(->}[r]  &{}^L\wt{G}  \ar@{>>}[r] & \mbf{W}_F(\C) \\
\wt{T}^\vee \ar@{^(->}[u]^-{\varphi^\vee} \ar@{^(->}[r]  &{}^L\wt{T} \ar@{^(->}[u]^-{{}^L \varphi} \ar@{>>}[r] &\mbf{W}_F(\C) \ar@{=}[u] \ .
}$$

In the remaining part of this section, we assume $F$ a local field. We will recall in details the construction of the extension of ${}^L\wt{G}$ in \cite{We14} for general BD type extension $\wt{G}$:
$$\seq{\wt{G}^\vee}{{}^L\wt{G}}{\W_F}.$$

Since we work with $\wt{G}$ which is incarnated by $(D,\eta)$, the description in \cite{We14} can be carried out in this language. The construction of ${}^L\wt{G}$ relies on two abelian extensions $E_i$ for $i=1, 2$ by the center $Z(\wt{G}^\vee)$:
$$\xymatrix{
Z(\wt{G}^\vee) \ar@{^{(}->}[r]  &E_i \ar@{>>}[r] & F^\times.
}$$

Now we describe the two extensions $E_i$. Note $Z(\wt{G}^\vee)=\Hom(Y_{Q,n}/Y_{Q,n}^{sc}, \C^\times)$. 

\subsubsection{The extension $E_1$}
The extension $E_1$ is given by the cocycle
\begin{equation}
\begin{diagram}
F^\times \times F^\times &\rTo &\Hom(Y_{Q,n}/Y_{Q,n}^{sc}, \C^\times) \\
(a,b) &\rMapsto &\big( y \mapsto (a, b)_n^{Q(y)} \big).
\end{diagram}
\end{equation}

Thus, we write $E_1=Z(\wt{G}) \times_Q F^\times$ with the group law given above. Note that since $(a, b)_n^{Q(y)}\in \mu_2$ since $2n|Q(y)$ for $y\in Y_{Q,n}$, the cocycle actually takes values in $\Hom(Y_{Q,n}/Y_{Q,n}^{sc}, \mu_2)$.

\subsubsection{The extension $E_2$}

For $E_2$, consider the extension $\mca{E}$ and $\mca{E}^{sc}$ associated to the covering tori $\wm{T}$ and $\wm{T}^{sc}$ of $\wt{\mbf{G}}$ and $\wt{\mbf{G}}^{sc}$ respectively. 

For convenience, we write $F^\times/n=F^\times/(F^\times)^n$. Consider the pull-back of $\mca{E}^{sc}$ by $Y_{Q,n}^{sc}$ and the followed-up push-out by the quotient map $\xymatrix{F^\times \ar[r] & F^\times/n}$, we obtain a group $\mca{E}_{Q,n}^{sc}$ which sits in an exact sequence
$$\xymatrix{
F^\times/n \ar@{^(->}[r] &\mca{E}_{Q,n}^{sc} \ar@{>>}[r]  &Y_{Q,n}^{sc}.
}$$

Fix an arbitrary nonzero $f\in F(\!(\tau)\!)$.  We define a section $\s$ of $\mca{E}_{Q,n}^{sc}$ over $Y_{Q,n}^{sc}$ as follows. For every element $\alpha^\vee_{[n]} \in Y^{sc}_{Q,n}$ with $\alpha\in \Psi$, it is given by
$$\xymatrix{
\s: \quad \alpha^\vee_{[n]} \ar@{|->}[r] & \wm{h}_\alpha^{[f]}(n_\alpha \alpha^\vee), \quad  \alpha\in \Psi.
}$$
Recall from Definition \ref{dfn BD section}  $\wm{h}_\alpha^{[f]}(n_\alpha \alpha^\vee)=\elt{\wm{h}_\alpha^{[f]}(\tau^{n_\alpha})} \in \mca{E}^{sc}$ and it clearly lies over $\alpha^\vee_{[n]}$, and by abuse of notation we also use it to denote its image in $\mca{E}_{Q,n}^{sc}$. Because of the property (cf. (\ref{h ppty2})) $$\wm{h}_\alpha^{[f]}(\tau^{n_\alpha})=\wm{h}_\alpha^{[1]}(\tau^{n_\alpha}) \cdot \set{f, \tau^{n_\alpha}}^{Q(\alpha^\vee)},$$

\noindent it follows that $\elt{\wm{h}_\alpha^{[f]}(\tau^{n_\alpha})} \in \mca{E}_{Q,n}^{sc}$ is independent of $f$, which justifies the absence of $f$ in the notation $\s$ we use. Therefore we may omit the superscript in both $\wm{h}_\alpha^{[f]}(n_\alpha \alpha^\vee)$ and $\elt{\wm{h}_\alpha^{[f]}(\tau^{n_\alpha})}$, and instead just write  $\wm{h}_\alpha(n_\alpha \alpha^\vee)$,  $\elt{\wm{h}_\alpha(\tau^{n_\alpha})}$ respectively. For computational convenience, one may take $f=1$ and there is no loss of generality because of the independence of $f$ in general.

What is more important is the following
\begin{prop} \label{s on sc}
The section $\xymatrix{\s: \alpha^\vee_{[n]} \ar@{|->}[r] & \wm{h}_\alpha(n_\alpha \alpha^\vee)}$ for all $\alpha\in \Psi$ above gives a well-defined splitting (i.e. a homomorphism) of the sequence:
$$\xymatrix{
F^\times/n \ar@{^(->}[r] &\mca{E}_{Q,n}^{sc} \ar@{>>}[r]  &Y_{Q,n}^{sc} \ar@/_1.2pc/[l]_-{\s}.
}$$
\end{prop}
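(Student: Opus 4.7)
The statement has two parts to verify: (a) the set-theoretic assignment $\s$ specified on the generators $\alpha_{[n]}^\vee$ extends to a well-defined group homomorphism out of $Y_{Q,n}^{sc}$, and (b) this extension is a section of the projection. Part (b) is automatic once (a) is established, since $\elt{\wm{h}_\alpha(\tau^{n_\alpha})}$ lies above $\alpha_{[n]}^\vee$ by construction.

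My first step is to check that the images $\s(\alpha_{[n]}^\vee)$ for $\alpha\in\Psi$ pairwise commute in $\mca{E}_{Q,n}^{sc}$, so that $\s$ at least extends to a homomorphism out of the free abelian group $\bigoplus_{\alpha\in\Psi}\Z\alpha_{[n]}^\vee$. The commutator in $\mca{E}^{sc}$ of the two generators, obtained from the defining formula $[y_1,y_2]=(-1)^{B_Q(y_1,y_2)}$, equals $(-1)^{n_\alpha n_\beta B_Q(\alpha^\vee,\beta^\vee)}$. Because $n_\alpha\alpha^\vee\in Y_{Q,n}$, this exponent lies in $n\Z$, so the commutator has the form $(-1)^{nk}$, which is always an $n$-th power in $F^\times$ (it is $1$ if $n$ is even, and equals $(-1)^k\in(F^\times)^n$ if $n$ is odd, since then $-1=(-1)^n$). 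Hence the commutator is trivial in $F^\times/n$.

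The substantive claim is: whenever $\sum_\alpha k_\alpha(n_\alpha\alpha^\vee)=0$ in $Y$, one has $\prod_\alpha \s(\alpha_{[n]}^\vee)^{k_\alpha}=1$ in $\mca{E}_{Q,n}^{sc}$. Iterating property~(\ref{h ppty1}) of the BD section and pushing out by the tame symbol yields
\[
\s(\alpha_{[n]}^\vee)^{k_\alpha}\;=\;\elt{\wm{h}_\alpha(\tau^{k_\alpha n_\alpha})}\cdot(-1)^{n_\alpha^2\binom{k_\alpha}{2}Q(\alpha^\vee)}
\]
in $\mca{E}^{sc}$. The sign correction lies in $(F^\times)^n$, since $n\mid n_\alpha Q(\alpha^\vee)$ forces $n\mid n_\alpha^2\binom{k_\alpha}{2}Q(\alpha^\vee)$. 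Thus the claim reduces to showing that $\prod_\alpha \elt{\wm{h}_\alpha(\tau^{k_\alpha n_\alpha})}\in(F^\times)^n\subset\mca{E}^{sc}$ whenever $\sum_\alpha k_\alpha n_\alpha\alpha^\vee=0$.

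The hard step is this final reduction, particularly for relations involving non-simple roots where formula~(\ref{Phi-eta}) does not apply directly. My approach is to compute the product inside $\wm{T}^{sc}(F(\!(\tau)\!))$ via the trivialization by a bisector $D^{sc}$ of $Q|_{Y^{sc}}$, expressing each BD lift as $\wm{h}_\alpha(\tau^{k_\alpha n_\alpha})=(c_\alpha,\mbf{h}_\alpha(\tau^{k_\alpha n_\alpha}))\in \mbf{K}_2(F(\!(\tau)\!))\times_{D^{sc}}\mbf{T}^{sc}(F(\!(\tau)\!))$ for an explicit symbol $c_\alpha$, with the formula derived from the simple case by Weyl conjugation of BD sections. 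Under the hypothesis, the $\mbf{T}^{sc}$-component of the product collapses to $1$, so the entire product lies in $\mbf{K}_2(F(\!(\tau)\!))$; its tame-symbol image in $F^\times$ collects exponents of shape $D^{sc}(n_\alpha\alpha^\vee,n_\beta\beta^\vee)$ and $n_\alpha Q(\alpha^\vee)\cdot(\text{integer})$, each divisible by $n$ (the former because $n_\alpha\alpha^\vee\in Y_{Q,n}$, the latter by definition of $n_\alpha$). Hence the total exponent lies in $n\Z$, the product lies in $(F^\times)^n$, and its image in $\mca{E}_{Q,n}^{sc}$ is trivial, completing the verification.
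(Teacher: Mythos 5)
Your reduction to verifying $\Z$-linear relations among the $\alpha_{[n]}^\vee$ is conceptually reasonable, and the commutativity check and the computation of $\s(\alpha_{[n]}^\vee)^{k}$ via~(\ref{h ppty1}) are fine. But the ``hard step'' carries the entire weight of the proposition, and the argument you sketch for it has a concrete error. You assert that after collapsing the $\mbf{T}^{sc}$-component, the tame-symbol image collects exponents of shape $D^{sc}(n_\alpha\alpha^\vee,n_\beta\beta^\vee)$, each divisible by $n$ ``because $n_\alpha\alpha^\vee\in Y_{Q,n}$.'' That inference is false: membership of $y$ in $Y_{Q,n}$ controls the \emph{symmetrized} form $B_Q(y,-)$, not the bisector $D^{sc}$ itself, which is an arbitrary (non-symmetric) decomposition of $B_Q$. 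For instance, take $\mbf{G}=\mbf{SL}_3$ with adjacent simple roots $\alpha,\beta$, set $Q(\alpha^\vee)=Q(\beta^\vee)=3$ and $n=3$, so $n_\alpha=n_\beta=1$ and $B_Q(\alpha^\vee,\beta^\vee)=-3$. One may choose the bisector with $D^{sc}(\alpha^\vee,\beta^\vee)=1$ and $D^{sc}(\beta^\vee,\alpha^\vee)=-4$; then $D^{sc}(n_\alpha\alpha^\vee,n_\beta\beta^\vee)=1$, which is not divisible by $3$. So the individual cocycle exponents need not lie in $n\Z$, and your argument would have to exhibit cancellation across terms, which it does not.

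There is a second, closely related gap: the ``explicit symbol $c_\alpha$'' for a non-simple root is simply declared to be ``derived from the simple case by Weyl conjugation of BD sections.'' That derivation is precisely the substance of the matter. Weyl conjugation of the $\wm{h}_\alpha$ introduces sign corrections --- this is the content of~\cite[\S 11.3]{BD01} and~\cite[(11.6.1)]{BD01}, which you never invoke --- and it is exactly the control of these correction terms that the proposition requires. The paper avoids both difficulties by invoking the presentation of $Y_{Q,n}^{sc}$ from~\cite[Lm.~11.5]{BD01}: the only relations that need checking are the Weyl reflection relations $\gamma_{[n]}^\vee=\beta_{[n]}^\vee-\angb{\beta}{\alpha^\vee}\alpha_{[n]}^\vee$ for $\gamma=s_\alpha(\beta)$, after first establishing the integer identity $n_\beta\angb{\alpha}{\beta^\vee}=n_\alpha\angb{\beta}{\alpha^\vee}$; each such relation is then verified by a short computation with~\cite[\S 11.3]{BD01} and~\cite[(11.6.1)]{BD01}, in which the correction factor is of the form $(-1)^{n_\alpha^2\varepsilon(\cdot)Q(\alpha^\vee)}$ and lies in $(F^\times)^n$ because $n\mid n_\alpha Q(\alpha^\vee)$. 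Verifying \emph{all} $\Z$-linear relations directly via a trivialization is both unnecessary and, without those BD identities, not actually manageable.
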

\begin{proof}
For all $\alpha\in \Psi$, write $\alpha_{[n]}:=\alpha/n_\alpha$. The lattice $Y_{Q,n}^{sc}$ is generated by $\alpha^\vee_{[n]}$ for all $\alpha^\vee \in \Psi^\vee$ with the following relation (cf. \cite[\S 11.5]{BD01}):
$$s_{\alpha_{[n]}}(\beta_{[n]})^\vee = \beta_{[n]}^\vee -\angb{\alpha_{[n]}}{\beta_{[n]}^\vee} \alpha_{[n]}^\vee,$$
which is equivalent to $n_\beta s_\alpha(\beta)^\vee =n_\beta \beta^\vee -n_\beta \angb{\alpha}{\beta^\vee}\alpha^\vee$. Note that Lemma \ref{BQ<>} is valid for general $\wm{G}$ of BD type, and it gives
\begin{align*}
n_\beta \cdot \angb{\alpha}{\beta^\vee} Q(\beta^\vee) &= n \cdot \angb{\alpha}{\beta^\vee} \\
&= n_\alpha \cdot Q(\alpha^\vee) \angb{\alpha}{\beta^\vee} \\
&= n_\alpha \cdot B(\alpha^\vee, \beta^\vee) \quad  \text{ by Lemma } \ref{BQ<>} \\
&=n_\alpha \cdot \angb{\beta}{\alpha^\vee} Q(\beta^\vee).
\end{align*} 
Thus it follows
\begin{equation} \label{n-<> dual}
n_\beta \cdot \angb{\alpha}{\beta^\vee} = n_\alpha \cdot \angb{\beta}{\alpha^\vee}.
\end{equation}

We see that it suffices to show
$$\s\big( n_\beta s_\alpha(\beta)^\vee \big)=\s\big(n_\beta \beta^\vee\big) \cdot \s\big(n_\alpha \alpha^\vee \big)^{-\angb{\beta}{\alpha^\vee} }.$$

Write $\gamma=s_\alpha(\beta)$, then $Q(\gamma^\vee)=Q(\beta^\vee)$ and $n_\gamma=n_\beta$. We have $\s\big( n_\beta s_\alpha(\beta)^\vee \big)=\s\big( n_\gamma \gamma^\vee \big)=\elt{\wm{h}_\gamma(\tau^{n_\gamma})}$ which by \cite[\S 11.3]{BD01} is equal to
\begin{align*}
&\elt{\wm{h}_\beta(\tau^{n_\beta})} \cdot \elt{\wm{h}_\alpha(\tau^{-n_\beta \angb{\alpha}{\beta^\vee}})} \\
=&\elt{\wm{h}_\beta(\tau^{n_\beta})} \cdot \elt{\wm{h}_\alpha(\tau^{-n_\alpha \angb{\beta}{\alpha^\vee}})} \\
=&\elt{\wm{h}_\beta(\tau^{n_\beta})} \cdot \elt{\wm{h}_\alpha(\tau^{n_\alpha})}^{- \angb{\beta}{\alpha^\vee}} \cdot (-1)^{n_\alpha^2 \varepsilon(-\angb{\beta}{\alpha^\vee}) Q(\alpha^\vee)} \quad \text{ by } \cite[(11.6.1)]{BD01}\ ,
\end{align*}
where $\varepsilon(N)=N(N-1)/2$. Note $n| n_\alpha Q(\alpha^\vee)$, and hence $(-1)^{n_\alpha^2 \varepsilon(-\angb{\beta}{\alpha^\vee}) Q(\alpha^\vee)} \in (F^\times)^n$. It follows $\elt{\wm{h}_\gamma(\tau^{n_\gamma})}=\elt{\wm{h}_\beta(\tau^{n_\beta})} \cdot \elt{\wm{h}_\alpha(\tau^{n_\alpha})}^{- \angb{\beta}{\alpha^\vee}}$ in $\mca{E}_{Q,n}^{sc}$, which is an extension with kernel $F^\times/n$. This exactly shows that $\s$ is a homomorphism and the proof is completed.
\end{proof}

Back to the main discussion, we could obtain on the other hand a group $\mca{E}_{Q,n}$ as composition of the pull-back of $\mca{E}$ by $\xymatrix{Y_{Q,n} \ar@{^(->}[r] & Y}$ and the push-out by the quotient map $\xymatrix{F^\times \ar[r] & F^\times/n}$. It is an extension of $Y_{Q,n}$ by $F^\times/n$:
$$\seq{F^\times/n}{\mca{E}_{Q,n}}{Y_{Q,n}}.$$
There is the inherited map $\xymatrix{\mca{E}_{Q,n}^{sc} \ar[r] & \mca{E}_{Q,n}}$ still denoted by $\phi_{D,\eta}$.

To summarize, we have the following commutative diagram with a canonical splitting $\s$ for the top row.
\begin{equation}
\xymatrix{
F^\times/n \ar@{^(->}[r] \ar@{=}[d] &\mca{E}_{Q,n}^{sc} \ar@{>>}[r] \ar[d]^-{\phi_{D,\eta}} &Y_{Q,n}^{sc} \ar@{^(->}[d] \ar@/_1.5pc/[l]_-{\s}\\
F^\times/n \ar@{^(->}[r] &\mca{E}_{Q,n} \ar@{>>}[r]  &Y_{Q,n}.
}
\end{equation}

As a simple consequence of above discussion
\begin{cor} \label{phi}
The splitting $\xymatrix{\s: Y_{Q,n}^{sc} \ar[r] & \mca{E}_{Q,n}^{sc} }$ gives rise to a splitting of $Y_{Q,n}^{sc}$ into $\mca{E}_{Q,n}$ which takes the explicit form for $\alpha\in \Delta$ as
$$\phi_{D,\eta} \circ \s(\alpha^\vee_{[n]})= \big(\eta(\alpha^\vee_{[n]}), \alpha^\vee_{[n]} \big), \quad \alpha^\vee \in \Delta^\vee,$$
where the right hand side is written in the form with respect to $\mca{E}_{Q,n}=F^\times \times_D Y_{Q,n}$.
\end{cor}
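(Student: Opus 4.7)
The plan is to unwind the definition of $\s$ and track the image through $\phi_{D,\eta}$ by using the explicit formula \eqref{Phi-eta} for $\Phi_{D,\eta}$, followed by the tame symbol description of the passage from $\wm{T}$ to $\mca{E}$.

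Concretely, for $\alpha \in \Delta$, by Proposition \ref{s on sc} together with the remark on independence of $f$, we may take $f = 1$ and write
$$\s(\alpha^\vee_{[n]}) = \wm{h}_\alpha(n_\alpha \alpha^\vee) = \elt{\wm{h}_\alpha^{[1]}(\tau^{n_\alpha})},$$
i.e.\ the class in $\mca{E}^{sc}$ (and then in $\mca{E}_{Q,n}^{sc}$) of the BD-lift of $\mbf{h}_\alpha(\tau^{n_\alpha}) \in \mbf{T}^{sc}(F(\!(\tau)\!))$. Applying $\Phi_{D,\eta}$ at the level of $F(\!(\tau)\!)$-points via \eqref{Phi-eta} with $b = 1$ and $a = \tau^{n_\alpha}$ gives
$$\Phi_{D,\eta}\bigl(\wm{h}_\alpha^{[1]}(\tau^{n_\alpha})\bigr) = \bigl(\set{\eta(\alpha^\vee),\, \tau^{n_\alpha}},\ \mbf{h}_\alpha(\tau^{n_\alpha})\bigr) \in \mbf{K}_2\bigl(F(\!(\tau)\!)\bigr) \times_D \mbf{T}\bigl(F(\!(\tau)\!)\bigr),$$
where the right hand side is the description of $\wm{T}_D(F(\!(\tau)\!))$ coming from the incarnation.

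Next I pass to $\mca{E}$ using the recipe of \S\ref{torsor/t}: pull back along $Y \to \mbf{T}(F(\!(\tau)\!))$, $y \mapsto y \otimes \tau$, then push out by the tame symbol. The $\mbf{T}$-component $\mbf{h}_\alpha(\tau^{n_\alpha})$ equals $n_\alpha \alpha^\vee \otimes \tau = \alpha^\vee_{[n]} \otimes \tau$, so the underlying $Y$-element is exactly $\alpha^\vee_{[n]} \in Y_{Q,n}$. For the $\mbf{K}_2$-component, bilinearity of the Steinberg symbol gives $\set{\eta(\alpha^\vee),\, \tau^{n_\alpha}} = \set{\eta(\alpha^\vee), \tau}^{n_\alpha}$, and the tame symbol sends $\set{a, \tau} \mapsto a$ for $a \in F^\times$; hence its image is $\eta(\alpha^\vee)^{n_\alpha}$. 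Since $\eta \colon Y^{sc} \to F^\times$ is a homomorphism and $\alpha^\vee_{[n]} = n_\alpha \alpha^\vee$, this equals $\eta(\alpha^\vee_{[n]})$. Therefore, viewed in $\mca{E}_{Q,n} = F^\times \times_D Y_{Q,n}$ after further quotienting by $(F^\times)^n$, we obtain
$$\phi_{D,\eta}\circ \s(\alpha^\vee_{[n]}) = \bigl(\eta(\alpha^\vee_{[n]}),\ \alpha^\vee_{[n]}\bigr),$$
as desired.

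The only subtle point is verifying that the explicit pull-back/push-out recipe defining $\mca{E}$ in \S\ref{torsor/t} is genuinely compatible, on the level of the morphism $\Phi_{D,\eta}$, with the abstract $\phi_{D,\eta}$ produced by the BD classification; this is precisely the content of the identification \eqref{Phi to phi} already used in the proof of \eqref{Phi-eta}, so no new work is needed. Everything else is a routine computation with tame symbols and the homomorphism property of $\eta$.
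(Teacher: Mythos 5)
Your proof is correct, and it takes a slightly different (and arguably cleaner) route than the paper's. The paper works entirely on the $\mca{E}$ side: it expands $\elt{\wm{h}_\alpha(\tau^{n_\alpha})}$ as $\elt{\wm{h}_\alpha(\tau)}^{n_\alpha}\cdot(-1)^{\varepsilon(n_\alpha)Q(\alpha^\vee)}$ using the formula (11.6.1) of Brylinski--Deligne, applies $\phi_{D,\eta}$ multiplicatively using the defining value $\phi_{D,\eta}(\gamma_\alpha)=(\eta(\alpha^\vee),\alpha^\vee)$, and then computes the $n_\alpha$-th power in $\mca{E}_D=F^\times\times_D Y$ using its explicit cocycle; the $(-1)^{\varepsilon(n_\alpha)Q(\alpha^\vee)}$ from (11.6.1) cancels against the $(-1)^{\varepsilon(n_\alpha)D(\alpha^\vee,\alpha^\vee)}$ that appears in the power computation. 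You instead apply the formula of Proposition~\ref{Phi} with $a=\tau^{n_\alpha}$ directly, yielding $\Phi_{D,\eta}\bigl(\wm{h}_\alpha^{[1]}(\tau^{n_\alpha})\bigr)=\bigl(\set{\eta(\alpha^\vee),\tau^{n_\alpha}},\mbf{h}_\alpha(\tau^{n_\alpha})\bigr)$, and then pass to $\mca{E}$ via the tame-symbol pushout, where the Steinberg relation and the evaluation $\text{tame}(\set{a,\tau})=a$ for $a\in F^\times$ give the answer $\eta(\alpha^\vee)^{n_\alpha}=\eta(\alpha^\vee_{[n]})$ at once. The gain of your version is that it avoids invoking (11.6.1) and the sign cancellation, because the symbol $\set{\eta(\alpha^\vee),\tau^{n_\alpha}}$ is already the correct ``aggregated'' $\mbf{K}_2$-element before projecting. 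One small remark on the justification: the compatibility you need is not quite the specific identity~\eqref{Phi to phi} (which concerns only $\wm{h}^{[1]}_\alpha(\tau)$) but rather the more general fact, implicit in the commutativity of the diagram~\eqref{inc comp}, that $\phi_{D,\eta}$ is by construction the map on $\mca{E}^{sc}\to\mca{E}$ induced by $\Phi_{D,\eta}$ on $F(\!(\tau)\!)$-points followed by the pull-back/push-out of \S\ref{torsor/t}; since both sides are homomorphisms, agreement on the generators $\elt{\wm{h}^{[1]}_\alpha(\tau)}$ in~\eqref{Phi to phi} already implies agreement on $\elt{\wm{h}^{[1]}_\alpha(\tau^{n_\alpha})}$, so your reference is sufficient, just phrased a notch too narrowly.
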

\begin{proof}
For $\alpha^\vee \in \Delta^\vee$, direct computation gives
\begin{align*}
\phi_{D,\eta} \circ \s(\alpha^\vee_{[n]}) =& \phi_{D,\eta}\big(\elt{\wm{h}_\alpha(\tau^{n_\alpha})}\big) \\
=& \phi_{D,\eta} \big(\elt{\wm{h}_\alpha(\tau)}^{n_\alpha} \cdot (-1)^{\varepsilon(n_\alpha)Q(\alpha^\vee)} \big)  \text{ by } \cite[(11.6.1)]{BD01} \\
=&\big((\eta(\alpha^\vee), \alpha^\vee)\big)^{n_\alpha} \cdot (-1)^{\varepsilon(n_\alpha)Q(\alpha^\vee)}\\
=& \big(\eta(n_\alpha \alpha^\vee), n_\alpha\alpha^\vee\big),
\end{align*}
which is the desired result.
\end{proof}

The splitting of the row over $Y_{Q,n}^{sc}$ gives
\begin{equation} \label{germ: E_2}
\xymatrix{
F^\times/n \ar@{^(->}[r] \ar@{=}[d] &\mca{E}_{Q,n} \ar@{>>}[r] \ar@{>>}[d] &Y_{Q,n} \ar@{>>}[d] \\
F^\times/n \ar@{^(->}[r] &\mca{E}_{Q,n}\big/ \phi_{D,\eta}\circ \s(Y_{Q,n}^{sc}) \ar@{>>}[r] &Y_{Q,n}/Y_{Q,n}^{sc}.
}
\end{equation}

By taking $\Hom(-, \C^\times)$, we further consider the pull-back by the map $h: F^\times \to \Hom(F^\times/n, \C^\times)$ given by the Hilbert symbol $a\mapsto h_a$ with $h_a(b)=(b, a)_n$. Then by definition $E_2$ is the pull-back of $\Hom(\mca{E}_{Q,n}\big/ \phi_{D,\eta}\circ \s(Y_{Q,n}^{sc}), \C^\times)$ by $h$:

$$\xymatrix{
Z(\wt{G}^\vee)  \ar@{^(->}[r] &\Hom(\mca{E}_{Q,n}\big/ \phi_{D,\eta}\circ \s(Y_{Q,n}^{sc}), \C^\times) \ar@{>>}[r] & \Hom(F^\times/n, \C^\times) \\
Z(\wt{G}^\vee) \ar@{=}[u]  \ar@{^(->}[r] & E_2 \ar[u] \ar@{>>}[r] & F^\times \ar[u]^-{h}.
}$$
\subsubsection{The fundamental extension $E_{\wt{G}}$ and the group ${}^L\wt{G}$}

\begin{dfn}
Consider the Baer sum $E_1\oplus_B E_2$, which we denote by $E_{\wt{G}}$. We call $E_{\wt{G}}$ the fundamental extension associated with $\wt{G}$. \index{fundamental extension}
\end{dfn}

For fixed $n$, the construction $\xymatrix{\wt{G} \ar@{|~>}[r] & E_{\wt{G}} }$ could be viewed as a construction of the abelian $E_{\wt{G}}$ in the category $\textsf{Ab}(F^\times, Z(\wt{G}^\vee))$ of abelian extensions of $F^\times$ by $Z(\wt{G})$, which starts with the category of incarnations $\CExt(\wm{G}, \mbf{K}_2)$. The category $\textsf{Ab}(F^\times, Z(\wt{G}^\vee))$ is clearly a commutative Picard category with respect to the Baer sum.

\begin{prop}[{\cite{We14}}]
Fix $n$. The construction of $E_{\wt{G}}$ from $ \wm{G}\in \CExt(\wm{G}, \mbf{K}_2)$ is a functor of Picard categories
$$\xymatrix{
\CExt(\wm{G}, \mbf{K}_2) \ar@{~>}[r] & \textsf{Ab}(F^\times, Z(\wt{G}^\vee)).
}$$
Therefore, the composition $\xymatrix{(D,\eta) \ar@{|~>}[r] & \wt{G}_{D,\eta} \ar@{|~>}[r] & E_{\wt{G}_{D,\eta}} }$ also gives a functor of Picard categories:
$$\xymatrix{
\Bis_{\wm{G}} \ar@{~>}[r] & \textsf{Ab}(F^\times, Z(\wt{G}^\vee)).
}$$
\end{prop}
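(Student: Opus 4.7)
The plan is to factor the construction $\wm{G} \mapsto E_{\wt{G}} = E_1 \oplus_B E_2$ through two separate functors $\wm{G} \mapsto E_1$ and $\wm{G} \mapsto E_2$, verify that each of these is a Picard functor from $\CExt(\wm{G}, \mbf{K}_2)$ to $\textsf{Ab}(F^\times, Z(\wt{G}^\vee))$, and then conclude by the general fact that the Baer sum of two Picard functors is again a Picard functor. The proof has a ``bureaucratic'' and a ``substantive'' part: the former consists in checking that pull-back, push-out, and $\Hom(-,\C^\times)$ are Picard operations, which is standard; the latter is checking compatibility of the canonical splitting $\s$ from Proposition \ref{s on sc} with the Baer sum.

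For the functor $\wm{G} \mapsto E_1$, everything is essentially tautological. Since $E_1 = Z(\wt{G}^\vee) \times_Q F^\times$ depends only on $Q$ (and the fixed $n$), and since morphisms in $\CExt(\wm{G}, \mbf{K}_2)$ only exist between objects with the same $Q$, functoriality on morphisms is trivial. For the Picard structure, the cocycle $(a, b) \mapsto \big( y \mapsto (a,b)_n^{Q(y)} \big)$ is visibly additive in $Q$, so the isomorphism $E_1^{\wm{G}_1 \oplus_B \wm{G}_2} \simeq E_1^{\wm{G}_1} \oplus_B E_1^{\wm{G}_2}$ holds after composing with the natural map of centers induced by $Y_{Q_1, n} \cap Y_{Q_2, n} \subseteq Y_{Q_1 + Q_2, n}$.

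For $E_2$, I would trace through the chain of operations used in its definition: pull-back of $\mca{E}$ along $Y_{Q,n} \hookrightarrow Y$, push-out by $F^\times \to F^\times/n$, quotient by $\phi_{D,\eta} \circ \s(Y_{Q,n}^{sc})$, dualization by $\Hom(-, \C^\times)$, and finally pull-back via the Hilbert symbol map $F^\times \to \Hom(F^\times/n, \C^\times)$. The outer operations (pull-back, push-out, and $\Hom$ into the divisible group $\C^\times$) are each Picard functors in the standard sense, so the only substantive task is to show that the splitting $\s_{\wm{G}}$ constructed from BD sections is additive under Baer sum, i.e.\ that under the canonical identification $\mca{E}_{Q_1 + Q_2, n}^{sc} \simeq \mca{E}_{Q_1, n}^{sc} \oplus_B \mca{E}_{Q_2, n}^{sc}$ one has
\begin{equation*}
\s_{\wm{G}_1 \oplus_B \wm{G}_2}(\alpha^\vee_{[n]}) = \s_{\wm{G}_1}(\alpha^\vee_{[n]}) \cdot \s_{\wm{G}_2}(\alpha^\vee_{[n]})
\end{equation*}
modulo $n$-th powers, for every $\alpha \in \Psi$.

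The main obstacle is precisely this last point, and its subtlety is that the integer $n_\alpha$ depends on $\gcd(Q(\alpha^\vee), n)$ and so changes when passing from $Q_i$ to $Q_1 + Q_2$. I would handle this by choosing a common multiple $N$ of $n_\alpha^{(1)}$, $n_\alpha^{(2)}$, and $n_\alpha^{(1+2)}$, replacing $\wm{h}_\alpha(\tau^{n_\alpha})$ by $\wm{h}_\alpha(\tau^{N})$ (which differs by elements lying in $(F^\times)^n$, hence trivial in $F^\times/n$, by the calculation in the proof of Proposition \ref{s on sc} using $\cite[(11.6.1)]{BD01}$ and property (\ref{h ppty2})). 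Once one is working with a uniform exponent $N$, the multiplicativity reduces to the fact that under Baer sum of the underlying BD extensions, the BD section $\wm{h}_\alpha^{[f]}(a) = \wm{w}_\alpha(ab) \cdot \wm{w}_\alpha(b)^{-1}$ in $\wm{T}^{sc}(F(\!(\tau)\!))$ is multiplicative in the $\mbf{K}_2$-torsor variable (since $\wm{w}_\alpha$ is built from the unipotent splittings of $\wm{e}_{\pm \alpha}$, which are canonical and hence respect Baer sum). Collecting these, the composite functor $\wm{G} \mapsto E_1 \oplus_B E_2$ is Picard, which yields the proposition.
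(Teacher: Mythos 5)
The paper supplies no proof of this proposition: it is attributed to \cite{We14} and stated without argument, so there is no internal proof to compare against. Your proposal must therefore be judged on its own merits.

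Your high-level structure is reasonable. Factoring $E_{\wt{G}} = E_1 \oplus_B E_2$ and treating the two summands separately is the natural move, and you correctly identify that the substantive content lies in the compatibility of the canonical splitting $\s$ from Proposition \ref{s on sc} with Baer sum. The observation that $\wm{h}_\alpha^{[b]}$ is ``canonical in the $\mbf{K}_2$-torsor variable'' (because $\wm{w}_\alpha$ is assembled from the unique $\mbf{B}$-equivariant unipotent splittings, which are respected by both morphisms and Baer sum) is the right key mechanism, and your $n$-th power argument via \cite[(11.6.1)]{BD01} for passing from $\tau^{n_\alpha}$ to $\tau^N$ is the correct device for coping with $n_\alpha$ changing.

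The genuine gap is the treatment of the kernel group, and this is not a bureaucratic detail — it is exactly where the statement, as reproduced in the paper, is already underdetermined. The kernel $Z(\wt{G}^\vee) = \Hom(Y_{Q,n}/Y_{Q,n}^{sc}, \C^\times)$ is a nontrivial function of $Q$, while Baer sum of objects with forms $Q_1, Q_2$ produces form $Q_1 + Q_2$. To even formulate the monoidal coherence isomorphism $E_{\wt{G}_1} \oplus_B E_{\wt{G}_2} \simeq E_{\wt{G}_1 \oplus_B \wt{G}_2}$ you must first exhibit a canonical homomorphism relating the three kernels $Z(\wt{G}_1^\vee)$, $Z(\wt{G}_2^\vee)$, $Z(\wt{G}_{12}^\vee)$ along which to compare extensions. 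Your appeal to ``the natural map of centers induced by $Y_{Q_1,n} \cap Y_{Q_2,n} \subseteq Y_{Q_1+Q_2,n}$'' does not deliver this: that inclusion (which you correctly note holds) would give a restriction of characters in the \emph{wrong} direction, and more seriously the sublattices $Y^{sc}_{Q_i,n}$ appearing in the quotients are not nested in any uniform way — the integers $n_\alpha$ and hence the lattices $Y^{sc}_{Q,n}$ do not behave monotonically in $Q$ (e.g.\ take $n = 6$, $Q_1(\alpha^\vee) = 2$, $Q_2(\alpha^\vee) = 3$: the $n_\alpha$ are $3, 2, 6$ for $Q_1, Q_2, Q_1+Q_2$, and the resulting sublattices of $\Z\alpha^\vee$ are nested in the opposite direction from the case $n = 4$, $Q_1(\alpha^\vee) = Q_2(\alpha^\vee) = 1$). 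Until the precise Picard structure on the target — in particular what ``Baer sum across varying kernels'' means here, presumably pinned down in \cite{We14} via a fibered or graded formulation — is settled, the isomorphism your reduction is supposed to verify does not yet exist, and the claim ``$\s$ is additive under Baer sum'' remains a statement searching for a precise assertion to prove.

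A smaller point: you should also make explicit that the same ``$\s$ is intrinsic to the torsor'' mechanism handles plain functoriality (a morphism $\wm{G}_1 \to \wm{G}_2$ over fixed $Q$ carries the BD section of one to that of the other, hence carries $\s_1$ to $\s_2$ and so induces a morphism $E_{2,\wt{G}_1} \to E_{2,\wt{G}_2}$). This is implicit in your sketch but is logically distinct from the Baer sum compatibility.
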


Recall that $\W_F$ is the Weil group of $F$. Denote by $\Rec$ the composition
$$\xymatrix{
\Rec: \quad  \W_F \ar[r] & F^\times,
}$$
which is induced from the Artin recirpocity map $\xymatrix{\W_F^{ab} \ar[r] &F^\times}$ which sending the class of geometric Frobenius to the class of uniformizer (modulo $\msc{O}_F^\times$) of $F^\times$.

We obtain the pull-back $\Rec^*(E_{\wt{G}})$:
$$\seq{Z(\wt{G})}{\Rec^*(E_{\wt{G}})}{\W_F}.$$

Then,

\begin{dfn} \index{$L$-group ! local}
Let $\wm{G} \in \CExt(\mbf{G}, \mbf{K}_2)$ be a BD extension, and let $\wt{G}\in \CExt(G, \mu_n)$ be the resulting topological extension. Consider the natural inclusion $\xymatrix{ j^{\wt{G}^\vee}: Z(\wt{G}^\vee) \ar@{^(->}[r] & \wt{G}^\vee}$. The $L$-group ${}^L\wt{G}$ of $\wt{G}$ is defined to be the push-out $j^{\wt{G}}_*$ of $\Rec^*(E_{\wt{G}})$:
$${}^L\wt{G}:=j^{\wt{G}^\vee}_* \circ \Rec^*(E_{\wt{G}})=\frac{\wt{G}^\vee \times \Rec^*(E_{\wt{G}}) }{\nabla Z(\wt{G}^\vee)},$$
where $\xymatrix{\nabla: Z(\wt{G}^\vee) \ar@{^(->}[r] & \wt{G}^\vee \times Z(\wt{G}^\vee)}$ is the anti-diagonal embedding. Also, we may use the equivalent $\Rec^*\circ j_*^{\wt{G}^\vee}(E_{\wt{G}})$ for the definition of ${}^L\wt{G}$.
\end{dfn}

\begin{rmk}
The definition of of $E_{2,\wt{G}}$ here differs slightly from that of \cite{We14}, where one considers the section of $\mca{E}_{Q,n}^{sc}$ over $Y_{Q,n}^{sc}$ given by
$$\xymatrix{
\s': \quad \alpha^\vee_{[n]} \ar@{|->}[r] & \wm{h}_\alpha^{[f]}(\alpha^\vee)^{n_\alpha}.
}$$
Similar proof shows that this gives a splitting, and in particular is independent of $f$ as well. However, the splitting $\s$ we defined gives exactly the expected match-up between the eigenvalue of adjoint action and the GK coefficient, whereas for $\s'$ there would be an additional term of $(-1)$-powers in the expression.
\end{rmk}
\subsubsection{Functoriality for Levi subgroups}
The group $\wt{G}$ gives rise to the Levi $\wt{M}$ by restriction of the Brylinski-Deligne data, and we obtain ${}^L\wt{G}$ and ${}^L\wt{M}$. It is shown in \cite{We14} (see also \cite[\S 5.5]{GaG14}) that there is a canonical morphism ${}^L\varphi$ extending the inclusion $\xymatrix{\varphi^\vee: {}^L\wt{M} \ar[r] & {}^L\wt{G}}$ such that the diagram commutes:
$$\xymatrix{
\wt{G}^\vee \ar@{^(->}[r]  &{}^L\wt{G}  \ar@{>>}[r] & \W_F \ar@{=}[d]  \\
\wt{M}^\vee \ar@{^(->}[u]^-{\varphi^\vee} \ar@{^(->}[r]  &{}^L\wt{M} \ar@{^(->}[u]^-{{}^L\varphi} \ar@{>>}[r] &\W_F .
}$$

The case $\wt{M}=\wt{T}$ is easy and crucial to us, and therefore we give the proof. Note ${}^L\wt{T}=\Rec^*(E_{\wt{T}})$ since $Z(\wt{T}^\vee)=\wt{T}^\vee$. 

\begin{prop} \label{L-T=p.o.}
Consider $\wt{G}$ and $\wt{T}$ the covering torus of $\wt{G}$. Let $\xymatrix{j^{\wt{T}^\vee}: Z(\wt{G}^\vee) \ar@{^(->}[r] & \wt{T}^\vee}$ be the inclusion. Then we have a canonical isomorphism
$${}^L\wt{T} \simeq j^{\wt{T}^\vee}_* \circ \Rec^*(E_{\wt{G}}),$$
where ${}^L\wt{T}=\Rec^*(E_{\wt{T}})$ by definition. Therefore it induces a canonical ${}^L\varphi$ from ${}^L\wt{T}$ to ${}^L\wt{G}$ giving the desired commutative diagram above.
\end{prop}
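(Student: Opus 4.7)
The plan is to first establish an isomorphism $E_{\wt{T}} \simeq j^{\wt{T}^\vee}_*(E_{\wt{G}})$ as central extensions of $F^\times$ by $\wt{T}^\vee$, and then pull the whole picture back by $\Rec$, using the elementary fact that pullback and pushout commute in the category of central extensions of abelian groups. Since the Baer sum distributes over pushout and $E_{\wt{G}} = E_{1,\wt{G}} \oplus_B E_{2,\wt{G}}$ by construction (and likewise for $\wt{T}$, where the construction simplifies because $Y_{Q,n}^{sc} = 0$), the task reduces to verifying the two identifications $j^{\wt{T}^\vee}_*(E_{i,\wt{G}}) \simeq E_{i,\wt{T}}$ for $i = 1, 2$ independently.

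The $E_1$ component is a direct cocycle computation. The cocycle of $E_{1,\wt{G}}$, valued in $Z(\wt{G}^\vee) = \Hom(Y_{Q,n}/Y_{Q,n}^{sc}, \C^\times)$, is $(a,b) \mapsto [\bar y \mapsto (a,b)_n^{Q(y)}]$; pushing out along $j^{\wt{T}^\vee}$ simply reinterprets it along the quotient $Y_{Q,n} \twoheadrightarrow Y_{Q,n}/Y_{Q,n}^{sc}$, yielding exactly the cocycle $(a,b) \mapsto [y \mapsto (a,b)_n^{Q(y)}]$ of $E_{1,\wt{T}}$ on $Y_{Q,n}$. Well-definedness on the quotient modulo $n$ amounts to $Q(y_{sc}), B_Q(y, y_{sc}) \in n\Z$ for $y \in Y$ and $y_{sc} \in Y_{Q,n}^{sc}$, which follows from Lemma \ref{BQ<>} together with the basic identity $n_\alpha Q(\alpha^\vee) \in n\Z$ (both already exploited in the proof of Proposition \ref{s on sc}).

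For $E_2$, the key point is that the surjection $\mca{E}_{Q,n} \twoheadrightarrow \mca{E}_{Q,n}/\phi_{D,\eta}\circ \s(Y_{Q,n}^{sc})$, well-defined by Proposition \ref{s on sc}, induces upon applying $\Hom(-, \C^\times)$ a commutative diagram
$$\xymatrix{
Z(\wt{G}^\vee) \ar@{^(->}[r] \ar@{^(->}[d]^-{j^{\wt{T}^\vee}} & \Hom(\mca{E}_{Q,n}/\phi_{D,\eta}\circ \s(Y_{Q,n}^{sc}), \C^\times) \ar@{^(->}[d] \ar@{>>}[r] & \Hom(F^\times/n, \C^\times) \ar@{=}[d] \\
\wt{T}^\vee \ar@{^(->}[r] & \Hom(\mca{E}_{Q,n}, \C^\times) \ar@{>>}[r] & \Hom(F^\times/n, \C^\times).
}$$
Since $\C^\times$ is divisible, $\Hom(-, \C^\times)$ is exact, so the cokernel of the middle vertical map is $\Hom(Y_{Q,n}^{sc}, \C^\times) = \wt{T}^\vee/Z(\wt{G}^\vee)$; the five-lemma (equivalently, the universal property of pushout) then identifies the bottom row with $j^{\wt{T}^\vee}_*$ of the top row. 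Pulling back by $h$ preserves the pushout relation, delivering $E_{2,\wt{T}} \simeq j^{\wt{T}^\vee}_*(E_{2,\wt{G}})$.

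Assembling the two Baer components and pulling back by $\Rec$ yields the desired isomorphism ${}^L\wt{T} \simeq j^{\wt{T}^\vee}_* \circ \Rec^*(E_{\wt{G}})$. The canonical ${}^L\varphi$ is then read off from associativity of pushouts: the factorization $j^{\wt{G}^\vee} = \varphi^\vee \circ j^{\wt{T}^\vee}$ gives $j^{\wt{G}^\vee}_* = \varphi^\vee_* \circ j^{\wt{T}^\vee}_*$, whence $\varphi^\vee_*({}^L\wt{T}) \simeq {}^L\wt{G}$, and ${}^L\varphi: {}^L\wt{T} \to {}^L\wt{G}$ is the structure map of this pushout, which by construction restricts to $\varphi^\vee$ on $\wt{T}^\vee$ and is the identity on $\W_F$. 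The main obstacle is the $E_2$ step, where one must track carefully how dualization interacts with the quotient by $\phi_{D,\eta}\circ \s(Y_{Q,n}^{sc})$; this works cleanly because $\C^\times$ is injective as an abelian group, but requires some bookkeeping of the interplay between the splitting $\s$, the map $\phi_{D,\eta}$, and the dualization.
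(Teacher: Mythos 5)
Your proposal is correct and takes essentially the same route as the paper: distribute the pushout over the Baer sum, verify $E_1$ by comparing cocycles (the paper leaves this as ``easily checked''), and obtain $E_2$ by dualizing the quotient sequence (\ref{germ: E_2}) and invoking the universal property of pushout --- your exactness-of-$\Hom(-,\C^\times)$ framing is a slightly more detailed way of saying the same thing. The closing paragraph deriving ${}^L\varphi$ from associativity of pushouts and the factorization $j^{\wt{G}^\vee} = \varphi^\vee \circ j^{\wt{T}^\vee}$ is a useful elaboration of what the paper states without proof, not a divergence.
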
 

\begin{proof}
It suffices to show 
$$E_{\wt{T}}\simeq j^{\wt{T}^\vee}_*(E_{\wt{G}}).$$
Since $E_{\wt{T}}=E_{1, \wt{T}} \oplus_B E_{2, \wt{T}}$ and $E_{\wt{G}}=E_{1, \wt{G}} \oplus_B E_{2, \wt{G}}$, we are reduced to show
$$E_{i, \wt{T}}\simeq j^{\wt{T}^\vee}_*(E_{i, \wt{G}}), i=1, 2.$$

The case $i=1$ can be easily checked to be true. Consider the case $i=2$ and the diagram as in (\ref{germ: E_2}):
$$\xymatrix{
F^\times/n \ar@{^(->}[r] \ar@{=}[d] &\mca{E}_{Q,n} \ar@{>>}[r] \ar@{>>}[d] &Y_{Q,n} \ar@{>>}[d] \\
F^\times/n \ar@{^(->}[r] &\mca{E}_{Q,n}\big/ \phi_{D,\eta}\circ \s(Y_{Q,n}^{sc}) \ar@{>>}[r] &Y_{Q,n}/Y_{Q,n}^{sc}.
}$$
Taking $\Hom(-,\C^\times)$ and pull-back by $\xymatrix{F^\times \ar[r] & F^\times/n}$, we obtain
$$\xymatrix{
\wt{T}^\vee \ar@{^(->}[r] &E_{2,\wt{T}} \ar@{>>}[r] &F^\times \\
Z(\wt{G}^\vee) \ar@{^(->}[r] \ar@{^(->}[u]^-{ j^{\wt{T}^\vee} } & E_{2,\wt{G}}  \ar@{>>}[r] \ar[u]^-{\phi^*_{D,\eta}} & F^\times \ar@{=}[u] .
}$$

The universal property of push-out $j^{\wt{T}^\vee}_*$ gives the middle map of the following commutative diagram
$$\xymatrix{
\wt{T}^\vee \ar@{^(->}[r] \ar@{=}[d] & j^{\wt{T}^\vee}_*(E_{2,\wt{G}})  \ar@{>>}[r] \ar[d] & F^\times \\
\wt{T}^\vee \ar@{^(->}[r] &E_{2,\wt{T}} \ar@{>>}[r] & F^\times  \ar@{=}[u] \ ,
}$$
which has identity maps on both $\wt{T}^\vee$ and $F^\times$. Therefore the middle map is a canonical isomorphism $j^{\wt{T}^\vee}_*(E_{2,\wt{G}})  \simeq E_{2,\wt{T}}$. 
\end{proof}

Recall by definition $j^{\wt{T}^\vee}_*(E_{\wt{G}})=\wt{T}^\vee \times E_{\wt{G}} \big/ \nabla Z(\wt{G}^\vee)$. Write $\msc{L}$ for the canonical isomorphism from above proof
$$\xymatrix{
\msc{L}: \quad \wt{T}^\vee \times E_{\wt{G}} \big/ \nabla Z(\wt{G}^\vee) \ar[r] & E_{\wt{T}}.
}$$

What is important for us is the explicit form of $\msc{L}$. From the construction we may identify
$$E_{2,\wt{T}}=\set{ ([P_a], a) \in \Hom(\mca{E}_{Q,n}, \C^\times) \times F^\times: [P_a]|_{F^\times/n}=h_a },$$
where as before $h_a(-)=(-, a)_n$ denotes the $n$-th Hilbert symbol. Here $[P_a]\big((b, y)\big)=h_a(b) \cdot P_a(y)$ is a homomorphism of $\mca{E}_{Q,n}$ with respect to a certain map (not necessarily a morphism) $\xymatrix{P_a: Y_{Q,n} \ar[r] & \C^\times}$. 

Similarly  $E_{2,\wt{G}}$ could be identified with the collection 
$$\set{([P_a], a) \in \Hom(\mca{E}_{Q,n}\big/ \phi_{D,\eta}\circ \s(Y_{Q,n}^{sc}), \C^\times) \times F^\times: [P_a]|_{F^\times/n}=h_a }.$$

Let $\wt{t}^\vee\in \wt{T}^\vee=\Hom(Y_{Q,n}, \C^\times)$.  It gives rise to an element $[\wt{t}^\vee]\in \Hom(\mca{E}_{Q,n}, \C^\times)$ via inflation. Tracing through the proof of Proposition \ref{L-T=p.o.}, we see that the canonical isomorphism takes the explicit form:
\begin{diagram}
\wt{T}^\vee \times E_{2,\wt{G}} \big/ \nabla Z(\wt{G}^\vee) &\rTo & E_{2,\wt{T}} \\
\big(\wt{t}^\vee, ([P_a], a)\big) &\rMapsto & \big( [\wt{t}^\vee] \cdot [P_a], a\big).
\end{diagram}

Thus the desired isomorphism $\msc{L}$ is given by
\begin{diagram}
\msc{L}: & \wt{T}^\vee \times E_{\wt{G}} \big/ \nabla Z(\wt{G}^\vee) &\rTo & E_{\wt{T}} \\
& \big(\wt{t}^\vee, ([P_a], a)\oplus_B (1, a)\big) &\rMapsto & ( [\wt{t}^\vee] \cdot [P_a], a )\oplus_B (1,a).
\end{diagram}

It is important for our purpose of the Gindikin-Karpelevich formula latter to determine the element $\wt{t}_{P_a}^\vee \in \wt{T}$ associated with $\msc{L}^{-1}\big(([P_a], a)\oplus_B (1,a)\big)$, where $([P_a], a)\oplus_B (1,a) \in E_{\wt{T}}$. Moreover, it is easy to see that $\wt{t}_{P_a}^\vee$ could be only determined up to a class modulo $Z(\wt{G}^\vee)$. That is, only the image of $\wt{t}_{P_a}^\vee$ in the quotient map $\xymatrix{\wt{T}^\vee \ar[r] & \wt{T}^\vee/ Z(\wt{G}^\vee)}$ is uniquely determined.

For this purpose, consider the trivial extension
$$\seq{\wt{T}^\vee/Z(\wt{G}^\vee)}{\wt{T}^\vee/Z(\wt{G}^\vee) \times F^\times}{F^\times},$$
which has a canonical splitting $s^\text{Tr}$ of the first map. There is a natural map $q_*$ from $j^{\wt{T}^\vee}_*(E_{\wt{G}})$ to $\wt{T}^\vee/Z(\wt{G}^\vee) \times F^\times$ such that the following diagram commutes
$$\xymatrix{
\wt{T}^\vee \ar@{^(->}[r] \ar@{>>}[d]_-q & j^{\wt{T}^\vee}_*(E_{\wt{G}})  \ar@{>>}[r] \ar[d]^-{q_*} & F^\times \ar@{=}[d] \\
\wt{T}^\vee/Z(\wt{G}^\vee) \ar@{^(->}[r]  & \wt{T}^\vee/Z(\wt{G}^\vee) \times F^\times \ar@/^1.3pc/[l]^-{s^{\text{Tr}}} \ar@{>>}[r] &F^\times.
}$$

Take the composition
$$\xymatrix{
\msc{C}=s^\text{Tr}\circ q_* \circ \msc{L}^{-1}: \quad E_{\wt{T}} \ar[r] & j^{\wt{T}^\vee}_*(E_{\wt{G}}) \ar[r] & \wt{T}^\vee/Z(\wt{G}^\vee) \times F^\times \ar[r] & \wt{T}^\vee/Z(\wt{G}^\vee).
}$$

Now we can describe explicitly the image of this map $\msc{C}$.

\begin{cor} \label{ad-t}
Let $\msc{P}_a=([P_a], a)\oplus_B (1, a) \in E_{\wt{T}}$, and identify $\Hom(Y_{Q,n}^{sc}, \C^\times)=\wt{T}^\vee/Z(\wt{G}^\vee)$. Then  $\msc{C}\big(\msc{P}_a\big)(\alpha^\vee_{[n]}) =[P_a] \circ \phi_{D,\eta}\big(\s(\alpha^\vee_{[n]})\big)$ for all $\alpha\in \Psi$.
That is, the image $\wt{t}_{P_a}^\vee$ of $\msc{L}^{-1}(\msc{P}_a)$ in $\wt{T}^\vee/Z(\wt{G}^\vee)$ is uniquely determined by
\begin{equation}
\wt{t}_{P_a}^\vee(\alpha^\vee_{[n]}) =[P_a] \circ \phi_{D,\eta} \big(\s(\alpha_{[n]}^\vee)\big), \alpha\in \Psi.
\end{equation}
\end{cor}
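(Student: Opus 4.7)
The plan is to trace $\msc{P}_a$ through the three maps composing $\msc{C} = s^{\text{Tr}} \circ q_* \circ \msc{L}^{-1}$, using the explicit formula for $\msc{L}$ derived just before the corollary.

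First I would exhibit a representative of $\msc{L}^{-1}(\msc{P}_a)$ in $\wt{T}^\vee \times E_{\wt{G}} / \nabla Z(\wt{G}^\vee)$. Since $\msc{L}$ is the identity on the $E_1$-summand of the Baer sum, the task reduces to the $E_2$-component: given the $[P_a]\in\Hom(\mca{E}_{Q,n},\C^\times)$ appearing in $\msc{P}_a$, I seek $\wt{t}^\vee \in \wt{T}^\vee$ and $[P_a'] \in \Hom(\mca{E}_{Q,n}/\phi_{D,\eta}\circ\s(Y_{Q,n}^{sc}), \C^\times)$ satisfying $[\wt{t}^\vee] \cdot [P_a'] = [P_a]$ after inflation to $\mca{E}_{Q,n}$. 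Because $[P_a']$ must vanish on $\phi_{D,\eta}\circ\s(Y^{sc}_{Q,n})$, evaluating the identity at $\phi_{D,\eta}(\s(\alpha^\vee_{[n]}))$ and using that $[\wt{t}^\vee]$ is the inflation of $\wt{t}^\vee$ (hence takes the value $\wt{t}^\vee(y)$ on any lift of $y \in Y_{Q,n}$) forces
$$\wt{t}^\vee(\alpha^\vee_{[n]}) = [P_a] \circ \phi_{D,\eta}(\s(\alpha^\vee_{[n]})) \quad \text{for all } \alpha \in \Psi.$$
These relations determine $\wt{t}^\vee$ only on the sublattice $Y_{Q,n}^{sc}$, hence modulo $Z(\wt{G}^\vee) = \Hom(Y_{Q,n}/Y_{Q,n}^{sc}, \C^\times)$, which is exactly the $\nabla Z(\wt{G}^\vee)$-ambiguity; one then sets $[P_a'] := [P_a] \cdot [\wt{t}^\vee]^{-1}$, which now factors through the quotient.

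Next I would compute $q_* \circ \msc{L}^{-1}(\msc{P}_a)$. Because $q \circ j^{\wt{T}^\vee} = 0$, iterated push-out gives $q_* \circ j^{\wt{T}^\vee}_*(E_{\wt{G}}) = (q \circ j^{\wt{T}^\vee})_*(E_{\wt{G}})$, canonically the trivial extension $\wt{T}^\vee/Z(\wt{G}^\vee) \times F^\times$; unwinding the universal property sends the class of $(\wt{t}^\vee, e)$ with $e \mapsto a$ to $(q(\wt{t}^\vee), a)$. Applying the canonical section $s^{\text{Tr}}$, i.e.\ projection to the first coordinate, yields $\msc{C}(\msc{P}_a) = q(\wt{t}^\vee) \in \Hom(Y^{sc}_{Q,n}, \C^\times)$, and evaluating at $\alpha^\vee_{[n]}$ recovers the claimed formula, with uniqueness of $\wt{t}_{P_a}^\vee$ in the quotient following from the fact that $\{\alpha^\vee_{[n]}\}_{\alpha \in \Psi}$ generate $Y_{Q,n}^{sc}$ (the consistency of these prescribed values being guaranteed by Proposition \ref{s on sc}, since $\phi_{D,\eta}\circ\s$ is a homomorphism).

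The main obstacle is careful bookkeeping. The symbol $[P_a]$ denotes a homomorphism on the quotient $\mca{E}_{Q,n}/\phi_{D,\eta}\circ\s(Y_{Q,n}^{sc})$ when viewed in $E_{2,\wt{G}}$, but on $\mca{E}_{Q,n}$ itself when viewed in $E_{2,\wt{T}}$, with inflation bridging the two; simultaneously one must verify that the indeterminacy in $\wt{t}^\vee$ matches precisely the kernel of $\wt{T}^\vee \twoheadrightarrow \wt{T}^\vee/Z(\wt{G}^\vee)$. Once these compatibilities are pinned down, the corollary is a mechanical unwinding of the definitions.
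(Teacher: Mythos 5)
Your proof is correct and takes essentially the same route as the paper's: write $\msc{L}^{-1}(\msc{P}_a)$ as a class $[(\wt{t}^\vee_{P_a}, [P_a]/[\wt{t}^\vee_{P_a}] \oplus_B \cdots)]$, observe that $[P_a]/[\wt{t}^\vee_{P_a}]$ must vanish on $\phi_{D,\eta}\circ\s(Y^{sc}_{Q,n})$ by definition of $E_{2,\wt{G}}$, and then evaluate via the inflation identity $[\wt{t}^\vee_{P_a}]\circ\phi_{D,\eta}(\s(\alpha^\vee_{[n]})) = \wt{t}^\vee_{P_a}(\alpha^\vee_{[n]})$. You spell out more explicitly the $q_*$ and $s^{\text{Tr}}$ bookkeeping and the consistency afforded by Proposition \ref{s on sc}, but the argument is the same.
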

\begin{proof}
Clearly $\wt{t}_{P_a}^\vee(\alpha^\vee_{[n]})=\msc{C}\big(\msc{P}_a\big)(\alpha^\vee_{[n]})$. Now write 
$$\msc{L}^{-1}\big(\msc{P}_a\big)=\big(\wt{t}_{P_a}^\vee, ([P_a]/[\wt{t}_{P_a}^\vee], a)\oplus_B (1, a)\big) \in j^{\wt{T}^\vee}_*(E_{\wt{G}}),$$
where $[P_a]/[\wt{t}_{P_a}^\vee]\in \Hom(\mca{E}_{Q,n}\big/ \phi_{D,\eta} \circ \s(Y_{Q,n}^{sc}), \C^\times)$. In particular, it vanishes on $\phi_{D,\eta} \circ \s(Y_{Q,n}^{sc})$, and therefore
$$[\wt{t}_{P_a}^\vee] \circ \phi_{D,\eta} \big(\s(\alpha_{[n]}^\vee)\big)=[P_a] \circ \phi_{D,\eta} \big(\s(\alpha_{[n]}^\vee)\big), \ \alpha\in \Psi.$$
But we have $\wt{t}_{P_a}^\vee(\alpha_{[n]}^\vee)=[\wt{t}_{P_a}^\vee] \circ \phi_{D,\eta} \big(\s(\alpha_{[n]}^\vee)\big)$ from the definition of $[\wt{t}_{P_a}^\vee]$. This gives the desired result and completes the proof.
\end{proof}


\subsection{Global $L$-group} \label{global L-grp} \index{$L$-group ! global}
Let $\wm{G} \in \CExt(\mbf{G}, \mbf{K}_2)$ be defined over a number field $F$. We may assume that it is incarnated by $(D, \eta)$. Assume $\mu_n \subseteq F^\times$, one obtains a global covering group $\wm{G}(\A)$ in section \ref{G-bar(A)}:
$$\seq{\mu_n}{\wm{G}(\A)}{\mbf{G}(\A)}.$$

As in the local case, it is essential to have the global $L$-group for the purpose of arithmetic parametrization of automorphic forms on $\wm{G}(\A)$ and problems alike. Though we are largely dealing with local problems, we include the construction of global $L$-group here. For more details, see \cite{We14}. One benefit we gain is that it will enable us to define (partial) automorphic $L$-functions later.
\\

The construction of global ${}^L\wt{G}$ relies on a global fundamental extension 
$$\seq{Z(\wt{G}^\vee)}{E_{\A}}{F^\times\backslash \A^\times},$$
from which one may define the global $L$-group associated with $W_F$ as $\pi^*\circ j^{\wt{G}^\vee}_*(E_{\A})$. Here $\xymatrix{j^{\wt{G}^\vee}: Z(\wt{G}) \ar@{^(->}[r] & \wt{G}^\vee}$ is the natural inclusion and $W_F$ the global Weil group with the natural map (cf. \cite{Tat79}) $\xymatrix{\pi: W_F \ar[r] & F^\times\backslash \A^\times}$ induced from the  reciprocity map. In principle one may also replace $W_F$ by any arithmetic group for $F$ with a natural map to $F^\times\backslash \A^\times$, for example the conjectural Langlands automorphic $L$-group.

We recall the construction of $E_{\A}$ (cf. \cite{We14}), which follows from similar consideration of the local case. From now, assume we are given with the data $(n, Q, \mca{E}, \phi)$. 

The group $E_{\A}$ is constructed as the Baer sum of two extensions $E_{1,\A}$ and $E_{2,\A}$:
$$\seq{Z(\wt{G}^\vee)}{E_{i, \A}}{F^\times\backslash \A^\times} \text{ for }  i=1,2.$$

First of all, the complex dual group $\wt{G}^\vee$ is the one with root data as before in the local case:
$$\big(Y_{Q,n},\  \set{\alpha^\vee_{[n]}}_{\alpha\in \Psi}, \ \Hom(Y_{Q,n}, \Z),\  \set{n_\alpha^{-1}\alpha}_{\alpha\in \Psi}\big).$$
The modification here depends on the data $(n, Q)$. As usual,  identify $Z(\wt{G}^\vee)$ with $\Hom(Y_{Q,n}/Y_{Q,n}^{sc}, \C^\times)$.

\subsubsection{The construction of $E_{1,\A}$}

First, we have an extension 
$$\seq{\Hom(Y_{Q,n}/Y_{Q,n}^{sc}, \C^\times)}{E_{0,\A}}{\A^\times}$$
which is given by the cocycle
\begin{diagram}
\A^\times \times \A^\times & \rTo &\Hom(Y_{Q,n}/Y_{Q,n}^{sc}, \C^\times)  \\
(a, b) &\rMapsto &(y \mapsto (a, b)_n^{Q(y)}),
\end{diagram}
where $a, b\in \A^\times$ and $(-,-)_n$ denotes the global $n$-th Hilbert symbol. Since the Hilbert symbol is trivial on $F^\times \times F^\times$, the extension $E_{0, \A}$ splits over $F^\times$. The quotient of $E_{0,\A}$ by the splitting image of $F^\times$ gives us the extension 

$$\seq{Z(\wt{G}^\vee)}{E_{1,\A}}{F^\times\backslash \A^\times}.$$

\subsubsection{The construction of $E_{2,\A}$}

Start with $(n, Q, \mca{E}, \phi)$, we have the diagram with a canonical splitting $\s$ as in the local case:
$$\xymatrix{
F^\times/n \ar@{^(->}[r] \ar@{=}[d] & \mca{E}_{Q,n}^{sc} \ar[d]^-{\phi_{D,\eta}} \ar@{>>}[r] & Y_{Q,n}^{sc} \ar@{^(->}[d] \ar@/_1.3pc/[l]_-\s \\
F^\times/n \ar@{^(->}[r] & \mca{E}_{Q,n} \ar@{>>}[r] & Y_{Q,n},
}$$
from which we have
$$\xymatrix{
F^\times/n \ar@{^(->}[r] & \mca{E}_{Q,n}\big/\phi_{D,\eta} \circ \s(Y_{Q,n}^{sc}) \ar@{>>}[r] & Y_{Q,n}/Y_{Q,n}^{sc}.
}$$
Apply $\Hom(-,\C^\times)$ and pull-back by the global Hilbert symbol 
$$\xymatrix{h_{\A}: F^\times\backslash \A^\times \ar[r] &\Hom(F^\times/n, \C^\times)}$$
given by $h_{\A}(b)=(b, a)_n$, we obtain our definition of $E_{2,\A}$:
$$\seq{Z(\wt{G}^\vee)}{E_{2, \A}}{F^\times\backslash \A^\times}.$$

\subsubsection{Local-global compatibility}

For each place $v$ of $F$, use the inclusion $\xymatrix{i_v: F \ar@{^(->}[r] & F_v}$ we could push-out the data $(Q, \mca{E}, \phi)$ to $(Q, \mca{E}_v, \phi_v)$, which is then the Brylinski-Deligne data associated with $\wm{G}_{/F_v}$, obtained from the base change of $\wm{G}$ via $i_v$. Here, $\mca{E}_v=(i_v)_*(\mca{E})$ and $\phi_v=i_v \circ \phi$.

Associated with $(Q, \mca{E}_v, \phi_v)$ there is the local fundamental extension $E_{\wt{G}_v}$ which sits in
$$\seq{Z(\wt{G}^\vee)}{E_{\wt{G}_v}}{F^\times_v}.$$

The construction of $E_{\A}$ above gives a canonical map $\xymatrix{E_{\wt{G}_v} \ar[r] & E_{\A}}$ such that the following diagram commutes:
\begin{equation} \label{lg FE}
\xymatrix{
Z(\wt{G}^\vee) \ar@{^(->}[r] \ar@{=}[d] & E_{\wt{G}_v} \ar[d] \ar@{>>}[r] & F^\times_v \ar@{^(->}[d] \\
Z(\wt{G}^\vee) \ar@{^(->}[r] & E_{\A} \ar@{>>}[r] & F^\times\backslash \A^\times,
}
\end{equation}
where the right hand side map is the natural inclusion. Clearly, instead of for the fundamental extensions, the local and global compatibility could be stated at the level of local and global $L$-groups. That is, we define the global $L$ group with respect to $W_F$ to be
$${}^L\wt{G}:= \pi^* \circ j_*^{\wt{G}^\vee}(E_{\A}),$$
where $\pi$ and $j^{\wt{G}^\vee}$ are the aforementioned maps from the beginning of this section. For all $v$, the following diagram commutes:
\begin{equation} \label{lg L-grp}
\xymatrix{
\wt{G}^\vee \ar@{^(->}[r] \ar@{=}[d] & {}^L\wt{G}_v \ar[d] \ar@{>>}[r] & \W_{F_v} \ar[d] \\
\wt{G}^\vee \ar@{^(->}[r] & {}^L\wt{G} \ar@{>>}[r] & W_F,
}
\end{equation}
where the right hand side is a certain natural map $\xymatrix{\W_{F_v} \ar[r] & \W_F}$, see \cite{Tat79}.

\chapter{Admissible splittings of the $L$-group}

\section{Subgroups of $\wt{G}$}

Assume that $F$ is a local field, unless stated otherwise. Let $\wt{G}$ be incarnated by $(D,\eta)$, and let $\wt{T}$ be the covering torus of $\wt{G}$. Then the center $Z(\wt{T})$ allows for a simple description as follows.

Let $\xymatrix{Y_{Q,n} \ar@{^(->}[r] & Y}$ be the inclusion, and let $\mbf{T}_{Q,n}$ be the torus associated with $Y_{Q,n}$ whose $F$-rational point is denoted by $T_{Q,n}$. Thus we have a natural map $\xymatrix{i_{Q,n}: T_{Q,n} \ar[r] & T}$ whose image we denote by $T^\dag$.

\begin{lm}[{\cite{We09}}]
The center $Z(\wt{T})$ is the preimage of $T^\dag$ in $\wt{T}$:
$$\xymatrix{
\mu_n \ar@{^(->}[r] & \wt{T} \ar@{>>}[r] &T \\
\mu_n \ar@{=}[u] \ar@{^(->}[r] & Z(\wt{T}) \ar@{^(->}[u] \ar@{>>}[r] &T^\dag \ar@{^(->}[u]\ .
}$$
\end{lm}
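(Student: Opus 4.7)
The plan is to convert the claim into a computation with the commutator pairing on $T$. Since $\mu_n$ is central in $\wt T$, the group commutator $[-,-]$ descends to a $\Z$-bilinear pairing $c\colon T\times T\to\mu_n$, and the explicit group law (\ref{Ftlaw1}) gives
\[
c(y\otimes a,\ y'\otimes b)=(a,b)_n^{B_Q(y,y')}.
\]
A lift of $t\in T$ to $\wt T$ is central if and only if $c(t,-)\equiv 1$, and this condition is independent of the chosen lift. Hence $Z(\wt T)$ is exactly the full preimage of the subgroup $T_c:=\{t\in T:c(t,-)\equiv 1\}$, and everything reduces to proving $T_c=T^\dag$.

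The inclusion $T^\dag\subseteq T_c$ is immediate from the very definition of $Y_{Q,n}$: writing $t=\sum_i y_i\otimes a_i$ with $y_i\in Y_{Q,n}$, one has $B_Q(y_i,y')\in n\Z$ for every $y'\in Y$, so $(a_i,b)_n^{B_Q(y_i,y')}=1$ on every factor and $c(t,y'\otimes b)=1$ for every generator $y'\otimes b$ of $T$.

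For the reverse inclusion, suppose $t=\sum_i y_i\otimes a_i\in T_c$. Testing against $t'=y'\otimes b$ gives $\bigl(\prod_i a_i^{B_Q(y_i,y')},\, b\bigr)_n=1$ for every $b\in F^\times$, and nondegeneracy of the $n$-th Hilbert symbol on $F^\times/(F^\times)^n$ (valid because $\mu_n\subseteq F^\times$) forces $\prod_i a_i^{B_Q(y_i,y')}\in(F^\times)^n$ for all $y'\in Y$. Tensoring the short exact sequence $0\to Y_{Q,n}\to Y\to Y/Y_{Q,n}\to 0$ with $F^\times$ identifies $T/T^\dag$ with $(Y/Y_{Q,n})\otimes F^\times$; since $nY\subseteq Y_{Q,n}$ makes $Y/Y_{Q,n}$ a finite $n$-torsion group, this equals $(Y/Y_{Q,n})\otimes F^\times/(F^\times)^n$. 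It remains to show that the image of $t$ in this group vanishes.

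By the construction of $Y_{Q,n}$ and symmetry of $B_Q$, the reduction $\bar B_Q$ descends to a perfect symmetric pairing $(Y/Y_{Q,n})\times(Y/Y_{Q,n})\to\Z/n\Z$ on this finite group. Combined with the Hilbert symbol on $V:=F^\times/(F^\times)^n$, this yields a bilinear form on $(Y/Y_{Q,n})\otimes V$ with respect to which the condition derived above says precisely that $t$ pairs trivially with everything. The main obstacle is verifying that this composite pairing is itself nondegenerate: this is not a formal consequence of the nondegeneracy of the two factor pairings, and reduces to the canonical identification $V[m]\simeq V/mV$ for $m\mid n$, which in turn rests on local class field theory (equivalently, explicit Kummer theory using $\mu_n\subseteq F^\times$). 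Granted this last point, the image of $t$ in $T/T^\dag$ must be zero, so $t\in T^\dag$ and $Z(\wt T)$ coincides with the preimage of $T^\dag$, completing the proof.
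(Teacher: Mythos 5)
The paper gives no proof here --- it simply cites \cite{We09} --- so there is no in-text argument to compare against; but your route (descend the commutator to a $\mu_n$-valued pairing $c$ on $T$, identify $Z(\wt T)$ with the preimage of the radical $T_c$, then prove $T_c = T^\dag$ by combining Hilbert-symbol nondegeneracy with the defining property of $Y_{Q,n}$) is the natural one and is, to my knowledge, exactly the argument in the cited source. Your reduction is correct: passing to $\bar Y := Y/Y_{Q,n}$, the iso $\bar B^\flat\colon \bar Y\xrightarrow{\sim}\bar Y^\vee$, and the Hilbert-symbol nondegeneracy convert the condition $t\in T_c$ into the vanishing of the image of $\bar t$ under the natural map $\bar Y^\vee\otimes V\to\Hom(\bar Y,V)$, and what remains is the injectivity of that map.

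One small imprecision in your phrasing of the remaining step: stating it as the ``canonical identification $V[m]\simeq V/mV$'' undersells what is actually needed, because \emph{any} finite abelian group has $|V[m]|=|V/mV|$, so an abstract isomorphism costs nothing. The fact you actually use (after reducing $\bar Y$ to cyclic summands $\Z/d$) is that the specific map $V/mV\to V[m]$, $\bar v\mapsto (n/m)\cdot v$, is an isomorphism --- equivalently that $(n/m)V = V[m]$ as subgroups of $V$. This does hold for $V=F^\times/(F^\times)^n$ with $\mu_n\subseteq F^\times$: writing $m'=n/m$, one checks $\{x\in F^\times: x^{m}\in (F^\times)^n\}=(F^\times)^{m'}$ using $\mu_{m}\subseteq (F^\times)^{m'}$ (extract an $n$-th root of unity and raise it to the $m'$), and injectivity of $x\mapsto x^{m'}$ modulo $n$-th powers follows by the same token with $m$ and $m'$ interchanged. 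Granting this, your argument is complete.
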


Now we define $\wt{T}_{Q,n}:=i_{Q,n}^*(Z(\wt{T}))$ to be the pull-back of $Z(\wt{T})$ via $i_{Q,n}$:
$$\xymatrix{
\mu_n \ar@{^(->}[r] & Z(\wt{T}) \ar@{>>}[r] &T^\dag \\
\mu_n \ar@{=}[u] \ar@{^(->}[r] & \wt{T}_{Q,n} \ar@{>>}[u]_-{\wt{i}_{Q,n}} \ar@{>>}[r] &T_{Q,n} \ar@{>>}[u]_-{i_{Q,n}}.
}$$

That is, elements of $\wt{T}_{Q,n}$ are of the form $\big((\zeta, i_{Q,n}(t)),  t\big) \in Z(\wt{T}) \times T_{Q,n}$. Since $\wt{T}$ is assumed to be incarnated by $D$, we can write $\wt{T}=\mu_n \times_D T$ and therefore also $Z(\wt{T})=\mu_n \times_D T^\dag$.

From now, we will write $(\zeta, t)$ for $\big((\zeta, i_{Q,n}(t)),  t\big) \in \wt{T}_{Q,n}$. The group law on $\wt{T}_{Q,n}=\mu_n \times_D T_{Q,n}$  inherited from $Z(\wt{T})$ is thus given by
\begin{align} 
& (i) \quad \big[(\zeta_1, y_1\otimes a),  (\zeta_2, y_2\otimes b)\big]=1\text{ for } y_1, y_2 \in Y_{Q,n}; \label{TQn law1} \\
& (ii) \quad (\zeta_1, y_1\otimes a) \cdot (\zeta_2, y_2\otimes a)=\big(\zeta_1\zeta_2(a, a)_n^{D(y_1, y_2)}, (y_1+y_2)\otimes a\big); \label{TQn law2} \\
& (iii) \quad (\zeta_1, y\otimes a) \cdot (\zeta_2, y\otimes b)=\big(\zeta_1\zeta_2(a, b)_n^{Q(y)}, y\otimes (ab)\big). \label{TQn law3}
\end{align}

Clearly, there is a canonical isomorphism $\xymatrix{\text{Ker}(i_{Q,n}) \ar[r] & \text{Ker}(\wt{i}_{Q,n})}$ given by
\begin{equation} \label{kernel i-Qn}
\xymatrix{
t \in \text{Ker}(i_{Q,n}) \ar@{|->}[r] & (1, t) \in \text{Ker}(\wt{i}_{Q,n}).
}
\end{equation}

It is desirable to have a simple description of $\text{Ker}(\wt{i}_{Q,n})$. However, we do not have an explicit one and the following consideration would be helpful, at least for discussions in next section.

Consider the three lattices
$$\xymatrix{
nY \ar@{^(->}[r] & Y_{Q,n} \ar@{^(->}[r] &Y 
}$$
and the induced isogenies
$$\xymatrix{
T_{nY} \ar[r]^-{i_{nY}} & T_{Q,n} \ar[r]^-{i_{Q,n}} &T,
}$$
where $\mbf{T}_{nY}$ is the torus defined over $F$ associated with $nY$ and $T_{nY}:=\mbf{T}_{nY}(F)$.

\begin{lm}
The following inclusion holds:
$$\text{Ker}(i_{Q,n}) \subseteq \text{Im}(i_{nY}).$$
\end{lm}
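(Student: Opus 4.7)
The plan is to diagonalize the lattice inclusions by the elementary divisor theorem, reduce the statement to a coordinate-wise assertion about $F^\times$, and then apply the hypothesis $\mu_n \subseteq F^\times$.

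First I would use the fact that $nY \subseteq Y_{Q,n} \subseteq Y$ are free $\Z$-modules of the same finite rank $r$ to choose, via the Smith normal form, a $\Z$-basis $e_1, \dots, e_r$ of $Y$ and positive integers $d_1 | d_2 | \cdots | d_r$ such that $\set{d_i e_i}$ is a $\Z$-basis of $Y_{Q,n}$. Because $nY\subseteq Y_{Q,n}$, each $ne_i$ lies in $Y_{Q,n}$, so $d_i\mid n$, and I can write $n = d_i k_i$ with $k_i\in \N_{\ge 1}$.

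Next I would make the three isogenies explicit in these coordinates. Using $T_L(F) = L\otimes_{\Z} F^\times$ for $L \in \set{Y, Y_{Q,n}, nY}$, the bases above identify each of the groups $T(F), T_{Q,n}(F), T_{nY}(F)$ with $(F^\times)^r$, and under these identifications
\begin{align*}
i_{Q,n}\colon (a_1,\dots,a_r) &\mapsto (a_1^{d_1},\dots,a_r^{d_r}), \\
i_{nY}\colon (a_1,\dots,a_r) &\mapsto (a_1^{k_1},\dots,a_r^{k_r}),
\end{align*}
since $(d_i e_i)\otimes a$ is sent to $d_i e_i\otimes a = e_i\otimes a^{d_i}$ in $Y\otimes F^\times$, and similarly $ne_i\otimes a = (d_i e_i)\otimes a^{k_i}$ in $Y_{Q,n}\otimes F^\times$. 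Consequently
$$\text{Ker}(i_{Q,n}) = \prod_{i=1}^r \mu_{d_i}(F), \qquad \text{Im}(i_{nY}) = \prod_{i=1}^r (F^\times)^{k_i}.$$

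The final step, which is the only place where the hypothesis enters, is to verify $\mu_{d_i}(F) \subseteq (F^\times)^{k_i}$ for each $i$. Since $\mu_n \subseteq F^\times$ and $d_i k_i = n$, the $k_i$-th power map $\mu_n \to \mu_{d_i}$ is surjective, so every $d_i$-th root of unity in $F$ is the $k_i$-th power of an element of $\mu_n \subseteq F^\times$. This yields the desired inclusion component-wise, completing the argument. There is no genuine obstacle; the only subtlety worth noting is that the conclusion genuinely uses $\mu_n \subseteq F^\times$ rather than just $\mu_n \subseteq \bar F^\times$, and the Smith normal form cleanly reduces the problem to this classical fact.
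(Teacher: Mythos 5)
Your proof is correct and matches the paper's argument essentially step for step: both invoke the elementary divisor theorem to align bases of $Y$ and $Y_{Q,n}$, identify the isogenies as coordinatewise power maps on $(F^\times)^r$, and then observe that the kernel consists of roots of unity hitting the image because $\mu_n\subseteq F^\times$. Your version is if anything a bit cleaner in isolating the surjectivity of $\mu_n\to\mu_{d_i}$ as the exact point where the hypothesis is used.
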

\begin{proof}
Note that the composition $i_{Q,n} \circ i_{nY}$ is the $n$-th power map of $T$. By elementary divisor theorem, let $\set{e_i}$ be a basis of $Y$ such that $\set{m_i e_i}$ is a basis of $Y_{Q,n}$ where $i=1, 2,..., r$. Then $\set{ne_i}$ is a basis of $nY$, and $m_i|n$.
Then the isogenies $\xymatrix{ T_{nY} \ar[r]^-{i_{nY}} & T_{Q,n} \ar[r]^-{i_{Q,n}} &T }$ could be identified with $\xymatrix{ \prod_i F^\times \ar[r]^-{i_{nY}} & \prod_i F^\times \ar[r]^-{i_{Q,n}} & \prod_i F^\times }$, whose $i$-th component maps are given by
$$\xymatrix{
a_i \ar@{|->}[r] & a_i^{n/m_i} \ar@{|->}[r] & a_i^n, \quad a_i \in F^\times.
}$$
So $\text{Ker}(i_{Q,n})$ is generated by $(m_ie_i)\otimes \zeta_{m_i} \in Y_{Q,n}\otimes F^\times$, where $\zeta_{m_i}$ is some $m_i$-th root of unity. But we see that there always exists $n$-th root of unity $\zeta_n\in F^\times$  such that $(ne_i)\otimes \zeta_{n} \in T_{nY}$ is mapped to $(m_ie_i)\otimes \zeta_{m_i}$. It follows $\text{Ker}(i_{Q,n}) \subseteq \text{Im}(i_{nY})$.
\end{proof}

Define a map $s_n$ on the generators of $T_{nY}$ by
$$\xymatrix{
s_n: \quad T_{nY} \ar[r] & \wt{T}_{Q,n}, \quad  (ny)\otimes a \ar@{|->}[r] &(1, (ny)\otimes a) \in \wt{T}_{Q,n}, \quad y\in Y.
}$$

By checking the relations in (\ref{TQn law1})-(\ref{TQn law3}), it follows from the observation $\text{Ker}(s_n)=\text{Ker}(i_{nY})$ that we have the following.

\begin{lm} \label{s_n}
The map $s_n$ is a homomorphism, and therefore gives a splitting of $\text{Im}(i_{nY})$ into $\wt{T}_{Q,n}$, which extends the canonical isomorphism $\xymatrix{\text{Ker}(i_{Q,n}) \ar[r] & \text{Ker}(\wt{i}_{Q,n})}$ given by (\ref{kernel i-Qn}). That is, the following diagram commutes:
$$\xymatrix{
\mu_n \ar@{^(->}[r] &\wt{T}_{Q,n} \ar@{>>}[r] & T_{Q,n} \\
& \text{Ker}(\wt{i}_{Q,n}) \ar@{^(->}[u] & \text{Im}(i_{nY}) \ar@{^(-->}[lu]^-{s_n} \ar@{^(->}[u] \\
& & \text{Ker}(i_{Q,n}) \ar[lu]^-{\text{can iso}}\ar@{^(->}[u] \ .
}$$ 
\end{lm}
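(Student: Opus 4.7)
The plan is to verify the three claims in turn: that $s_n$ is a group homomorphism on $T_{nY}$, that its kernel is precisely $\text{Ker}(i_{nY})$ so that it descends to an injective splitting on $\text{Im}(i_{nY})$, and finally that the resulting splitting restricts to the canonical isomorphism on $\text{Ker}(i_{Q,n})$. Since $T_{nY}$ and $\wt{T}_{Q,n}$ are both abelian (the latter lying inside $Z(\wt{T})$), the commutator relation (\ref{TQn law1}) is automatic, so the content is in the relations (\ref{TQn law2}) and (\ref{TQn law3}).

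First I would verify multiplicativity. Any generator of $T_{nY}$ has the form $(ny)\otimes a$ with $y\in Y$. Using (\ref{TQn law2}), for $y_1,y_2\in Y$ and $a\in F^\times$,
\[
s_n\big((ny_1)\otimes a\big)\cdot s_n\big((ny_2)\otimes a\big)=\big((a,a)_n^{D(ny_1,ny_2)},\,n(y_1+y_2)\otimes a\big),
\]
and since $D(ny_1,ny_2)=n^2 D(y_1,y_2)$ is divisible by $n$, the Hilbert symbol is trivial, so this equals $s_n\big(n(y_1+y_2)\otimes a\big)$. Similarly, using (\ref{TQn law3}),
\[
s_n\big((ny)\otimes a\big)\cdot s_n\big((ny)\otimes b\big)=\big((a,b)_n^{Q(ny)},\,(ny)\otimes(ab)\big),
\]
and $Q(ny)=n^2 Q(y)$ is again divisible by $n$, so this equals $s_n\big((ny)\otimes(ab)\big)$. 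These are precisely the tensor-product relations defining $T_{nY}$, so $s_n$ is a well-defined homomorphism.

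Next, the kernel of $s_n$: an element $t\in T_{nY}$ lies in $\ker(s_n)$ iff $(1,t)=1$ in $\wt{T}_{Q,n}$, and since the projection $\wt{T}_{Q,n}\twoheadrightarrow T_{Q,n}$ sends $(1,t)$ to the image $i_{nY}(t)$, this happens iff $i_{nY}(t)=1$. Thus $\ker(s_n)=\ker(i_{nY})$, so $s_n$ factors through an injection $\text{Im}(i_{nY})\hookrightarrow \wt{T}_{Q,n}$ which is a section of $\wt{i}_{Q,n}$ above $\text{Im}(i_{nY})\subseteq T_{Q,n}$. Finally, for the extension claim, take $t\in \ker(i_{Q,n})$; by the preceding lemma $t=i_{nY}(t')$ for some $t'\in T_{nY}$, and the induced splitting sends $t$ to $s_n(t')=(1,i_{nY}(t'))=(1,t)$, matching the canonical isomorphism (\ref{kernel i-Qn}). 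I expect no serious obstacle: the entire argument hinges on the simple observation that $n\mid n^2$, so there is nothing subtle to overcome beyond bookkeeping in the group law (\ref{TQn law1})--(\ref{TQn law3}).
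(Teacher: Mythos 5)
Your proof is correct and fills in exactly the details that the paper sketches in its one-line remark: you verify the generator relations from (\ref{TQn law2})--(\ref{TQn law3}) using $n\mid n^2D(y_1,y_2)$ and $n\mid n^2Q(y)$, identify $\text{Ker}(s_n)=\text{Ker}(i_{nY})$, and check via the preceding lemma that the induced splitting restricts to (\ref{kernel i-Qn}) on $\text{Ker}(i_{Q,n})$. This matches the paper's intended argument.
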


Thus the homomorphism $s_n$ also induces a splitting of $\text{Im}(i_{Q,n}\circ i_{nY}) \subseteq T^\dag$ into $Z(\wt{T})$, which by abuse of notation is also written as $s_n$. Note $\text{Im}(i_{Q,n}\circ i_{nY})=\set{t^n: t\in T}$.

\section{Admissible splittings of the $L$-group}

We are interested in splittings of ${}^L\wt{G}$ over $\W_F$, with respect to which we have ${}^L\wt{G}\simeq \wt{G}^\vee \times \W_F$. For example, we will see that ${}^L\wt{T}$ always splits and we have a local Langlands correspondence between splittings of ${}^L\wt{T}$ and genuine characters of the center $Z(\wt{T})$.

Let $\WD_F:=\mbf{SL}_2(\C)\times \W_F$ be the Weil-Deligne group. For splittings of ${}^L\wt{G}$ over $\W_F$, we have

\begin{dfn} \label{adm splitting}
An $L$-parameter is just a splitting $\rho$ of ${}^L\wt{G}$ over $\W_F$:
$$\xymatrix{
\wt{G}^\vee \ar@{^{(}->}[r]  &{}^L\wt{G} \ar@{>>}[r] & \W_F \ar@/^1pc/[l]^-{\rho},
}$$
\noindent while a Weil-Deligne parameter is a homomorphism $\xymatrix{\WD_F \ar[r] & {}^L\wt{G} }$ such that the following diagram commutes
$$\xymatrix{
\wt{G}^\vee \ar@{^(->}[r] &  {}^L\wt{G} \ar@{>>}[r] &\W_F \\
& & \WD_F \ar[lu] \ar@{>>}[u].
}$$
The vertical surjection is the projection of $\WD_F$ onto its second component. Any $L$-parameter $\rho$ is called admissible if and only if it gives an isomorphism
$${}^L\wt{G}\simeq \wt{G}^\vee \times_\rho \W_F$$
with the inverse map given by $\xymatrix{(g^\vee, x) \ar@{|->}[r] & g^\vee \cdot \rho(x)}$ for $g^\vee \in \wt{G}^\vee, x\in \W_F$.
\end{dfn} \index{parameter ! $L$-} \index{parameter ! Weil-Deligne} \index{admissible splitting}

Write $\mfr{S}^a({}^L\wt{G}, \W_F)$ for the set of admissible splittings. Then 

\begin{lm}
A splitting $\rho$ lies in $\mfr{S}^a({}^L\wt{G}, \W_F)$ if and only if its image lies in the centralizer $Z_{{}^L\wt{G}} (\wt{G}^\vee)\simeq \Rec^* (E_{\wt{G}})$ of $\wt{G}^\vee$ in ${}^L\wt{G}$. That is, we have the isomorphism $\mfr{S}^a({}^L\wt{G}, \W_F) \simeq \mfr{S}(E_{\wt{G}}, F^\times)$ consisting of splittings of $E_{\wt{G}}$ over $F^\times$. It is clearly a $\Hom(\W_F, Z(\wt{G}^\vee))$-torsor.
\end{lm}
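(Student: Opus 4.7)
The plan is to verify the three assertions of the lemma in sequence: the admissibility-centralizer criterion, the identification of the centralizer, and the bijection with $\mfr{S}(E_{\wt{G}},F^\times)$ together with its torsor structure.

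First, for the admissibility criterion I unpack Definition \ref{adm splitting}. Any splitting $\rho$ produces a set-bijection $\wt{G}^\vee\times\W_F\to{}^L\wt{G}$ via $(g^\vee,x)\mapsto g^\vee\cdot\rho(x)$ (bijectivity is immediate by projecting to $\W_F$). A direct comparison of products shows this map is a homomorphism (for the direct product structure on the source) precisely when $\rho(x)$ commutes with every $g^\vee\in\wt{G}^\vee$, that is, $\mathrm{Im}(\rho)\subseteq Z_{{}^L\wt{G}}(\wt{G}^\vee)$.

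Second, I identify the centralizer using the pushout presentation ${}^L\wt{G}=(\wt{G}^\vee\times\Rec^*(E_{\wt{G}}))/\nabla Z(\wt{G}^\vee)$. Since multiplication in the pushout is componentwise, the natural map $j\colon\Rec^*(E_{\wt{G}})\hookrightarrow{}^L\wt{G}$, $e\mapsto[1,e]$, lands in $Z_{{}^L\wt{G}}(\wt{G}^\vee)$. For the reverse inclusion, suppose $[g^\vee,e]$ centralizes $\wt{G}^\vee$; testing against $[h^\vee,1]$ and comparing the two representatives in the quotient forces $z=1$ in the anti-diagonal relation, and hence $g^\vee h^\vee=h^\vee g^\vee$ for all $h^\vee\in\wt{G}^\vee$, so $g^\vee\in Z(\wt{G}^\vee)$. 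Applying the anti-diagonal with $z=(g^\vee)^{-1}$ then yields $(g^\vee,e)\sim(1,e\cdot g^\vee)$, so $[g^\vee,e]\in j(\Rec^*(E_{\wt{G}}))$.

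Third, splittings of ${}^L\wt{G}$ over $\W_F$ with image in the centralizer correspond bijectively to splittings of $\Rec^*(E_{\wt{G}})\twoheadrightarrow\W_F$, which by the pullback universal property are homomorphisms $\W_F\to E_{\wt{G}}$ lifting $\Rec$. The crucial input here is that $E_{\wt{G}}$ is abelian: the commutator in $E_1$ of lifts of $a,b\in F^\times$ evaluates to $y\mapsto(a,b)_n^{2Q(y)}$, which is trivial because $2Q(y)=B_Q(y,y)\in n\Z$ for $y\in Y_{Q,n}$; meanwhile $E_2$ sits inside $\Hom(-,\C^\times)\times F^\times$ and is visibly abelian; so the Baer sum $E_{\wt{G}}=E_1\oplus_B E_2$ is abelian. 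Therefore any homomorphism $\W_F\to E_{\wt{G}}$ factors through $\W_F^{\ab}\simeq F^\times$ by Artin reciprocity, giving a splitting of $E_{\wt{G}}\to F^\times$; the reverse assignment is precomposition with $\Rec$, and these are mutually inverse. The $\Hom(\W_F,Z(\wt{G}^\vee))$-torsor structure then follows from the standard fact that splittings of a central extension form a torsor under $\Hom$ from the quotient into the kernel, using $\Hom(F^\times,Z(\wt{G}^\vee))=\Hom(\W_F,Z(\wt{G}^\vee))$ since $Z(\wt{G}^\vee)$ is abelian. The main obstacle I anticipate is the check that $E_{\wt{G}}$ is abelian; everything else is formal manipulation with pushout and pullback diagrams.
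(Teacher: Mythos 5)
Your proof is correct and takes essentially the same route as the paper: show that admissibility forces $\rho$ into the centralizer of $\wt{G}^\vee$, identify that centralizer with $\Rec^*(E_{\wt{G}})$ via the pushout presentation, and then translate splittings of $\Rec^*(E_{\wt{G}})$ over $\W_F$ into splittings of $E_{\wt{G}}$ over $F^\times$ using the universal property of pullback together with the abelianness of $E_{\wt{G}}$. The only difference is one of detail: where the paper simply says ``tracing through the diagrams, it is easy to see,'' you explicitly perform the centralizer computation in the quotient $\bigl(\wt{G}^\vee \times \Rec^*(E_{\wt{G}})\bigr)/\nabla Z(\wt{G}^\vee)$ and verify that $E_{\wt{G}}$ is abelian (the key computation being $2Q(y)=B_Q(y,y)\in n\Z$ for $y\in Y_{Q,n}$, which kills the $E_1$-commutator), which is exactly the check needed to factor a homomorphism $\W_F\to E_{\wt{G}}$ through $\W_F^{\mathrm{ab}}\simeq F^\times$.
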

\begin{proof}
The assignment $\xymatrix{(g^\vee, x) \ar@{|->}[r] & g^\vee \cdot \rho(x)}$ is a homomorphism, then one must have
$$g_1^\vee \cdot \rho(x_1) g_2^\vee \cdot \rho(x_2)=g^\vee_1g_2^\vee \cdot \rho(x_1)\rho(x_2),$$
where $g_i^\vee \in \wt{G}^\vee$ and $x_i \in \W_F$ are arbitrary elements. In particular, $\rho(x)$ commutes with arbitrary element of $\wt{G}^\vee$. The construction of ${}^L\wt{G}$ is:
$$\xymatrix{
Z(\wt{G}^\vee) \ar@{^(->}[r] & E_{\wt{G}} \ar@{>>}[r] & F^\times \\
Z(\wt{G}^\vee) \ar@{=}[u] \ar@{^(->}[r] \ar@{^(->}[d]_-{j^{\wt{G}^\vee}} & \Rec^*(E_{\wt{G}}) \ar[u] \ar@{>>}[r] \ar[d] & \W_F \ar[u]^{\Rec} \ar@{=}[d] \\
\wt{G}^\vee \ar@{^(->}[r] & {}^L\wt{G} \ar@{>>}[r] & \W_F.
}$$
So admissible splittings are valued in $\Rec^*(E_{\wt{G}})$. Tracing through the diagrams, it is easy to see that we have the canonical isomorphisms
$$\mfr{S}^a({}^L\wt{G}, \W_F) \simeq \mfr{S}(\Rec^*(E_{\wt{G}}), \W_F) \simeq \mfr{S}(E_{\wt{G}}, F^\times).$$
\end{proof}

\subsection{Conditions on the existence of admissible splittings}

The goal of this section is on some subsets of  $\Hom_\epsilon (Z(\wt{T}), \C^\times)\subseteq \Hom_\epsilon (\wt{T}_{Q,n}, \C^\times)$, which are called qualified or distinguished characters (to be defined later) and which give rise to admissible splittings of $\mfr{S}^a({}^L\wt{G}, \W_F)$.

Before we proceed, we give some general discussion on the conditions for the existence of admissible splittings. Recall the construction of $E_{\wt{G}}$ as
$$E_{\wt{G}} =E_{2, \wt{G}} \oplus_B E_{1, \wt{G}}.$$ 
We have identified $E_{2,\wt{G}}$ with
$$\set{([P_a], a) \in \Hom\big(\mca{E}_{Q,n}\big/\phi_{D,\eta}\circ \s(Y_{Q,n}^{sc}), \C^\times\big) \times F^\times: [P_a]|_{F^\times/n} =h_a},$$

\noindent where $[P_a]\big((b, y)\big)=(b, a)_n \cdot P_a(y)$ for some $\C^\times$-valued function $P_a$ on $Y_{Q,n}$. The conditions on $[P_a]$ translate into
\begin{align}
& (c1) \quad [P_a]\circ \phi_{D,\eta} \big(\s(\alpha^\vee_{[n]})\big)=1,  \alpha \in \Psi, \\
& (c2) \quad P_a(y_1 +y_2)=P_a(y_1) \cdot P_a(y_2) \cdot (a, a)_n^{D(y_1, y_2)} .
\end{align}

The cocycle of $E_{1,\wt{G}}$ actually takes value in the $2$-torsion of $Z(\wt{G})$, i.e. $\Hom(Y_{Q,n}/Y_{Q,n}^{sc}, \mu_2)$. 
Therefore, any section of $E_{\wt{G}}$ over $F^\times$ given by
$$\xymatrix{a \ar@{|->}[r] & ([P_a], a)\oplus_B (1, a) \in E_{\wt{G}} }$$
is a splitting if and only if the section $a\mapsto ([P_a], a)$ of $E_{2,\wt{G}}$ over $F^\times$ produces a cocycle the same as that on $E_{1,\wt{G}}$. That is, to translate this into $P_a$, we must have
\begin{align}
& (c3) \quad P_{ac}(y)=P_a(y)\cdot P_c(y) \cdot (a, c)_n^{Q(y)}, \quad y\in Y_{Q,n}.
\end{align}

Note that in particular $(c2)$ and $(c3)$ imply $P_{a^m}(y)=P_a(y^m)$. Consider the covering group $\wt{T}_{Q,n}$ and define a map $\xymatrix{\wt{\chi}_P: \wt{T}_{Q,n} \ar[r] & \C^\times}$ on generators of $\wt{T}_{Q,n}$ by
$$\xymatrix{
\wt{\chi}_P: \quad (\xi, y\otimes a) \ar@{|->}[r] & \xi \cdot [P_a]\big((1, y) \big)=\xi \cdot P_a(y).
}$$

Then $\wt{\chi}_P \in \Hom_\epsilon(\wt{T}_{Q,n}, \C^\times)$ is a well-defined genuine character from $(c2)$ and $(c3)$. That is, admissible splittings of ${}^L\wt{G}$ over $\W_F$ correspond to genuine characters of $\Hom_\epsilon(\wt{T}_{Q,n}, \C^\times)$ satisfying a certain condition $(C1)$ derived from $(c1)$.  We will state the condition $(C1)$ explicitly later.

Before we proceed, we observe that in general characters defined on the external $\wt{T}_{Q,n}$ do not reflect the ambient group $\wt{G}$. Recall the definition of $\wt{T}_{Q,n}$ as the pull-back:
$$\xymatrix{
\mu_n \ar@{^(->}[r] & Z(\wt{T}) \ar@{>>}[r] &T^\dag \\
\mu_n \ar@{=}[u] \ar@{^(->}[r] & \wt{T}_{Q,n} \ar@{>>}[u]_-{\wt{i}_{Q,n}} \ar@{>>}[r] &T_{Q,n} \ar@{>>}[u]_-{i_{Q,n}}.
}$$
If one reverses the discussion above and starts with a character on $\wt{T}_{Q,n}$ which gives rise to an admissible splitting of ${}^L\wt{G}$, it is natural to require that it descends to $Z(\wt{T})$. That is, we are interested in characters $\wt{\chi}_P \in \Hom_\epsilon(\wt{T}_{Q,n}, \C^\times)$ which satisfy the following condition in addition to $(C1)$:
\begin{align}
(C0) \quad \wt{\chi}_P\big( (1, t) \big)=1 \text{ for any } t \in \text{Ker} (i_{Q,n}).
\end{align}

To proceed, we first state a very useful result for explicating the condition $(C1)$ and also for the GK formula later. Let $\wt{\chi} \in \Hom_\epsilon(Z(\wt{T}), \C^\times)$ be an arbitrary genuine character of $Z(\wt{T})$. For any $a\in F^\times$, it gives rise to an element $([P_a], a) \oplus_B (1, a)\in E_{\wt{T}}$ where $[P_a]\in \Hom(\mca{E}_{Q,n}, \C^\times)$ is given by
\begin{equation} \label{wt-chi to [P]}[P_a]
\big( (1, y) \big):=\wt{\chi}\big( (1, y\otimes a)\big).
\end{equation}

Recall the natural map $\xymatrix{\Phi_{D,\eta}: \wm{T}^{sc} \ar[r] & \wm{T}}$, which by abuse of notation is also used to denote the induced map $\xymatrix{\wt{T}^{sc} \ar[r] & \wt{T}}$. For any $a\in F^\times$ and any $\alpha \in \Psi$, let $\wm{h}^{[b]}_\alpha(a) \in \wt{T}^{sc}$ be the element of the canonical section which a priori depends on $b \in F^\times$.

Then we have
\begin{prop} \label{s-eta}
The element $\wm{h}^{[b]}_\alpha(a^{n_\alpha}) \in \wt{T}^{sc}$ is independent of $b\in F^\times$, and thus we could omit $b$ for notational simplicity and just write $\wm{h}_\alpha(a^{n_\alpha})$ for it. More importantly, for any $\alpha \in \Psi$ and any $a\in F^\times$,
$$\wt{\chi} \circ \Phi_{D,\eta}\big(\wm{h}_\alpha(a^{n_\alpha}) \big)=[P_a] \circ \phi_{D,\eta} \big(\s(\alpha^\vee_{[n]})\big),$$
where $[P_a]$ is associated with $\wt{\chi}$ by (\ref{wt-chi to [P]}).
\end{prop}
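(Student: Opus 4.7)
The plan is to prove the two assertions in turn, handling the simple-root case first and then extending.

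First I will establish the independence of $b$. Invoking property (\ref{h ppty2}) and pushing to the topological cover via the Hilbert symbol, we obtain
$$\wm{h}^{[db]}_\alpha(a^{n_\alpha}) = \wm{h}^{[b]}_\alpha(a^{n_\alpha}) \cdot (d, a^{n_\alpha})_n^{Q(\alpha^\vee)} = \wm{h}^{[b]}_\alpha(a^{n_\alpha}) \cdot (d, a)_n^{n_\alpha Q(\alpha^\vee)}.$$
Since $n_\alpha Q(\alpha^\vee) = n \cdot Q(\alpha^\vee)/\text{gcd}(Q(\alpha^\vee),n)$ is divisible by $n$, the Hilbert symbol power is trivial, and so $\wm{h}_\alpha(a^{n_\alpha}) := \wm{h}^{[b]}_\alpha(a^{n_\alpha})$ is a well-defined element independent of $b$.

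For the main identity, I will first treat the case $\alpha \in \Delta$. Applying (\ref{Phi-eta}) with $a$ replaced by $a^{n_\alpha}$ and using the same vanishing observation to dispose of the $b$-term yields
$$\Phi_{D,\eta}\big(\wm{h}_\alpha(a^{n_\alpha})\big) = \big((\eta(\alpha^\vee)^{n_\alpha}, a)_n,\ \alpha^\vee_{[n]} \otimes a\big) = \big((\eta(\alpha^\vee_{[n]}), a)_n,\ \alpha^\vee_{[n]} \otimes a\big) \in \wt{T},$$
where I have used that $\eta$ is a homomorphism on $Y^{sc}$. Since $\alpha^\vee_{[n]}\in Y_{Q,n}$, this lies in $Z(\wt{T})$, so applying $\wt{\chi}$ and invoking (\ref{wt-chi to [P]}) gives
$$\wt{\chi}\circ \Phi_{D,\eta}\big(\wm{h}_\alpha(a^{n_\alpha})\big) = (\eta(\alpha^\vee_{[n]}), a)_n \cdot P_a(\alpha^\vee_{[n]}).$$
On the other side, Corollary \ref{phi} identifies $\phi_{D,\eta}(\s(\alpha^\vee_{[n]})) = (\eta(\alpha^\vee_{[n]}), \alpha^\vee_{[n]})$ in $\mca{E}_{Q,n}$, and since $[P_a]\big((b,y)\big) = h_a(b)\cdot P_a(y)$ by construction of $E_{2,\wt G}$, we compute
$$[P_a]\circ \phi_{D,\eta}\big(\s(\alpha^\vee_{[n]})\big) = (\eta(\alpha^\vee_{[n]}), a)_n \cdot P_a(\alpha^\vee_{[n]}),$$
matching the previous expression.

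The hard part will be extending this to non-simple $\alpha \in \Psi$, since both (\ref{Phi-eta}) and Corollary \ref{phi} were stated only for $\Delta$. The strategy is to reduce by Weyl conjugation: writing $\alpha = s_\beta(\gamma)$ with $\gamma \in \Delta$, the relation
$$\elt{\wm{h}_\alpha(\tau^{n_\alpha})} = \elt{\wm{h}_\gamma(\tau^{n_\gamma})} \cdot \elt{\wm{h}_\beta(\tau^{n_\alpha})}^{-\angb{\beta}{\gamma^\vee}}$$
holds in $\mca{E}_{Q,n}^{sc}$ by the proof of Proposition \ref{s on sc} (the cocycle correction from \cite[(11.6.1)]{BD01} lying in $(F^\times)^n$), so applying $[P_a]\circ \phi_{D,\eta}$, which is a homomorphism, reduces the right-hand side of the target identity to the simple case. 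The analogous Tits-type relation at the level of $F$-rational points of $\wt{T}^{sc}$, together with the vanishing modulo $n$-th powers of $(\cdot,\cdot)_n^{n_\alpha Q(\alpha^\vee)}$ established above, reduces the left-hand side in parallel. Since $Q$ is Weyl-invariant, $n_\alpha = n_\gamma$, and the two reductions match term by term, completing the argument by induction on the length of a Weyl element carrying $\gamma$ to $\alpha$.
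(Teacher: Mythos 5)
Your proposal follows essentially the same route as the paper's proof: independence of $b$ via property (\ref{h ppty2}) together with the observation $n\mid n_\alpha Q(\alpha^\vee)$; the $\alpha\in\Delta$ base case via (\ref{Phi-eta}) and Corollary \ref{phi}; and reduction of the general case by Weyl conjugation using the Tits-type relation from \cite[\S 11.3]{BD01} and the compatibility of $[P_a]\circ\phi_{D,\eta}$ with that relation.

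There are, however, two indexing slips in the inductive step that as written would block the argument. First, the decomposition ``$\alpha = s_\beta(\gamma)$ with $\gamma\in\Delta$'' places the simple-root constraint on the wrong factor: for the induction on the length of a Weyl element conjugating $\alpha$ into $\Delta$ to close, it is the reflecting root $\beta$ that should be simple (so its case is a base case), while $\gamma$ should be a root covered by the inductive hypothesis (one step closer to $\Delta$), exactly as in the paper's phrasing $\gamma = s_\alpha(\beta)$ with $\alpha\in\Delta$. Second, the exponent on the $\wm{h}_\beta$ factor should be $n_\beta$, not $n_\alpha$: after using (\ref{n-<> dual}) to pass from $n_\gamma\angb{\beta}{\gamma^\vee}$ to $n_\beta\angb{\gamma}{\beta^\vee}$, the $(a,a)_n$-correction coming from \cite[(11.6.1)]{BD01} has exponent $n_\beta^2\,\varepsilon(\cdot)Q(\beta^\vee)$ and vanishes modulo $n$-th powers precisely because $n\mid n_\beta Q(\beta^\vee)$; with $n_\alpha$ in its place the required divisibility is not automatic. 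With these two labels corrected, the reduction matches the paper's step for step.
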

\begin{proof}
First it follows from (\ref{h ppty2}) that for all $\alpha\in \Psi$,
$$\wm{h}^{[db]}_\alpha(a^{n_\alpha})= \wm{h}^{[b]}_\alpha(a^{n_\alpha}) \cdot (d, a^{n_\alpha})_n^{Q(\alpha^\vee)} = \wm{h}^{[b]}_\alpha(a^{n_\alpha}) \in \wt{T}^{sc}.$$
Thus the first assertion is proved.

It is clear that $\Phi_{D,\eta}\big(\wm{h}_\alpha(a^{n_\alpha}) \big) \in Z(\wt{T})$. Now we show the desired equality by first treating the case $\alpha \in \Delta$. If $\alpha\in \Delta$, by (\ref{Phi}), 
$$\Phi_{D,\eta}\big(\wm{h}_\alpha(a^{n_\alpha}) \big)=\big((\eta(\alpha^\vee), a^{n_\alpha})_n, \mbf{h}_\alpha(a^{n_\alpha}) \big) \in \wt{T}.$$
On the other hand, by (\ref{phi}), $\phi_{D,\eta} \big(\s(\alpha^\vee_{[n]})\big)=\big(\eta(n_\alpha \alpha^\vee), n_\alpha \alpha^\vee   \big) \in \mca{E}_{Q,n}$.
Thus,
\begin{align*}
\wt{\chi} \circ \Phi_{D,\eta}\big(\wm{h}_\alpha(a^{n_\alpha}) \big) &= (\eta(n_\alpha \alpha^\vee), a)_n \cdot \wt{\chi} \big( (1, \alpha^\vee_{[n]} \otimes a) \big) \\
&=(\eta(n_\alpha \alpha^\vee), a)_n \cdot [P_a]\big((1, \alpha^\vee_{[n]}) \big) \\
&=[P_a]\big((\eta(\alpha^\vee_{[n]}), \alpha^\vee_{[n]}) \big) \\
&=[P_a] \circ \phi_{D,\eta} \big(\s(\alpha^\vee_{[n]})\big).
\end{align*}
In general, let $\gamma \in \Psi$, we may use induction on the minimum of the lengths of $w\in W$ such that $w(\gamma) \in \Delta$. Thus assume $\gamma=s_\alpha(\beta)$ with $\alpha \in \Delta$ and the equality hold for $\beta$, i.e. $\wt{\chi} \circ \Phi_{D,\eta}\big(\wm{h}_\beta(a^{n_\beta}) \big)=[P_a] \circ \phi_{D,\eta} \big(\s(\beta^\vee_{[n]})\big).$

Note $Q(\gamma^\vee)=Q(\beta^\vee)$ and $n_\gamma=n_\beta$. By \cite[\S 11.3]{BD01},  we have the following equalities in $\wt{T}^{sc}$
\begin{align*} \label{eqn 1}
\wm{h}_\gamma(a^{n_\gamma}) &=\wm{h}_\beta(a^{n_\beta}) \cdot \wm{h}_\alpha(a^{-n_\beta \angb{\alpha}{\beta^\vee}}) \\
&=\wm{h}_\beta(a^{n_\beta}) \cdot \wm{h}_\alpha(a^{-n_\alpha \angb{\beta}{\alpha^\vee}}) \\
&=\wm{h}_\beta(a^{n_\beta}) \cdot \wm{h}_\alpha(a^{n_\alpha})^{-\angb{\beta}{\alpha^\vee}} \cdot (a^{n_\alpha}, a^{n_\alpha})_n^{\varepsilon(-\angb{\beta}{\alpha^\vee}) Q(\alpha^\vee)} \\
&=\wm{h}_\beta(a^{n_\beta}) \cdot \wm{h}_\alpha(a^{n_\alpha})^{-\angb{\beta}{\alpha^\vee}}.
\end{align*}

On the other hand, $\s(\gamma^\vee_{[n]})=\s(\beta^\vee_{[n]}) \cdot \s(\alpha^\vee_{[n]})^{-\angb{\beta}{\alpha^\vee}} \in \mca{E}_{Q,n}$. By the induction hypothesis on $\beta$ and the proved equality for $\alpha \in \Delta$, we see $\wt{\chi} \circ \Phi_{D,\eta}\big(\wm{h}_\gamma(a^{n_\alpha}) \big)=[P_a] \circ \phi_{D,\eta} \big(\s(\gamma^\vee_{[n]})\big)$. That is, the assertion holds for $\gamma$, and the proof is completed.
\end{proof}

Now we explicate or state the conditions $(C0), (C1)$ in equivalent or stronger forms.

\subsubsection{The condition $(C0)$}

In view of the inclusion $\text{Ker}(\wt{i}_{Q,n})\subset \text{Im}(s_n)$  of Lemma \ref{s_n}, there is the stronger condition
that could be imposed on $\wt{\chi}_P$:
\begin{align}
(C0)_+ \quad \wt{\chi}_P\circ s_n=\mbf{1}, \text{ or equivalently } \wt{\chi}_P(\wt{t})= 1 \text{ for all } \wt{t} \in \text{Im}(s_n) \subset \wt{T}_{Q,n}.
\end{align}

It takes an explicit form
\begin{align}
(C0)_+^\text{eq} \quad \wt{\chi}_P\big((1, (ny)\otimes a)\big)= 1 \text{ for all } (ny)\otimes a \in T_{Q,n} \text{ with } y\in Y.
\end{align}

\subsubsection{The condition $(C1)$}

To write $(C1)$ in explicit form, we would like to impose that $\wt{\chi}_P$ satisfies $(C0)$ in the first place. Then we could consider $\wt{\chi}_P$ as a genuine character of $Z(\wt{T})$.

Thus assuming $(C0)$ of $\wt{\chi}_P$, the condition $(c1)$ on $[P_a]$ translates into 
\begin{align}
& (C1)\quad \wt{\chi}_P \circ \Phi_{D,\eta}\big(\wm{h}_\alpha(a^{n_\alpha}) \big) =1, \text{ for all } \alpha \in \Psi,
\end{align}
or equivalently (from above proof) the same statement but only for $\alpha\in \Delta$, which takes the explicit form
\begin{align}
& (C1)^\text{eq} \quad \wt{\chi}_P \big( (1, \alpha^\vee_{[n]}\otimes a)\big) =\big(a, \eta(\alpha_{[n]}^\vee)\big)_n, \text{ for all } \alpha \in \Delta.
\end{align}

Here $(1, \alpha^\vee_{[n]}\otimes a)$ could be viewed either in $\wt{T}_{Q,n}$ or $Z(\wt{T})$ as we have assumed $(C0)$.

\begin{dfn} \label{key dfn} \index{character ! qualified} \index{character ! distinguished}
A genuine character $\wt{\chi}$ of $\wt{T}_{Q,n}$ is called \emph{qualified} if it satisfies the two conditions 
\begin{align}
& (C0)  \quad \wt{\chi}\big((1, t)\big)=1 \text{ for any } t \in \text{Ker} (i_{Q,n}), \\
& (C1)^\text{eq} \quad \wt{\chi} \big( (1, \alpha^\vee_{[n]}\otimes a)\big) =\big(a, \eta(\alpha_{[n]}^\vee)\big)_n, \text{ for all } \alpha \in \Delta.
\end{align}
Since $(C0)$ implies that $\wt{\chi}$ descends to $Z(\wt{T})$, thus an admissible character is a genuine character $\wt{\chi}$ of $Z(\wt{T})$ satisfying $(C1)^\text{eq}$. In this case, $(1, y\otimes a)$ and $(1, \alpha^\vee_{[n]}\otimes a)$ above are then viewed as elements in $Z(\wt{T})$.

A genuine character $\wt{\chi}$ of $\wt{T}_{Q,n}$ is called \emph{distinguished} if it satisfies
\begin{align}
& (C0)_+^\text{eq} \quad \wt{\chi}\big((1, (ny)\otimes a)\big)= 1 \text{ for all } (ny)\otimes a \in T_{Q,n} \text{ with } y\in Y, \\
& (C1)^\text{eq} \quad \wt{\chi} \big( (1, \alpha^\vee_{[n]}\otimes a)\big) =\big(a, \eta(\alpha_{[n]}^\vee)\big)_n, \text{ for all } \alpha \in \Delta.
\end{align}

Equivalently, a distinguished character is a genuine character of $Z(\wt{T})$ satisfying the same conditions $(C0)_+^\text{eq}$ and $(C1)^\text{eq}$, where $(1, y\otimes a)$ and $(1, \alpha^\vee_{[n]}\otimes a)$ above are then viewed as elements in $Z(\wt{T})$.
\end{dfn}

Write $\Hom^q_\epsilon(Z(\wt{T}), \C^\times)$ and $\Hom^d_\epsilon(Z(\wt{T}), \C^\times)$ for the set of qualified and distinguished characters respectively. Clearly, any distinguished character is qualified, and qualified characters give rise to admissible splittings of ${}^L\wt{G}$. We have the following relations:
$$\xymatrix{
\Hom^d_\epsilon(Z(\wt{T}), \C^\times) \ar@{^(->}[r] & \Hom^q_\epsilon(Z(\wt{T}), \C^\times) \ar@{^(->}[r] & \mfr{S}^a({}^L\wt{G}, \W_F) \ar@{^(->}[r] & \Hom_\epsilon(\wt{T}_{Q,n}, \C^\times).
}$$

There are obstructions to the existence of qualified or distinguished characters. For example, for the existence of distinguished characters one has the necessary condition that for all $\alpha\in \Delta$,
\begin{equation}
\begin{cases}
(a, \eta(\alpha^\vee_{[n]}))_n=1 \text{ if } \alpha^\vee_{[n]}\otimes a=1 \in T_{Q,n}, \\
(a, \eta(\alpha^\vee_{[n]}))_n=1 \text{ if } \alpha^\vee_{[n]} \otimes a= (ny)\otimes b \in T_{Q,n} \text{ for some } y\in Y, b\in F^\times. 
\end{cases}
\end{equation}

However, these conditions may not be sufficient, and in general we do not have an explicit description of the necessary and sufficient condition for the existence of distinguished characters.

Nevertheless, we could restrict to look at certain coverings $\wt{G}_{D,\eta}$ which arise from nicer incarnation object.

\begin{dfn} \index{bisector!fair}
A bisector $D$ is called fair if for all $\alpha^\vee \in \Delta^\vee$,
$$2|Q(\alpha^\vee) \text{ implies } 2|D(\alpha^\vee, y) \text{ for all } y\in Y.$$
We also call $(D, \eta)$ fair if $D$ is fair.
\end{dfn}

Regarding the existence, we have
\begin{prop}[{\cite[\S 2.5]{We13}}]
Any Weyl-invariant quadratic form $Q$ on $Y$ possesses a fair bisector defined above.
\end{prop}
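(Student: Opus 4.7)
The plan is to produce a fair bisector by modifying an arbitrary bisector by a suitable anti-symmetric bilinear form. First, a base bisector $D_0$ of $Q$ always exists: fix a $\Z$-basis $\{e_1,\ldots,e_r\}$ of $Y$ and set $D_0(e_i,e_j) = B_Q(e_i,e_j)$ for $i<j$, $D_0(e_i,e_i) = Q(e_i)$, and $D_0(e_i,e_j) = 0$ for $i>j$; extending $\Z$-bilinearly, a direct verification gives $D_0(y_1,y_2) + D_0(y_2,y_1) = B_Q(y_1,y_2)$ and $D_0(y,y) = Q(y)$. Furthermore, the set of all bisectors of $Q$ is a torsor under the abelian group of $\Z$-valued anti-symmetric bilinear forms on $Y$, since any two bisectors differ by such a form.

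Next I would reformulate fairness as a system of linear congruences modulo $2$. Let $\Delta^\vee_\text{ev} := \{\alpha^\vee \in \Delta^\vee : 2 \mid Q(\alpha^\vee)\}$. Then $D := D_0 + A$ is fair if and only if the anti-symmetric bilinear form $A$ satisfies
\[
A(\alpha^\vee, y) \equiv -D_0(\alpha^\vee, y) \pmod{2} \quad \text{for all } \alpha^\vee \in \Delta^\vee_\text{ev} \text{ and } y \in Y.
\]
The existence of such an $A$ is what needs to be proved.

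The key step, and the main obstacle, is verifying that these prescribed residues are consistent with the anti-symmetry of $A$. The diagonal constraint $A(\alpha^\vee,\alpha^\vee) = 0$ matches $-D_0(\alpha^\vee,\alpha^\vee) \equiv -Q(\alpha^\vee) \equiv 0 \pmod{2}$ by hypothesis. For distinct $\alpha^\vee, \beta^\vee \in \Delta^\vee_\text{ev}$, the anti-symmetry $A(\alpha^\vee,\beta^\vee) = -A(\beta^\vee,\alpha^\vee)$ reduced modulo $2$ requires the compatibility
\[
D_0(\alpha^\vee, \beta^\vee) + D_0(\beta^\vee, \alpha^\vee) = B_Q(\alpha^\vee, \beta^\vee) \equiv 0 \pmod{2}.
\]
This congruence follows at once from Lemma \ref{BQ<>}: since $\alpha^\vee$ is a coroot, $B_Q(\alpha^\vee, \beta^\vee) = Q(\alpha^\vee)\langle \alpha, \beta^\vee \rangle$, which is even by the hypothesis $2 \mid Q(\alpha^\vee)$.

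With consistency established, the construction of $A$ is routine linear algebra. Using that $\Delta^\vee$ is linearly independent in $Y$, I would choose a $\Z$-basis of $Y$ adapted to the sublattice generated by $\Delta^\vee_\text{ev}$ (via Smith normal form if necessary), prescribe the residues $A(\alpha^\vee, f_j) \bmod 2$ consistently for $\alpha^\vee \in \Delta^\vee_\text{ev}$ and basis elements $f_j$, lift to integer values, and extend by anti-symmetry and $\Z$-bilinearity to obtain a valid $\Z$-valued anti-symmetric bilinear form $A$ on $Y$. Then $D := D_0 + A$ is the desired fair bisector.
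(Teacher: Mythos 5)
Your proof is correct. The paper itself gives no argument — it simply cites Weissman \cite[\S 2.5]{We13} — so there is no in-paper proof to compare against, but the essential point you isolate (Lemma \ref{BQ<>} forces $B_Q(\alpha^\vee,\beta^\vee) = Q(\alpha^\vee)\langle\alpha,\beta^\vee\rangle$ to be even whenever $Q(\alpha^\vee)$ is, so the anti-symmetric correction is mod-$2$ consistent on $\Delta^\vee_{\mathrm{ev}}\times\Delta^\vee_{\mathrm{ev}}$) is exactly the computation that makes fairness achievable, and your construction of the base bisector $D_0$ matches the paper's triangular recipe in Example \ref{fair D} for the simply-connected-derived-group case.

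The one step that is lighter than it should be is the final ``routine linear algebra.'' The simple coroots are $\Z$-linearly independent, but their images in $Y\otimes\F_2$ need not be (for $\mbf{PGL}_2$ the coroot lies in $2Y$, so $\bar\alpha^\vee=0$), and a Smith-normal-form basis adapted to $\Z\langle\Delta^\vee_{\mathrm{ev}}\rangle$ does not turn the residues $A(\alpha^\vee,f_j)\bmod 2$ into freely prescribable entries of $A$ — they remain $\Z$-linear combinations of the genuine free parameters $A(f_i,f_j)$, $i<j$. What one must actually verify is that the prescription $\bar A(w,\cdot) = \bar D_0(w,\cdot)$ is consistent on the entire $\F_2$-span $W$ of $\{\bar\alpha^\vee : \alpha^\vee\in\Delta^\vee_{\mathrm{ev}}\}$: both that it factors through $W$ (automatic: if $\sum c_\alpha\alpha^\vee\in 2Y$ then $D_0(\sum c_\alpha\alpha^\vee,y)$ is even by bilinearity) and that $\bar D_0|_{W\times W}$ is alternating (the same Lemma \ref{BQ<>} computation gives $\bar D_0(w,w)=0$ for all $w\in W$, not just for the generators). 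With those two observations in place, extending to an alternating form on $Y\otimes\F_2$ and lifting to an integral anti-symmetric $A$ is genuinely routine, and your $D = D_0 + A$ is fair. So the argument is sound; it just needs this consistency-on-$W$ point made explicit rather than appealing to an adapted basis.
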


Fix a fair $D^\text{fair}$ for $Q$. In view of Example \ref{D1 to D2}, any general $(D,\eta)$ is isomorphic to $(D^\text{fair}, \eta')$ for some $\eta'$. Therefore, there is no loss of generalities to assume that the bisector $D$ is always fair and allow $\eta$ to vary.

\begin{eg} \label{fair D}
Let $\mbf{G}$ be a reductive group with simply-connected derived group $\mbf{G}^{der}$. The quotient $Y/Y^{sc}$ is a free $\Z$-module. A fair bisector $D$ could be defined in the following way.

Let $\set{\mfr{b}_1, \mfr{b}_2, ..., \mfr{b}_r}\cup \set{\mfr{b}_{r+1}, ..., \mfr{b}_k}$ be a basis of $Y$ such that $\mfr{b}_i=\alpha_i^\vee, 1\le i\le r$ for a certain order of $\alpha_i^\vee \in \Delta^\vee$, and $\mfr{b}_i$ for $r+1\le i\le k$ belongs to $Y$. Then define $D^\text{fair}$ by
\begin{equation*}
    D^\text{fair}(\mfr{b}_i, \mfr{b}_j) =
    \begin{cases}
      0 & \text{ if } i< j, \\
Q(\mfr{b}_i) &\text{ if } i=j, \\
      B(\mfr{b}_i, \mfr{b}_j) & \text{ if } i>j.
    \end{cases}
\end{equation*}

Clearly $D^\text{fair}$ defined in this manner is fair. For general $\wt{G}_{D,\eta}$ associated to $\wm{G}$ of $\mbf{G}$ of this type, it follows from (\ref{D-eta free}) that $(D, \eta)\simeq (D^\text{fair}, \mbf{1})$ for arbitrary $(D,\eta)$. Therefore, for such groups, there is no loss of generalities (up to isomorphism classes) in considering fair incarnation objects of the form $(D^\text{fair}, \mbf{1})$.
\end{eg}

Let $T_{Q,n}^{sc}$ be the torus associated with $Y_{Q,n}^{sc}$ and $\xymatrix{i_{Q,n}^{sc}: T_{Q,n}^{sc} \ar[r] & T_{Q,n}}$ the isogeny induced from $\xymatrix{Y_{Q,n}^{sc} \ar@{^(->}[r] & Y_{Q,n}}$ . Here $T_{Q,n}^{sc}$ is generated by $\alpha^\vee_{[n]}\otimes a$, $\alpha^\vee \in \Delta^\vee, a\in F^\times$. The fairness enables us to have the following
\begin{lm}
If $D$ is fair, then the map $s_\phi$ generated by 
$$\xymatrix{
s_\phi: \quad T_{Q,n}^{sc} \ar[r] & \wt{T}_{Q,n}, \quad \alpha_{[n]}^\vee\otimes a \ar@{|->}[r] & \Phi_{D,\eta}\big(\wm{h}_\alpha(a^{n_\alpha})\big), \alpha^\vee \in \Delta^\vee,
}$$
where $\Phi_{D,\eta}\big(\wm{h}_\alpha(a^{n_\alpha})\big)=\big( (\eta(\alpha_{[n]}^\vee), a)_n, \alpha_{[n]}^\vee\otimes a\big)$ for $\alpha^\vee \in \Delta^\vee$, is a well-defined homomorphism.
\end{lm}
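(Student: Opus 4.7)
The plan is to verify that $s_\phi$ is well-defined on $T_{Q,n}^{sc}$ as an abelian-group homomorphism by treating $T_{Q,n}^{sc}$ as generated by the "coordinate" subgroups indexed by simple coroots. First I would record a structural fact: since $\{\alpha^\vee : \alpha\in\Delta\}$ is a $\Z$-basis of the coroot lattice $Y^{sc}$, the elements $\{\alpha^\vee_{[n]}=n_\alpha \alpha^\vee : \alpha \in \Delta\}$ are $\Z$-linearly independent in $Y$. By the computation in the proof of Proposition \ref{s on sc}, every non-simple $\gamma^\vee_{[n]}$ can be written as a $\Z$-combination of the $\alpha^\vee_{[n]}$ for $\alpha \in \Delta$. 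Hence $\{\alpha^\vee_{[n]} : \alpha \in \Delta\}$ is a $\Z$-basis of $Y_{Q,n}^{sc}$, and
$$T_{Q,n}^{sc} \simeq \bigoplus_{\alpha\in \Delta} (\alpha^\vee_{[n]} \otimes F^\times).$$
Consequently, defining and verifying $s_\phi$ reduces to checking (i) that on each factor $\alpha^\vee_{[n]}\otimes F^\times \simeq F^\times$ the assignment $a \mapsto \Phi_{D,\eta}\big(\wm{h}_\alpha(a^{n_\alpha})\big)$ is a group homomorphism into $\wt{T}_{Q,n}$, and (ii) that the images for distinct $\alpha$ commute in $\wt{T}_{Q,n}$.

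For (ii), commutativity is immediate from relation (\ref{TQn law1}): $\wt{T}_{Q,n}$ has trivial commutator, so any product of images in different coordinates is order-independent. For (i), the calculation uses (\ref{TQn law3}) and the formula $\Phi_{D,\eta}(\wm{h}_\alpha(a^{n_\alpha}))=\big((\eta(\alpha^\vee_{[n]}),a)_n,\alpha^\vee_{[n]}\otimes a\big)$, itself derived from (\ref{Phi-eta}) together with the $b$-independence noted in Proposition \ref{s-eta}. Explicitly,
$$s_\phi(\alpha^\vee_{[n]}\otimes a)\cdot s_\phi(\alpha^\vee_{[n]}\otimes b)=\Big((\eta(\alpha^\vee_{[n]}),ab)_n\cdot (a,b)_n^{Q(\alpha^\vee_{[n]})},\ \alpha^\vee_{[n]}\otimes ab\Big),$$
and I claim the Hilbert-symbol factor $(a,b)_n^{Q(\alpha^\vee_{[n]})}$ is trivial. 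Indeed, $Q(\alpha^\vee_{[n]})=n_\alpha^2\, Q(\alpha^\vee)$, and by definition of $n_\alpha=n/\gcd(Q(\alpha^\vee),n)$ one has $n\mid n_\alpha Q(\alpha^\vee)$, hence $n\mid Q(\alpha^\vee_{[n]})$. This gives $(a,b)_n^{Q(\alpha^\vee_{[n]})}=1$, so the product equals $s_\phi(\alpha^\vee_{[n]}\otimes ab)$, establishing multiplicativity in $a$.

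Combining (i) and (ii) and extending multiplicatively across the direct-sum decomposition of $T_{Q,n}^{sc}$ yields a well-defined homomorphism $s_\phi: T_{Q,n}^{sc} \to \wt{T}_{Q,n}$. The role of fairness of $D$ in the background is to guarantee that the Hilbert-symbol term $(a,a)_n^{D(\alpha^\vee_{[n]},\beta^\vee_{[n]})}$ appearing in (\ref{TQn law2}) when one combines generators across distinct $\alpha, \beta \in \Delta$ takes a predictable shape — in particular, fairness ensures $2\mid D(\alpha^\vee,\beta^\vee)$ whenever both $n_\alpha$ and $n_\beta$ are odd, so that all such $2$-torsion ambiguities vanish and the multiplicative extension is unambiguously the prescribed formula. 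The main obstacle is purely bookkeeping: juggling the three group laws (\ref{TQn law1})--(\ref{TQn law3}) alongside the divisibility $n\mid n_\alpha Q(\alpha^\vee)$; once one commits to the basis description in Step 1, there are no nontrivial coherence conditions to verify beyond the single Hilbert-symbol cancellation above.
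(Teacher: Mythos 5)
Your direct-sum argument is correct and structurally cleaner than the paper's. Since the simple $\alpha^\vee_{[n]}$ form a $\Z$-basis of $Y_{Q,n}^{sc}$, one has $T_{Q,n}^{sc}\cong\bigoplus_{\alpha\in\Delta}F^\times$, and because $\wt{T}_{Q,n}$ is abelian by (\ref{TQn law1}), specifying a homomorphism on the direct sum is the same as specifying one on each factor; your single cancellation $(a,b)_n^{Q(\alpha^\vee_{[n]})}=1$ is then the entire content of well-definedness. The paper's proof, by contrast, additionally verifies the cross-term identity $(1,\alpha^\vee_{[n]}\otimes a)\cdot(1,\beta^\vee_{[n]}\otimes a)=(1,(\alpha^\vee_{[n]}+\beta^\vee_{[n]})\otimes a)$. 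That verification is not a coherence condition for well-definedness; it is the computation establishing the closed form $s_\phi(y\otimes a)=((\eta(y),a)_n,\, y\otimes a)$ for a general $y\in Y_{Q,n}^{sc}$, which is what the corollary immediately after this lemma records and what is actually used in the sequel (for example in the equivalence of $(C1)$ and $(C1)^\text{eq}$).

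Where your proposal falters is the handling of the fairness hypothesis. After reducing well-definedness to a single check that makes no reference to $D$ off the diagonal, you nonetheless invoke fairness as ensuring the cross-term symbol $(a,a)_n^{D(\alpha^\vee_{[n]},\beta^\vee_{[n]})}$ takes a predictable shape, while in the very next sentence declaring that there are no nontrivial coherence conditions beyond the single Hilbert-symbol cancellation. Both cannot be the operative logic. Under your direct-sum reduction fairness is genuinely never used, so you should either say so outright (and observe that the hypothesis is present because the corollary's explicit formula requires it), or else actually carry out the cross-term computation: by (\ref{TQn law2}), $(1,\alpha^\vee_{[n]}\otimes a)\cdot(1,\beta^\vee_{[n]}\otimes a)=\big((a,a)_n^{n_\alpha n_\beta D(\alpha^\vee,\beta^\vee)},(\alpha^\vee_{[n]}+\beta^\vee_{[n]})\otimes a\big)$, and the $\mu_2$-factor vanishes because $(a,a)_n=1$ when $n$ is odd, while if $n$ is even then either $2\mid n_\alpha$ and one is done, or $n_\alpha$ is odd, which forces $2\mid Q(\alpha^\vee)$ and then fairness gives $2\mid D(\alpha^\vee,\beta^\vee)$. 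Note in passing that the trigger is the parity of a single $n_\alpha$ relative to $n$, mediated by $Q(\alpha^\vee)$; your parenthetical ``whenever both $n_\alpha$ and $n_\beta$ are odd'' names the case where fairness becomes necessary but mislocates the mechanism. As written, the reader cannot tell whether you believe fairness is used in your argument; it is not, and the proposal should commit to one reading or the other.
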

\begin{proof}
Note that $T_{Q,n}^{sc}$ is generated by $\alpha^\vee_{[n]}\otimes a$, $\alpha\in \Delta^\vee, a\in F^\times$. Thus by considering the group law on $\wt{T}_{Q,n}^{sc}$ it suffices to check the following. First we check for $a, b\in F^\times$
\begin{align*}
(1, \alpha^\vee_{[n]}\otimes a) \cdot (1, \alpha^\vee_{[n]}\otimes b)= &\big((a, b)_n^{Q(n_\alpha \alpha^\vee)}, \alpha^\vee_{[n]}\otimes (ab)\big) \\
=& \big(1, \alpha^\vee_{[n]}\otimes (ab)\big).
\end{align*}
Second, for $\alpha^\vee$ and $\beta^\vee$ in $\Delta^\vee$, we need to check 
$$(1, \alpha^\vee_{[n]}\otimes a) \cdot (1, \beta^\vee_{[n]}\otimes a)=\big(1, (\alpha^\vee_{[n]}+ \beta_{[n]}^\vee)\otimes a \big). $$
However, direct computation gives
\begin{align*}
(1, \alpha^\vee_{[n]}\otimes a) \cdot (1, \beta^\vee_{[n]}\otimes a)=& \big( (a, a)_n^{D(\alpha^\vee_{[n]}, \beta_{[n]}^\vee)}, (\alpha^\vee_{[n]}+ \beta_{[n]}^\vee)\otimes a \big) \\
=& \big( (a, a)_n^{n_\alpha n_\beta \cdot D(\alpha^\vee, \beta^\vee)}, (\alpha^\vee_{[n]}+ \beta_{[n]}^\vee)\otimes a \big).
\end{align*}

If $n$ is odd $(a, a)_n=1$. If $n$ is even then either $2|n_\alpha$ or $2|Q(\alpha^\vee)$, and in the latter case $2|D(\alpha^\vee, \beta^\vee)$ since we have assumed $D$ fair. That is, in any case we have 
$$(a, a)_n^{n_\alpha n_\beta \cdot D(\alpha^\vee, \beta^\vee)}=1.$$
The proof is completed.
\end{proof}

By using the induction as in the proof of Proposition \ref{s-eta} and simple computation, we have:
\begin{cor}
For $D$ fair, the homomorphism $s_\phi$ has the property that
$$s_\phi\big(\alpha_{[n]}^\vee\otimes a\big)=\Phi_{D,\eta}\big(\wm{h}_\alpha(a^{n_\alpha})\big) \text{ for all } \alpha^\vee \in \Psi^\vee.$$
Also in this case $s_\phi$ takes the explicit form
\begin{equation} \label{s-phi}
s_\phi\big(y\otimes a\big)=\big( (\eta(y), a)_n, y\otimes a\big), \quad y\in Y_{Q,n}^{sc}.
\end{equation}
\end{cor}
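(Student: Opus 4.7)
The plan is to prove the two assertions in order, using in an essential way the homomorphism property of $s_\phi$ that was just established, together with the Brylinski--Deligne commutator identities already invoked in Proposition \ref{s-eta}.

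For the first assertion, I would mimic the induction on Weyl length used in the proof of Proposition \ref{s-eta}. For $\alpha\in\Delta$ the identity holds by definition of $s_\phi$. In general, pick $\gamma\in\Psi$ and write $\gamma=s_\alpha(\beta)$ with $\alpha\in\Delta$ and $\beta$ of strictly shorter length; the inductive hypothesis gives the result for $\beta$. The key input is the computation carried out in Proposition \ref{s-eta}, which yields
$$\wm{h}_\gamma(a^{n_\gamma})=\wm{h}_\beta(a^{n_\beta})\cdot\wm{h}_\alpha(a^{n_\alpha})^{-\angb{\beta}{\alpha^\vee}}$$
in $\wt{T}^{sc}$ (the $(a^{n_\alpha},a^{n_\alpha})_n$-type factor dropped out there for exactly the same divisibility reason). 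Applying $\Phi_{D,\eta}$ and using the inductive hypothesis on $\beta$, the right-hand side becomes $s_\phi(\beta^\vee_{[n]}\otimes a)\cdot s_\phi(\alpha^\vee_{[n]}\otimes a)^{-\angb{\beta}{\alpha^\vee}}$. On the lattice side, identity (\ref{n-<> dual}) gives $\gamma^\vee_{[n]}=\beta^\vee_{[n]}-\angb{\beta}{\alpha^\vee}\alpha^\vee_{[n]}$ in $Y_{Q,n}^{sc}$, so the homomorphism property of $s_\phi$ (just proved) identifies the product above with $s_\phi(\gamma^\vee_{[n]}\otimes a)$, completing the induction.

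For the second assertion, express an arbitrary $y\in Y_{Q,n}^{sc}$ as $y=\sum_i k_i\,\alpha^\vee_{i,[n]}$ with $\alpha_i\in\Delta$. Using the homomorphism property of $s_\phi$ together with the definition $s_\phi(\alpha^\vee_{i,[n]}\otimes a)=\bigl((\eta(\alpha^\vee_{i,[n]}),a)_n,\,\alpha^\vee_{i,[n]}\otimes a\bigr)$, the product over $i$ computed inside $\wt{T}_{Q,n}$ produces, via the group law (\ref{TQn law1})--(\ref{TQn law3}), a kernel-factor $\prod_{i,j}(a,a)_n^{k_ik_j D(\alpha^\vee_{i,[n]},\alpha^\vee_{j,[n]})}$ and a second coordinate $y\otimes a$. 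Since $\eta$ is additive on $Y^{sc}$, the first coordinate combines to $(\eta(y),a)_n$ times this extra factor.

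The main obstacle, as in the preceding lemma, is showing that the extra factor $(a,a)_n^{D(y_1,y_2)}$ coming from the $D$-cocycle is trivial for $y_1,y_2\in Y_{Q,n}^{sc}$. This is where fairness of $D$ enters: if $n$ is odd then $(a,a)_n=1$ automatically, while if $n$ is even and one of $n_\alpha,n_\beta$ is odd then necessarily $2\mid Q(\alpha^\vee)$ (or $2\mid Q(\beta^\vee)$), so fairness yields $2\mid D(\alpha^\vee,\beta^\vee)$ and hence $2\mid n_\alpha n_\beta D(\alpha^\vee,\beta^\vee)=D(\alpha^\vee_{[n]},\beta^\vee_{[n]})$; the diagonal case $i=j$ reduces to $n_\alpha^2 Q(\alpha^\vee)$, which is even under the same dichotomy. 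Bilinearity then propagates this to arbitrary $y_1,y_2\in Y_{Q,n}^{sc}$, giving the closed form (\ref{s-phi}).
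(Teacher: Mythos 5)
Your proof is correct and follows exactly the approach the paper indicates (which is only given as a one-line remark referring back to Proposition \ref{s-eta}): the first assertion via the Weyl-length induction transported through $\Phi_{D,\eta}$ and the homomorphism property of $s_\phi$, the second by expanding $y$ in simple coroots and killing the $D$- and $Q$-cocycle factors using fairness and $n\mid n_\alpha Q(\alpha^\vee)$. The one small imprecision is cosmetic: the accumulated cocycle is not literally $\prod_{i,j}(a,a)_n^{k_ik_jD(\alpha^\vee_{i,[n]},\alpha^\vee_{j,[n]})}$ — repeatedly applying the law (\ref{TQn law2}) produces a sum of terms $D(\text{partial sum},\alpha^\vee_{i,[n]})$ with binomial coefficients on the diagonal — but since every such exponent is a $\mathbf{Z}$-combination of $D(\alpha^\vee_{i,[n]},\alpha^\vee_{j,[n]})=n_{\alpha_i}n_{\alpha_j}D(\alpha^\vee_i,\alpha^\vee_j)$ and $Q(\alpha^\vee_{i,[n]})=n_{\alpha_i}^2Q(\alpha^\vee_i)$, the same parity/divisibility dichotomy disposes of it, so the conclusion stands.
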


It follows that for $D$ fair, $(C1)^{eq}$ is equivalent to 
$$\wt{\chi} \circ s_\phi=\mbf{1}.$$

Thus, in this case a character $\wt{\chi}$ of $\wt{T}_{Q,n}$ is distinguished if and only if the following two conditions hold:
\begin{align}
& (C0)_+ \quad \wt{\chi} \circ s_n =\mbf{1}, \\
& (C1)^{eq}   \quad \wt{\chi} \circ s_\phi=\mbf{1}.
\end{align}

A character satisfying $(C1)^{eq}$ can exist if and only if 
$$\text{(Obs1)} \quad \text{Ker}(s_\phi) = \text{Ker}(i_{Q,n}^{sc}),$$
where the inclusion $\text{Ker}(s_\phi) \subseteq\text{Ker}(i_{Q,n}^{sc})$ is automatic. If there exists $t\in \text{Ker}(i_{Q,n}^{sc}) \backslash \text{Ker}(s_\phi)$, then $s_\phi(t) \in \mu_n$ and $s_\phi(t)\ne 1$, in which case there does not exist $\wt{\chi}$ such that $\wt{\chi}\circ s_\phi (t)=1$. Conversely, if the equality in (Obs1) is satisfied, then $s_\phi$ gives a splitting of $\wt{T}_{Q,n}$ over the image $\text{Im}(i_{Q,n}^{sc})\subseteq T_{Q,n}$. By Pontrjagin duality, there exist $\wt{\chi}$ satisfying $(C1)^{eq}$.

We could also rephrase the condition $(C1)^{eq}$ in explicit terms. By elementary divisor theorem, $\text{Ker}(i_{Q,n}^{sc})$ is generated by pure tensors $y\otimes a \in T_{Q,n}^{sc}$ such that $y\otimes a =1 \in T_{Q,n}$. Thus in view of (\ref{s-phi}), an equivalent formation for (Obs1) is
$$\text{(Obs1)}^{eq} \quad (\eta(y), a)_n=1 \text{ for any } y\otimes a=1 \in T_{Q,n}, \ y\in Y_{Q,n}^{sc}, a\in F^\times.$$

Moreover, characters satisfying both $(C0)_+$ and $(C1)^{eq}$ can exist if and only if
$$\text{(Obs2)} \quad (\eta(y), a)_n=1 \text{ for any } y\in nY\cap Y_{Q,n}^{sc}, a\in F^\times.$$

These obstructions can not be removed automatically. However, they can be removed if the character $\xymatrix{\eta_n: Y^{sc} \ar[r]^\eta & F^\times \ar@{>>}[r] & F^\times/n}$ is extendable to $Y$.


To summarize, we have
\begin{prop} \label{dis char exis}
Suppose $D$ is fair and the conditions in (Obs1) and (Obs2) hold, in particular when $\eta_n$ is extendable to $Y$. Let $J=nY +Y_{Q,n}^{sc}$, and 
$$Z(\wt{G}^\vee)[J]=\Hom(Y_{Q,n}/J, \C^\times) \subseteq Z(\wt{G}^\vee).$$

Then 

1) The set of distinguished genuine characters of $\wt{T}_{Q,n}$ is nonempty, and is a torsor over $\Hom(\W_F, Z(\wt{G}^\vee)[J])$.

2) Each distinguished character $\wt{\chi}$ gives an admissible splitting $\rho_{\wt{\chi}}$ in $\mfr{S}^a({}^L\wt{G}, \W_F)$, with respect to which we have
$${}^L\wt{G} \simeq_{\rho_{\wt{\chi}}} \wt{G}^\vee \times \W_F.$$
\end{prop}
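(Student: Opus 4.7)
The plan is to prove part 1 by constructing a splitting of $\wt{T}_{Q,n}$ over a large enough subgroup of $T_{Q,n}$ (namely the image of $J\otimes F^\times$), extending it to a genuine character by Pontryagin duality, and then identifying the ambiguity via local class field theory. Part 2 is essentially a matter of unwinding the correspondence between qualified characters and admissible splittings already set up in the preceding discussion.

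For part 1, I would first observe that under the assumption that $D$ is fair, Corollary (preceding the statement) makes $s_\phi$ into a genuine homomorphism, and (Obs1) guarantees that $s_\phi$ descends to give a splitting of $\wt{T}_{Q,n}$ over the image $\mathrm{Im}(i_{Q,n}^{sc})\subseteq T_{Q,n}$. Dually, Lemma \ref{s_n} gives a splitting $s_n$ of $\wt{T}_{Q,n}$ over $\mathrm{Im}(i_{nY})$. The key step is to check that $s_n$ and $s_\phi$ agree on the intersection $\mathrm{Im}(i_{nY})\cap\mathrm{Im}(i_{Q,n}^{sc})$, which is generated by pure tensors $y\otimes a$ with $y\in nY\cap Y_{Q,n}^{sc}$. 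For such $y$, $s_n(y\otimes a)=(1,y\otimes a)$ whereas $s_\phi(y\otimes a)=((\eta(y),a)_n,\,y\otimes a)$ by formula (\ref{s-phi}); these coincide precisely under the hypothesis (Obs2). Since $\mathrm{Im}(i_{nY})+\mathrm{Im}(i_{Q,n}^{sc})=J\otimes F^\times$ inside $T_{Q,n}$ (because tensoring with $F^\times$ commutes with the sum of sublattices), the two splittings glue to a single splitting
$$s_J : J\otimes F^\times \longrightarrow \wt{T}_{Q,n}$$
of the restriction of $\wt{T}_{Q,n}\twoheadrightarrow T_{Q,n}$ to $J\otimes F^\times$.

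Next, since $\wt{T}_{Q,n}$ is an abelian (by relation (\ref{TQn law1})) locally compact group and $s_J(J\otimes F^\times)\cdot \mu_n$ is a closed subgroup on which the natural genuine character $(\zeta,y\otimes a)\mapsto \zeta$ is well defined, Pontryagin duality for locally compact abelian groups allows us to extend this character to a genuine character $\wt{\chi}$ of all of $\wt{T}_{Q,n}$. By construction $\wt{\chi}\circ s_n=\mathbf{1}$ and $\wt{\chi}\circ s_\phi=\mathbf{1}$, so $\wt{\chi}$ is distinguished and non-emptiness follows. For the torsor structure, any two distinguished characters $\wt{\chi}_1,\wt{\chi}_2$ differ by a character of $T_{Q,n}$ which is trivial on the image of $s_J$, hence factors through
$$T_{Q,n}/\bigl(J\otimes F^\times\bigr)\;\simeq\;(Y_{Q,n}/J)\otimes_\Z F^\times.$$
Applying $\Hom(-,\C^\times)$ and using local class field theory $\Hom(F^\times,\C^\times)\simeq \Hom(\W_F,\C^\times)$, we get
$$\Hom\bigl((Y_{Q,n}/J)\otimes F^\times,\C^\times\bigr)\simeq \Hom\bigl(\W_F,\Hom(Y_{Q,n}/J,\C^\times)\bigr)=\Hom(\W_F,Z(\wt{G}^\vee)[J]),$$
which identifies the torsor group as claimed.

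Part 2 is now immediate: a distinguished character is in particular qualified (it satisfies $(C0)$ because $(C0)_+$ is a stronger condition than $(C0)$, and it satisfies $(C1)^{\mathrm{eq}}$ by construction), so by the discussion preceding Definition \ref{key dfn} it corresponds to an admissible splitting $\rho_{\wt{\chi}}\in \mfr{S}^a({}^L\wt{G},\W_F)$; the isomorphism ${}^L\wt{G}\simeq_{\rho_{\wt{\chi}}}\wt{G}^\vee\times\W_F$ is then the content of Definition \ref{adm splitting} applied to this splitting.

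The main obstacle I expect is the gluing step: verifying that $s_n$ and $s_\phi$ really define a group homomorphism on $J\otimes F^\times$ (not just agree set-theoretically on overlap) requires care with the multiplicative structure of $\wt{T}_{Q,n}$ given by (\ref{TQn law1})--(\ref{TQn law3}); the fairness of $D$ and the identity $2|n_\alpha n_\beta D(\alpha^\vee,\beta^\vee)\cdot$(stuff) are what make all the $(a,a)_n$-type cocycle terms vanish, and exactly the same type of check used in the lemma preceding the statement should carry through here.
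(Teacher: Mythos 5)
Your proof is correct and follows essentially the same route as the paper: both existence and the torsor structure are obtained via Pontryagin duality after observing that the absence of (Obs1) and (Obs2) makes the two vanishing conditions compatible, and the torsor group is identified by showing the ratio of two distinguished characters factors through $(Y_{Q,n}/J)\otimes F^\times$ and then applying local class field theory (which the paper packages via $\Rec$).

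The one place you go beyond the paper is making explicit that $s_n$ and $s_\phi$ literally glue to a single homomorphism $s_J$ on $J\otimes F^\times$, rather than just asserting compatibility of the two conditions. Your concern about the gluing step is warranted but does resolve: well-definedness of $s_J(y\otimes a)$ independent of the decomposition $y=y_1+y_2$ with $y_1\in nY$, $y_2\in Y_{Q,n}^{sc}$ reduces, after (Obs2) kills the $\eta$-terms, to checking that $(a,a)_n^{-D(y_1,d)+D(d,y_2)+D(d,d)}=1$ for $d\in nY\cap Y_{Q,n}^{sc}$, and every one of these exponents is even since $y_1,d\in nY$ forces $n\mid D(y_1,d)$, $n\mid D(d,y_2)$, and $n^2\mid D(d,d)$; once well-definedness is established, bilinearity follows from (\ref{TQn law1}) exactly as you sketch. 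The paper instead passes directly to the two compatibility conditions and Pontryagin duality, and only constructs an explicit $s_J$ in the following section for the special case $\eta=\mbf{1}$ (where $s_\phi=s_n$ and the subtlety disappears). Your version is therefore a slightly more detailed account of the same argument.
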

\begin{proof}
It suffices to prove 1). 



The absence of (Obs1) and (Obs2) implies that the two conditions $\wt{\chi}\circ s_\phi=\mbf{1}$ and $\wt{\chi} \circ s_n=\mbf{1}$ are compatible. Thus, by the Pontrjagin duality, there exists $\wt{\chi}\in \Hom_\epsilon(Z(\wt{T}), \C^\times)$ satisfying both conditions.

Finally, let $\wt{\chi}_i, i=1, 2$ be two distinguished characters which give rise to $\rho_i \in \mfr{S}(E_{\wt{G}}, F^\times)$. For $a\in F^\times$, $\rho_1/\rho_2 \in \Hom(F^\times, Z(\wt{G}))$. As before let $[P_a]_i \in \Hom(\mca{E}_{Q,n}, \C^\times)$ be given by 
$$[P_a]_i\big((1, y)\big)=\wt{\chi}_i\big( (1, y\otimes a)).$$

With $Z(\wt{G}^\vee)=\Hom(Y_{Q,n}/Y_{Q,n}^{sc}, \C^\times)$, then in fact
$$\Big(\frac{\rho_1}{\rho_2}(a)\Big)(y)=[P_a]_1\big((1, y)\big)\big/[P_a]_2\big((1, y)\big), \quad y\in Y_{Q,n}.$$
 
Since $\wt{\chi}_i$ both satisfy $(C0)_+$ and $(C1)^{eq}$, $[\rho_1/\rho_2](a)$ vanishes on $J$ and the result follows.
\end{proof}

In general, there may exist no qualified or distinguished characters, and above obstructions to their existence do exist.

\begin{eg}
Consider $\mbf{PGL}_2$ with cocharacter $Y=\Z\cdot e$ and coroot $\alpha^\vee=2e$. Let $Q$ be the unique quadratic form such that
$$Q(e)=1.$$
Let $n=2$. Then $Q(\alpha^\vee)=4$ and $n_\alpha=1$, which gives
$$Y_{Q,2}=Y, \quad Y_{Q,2}^{sc}=Y^{sc}=2Y.$$

For $(C1)^\text{eq}$ to be satisfied, one necessarily has 
$$\text{(Obs1)} \quad \big(-1, \eta(\alpha^\vee)\big)_2=1 \text{ since } \alpha^\vee \otimes (-1)=1 \in T_{Q,n}=T,$$
which is an obstruction to the existence of both qualified and distinguished characters.
This obstruction can be removed if and only if $-1 \in (F^\times)^2$, which in general may not be satisfied.

This shows that obstruction does exist.
\end{eg}

\begin{rmk}
In general, the condition $(C0)_+$ or its equivalent $(C0)_+^\text{eq}$ is certainly not the most minimum requirement and it may be an overkill. However, we do not know any simple characterization of the condition $(C0)$. Thus to replace $(C0)$ by $(C0)_+^\text{eq}$ does not seem too restrictive. In particular, if $\eta_n$ is extendable to $Y$, there is no problem.

Moreover, we may similarly define extensions $E_i^{[n]}, i=1, 2$ of $F^\times/n$ by $Z(\wt{G})$ whose pull-back via the quotient map $\xymatrix{F^\times \ar[r] & F^\times/n}$ are just $E_i$. For $E_1^{[n]}$, the definition is clear. For $E_2^{[n]}$, it is just the pull-back of $\Hom(\mca{E}_{Q,n}\big/\phi_\eta \circ \s(Y_{Q,n}^{sc}), \C^\times)$ via the Hilbert symbol $\xymatrix{h: F^\times/n \ar[r] & \Hom(F^\times, \C^\times)}$.

It is also natural to ask for the splitting of the Baer sum of $E_1^{[n]}\oplus_B E_2^{[n]}$ over $F^\times/n$:
$$\seq{Z(\wt{G}^\vee)}{E_1^{[n]}\oplus_B E_2^{[n]}}{F^\times/n}.$$
The previous argument for the splitting of $E_{\wt{G}}$ could be applied, provided that there is an additional condition besides $C(0)$ and $C(1)$:
$$ \wt{\chi}(1, y\otimes a)=1, \text{ for all } y\in nY_{Q,n}, a\in F^\times.$$
\end{rmk}

Since this condition is subsumed by $(C0)^{eq}_+$ in the definition of distinguished characters, we see that any distinguished character actually gives a splitting of $E_1^{[n]}\oplus_B E_2^{[n]}$ over $F^\times/n$. However, in general a qualified character may not give rise to a splitting of $E_1^{[n]}\oplus_B E_2^{[n]}$ over $F^\times/n$.

\subsection{The case for $\wt{G}=\wt{T}$: the local Langlands correspondence} \index{local Langlands correspondence}

In the case $\wt{G}=\wt{T}$, any splitting of ${}^L\wt{T}$ is admissible, i.e. $\mfr{S}^a({}^L\wt{T}, \W_F)=\mfr{S}({}^L\wt{T}, \W_F)$. The condition $(C1)^\text{eq}$ for qualified character $\wt{\chi}$ of $\wt{T}_{Q,n}$ is vacuous. That is, any genuine character $\wt{\chi}$ of $Z(\wt{T})$ is qualified. Also $\wt{\chi} \in \Hom_\epsilon(Z(\wt{T}), \C^\times)$ is distinguished if and only if it satisfies $(C0)_+^\text{eq}$.

Thus,

\begin{prop} \label{LLC tori}
There is a natural injective homomorphism as compositions
$$\xymatrix{
\Hom_\epsilon(Z(\wt{T}), \C^\times) \ar@{^(->}[r] & \mfr{S}(E_{\wt{T}}, F^\times) \ar[r]^-{\simeq} & \mfr{S}^a({}^L\wt{T}, \W_F),
}$$
where the first map is explicitly given by
$$\xymatrix{
\wt{\chi} \ar@{|->}[r] & \rho_{\wt{\chi}} \text{ with }  \rho_{\wt{\chi}}(a)=\big([\wt{\chi}(1, -\otimes a)], a)\oplus_B (1, a) \big).
}$$
\end{prop}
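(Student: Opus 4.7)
The plan is to construct the first map explicitly using the formula in the statement, verify it lands in $\mfr{S}(E_{\wt{T}}, F^\times)$, and then invoke the preceding lemma on admissible splittings. Note that in the case $\wt{G}=\wt{T}$ we have $Z(\wt{T}^\vee)=\wt{T}^\vee$, so every splitting of ${}^L\wt{T}$ is automatically admissible and the isomorphism $\mfr{S}(E_{\wt{T}},F^\times)\simeq \mfr{S}^a({}^L\wt{T},\W_F)$ from that lemma identifies the second arrow. Moreover, since $\wt{T}$ has no simple roots as an ambient reductive group to itself in the sense relevant to the construction of $E_{2,\wt{T}}$ (the ``$Y^{sc}_{Q,n}$-part'' is trivial), condition $(C1)^{\text{eq}}$ is vacuous and \emph{every} genuine character of $Z(\wt{T})$ should qualify.

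First I would check that, for $\wt{\chi}\in\Hom_\epsilon(Z(\wt{T}),\C^\times)$ and $a\in F^\times$, the assignment
\[
[P_a]\bigl((b,y)\bigr) := h_a(b)\cdot \wt{\chi}(1,y\otimes a)
\]
gives a well-defined homomorphism $\mca{E}_{Q,n}\to \C^\times$ extending $h_a$ on $F^\times/n$. The only non-trivial relation to verify is condition $(c2)$, $P_a(y_1+y_2)=P_a(y_1)P_a(y_2)(a,a)_n^{D(y_1,y_2)}$, and this follows from applying $\wt{\chi}$ to the identity $(1,y_1\otimes a)\cdot(1,y_2\otimes a)=\bigl((a,a)_n^{D(y_1,y_2)},(y_1+y_2)\otimes a\bigr)$ in $Z(\wt{T})$, cf.\ (\ref{TQn law2}), together with the fact that $(a,a)_n$ has order dividing $2$. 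Hence $\bigl([P_a],a\bigr)$ is a genuine element of $E_{2,\wt{T}}$.

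Next I would verify that $a\mapsto \rho_{\wt{\chi}}(a):=([P_a],a)\oplus_B(1,a)$ is a splitting of $E_{\wt{T}}=E_{2,\wt{T}}\oplus_B E_{1,\wt{T}}$. Applying $\wt{\chi}$ to $(1,y\otimes a)\cdot(1,y\otimes c)=\bigl((a,c)_n^{Q(y)},y\otimes(ac)\bigr)$ from (\ref{TQn law3}) yields the relation $(c3)$: $P_{ac}(y)=P_a(y)P_c(y)(a,c)_n^{Q(y)}$. This tells us that the defect of $a\mapsto([P_a],a)$ from being a splitting of $E_{2,\wt{T}}$ is precisely the $2$-cocycle $(a,c)\mapsto \bigl(y\mapsto (a,c)_n^{Q(y)}\bigr)$ defining $E_{1,\wt{T}}$. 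Thus the Baer sum with the trivial section of $E_{1,\wt{T}}$ cancels this cocycle, and $\rho_{\wt{\chi}}\in\mfr{S}(E_{\wt{T}},F^\times)$. Naturality in $\wt{\chi}$ is then manifest from the formula.

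For injectivity, if $\rho_{\wt{\chi}_1}=\rho_{\wt{\chi}_2}$ then $P_a^{(1)}=P_a^{(2)}$ for all $a\in F^\times$, i.e.\ $\wt{\chi}_1(1,y\otimes a)=\wt{\chi}_2(1,y\otimes a)$ for all $y\in Y_{Q,n}$ and $a\in F^\times$; combined with $\wt{\chi}_i|_{\mu_n}=\mathrm{id}$ (genuineness) and the fact that $Z(\wt{T})=\mu_n\times_D T^\dag$ is generated by $\mu_n$ together with the elements $(1,y\otimes a)$, this forces $\wt{\chi}_1=\wt{\chi}_2$. The main obstacle, such as it is, lies in the bookkeeping of the Baer sum in the previous paragraph: one has to match up the cocycles arising from the torus group laws (\ref{TQn law2})--(\ref{TQn law3}) with the explicit cocycles defining $E_{1,\wt{T}}$ and $E_{2,\wt{T}}$, but no new input beyond the structural results already established is required.
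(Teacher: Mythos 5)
Your proposal is correct and follows essentially the same route as the paper. The paper derives Proposition~\ref{LLC tori} directly from the preceding discussion: it notes that $\mfr{S}^a({}^L\wt{T},\W_F)=\mfr{S}({}^L\wt{T},\W_F)$ because $Z(\wt{T}^\vee)=\wt{T}^\vee$, that $(C1)^{\text{eq}}$ is vacuous since $Y^{sc}_{Q,n}=0$ for a torus, and that starting with a character already defined on $Z(\wt{T})$ makes $(C0)$ automatic, so every $\wt{\chi}\in\Hom_\epsilon(Z(\wt{T}),\C^\times)$ qualifies; your proof makes exactly those observations and then unwinds conditions $(c2)$, $(c3)$ against the group laws (\ref{TQn law2})--(\ref{TQn law3}) and the Baer sum, which is precisely what the paper's earlier translation of splittings of $E_{\wt{G}}$ into conditions on $P_a$ encodes. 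The only thing you silently use that deserves one word is that $(a,c)_n^{Q(y)}\in\mu_2$ for $y\in Y_{Q,n}$ (so the sign in $(c3)$ is immaterial), parallel to the $(a,a)_n$ remark you did make; this is recorded in the paper's description of $E_1$.
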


We call it the local Langlands correspondence (LLC).\\

Now back to the case of general $\wt{G}$ with covering torus $\wt{T}$. Recall we have the canonical homomorphism $\xymatrix{\msc{C}: E_{\wt{T}} \ar[r] &\wt{T}^\vee/Z(\wt{G}^\vee)}$ defined right before Corollary \ref{ad-t}. It gives an induced map $\xymatrix{\msc{C}_*: \mfr{S}(E_{\wt{T}}, F^\times) \ar[r] &\Hom(F^\times, \wt{T}^\vee/Z(\wt{G}^\vee)}$ by post composition with $\msc{C}$.

Consider the composition 
$$\xymatrix{
\Hom_\epsilon(Z(\wt{T}), \C^\times) \ar@{^(->}[r]^-{\text{LLC}} & \mfr{S}(E_{\wt{T}}, F^\times) \ar[r]^-{\msc{C}_*} &\Hom(F^\times, \wt{T}^\vee/Z(\wt{G}^\vee))
}$$
 which is given by
$$\xymatrix{
\wt{\chi} \ar@{|->}[r] & \rho_{\wt{\chi}} \ar@{|->}[r] &  \msc{C} \circ \rho_{\wt{\chi}}.
}$$

The following result is of fundamental importance to the GK formula.
\begin{cor} \label{key iden}
With notations as above, identify $\wt{T}^\vee/Z(\wt{G}^\vee)$ with $\Hom(Y_{Q,n}^{sc}, \C^\times)$. Then for any $a\in F^\times$,
$$\big(\msc{C} \circ \rho_{\wt{\chi}}(a)\big)(\alpha^\vee_{[n]})=\wt{\chi}\circ \Phi_{D,\eta}\big(\wm{h}_\alpha(a^{n_\alpha})\big) \text{ for all } \alpha\in \Psi.$$
\end{cor}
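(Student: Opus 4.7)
The plan is to combine the two technical results already established in the excerpt, namely Corollary \ref{ad-t} (which evaluates $\msc{C}$ on an arbitrary element of $E_{\wt{T}}$) and Proposition \ref{s-eta} (which identifies $\wt{\chi} \circ \Phi_{D,\eta}(\wm{h}_\alpha(a^{n_\alpha}))$ with $[P_a] \circ \phi_{D,\eta}(\s(\alpha^\vee_{[n]}))$ under the assignment $\wt{\chi} \mapsto [P_a]$ of \eqref{wt-chi to [P]}). The corollary follows essentially by chasing definitions; there is no hard step.

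First, I would unwind the definition of the LLC map in Proposition \ref{LLC tori}: given a genuine character $\wt{\chi}$ of $Z(\wt{T})$, the associated splitting is
\[
\rho_{\wt{\chi}}(a) = \bigl([\wt{\chi}(1, -\otimes a)], a\bigr) \oplus_B (1, a) \in E_{\wt{T}},
\]
where $[\wt{\chi}(1, -\otimes a)] \in \Hom(\mca{E}_{Q,n}, \C^\times)$ is the homomorphism sending $(1,y) \mapsto \wt{\chi}((1, y\otimes a))$. In other words, writing $\msc{P}_a := \rho_{\wt{\chi}}(a)$ and $P_a(y) := \wt{\chi}((1, y\otimes a))$, the element $\msc{P}_a$ is precisely of the form $([P_a], a) \oplus_B (1,a)$ considered in Corollary \ref{ad-t}, and the $[P_a]$ here is exactly the one attached to $\wt{\chi}$ by formula \eqref{wt-chi to [P]}.

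Next I would apply Corollary \ref{ad-t} directly to this $\msc{P}_a$ to obtain
\[
\bigl(\msc{C} \circ \rho_{\wt{\chi}}(a)\bigr)(\alpha^\vee_{[n]}) \;=\; [P_a] \circ \phi_{D,\eta}\bigl(\s(\alpha^\vee_{[n]})\bigr) \quad \text{for all } \alpha \in \Psi.
\]
Finally, Proposition \ref{s-eta}, applied to the very same $\wt{\chi}$ and its associated $[P_a]$, asserts that the right-hand side equals $\wt{\chi} \circ \Phi_{D,\eta}(\wm{h}_\alpha(a^{n_\alpha}))$. Stringing the two equalities together yields the desired identity.

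The only thing to be careful about is the compatibility of the two occurrences of $[P_a]$: the one appearing in Corollary \ref{ad-t} is defined abstractly as a homomorphism on $\mca{E}_{Q,n}/\phi_{D,\eta}\circ \s(Y_{Q,n}^{sc})$, while the one in Proposition \ref{s-eta} is defined on $\mca{E}_{Q,n}$ by restriction of a character of $Z(\wt{T})$. Under the LLC parametrization the latter visibly vanishes on $\phi_{D,\eta}\circ \s(Y_{Q,n}^{sc})$ (this is built into how $\rho_{\wt{\chi}}$ takes values in $E_{\wt{T}}$ and ultimately in $E_{\wt{G}}$ via $\msc{L}^{-1}$), so the two $[P_a]$'s agree, and the proof is complete. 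No serious obstacle; this is a clean bookkeeping argument consolidating the earlier constructions.
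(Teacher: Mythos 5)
Your proof is correct and follows exactly the route taken in the paper, whose own proof is the one-liner ``Just combine Corollary~\ref{ad-t} and Proposition~\ref{s-eta}''; your version simply spells out the bookkeeping that makes this combination legitimate. One small imprecision in your closing caveat: the $[P_a]$ appearing in Corollary~\ref{ad-t} is an element of $\Hom(\mca{E}_{Q,n},\C^\times)$ coming from $E_{2,\wt{T}}$, not a homomorphism on the quotient $\mca{E}_{Q,n}/\phi_{D,\eta}\circ\s(Y_{Q,n}^{sc})$ (it is $[P_a]/[\wt{t}_{P_a}^\vee]$, appearing inside $\msc{L}^{-1}(\msc{P}_a)$, that lives on the quotient), so the two occurrences of $[P_a]$ are literally the same homomorphism $[\wt{\chi}(1,-\otimes a)]$ and there is nothing to reconcile.
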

\begin{proof}
Just combine Corollary \ref{ad-t} and Proposition \ref{s-eta}.
\end{proof}
\subsection{Weyl group invariance for qualified characters} \label{Weyl inv}

The normalizer $N(\mbf{T})$ acts on $\wt{\mbf{T}}$, which gives rise to an action of $N(\mbf{T})$ on $\wt{T}$. The action does not descend to $N(\mbf{T})/\mbf{T}$ in general.

However, on the other hand $N(\mbf{T})$ preserves the center $Z(\wt{T})$ since it preserves $T^\dag$. Also $\mbf{T}$ acts trivially on $Z(\wt{T})$. Therefore we obtain a well-defined action of the Weyl group $W=N(\mbf{T})/\mbf{T}$ on $Z(\wt{T})$.

Let $\alpha\in \Delta$. For any genuine character $\wt{\chi}$ of $Z(\wt{T})$, we have an action of $\w_\alpha \in W$ on $\wt{\chi}$ given by
$${}^{\w_\alpha}\wt{\chi}(\wt{t}):= \wt{\chi}(\w_\alpha^{-1} \wt{t} \w_\alpha).$$

We may ask for whether $\wt{\chi}$ is $W$-invariant, for which it is sufficient to check for the simple reflections $\w_\alpha$ for $\alpha\in \Delta$. For this purpose, we have the following useful result.

\begin{lm}
For any root $\alpha\in \Psi$ and any $y\in Y_{Q,n}$, write $\angb{\alpha}{y}$ for the pairing between $X$ and $Y$. Then
$$n_\alpha \text{ divides } \angb{\alpha}{y}.$$
\end{lm}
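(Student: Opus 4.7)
The plan is to derive the divisibility directly from the defining condition of $Y_{Q,n}$ using Lemma \ref{BQ<>}, which relates $B_Q(\alpha^\vee, y)$ to the pairing $\angb{\alpha}{y}$.

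First I would pair $\alpha^\vee$ with $y \in Y_{Q,n}$ via $B_Q$. By definition of $Y_{Q,n}$, we have $B_Q(\alpha^\vee, y) \in n\Z$ since $\alpha^\vee \in Y$ and $y \in Y_{Q,n}$ (using symmetry of $B_Q$). Applying Lemma \ref{BQ<>}, this gives
$$n \mid Q(\alpha^\vee) \cdot \angb{\alpha}{y}.$$

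Next, I would unravel the definition $n_\alpha = n/\gcd(Q(\alpha^\vee), n)$. Writing $d = \gcd(Q(\alpha^\vee), n)$, set $Q(\alpha^\vee) = d \cdot q'$ and $n = d \cdot n_\alpha$, where by construction $\gcd(q', n_\alpha) = 1$. Substituting into the divisibility above, we get $d \cdot n_\alpha \mid d \cdot q' \cdot \angb{\alpha}{y}$, hence $n_\alpha \mid q' \cdot \angb{\alpha}{y}$. Coprimality of $q'$ and $n_\alpha$ then forces $n_\alpha \mid \angb{\alpha}{y}$, which is the desired claim.

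There is no real obstacle here: the argument is a two-line manipulation combining the Weyl-invariance identity of Lemma \ref{BQ<>} with the elementary number-theoretic fact that dividing $n$ by $\gcd(Q(\alpha^\vee), n)$ produces a factor coprime to $Q(\alpha^\vee)/\gcd(Q(\alpha^\vee), n)$. The only small care required is to note that the proof only uses $\alpha^\vee \in Y$, not $\alpha^\vee \in Y_{Q,n}$, so no auxiliary hypothesis on $\alpha$ is needed.
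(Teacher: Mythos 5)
Your proof is correct and follows essentially the same route as the paper: apply Lemma \ref{BQ<>} to get $Q(\alpha^\vee)\cdot\angb{\alpha}{y}=B_Q(\alpha^\vee,y)\in n\Z$ and then extract $n_\alpha\mid\angb{\alpha}{y}$. You merely spell out the final coprimality/gcd step, which the paper compresses into "It follows".
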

\begin{proof}
Using the Weyl-invariance of $B_Q$ we have shown (cf. Lemma \ref{BQ<>}) $\angb{\alpha}{y}\cdot Q(\alpha^\vee) = B_Q(y, \alpha^\vee)$ which is divisible by $n$ with $y\in Y_{Q,n}$. It follows $n_\alpha| \angb{\alpha}{y}$.
\end{proof}

\begin{prop}
Let $\wt{\chi}$ be a qualified genuine character of $Z(\wt{T})$, then it is $W$-invariant. That is, for all $\w_\alpha \in W$ with $\alpha\in \Delta$,
$$^{\w_\alpha}\wt{\chi}=\wt{\chi}.$$
\end{prop}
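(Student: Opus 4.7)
The plan is to reduce the Weyl invariance of $\wt{\chi}$ to the extended condition $(C1)$ stating $\wt{\chi}\circ\Phi_{D,\eta}\bigl(\wm{h}_\alpha(a^{n_\alpha})\bigr) = 1$ for every $\alpha \in \Psi$, which Proposition \ref{s-eta} derives from $(C1)^{\text{eq}}$ by inductive reduction to simple roots. Since $\mu_n \subseteq Z(\wt{T})$ is Weyl-fixed and conjugation on $Z(\wt{T})$ depends only on $\w_\alpha$ modulo $\wt{T}$, it is enough to check ${}^{\w_\alpha}\wt{\chi}(\wt{t}) = \wt{\chi}(\wt{t})$ for generators $\wt{t} = (1, y \otimes a)$ with $y \in Y_{Q,n}$ and $a \in F^\times$; I will take $\w_\alpha$ to be the image in $\wt{G}$ via $\Phi_{D,\eta}$ of the Brylinski--Deligne lift $\wm{w}_\alpha(1) \in \wt{G}^{sc}$.

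First I would apply the preceding lemma to write $\angb{\alpha}{y} = n_\alpha m$ with $m \in \Z$, so that $s_\alpha(y) = y - m\,\alpha^\vee_{[n]} \in Y_{Q,n}$. The torus law (\ref{TQn law2}), combined with the identity $(1, \alpha^\vee_{[n]} \otimes a)^m = (1, m\alpha^\vee_{[n]} \otimes a)$ (valid because $(a,a)_n^{Q(\alpha^\vee_{[n]})} = 1$, since $n \mid n_\alpha^2 Q(\alpha^\vee)$), allows me to expand
\begin{equation*}
(1, s_\alpha(y) \otimes a) = (a, a)_n^{m\cdot D(y,\alpha^\vee_{[n]})}\cdot (1, y \otimes a)\cdot (1, \alpha^\vee_{[n]} \otimes a)^{-m}
\end{equation*}
inside $Z(\wt{T})$. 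Substituting the value $\wt{\chi}\bigl((1, \alpha^\vee_{[n]}\otimes a)\bigr) = (a, \eta(\alpha^\vee_{[n]}))_n$ from $(C1)^{\text{eq}}$ expresses $\wt{\chi}\bigl((1, s_\alpha(y) \otimes a)\bigr)$ as $\wt{\chi}\bigl((1, y\otimes a)\bigr)$ times an explicit product of Hilbert symbols in $a$, $\eta(\alpha^\vee_{[n]})$, and the bisector values.

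Next I would determine the $\mu_n$-scalar $\zeta$ characterised by $\w_\alpha^{-1}(1, y \otimes a)\w_\alpha = \zeta\cdot (1, s_\alpha(y)\otimes a)$; such a $\zeta$ exists because both sides lie over $s_\alpha(y)\otimes a \in T$. The commutator $(1, y\otimes a)^{-1}\cdot \w_\alpha^{-1}(1,y\otimes a)\w_\alpha$ projects to $\alpha^\vee \otimes a^{-\angb{\alpha}{y}} = \alpha^\vee \otimes (a^{-m})^{n_\alpha}$, so it lies in the $\mu_n$-fibre above $\Phi_{D,\eta}\bigl(\wm{h}_\alpha((a^{-m})^{n_\alpha})\bigr)$. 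A Matsumoto-style rank-one computation inside the subgroup $\wm{SL}_2^\alpha$, together with the BD formula (\ref{Phi-eta}) for $\Phi_{D,\eta}\bigl(\wm{h}_\alpha(a)\bigr)$ and the torus laws (\ref{Ftlaw1})--(\ref{Ftlaw3}), pins down this $\mu_n$-correction to an explicit Hilbert-symbol expression.

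The last step assembles the pieces: the extended $(C1)$, applied to $\alpha \in \Psi$, kills the factor $\Phi_{D,\eta}\bigl(\wm{h}_\alpha((a^{-m})^{n_\alpha})\bigr)$ under $\wt{\chi}$, leaving only the $\mu_n$-cocycle from the rank-one step, which will cancel exactly against the Hilbert-symbol modulation from the first step. This yields ${}^{\w_\alpha}\wt{\chi}\bigl((1,y \otimes a)\bigr) = \wt{\chi}\bigl((1, y \otimes a)\bigr)$, completing the proof. The main obstacle will be the Matsumoto-style cocycle identification in $\wm{SL}_2^\alpha$; this is a tedious but standard rank-one calculation, and the fact that the emerging cocycle matches the inverse of the first-step modulation is exactly the manifestation that $(C1)^{\text{eq}}$ is the correct axiom enforcing $W$-invariance of qualified characters.
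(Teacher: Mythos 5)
Your plan correctly isolates the two key ingredients — the divisibility $n_\alpha \mid \angb{\alpha}{y}$ for $y \in Y_{Q,n}$ (from the preceding lemma) and the extended condition $(C1)$ applied to all of $\Psi$ via Proposition \ref{s-eta} — and if the deferred cocycle computation is pushed through, the telescoping you describe does close. But the route is more circuitous than the paper's. The paper invokes \cite[(11.9.1)]{BD01}, which hands you the conjugation formula
$\w_\alpha^{-1}\,\wt{t}\,\w_\alpha = \wt{t}\cdot \Phi_{D,\eta}\bigl(\wm{h}_\alpha(\alpha(t)^{-1})\bigr)$
on the nose, with no residual $\mu_n$-ambiguity. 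Since $\wt{\chi}$ is a homomorphism on $Z(\wt{T})$, this product factors immediately into $\wt{\chi}(\wt{t})\cdot\wt{\chi}\circ\Phi_{D,\eta}\bigl(\wm{h}_\alpha(b^{-\angb{\alpha}{y}})\bigr)$; after one application of \cite[11.1.5]{BD01} to peel off the harmless $(-1)$-power, the second factor is killed outright by $(C1)$. The detour through $(1, s_\alpha(y)\otimes a)$, the explicit torus-law expansion, and the promised cancellation of two Hilbert-symbol modulations are not needed once you exploit multiplicativity of $\wt{\chi}$.

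The caveat I would flag is precisely the step you defer. You describe the determination of the $\mu_n$-scalar as a "Matsumoto-style rank-one computation inside $\wm{SL}_2^\alpha$," but $\wt{t}$ lies over $y\otimes a$ with $y \in Y_{Q,n}$ generally \emph{not} in $\Z\alpha^\vee$, so the commutator $[\wt{t},\w_\alpha]$ is not controlled by $\wm{SL}_2^\alpha$ and its cocycle alone — it requires the interaction of the full covering torus with the rank-one Steinberg elements, i.e.\ exactly the structural identity Brylinski--Deligne prove to arrive at (11.9.1). Carrying out that step from scratch amounts to reproving their formula, which is a genuine piece of work rather than a routine $\mathrm{SL}_2$ calculation; citing (11.9.1), as the paper does, is the efficient path and sidesteps the entire bookkeeping you describe in your first and third steps.
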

\begin{proof}
We use the notation $\mbf{h}_\alpha(a)$ for $\alpha^\vee(a), a\in F^\times$. Let $t$ be the image of $\wt{t}$ in $T^\dag$. It suffices to show $^{\w_\alpha}\wt{\chi}(\wt{t})=\wt{\chi}(\wt{t})$ on the generators of $Z(\wt{T})$. Hence, we may assume $t=y\otimes b$ with $y\in Y_{Q,n}$ and $b\in F^\times$. From \cite[(11.9.1)]{BD01} we have
\begin{align*}
\w_\alpha^{-1} \wt{t} \w_\alpha= & \wt{t} \cdot \Phi_{D,\eta} \big( \wm{h}_\alpha(\alpha(t)^{-1}) \big) \\
 =& \wt{t} \cdot \Phi_{D,\eta} \big( \wm{h}_\alpha( b^{-\angb{\alpha}{y}} ) \big).
\end{align*}

We need to show $\wt{\chi} \circ \Phi_{D,\eta} \big( \wm{h}_\alpha( b^{-\angb{\alpha}{y}} ) \big)=1$. However, from \cite[11.1.5]{BD01}, we have
\begin{align*}
\Phi_{D,\eta} \big( \wm{h}_\alpha( b^{-\angb{\alpha}{y}} ) \big) &=\Phi_{D,\eta} \big( \wm{h}_\alpha( b^{n_\alpha}) \big)^{-\angb{\alpha}{y}/n_\alpha} \cdot (b^{n_\alpha}, b^{n_\alpha})_n^{\varepsilon(-\angb{\alpha}{y}/n_\alpha)Q(\alpha^\vee)} \\
&=\Phi_{D,\eta} \big( \wm{h}_\alpha( b^{n_\alpha}) \big)^{-\angb{\alpha}{y}/n_\alpha} \\
\end{align*}

Since $\wt{\chi}$ is a qualified character, we have $\wt{\chi} \circ \Phi_{D,\eta} \big( \wm{h}_\alpha( b^{n_\alpha} ) \big)=1$ by the condition $(C1)$ (which is equivalent to $(C1)^{eq}$). It follows $\wt{\chi} \circ \Phi_{D,\eta} \big( \wm{h}_\alpha( b^{-\angb{\alpha}{y}} ) \big)=1$. Therefore,  $^{\w_\alpha}\wt{\chi}=\wt{\chi}$ and this concludes the proof.
\end{proof}

\section{Construction of distinguished characters for fair $(D, \mbf{1})$} \label{cons dis char}
Consider fair $(D, \mbf{1})$. By Proposition \ref{dis char exis}, there exist distinguished characters of $\wt{T}_{Q,n}$. In this section, we will give an explicit construction.

Recall $J=nY+Y_{Q,n}^{sc}$. Using the fairness of $D$, it is easy to see that the map given by its image of the generators of $J\otimes F^\times$ as
\begin{equation} \label{sJ}
\xymatrix{
s_J: \quad J\otimes F^\times \ar[r] & \wt{T}_{Q,n}, \quad y\otimes a \ar@{|->}[r] & \big(1, y\otimes a\big) \in \wt{T}_{Q,n}
}
\end{equation}
is a well-defined homomorphism.

Let $\wt{\chi}$ be a genuine character of $\wt{T}_{Q,n}$, then the condition $\wt{\chi} \circ s_J=\mbf{1}$ implies both $(C0)_+$ and $(C1)^\text{eq}$. Now we give an explicit construction of genuine characters of $\wt{T}_{Q,n}$ which satisfy $\wt{\chi} \circ s_J=\mbf{1}$ using the Weil index and Hilbert symbol. These characters will be distinguished characters.

\index{Weil index}
Recall for any additive character $\psi$ of $F$, the Weil index $\xymatrix{\gamma_\psi: F^\times \ar[r] & \mu_4\subseteq \C^\times}$ is a map satisfying the following properties:
\begin{align}
\gamma_\psi(a) \cdot \gamma_\psi(b) &=\gamma_\psi(ab) \cdot (a, b)_2 \\
\gamma_{\psi_{c^2}} &=\gamma_\psi,
\end{align}
where for any $c\in F^\times$ we define $\psi_c(x):=\psi(cx), x\in F$. The Weil index plays an important role in describing the representations of the classical double cover of $\mbf{Sp}_{2r}(F)$, in particular the construction of genuine characters of its abelian covering torus. Our construction below will recover this.

\subsubsection{Reduction to dimension one tori}

First of all, by elementary divisor theorem, let $\set{e_i}$ for $ 1\le i\le r$ be a basis of $Y_{Q,n}$ such that $\set{k_i e_i}$ is a basis of the
lattice $J=nY +Y_{Q,n}^{sc}$.

Let $\mbf{T}_J$ be the torus defined over $F$ associated with $J$, and let $T_J:=\mbf{T}_{J}(F)$. We may write $T_J=J\otimes F^\times \simeq \prod_{i} F^\times$ and $T_{Q,n}=Y_{Q,n}\otimes F^\times \simeq \prod_i F^\times$. Thus we obtain the map
\begin{diagram}
T_J &\rTo & T_{Q,n} \\
\prod_i (k_i e_i)\otimes a  &\rMapsto & \prod_i e_i \otimes a^{k_i}.
\end{diagram}

On the $i$-th component of the product, the map is the $k_i$-power. Write $T_{Q,n, i}$ and $T_{J, i}$ for the one-dimensional tori for the lattices generated by $e_i$ and $k_i e_i$ respectively. Since $\wt{T}_{Q,n}$ is abelian, we have 
$$\wt{T}_{Q,n}=\wt{T}_{Q,n, 1} \times ... \times \wt{T}_{Q,n, r}\big/ Z,$$
where $\wt{T}_{Q,n, i}$ is the preimage of $T_{Q,n,i}$ in $\wt{T}_{Q,n}$ and
$$Z=\set{(\zeta_i)\in \prod_i \mu_n: \prod_i \zeta_i=1 }.$$


To construct a genuine character on $\wt{T}_{Q,n}$ such that $\wt{\chi} \circ s_J=\mbf{1}$, it suffices to do so for each $\wt{T}_{Q, n, i}$ by requiring that it is trivial on the image of the splitting
\begin{diagram}
s_{J,i}: & T_{J, i} &\rTo & \wt{T}_{Q,n, i} \\
& (k_ie_i) \otimes a  &\rMapsto & \big(1, e_i \otimes a^{k_i}\big).
\end{diagram}

Note the group law on $\wt{T}_{Q, n, i}$ is given by
$$(1, y_i\otimes a) \cdot (1, y_i \otimes b) =((a, b)_n^{Q(y_i)}, y_i\otimes (ab)).$$

\subsubsection{The definition of $\wt{\chi}$}

We now attempt to define a character $\wt{\chi}_i$ of $\wt{T}_{Q, n, i}$ by
$$\wt{\chi}_i(1, e_i\otimes a)=\gamma_\psi(a)^{f_i},$$
where $f_i$ is to be determined. There are several requirements on $f_i$:

1). First, the relation
$$\wt{\chi}_i((1, e_i\otimes a)) \cdot \wt{\chi}_i((1, e_i\otimes b)) =(a, b)_n^{Q(e_i)} \cdot \wt{\chi}_i((1, e_i\otimes (ab)))$$
gives
$$ (\text{R1}) \quad f_i = \frac{2Q(e_i)}{n} \mod 2.$$

2). Write $A_i=2Q(e_i)/n \in \Z$. Simple computation gives 
$$\wt{\chi}_i((1, e_i \otimes a^{k_i})) =\gamma_\psi(a)^{k_i f_i + k_i(k_i-1)A_i}.$$

We require this to be trivial, which gives
$$ (\text{R2}) \quad k_i f_i + k_i(k_i-1)A_i =0 \mod 4.$$

\begin{lm} \label{constr dis char}
There exists $f_i$ such that both (R1) and (R2) hold. In fact, the assignment $f_i=\pm (k_i-1)A_i$ works.
\end{lm}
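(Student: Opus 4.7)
The plan is to verify directly that the explicit assignment $f_i = \pm(k_i-1)A_i$ satisfies both (R1) and (R2), after first reducing the parity constraint in (R1) to a single integrality statement, namely that $k_i A_i \in 2\Z$.

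\smallskip

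I would begin with (R2), which turns out to be automatic. For $f_i = +(k_i-1)A_i$ the left-hand side of (R2) equals $2k_i(k_i-1)A_i$, and since $k_i(k_i-1)$ is always even this is divisible by $4$. For $f_i = -(k_i-1)A_i$ the left-hand side telescopes to $0$. Next, for (R1), rewrite the congruence $\pm(k_i-1)A_i \equiv A_i \pmod 2$ as $A_i\bigl(\pm(k_i-1)-1\bigr) \equiv 0 \pmod 2$; both sign choices collapse to the single requirement $k_i A_i \equiv 0 \pmod 2$. So the whole lemma reduces to verifying that $k_i A_i$ is even.

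\smallskip

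To establish $k_i A_i \in 2\Z$, I will exploit the defining membership $k_i e_i \in J = nY + Y_{Q,n}^{sc}$, writing $k_i e_i = u + nw$ with $u = \sum_j c_j n_{\alpha_j}\alpha_j^\vee \in Y_{Q,n}^{sc}$ and $w\in Y$. Expanding $2Q(k_i e_i) = k_i^2\cdot 2Q(e_i) = k_i^2 A_i\, n$ and dividing through by $n$ yields
$$k_i^2 A_i \,=\, \frac{2Q(u)}{n} + 2B_Q(u,w) + 2nQ(w).$$
The last two summands are manifestly even. For the first, I will split $B_Q(u,u)$ into diagonal and off-diagonal pieces: the diagonal terms contribute $\sum_j 2c_j^2 n_{\alpha_j}\cdot n_{\alpha_j}Q(\alpha_j^\vee)$ which, using $n\mid n_{\alpha_j}Q(\alpha_j^\vee)$, becomes $2n\sum_j c_j^2 n_{\alpha_j} m_{\alpha_j}$ after dividing by $n$; the off-diagonal terms contribute twice a symmetric sum, giving another factor of $2$ upon invoking Lemma \ref{BQ<>} and the same divisibility $n\mid n_{\alpha_l}Q(\alpha_l^\vee)$. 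Hence $2Q(u)/n$ is even, and so $k_i^2 A_i$ is even.

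\smallskip

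Finally, I conclude by a parity dichotomy on $k_i$: if $k_i$ is even then $k_i A_i$ is obviously even, while if $k_i$ is odd then $k_i^2 A_i \equiv A_i \pmod 2$, forcing $A_i$ to be even, and again $k_i A_i \equiv 0 \pmod 2$. This discharges (R1) for either sign choice and completes the verification. The only mildly technical step is the even-parity of $2Q(u)/n$, but this is purely mechanical once the structural identities $n\mid n_\alpha Q(\alpha^\vee)$ and $B_Q(\alpha^\vee,y) = Q(\alpha^\vee)\langle\alpha,y\rangle$ are in hand; I expect no further obstacle.
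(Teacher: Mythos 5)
Your proof is correct and follows the same outline as the paper's: check (R2) for each sign, reduce (R1) (for either sign) to the parity claim $k_iA_i\equiv 0\pmod 2$, and dichotomize on the parity of $k_i$. The one place you genuinely add something is in the odd case: the paper simply asserts that $k_ie_i\in J=nY+Y_{Q,n}^{sc}$ ``clearly'' gives $n\mid Q(k_ie_i)$, whereas you actually establish this by writing $k_ie_i=u+nw$, expanding $Q$, and handling $2Q(u)/n$ via the diagonal/off-diagonal split of $B_Q(u,u)$ together with $n\mid n_\alpha Q(\alpha^\vee)$ and Lemma~\ref{BQ<>}. (Minor slips in wording aside — e.g.\ ``becomes $2n\sum_j c_j^2 n_{\alpha_j} m_{\alpha_j}$ after dividing by $n$'' should read $2\sum_j c_j^2 n_{\alpha_j} m_{\alpha_j}$ — the logic is sound, and the explicit verification is a welcome justification of a step the paper glosses over.)
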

\begin{proof}
Clearly, if $f_i=-(k_i-1)A_i$, then R2 is satisfied. If $f_i=(k_i-1)A_i$, then we have
$$k_i f_i + k_i(k_i-1)A_i=2k_i(k_i-1)A_i=0 \mod 4,$$
which follows from the fact $k_i(k_i-1) =0 \mod 2.$

Now it suffices to show
$$\pm (k_i -1)A_i = A_i \mod 2.$$
Equivalently, $$k_iA_i =0 \mod 2.$$
If $k_i$ is even we are done. Now assume $k_i$ is odd.

We have $k_i e_i \in J=nY +Y_{Q,n}^{sc}$, and thus clearly $n| Q(k_i e_i)$. Since $k_i$ is odd,
$$k_iA_i =\frac{2k_i^2 Q(e_i)}{n} =\frac{2Q(k_ie_i)}{n} =0 \mod 2.$$

This completes the proof.
\end{proof}

Let $y=\sum_i n_i e_i \in Y_{Q,n}$ and $a\in F^\times$ be arbitrary. Then we define

\begin{align}
\wt{\chi}_\psi\big((1, y\otimes a)\big) & =\prod_i \wt{\chi}_i(1, e_i\otimes a^{n_i}) \cdot (a, a)_n^{\sum_{j<j'} n_j n_{j'} D(e_j, e_{j'})} \\
& = \prod_i \gamma_\psi(a^{n_i})^{f_i} \cdot (a, a)_n^{\sum_{j<j'} n_j n_{j'} D(e_j, e_{j'})},
\end{align}

where $f_i=(k_i -1)A_i$. Then $\wt{\chi}_\psi$ is a distinguished character of $\wt{T}_{Q,n}$.

\section{Explicit distinguished characters and compatibility}
In this section, we consider a simply-connected group $\mbf{G}$ of arbitrary type. By Theorem \ref{torsor/sc}, there is up to unique isomorphism a $\mbf{K}_2$-torsor $\wm{G}$ associated to a Weyl-invariant quadratic form on $Y^{sc}=Y$. Consider $\wt{G}$ incarnated by $(D, \eta)$, then there is no loss of generality in assuming $D$ fair and $\eta=\mbf{1}$. We will do so in the following.

We apply the construction in previous section and explicate the distinguished characters case by case. 

For simplicity we assume $n=2$ except for the case of the exceptional $G_2$ where the computation is very simple for general $n$. We also assume that $Q$ is the unique Weyl-invariant quadratic form which takes value 1 on the short coroots of $\mbf{G}$. The general case follows from similar computations.

In the simply-laced case and the case $C_r$ for the Dynkin diagram, we shall see that these explicit distinguished characters are compatible with those given by Savin (cf. \cite{Sav04}) and those for the classical double cover of $\mbf{Sp}_{2r}(F)$ respectively (cf. \cite{Kud96} \cite{Rao93}).

Note that since we have assume $n=2$, we will write $Y_{Q,2}$ and $Y_{Q,2}^{sc}$ for the lattices $Y_{Q,n}$ and $Y_{Q,n}^{sc}$ which are of interest. We also have $J=2Y+Y_{Q,2}^{sc}=Y_{Q,2}^{sc}$ since $Y=Y^{sc}$.

\subsection{The simply-laced case $A_r, D_r, E_6, E_7, E_8$ and compatibility}
Now let $\mbf{G}$ be a simply-laced simply-connected group of type $A_r$ for $ r\ge 1$, $D_r$ for $r\ge 3$, and $E_6, E_7, E_8$. Let $\Delta=\set{\alpha_1, ..., \alpha_r}$ be a fixed set of simple roots of $\mbf{G}$. Let $\wt{\mbf{G}}$ be the extension of $\mbf{G}$ determined by the quadratic form $Q$ with $Q(\alpha_i^\vee)=1$ for all coroots $\alpha_i^\vee$. 

We obtain the two-fold cover $\wt{G}$ of $G$. We show that our distinguished genuine character in previous section agrees with the one given by Savin.

Clearly we have $n_\alpha=2$ for all  $\alpha \in \Psi$ in this case. As mentioned, we also have
$$J=2Y^{sc} + Y_{Q,2}^{sc}=2Y^{sc}=Y_{Q,2}^{sc}.$$

Let $\alpha_i^\vee \in \Delta^\vee$ for $i=1, ..., r$ be the simple coroots of $\mbf{G}$. It is easy to compute the bilinear form $B_Q$ associated with $Q$: 
\begin{equation} \label{sl B}
    B_Q(\alpha_i^\vee, \alpha_j^\vee) =
    \begin{cases}
      -1 & \text{if } \alpha_i \text{ and } \alpha_j \text{ connected in the Dynkin diagram,} \\
      0 & \text{otherwise}.
    \end{cases}
\end{equation}

In order to show compatibility with Savin, we may further assume that $\wt{G}$ is incarnated by the following fair bisector $D$ associated with $B_Q$ as given in \cite{Sav04},
\begin{equation} \label{S-fair D}
    D(\alpha_i^\vee, \alpha_j^\vee) =
    \begin{cases}
      0 & \text{ if } i< j, \\
Q(\alpha^\vee_i) &\text{ if } i=j, \\
      B_Q(\alpha_i^\vee, \alpha_j^\vee) & \text{ if } i>j.
    \end{cases}
\end{equation}

The following lemma is in \cite{Sav04} and reproduced here for convenience. The stated result can also be checked by straightforward computation.
\begin{lm} \label{geom lm}
Let $\Omega$ be a subset of the vertices in the Dynkin diagram of $\mbf{G}$ satisfying:
\begin{enumerate}
\item[(i)] No two vertices in $\Omega$ are adjacent,

\item[(ii)] Every vertex not in $\Omega$ is adjacent to an even number of vertices in $\Omega$. 
\end{enumerate}

Then the map given by $\xymatrix{\Omega \ar@{|->}[r] & e_\Omega}$ with $e_\Omega:=\sum_{\alpha_i \in \Omega} \alpha_i^\vee$ gives a well-defined correspondence between such sets $\Omega$ and the cosets of $Y_{Q,2}/J$. In particular, the empty set $\varnothing$ corresponds to the trivial coset $J$.
\end{lm}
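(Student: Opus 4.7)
The plan is to reduce the lemma to a purely combinatorial statement about the induced subgraph of the Dynkin diagram cut out by $\Omega$, and then exploit that simply-laced Dynkin diagrams are trees.

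I will first identify $J$ explicitly. Since $\mbf{G}$ is simply-connected and simply-laced with $Q(\alpha^\vee) = 1$, one has $n_\alpha = 2$ for every $\alpha \in \Psi$, and $Y_{Q,2}^{sc}$ is the $\Z$-span of $\{2\alpha^\vee : \alpha \in \Psi\}$, which equals $2Y$ because the coroots span $Y$ in the simply-connected case. Hence $J = 2Y$. Combining (\ref{sl B}) with $B_Q(\alpha_i^\vee, \alpha_i^\vee) = 2$, the condition $y = \sum_i c_i \alpha_i^\vee \in Y_{Q,2}$ reduces, modulo $2$, to the purely combinatorial requirement $\sum_{i \sim j} c_i \equiv 0 \pmod 2$ for every vertex $v_j$ of the Dynkin diagram --- equivalently, every vertex has an even number of neighbors in the subset $\Omega' := \{v_i : c_i \equiv 1 \pmod 2\}$.

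From this, well-definedness and injectivity are quick. If $\Omega$ satisfies (i) and (ii), then every vertex has an even number of $\Omega$-neighbors: (i) gives $0$ such neighbors when $v_j \in \Omega$, and (ii) gives evenness when $v_j \notin \Omega$. Hence $e_\Omega \in Y_{Q,2}$. Injectivity follows because $\{\alpha_i^\vee\}$ is a $\Z$-basis of $Y$, so the residue of $e_\Omega$ in $Y/2Y = Y/J$ already determines $\Omega$. The empty set trivially maps to $0 \in J$.

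The essential step is surjectivity. Given a coset in $Y_{Q,2}/J$, pick its unique representative $y = \sum c_i \alpha_i^\vee$ with $c_i \in \{0,1\}$ and put $\Omega' := \{v_i : c_i = 1\}$. Condition (ii) is immediate from the combinatorial description. The main obstacle is (i): showing that $\Omega'$ is necessarily an independent set. For this I will use that the Dynkin diagrams of types $A_r, D_r, E_6, E_7, E_8$ are all trees, so the induced subgraph $G[\Omega']$ is a forest. The even-neighbor condition applied to vertices of $\Omega'$ forces each $v \in \Omega'$ to have even degree in $G[\Omega']$, and in particular never degree $1$. But any tree with at least one edge has at least two leaves (vertices of degree $1$), so each connected component of $G[\Omega']$ must consist of a single isolated vertex. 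This is exactly the independence of $\Omega'$, and completes the proof plan.
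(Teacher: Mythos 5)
Your proof is correct and complete. Since the paper itself gives no proof here (it cites Savin and asserts the result ``can also be checked by straightforward computation,'' which in context means the kind of coefficient-by-coefficient case analysis the paper then illustrates for $A_r$, $r$ even), your argument is a genuinely self-contained and more uniform route to the lemma. The key differences are worth highlighting: you first translate membership $\sum c_i\alpha_i^\vee \in Y_{Q,2}$ into the purely graph-theoretic condition ``every vertex of the Dynkin diagram has an even number of neighbors with odd coefficient,'' using $B_Q(\alpha_i^\vee,\alpha_i^\vee)=2$ and (\ref{sl B}) and the fact that simple coroots form a $\Z$-basis of $Y=Y^{sc}$; well-definedness and injectivity are then formal. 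The surjectivity argument is the real content, and here you replace the case-by-case enumeration with a clean structural observation: the induced subgraph $G[\Omega']$ sits inside a tree (all simply-laced diagrams are trees), hence is a forest; the even-degree condition on vertices of $\Omega'$ is incompatible with the existence of a leaf, so every component of $G[\Omega']$ is a singleton and $\Omega'$ is independent. This ``forest has no leaves $\Rightarrow$ forest has no edges'' step is exactly the kind of argument that unifies $A_r, D_r, E_6, E_7, E_8$ without ever drawing a diagram, and it is arguably more illuminating than the computation the paper has in mind; the tradeoff is that the paper's explicit case analysis (which it needs anyway, to describe the nontrivial cosets concretely for the construction of the distinguished character) does double duty as a verification of the lemma, whereas your proof establishes the bijection abstractly and then still requires the enumeration for the applications that follow.
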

By properties of $B_Q$ and (i) of $\Omega$ above, it follows that
$$Q(e_\Omega)=|\Omega|.$$


We now give a brief case by case discussion.

\subsubsection{The $A_r$ case.} 
There are two situations according to the parity of $r$.

\cu{Case 1:} $r$ is even.  As an illustration, we first do the straightforward computation. Let $\sum_i k_i \alpha_i^\vee \in Y_{Q,2}$ for proper $k_i\in \Z$. Then $B_Q(\sum_i k_i \alpha_i^\vee, \alpha^\vee_j) \in 2\Z$ for all $1\le j\le r$ by the definition of $Y_{Q,2}$. In view of (\ref{sl B}), it is equivalent to 
\begin{equation} \label{comp}
\begin{cases}
2k_1 + (-1) k_2 &\in 2\Z, \\
(-1)k_1 + 2 k_2 + (-1) k_3 &\in 2\Z, \\
(-1)k_2 + 2 k_3 + (-1) k_4 &\in 2\Z, \\
\vdots & \ \\
(-1)k_{r-2} + 2 k_{r-1} + (-1) k_r &\in 2\Z, \\
(-1)k_{r-1} + 2k_r &\in 2\Z.
\end{cases}
\end{equation}
It follows that $k_2$ is even and so are the successive $k_4, ..., k_r$ (we have assumed $r$ to be even). Similarly, $k_{r-1}$ is even, and therefore all $k_{r-3}, ..., k_1$ are also even. That is $Y_{Q,2}=J$.

Note that we could simply apply the lemma to get $Y_{Q,2}=J$, which corresponds to the fact there is only the empty set for $\Omega$ satisfying properties $(i)$ and $(ii)$. There is nothing to check in this case, and the character $\wt{\chi}$ with $\wt{\chi}\big(s_J(t)\big)=1, t\in T_J=J\otimes F^\times$ will be a distinguished character.

\cu{Case 2:} $r=2k-1$ is odd. The consideration as in (\ref{comp}) works. However, for convenience we will apply the lemma to get $[Y_{Q,2}, J]=2$ with a basis of $Y_{Q,2}$ given by
$$\set{\alpha_{r,[2]}^\vee, \alpha_{r-1,[2]}^\vee, ..., \alpha_{2,[2]}^\vee, e_\Omega=\sum_{m=1}^k \alpha_{2m-1}^\vee}.$$
The nontrivial coset corresponds to the set $\Omega$ indicated by alternating bold circles below
$$
\begin{picture}(4.7,0.2)(0,0)
\put(1,0){\circle*{0.08}}
\put(1.5,0){\circle{0.08}}
\put(2,0){\circle*{0.08}}
\put(2.5,0){\circle{0.08}}
\put(3,0){\circle*{0.08}}
\put(1.04,0){\line(1,0){0.42}}
\multiput(1.55,0)(0.05,0){9}{\circle*{0.02}}
\put(2.04,0){\line(1,0){0.42}}
\put(2.54,0){\line(1,0){0.42}}
\put(1,0.1){\footnotesize $\alpha_{2k-1}$}
\put(1.5,0.1){\footnotesize $\alpha_{2k-2}$}
\put(2,0.1){\footnotesize $\alpha_3$}
\put(2.5,0.1){\footnotesize $\alpha_2$}
\put(3,0.1){\footnotesize $\alpha_1$}
\end{picture}
$$
A basis for $J=2Y^{sc}$ is given by
$$\set{\alpha_{r,[2]}^\vee, \alpha_{r-1,[2]}^\vee, ..., \alpha_{2,[2]}^\vee, 2 e_\Omega}.$$

Now the construction of the distinguished $\wt{\chi}_\psi$ in previous section is given by
\begin{equation}
\begin{cases}
\wt{\chi}_\psi(1, \alpha_{i,[2]}^\vee \otimes a) =1 \\
\wt{\chi}_\psi(1, e_\Omega\otimes a)=\gamma_\psi(a)^{(2-1)Q(e_\Omega)}=\gamma_\psi(a)^{|\Omega|}.
\end{cases}
\end{equation}

This agrees with the formula in Savin when we substitute $a=\varpi$ in $\gamma_\psi(a)$ for $\psi$ of conductor $\msc{O}_F$. See \cite[pg. 118]{Sav04}.

\subsubsection{The $D_r$ case.}
We also have two cases.

\cu{Case 1:} $r=2k-1$ is odd with $k\ge 2$. Then $[Y_{Q,2}, J]=2$ with the nontrivial $\Omega=\set{\alpha_1, \alpha_2}$:
$$
\begin{picture}(4.7,0.4)(0,0)
\put(1,0){\circle{0.08}}
\put(1.5,0){\circle{0.08}}
\put(2,0){\circle{0.08}}
\put(2.5,0){\circle{0.08}}
\put(3,0){\circle{0.08}}
\put(3.5, 0.25){\circle*{0.08}}
\put(3.5, -0.25){\circle*{0.08}}
\put(1.04,0){\line(1,0){0.42}}
\put(1.54,0){\line(1,0){0.42}}
\multiput(2.05,0)(0.05,0){9}{\circle*{0.02}}
\put(2.54,0){\line(1,0){0.42}}
\put(3.00,0){\line(2,1){0.46}}
\put(3.00,0){\line(2,-1){0.46}}
\put(1,0.1){\footnotesize $\alpha_{2k-1}$}
\put(1.5,0.1){\footnotesize $\alpha_{2k-2}$}
\put(2,0.1){\footnotesize $\alpha_{2k-3}$}
\put(2.5,0.1){\footnotesize $\alpha_4$}
\put(3,0.1){\footnotesize $\alpha_3$}
\put(3.5,0.35){\footnotesize $\alpha_1$}
\put(3.5,-0.4){\footnotesize $\alpha_2$}
\end{picture}
$$
\vspace{0.5cm}

Consider the basis $\set{\alpha_{i,[2]}^\vee: 2\le i \le r} \cup \set{e_\Omega}$ of $Y_{Q,2}$, then the construction of distinguished $\wt{\chi}_\psi$ in previous section is determined by
\begin{align*}
\wt{\chi}_\psi(1, e_\Omega\otimes a)=\gamma_\psi(a)^{(2-1)Q(e_\Omega)}=\gamma_\psi(a)^{|\Omega|}.
\end{align*}

\cu{Case 2:} $r=2k$ is even. Then $[Y_{Q,2}: J]=4$. There are three nontrivial sets $\Omega_i$ for $i=1, 2, 3$ as indicated by the bold circles below.
\begin{multicols}{2}
$$
\begin{picture}(4.7,0.1)(1,0)
\put(1.5,0){\circle*{0.08}}
\put(2,0){\circle{0.08}}
\put(2.5,0){\circle*{0.08}}
\put(3,0){\circle{0.08}}
\put(3.5, 0.25){\circle*{0.08}}
\put(3.5, -0.25){\circle{0.08}}
%
\put(1.54,0){\line(1,0){0.42}}
\multiput(2.05,0)(0.05,0){9}{\circle*{0.02}}
\put(2.54,0){\line(1,0){0.42}}
\put(3.00,0){\line(2,1){0.46}}
\put(3.00,0){\line(2,-1){0.46}}
%
\put(1.5,0.1){\footnotesize $\alpha_{2k}$}
\put(2,0.1){\footnotesize $\alpha_{2k-1}$}
\put(2.5,0.1){\footnotesize $\alpha_4$}
\put(3,0.1){\footnotesize $\alpha_3$}
\put(3.5,0.35){\footnotesize $\alpha_1$}
\put(3.5,-0.4){\footnotesize $\alpha_2$}
\end{picture}
$$
\goodbreak
$$
\begin{picture}(4.7,0.1)(1,0)
\put(1.5,0){\circle*{0.08}}
\put(2,0){\circle{0.08}}
\put(2.5,0){\circle*{0.08}}
\put(3,0){\circle{0.08}}
\put(3.5, 0.25){\circle{0.08}}
\put(3.5, -0.25){\circle*{0.08}}
%
\put(1.54,0){\line(1,0){0.42}}
\multiput(2.05,0)(0.05,0){9}{\circle*{0.02}}
\put(2.54,0){\line(1,0){0.42}}
\put(3.00,0){\line(2,1){0.46}}
\put(3.00,0){\line(2,-1){0.46}}
%
\put(1.5,0.1){\footnotesize $\alpha_{2k}$}
\put(2,0.1){\footnotesize $\alpha_{2k-1}$}
\put(2.5,0.1){\footnotesize $\alpha_4$}
\put(3,0.1){\footnotesize $\alpha_3$}
\put(3.5,0.35){\footnotesize $\alpha_1$}
\put(3.5,-0.4){\footnotesize $\alpha_2$}
\end{picture}
$$
\end{multicols}

$$
\begin{picture}(4.7,0.4)(0,0)
\put(1.5,0){\circle{0.08}}
\put(2,0){\circle{0.08}}
\put(2.5,0){\circle{0.08}}
\put(3,0){\circle{0.08}}
\put(3.5, 0.25){\circle*{0.08}}
\put(3.5, -0.25){\circle*{0.08}}
%
\put(1.54,0){\line(1,0){0.42}}
\multiput(2.05,0)(0.05,0){9}{\circle*{0.02}}
\put(2.54,0){\line(1,0){0.42}}
\put(3.00,0){\line(2,1){0.46}}
\put(3.00,0){\line(2,-1){0.46}}
%
\put(1.5,0.1){\footnotesize $\alpha_{2k}$}
\put(2,0.1){\footnotesize $\alpha_{2k-1}$}
\put(2.5,0.1){\footnotesize $\alpha_4$}
\put(3,0.1){\footnotesize $\alpha_3$}
\put(3.5,0.35){\footnotesize $\alpha_1$}
\put(3.5,-0.4){\footnotesize $\alpha_2$}
\end{picture}
$$
\vspace{0.5cm}

That is, $\Omega_1=\set{\alpha_1}\cup \set{\alpha_{2m}: 2\le m\le k}$, $\Omega_2=\set{\alpha_2}\cup \set{\alpha_{2m}: 2\le m\le k}$ and $\Omega_3=\set{\alpha_1, \alpha_2}$. Note $|\Omega_1|=|\Omega_2|=k$ and $|\Omega_2|=2$.

A basis of $Y_{Q,2}$ is given by
$$\set{\alpha_{i,[2]}^\vee: 3\le i\le 2k-1}\cup \set{e_{\Omega_1}, e_{\Omega_2}, e_{\Omega_3}}.$$
However, the construction of distinguished characters in previous section utilized the elementary divisor theorem. Thus we have to provide bases for $Y_{Q,2}$ and $J$ aligned in a proper way. To achieve this, consider the alternative basis of $Y_{Q,2}$ given by
$$\set{\alpha_{i,[2]}^\vee: 3\le i\le 2k-1}\cup \set{e_{\Omega_1}+e_{\Omega_2} + e_{\Omega_3}, e_{\Omega_2} +e_{\Omega_3}, e_{\Omega_3}}.$$
Then it is easy to check that the set
$$\set{\alpha_{i,[2]}^\vee: 3\le i\le 2k-1}\cup \set{e_{\Omega_1}+e_{\Omega_2} + e_{\Omega_3}, 2(e_{\Omega_2} +e_{\Omega_3}), 2e_{\Omega_3}}$$
is a basis for $J$.
Note 
$$Q(e_{\Omega_2} +e_{\Omega_3})=|\Omega_2|+Q(2\alpha^\vee)=|\Omega_2|+4.$$

Thus a distinguished character could be determined by
\begin{equation*}
\begin{cases}
\wt{\chi}_\psi(1, (e_{\Omega_2} +e_{\Omega_3})\otimes a)=\gamma_\psi(a)^{Q(e_{\Omega_2} +e_{\Omega_3})}= \gamma_\psi(a)^{|\Omega_2|}, \\
\wt{\chi}_\psi(1, e_{\Omega_3} \otimes a)=\gamma_\psi(a)^{|\Omega_3|}.
\end{cases}
\end{equation*}

However, since we have assumed that $D$ takes the special form given by (\ref{S-fair D}), we have
\begin{align*}
& D(e_{\Omega_1}, e_{\Omega_2} + e_{\Omega_3}) = |\Omega_1|\\
& D(e_{\Omega_2}, e_{\Omega_3}) =Q(\alpha^\vee) =1.
\end{align*}

Thus
\begin{align*}
\wt{\chi}_\psi(1, e_{\Omega_1}\otimes a) \cdot \wt{\chi}_\psi(1, (e_{\Omega_2} +e_{\Omega_3})\otimes a) &= (a, a)_2^{|\Omega_1|} \cdot
\wt{\chi}_\psi(1, (e_{\Omega_1}+ e_{\Omega_2} +e_{\Omega_3})\otimes a) \\
\wt{\chi}_\psi(1, e_{\Omega_2}\otimes a) \cdot \wt{\chi}_\psi(1, e_{\Omega_3}\otimes a) &= (a, a)_2 \cdot
\wt{\chi}_\psi(1, (e_{\Omega_2} +e_{\Omega_3})\otimes a) 
\end{align*}

Recall $\gamma_\psi(a)^2=(a, a)_2$. This combined with the above results gives
\begin{equation*}
\begin{cases}
\wt{\chi}_\psi(1, e_{\Omega_1}\otimes a)=\gamma_\psi(a)^{|\Omega_1|}, \\
\wt{\chi}_\psi(1, e_{\Omega_2}\otimes a)=\gamma_\psi(a)^{|\Omega_2|}, \\
\wt{\chi}_\psi(1, e_{\Omega_3}\otimes a)=\gamma_\psi(a)^{|\Omega_3|}.
\end{cases}
\end{equation*}

It agrees with the character given by Savin.

\subsubsection{The $E_6, E_7, E_8$ case.} For $E_6$ and $E_8$, $Y_{Q,2}=J$ and so the situation is trivial. Consider $E_7$, then $[Y_{Q,n}: J]=2$. The nontrivial $\Omega$ is given by $\Omega=\set{\alpha_4, \alpha_6, \alpha_7}$.
$$
\begin{picture}(4.7,0.2)(0,0)
\put(1,0){\circle*{0.08}}
\put(1.5,0){\circle{0.08}}
\put(2,0){\circle*{0.08}}
\put(2.5,0){\circle{0.08}}
\put(3,0){\circle{0.08}}
\put(3.5,0){\circle{0.08}}
\put(2.5,-0.5){\circle*{0.08}}
\put(1.04,0){\line(1,0){0.42}}
\put(1.54,0){\line(1,0){0.42}}
\put(2.04,0){\line(1,0){0.42}}
\put(2.5,-0.04){\line(0,-1){0.42}}
\put(2.54,0){\line(1,0){0.42}}
\put(3.04,0){\line(1,0){0.42}}
\put(1,0.1){\footnotesize $\alpha_6$}
\put(1.5,0.1){\footnotesize $\alpha_5$}
\put(2,0.1){\footnotesize $\alpha_4$}
\put(2.5,0.1){\footnotesize $\alpha_3$}
\put(3,0.1){\footnotesize $\alpha_2$}
\put(3.5,0.1){\footnotesize $\alpha_1$}
\put(2.58,-0.55){\footnotesize $\alpha_7$}
\end{picture}
$$
\vspace{0.7cm}

The set $\set{\alpha_{i,[2]}^\vee: 1\le i\le 6} \cup \set{e_\Omega}$ is a basis of $Y_{Q,2}$, while $\set{\alpha_{i,[2]}^\vee: 1\le i\le 6} \cup \set{2 e_\Omega}$ a basis for $J$.

Our distinguished character is determined by
$$\wt{\chi}_\psi(1, e_\Omega \otimes a)=\gamma_\psi(a)^{|\Omega|}.$$
This agrees with Savin also.

\subsection{The case $C_r$ and compatibility with the classical metaplectic double cover $\wm{Sp}_{2r}(F)$}

Let $\mbf{Sp}_{2r}$ be the simply-connected simple group with Dynkin diagram:

$$
\begin{picture}(4.7,0.2)(0,0)
\put(1,0){\circle{0.08}}
\put(1.5,0){\circle{0.08}}
\put(2,0){\circle{0.08}}
\put(2.5,0){\circle{0.08}}
\put(3,0){\circle{0.08}}
\put(1.04,0){\line(1,0){0.42}}
\multiput(1.55,0)(0.05,0){9}{\circle*{0.02}}
\put(2.04,0){\line(1,0){0.42}}
\put(2.54,0.015){\line(1,0){0.42}}
\put(2.54,-0.015){\line(1,0){0.42}}
\put(2.74,-0.04){$<$}
\put(1,0.1){\footnotesize $\alpha_r$}
\put(1.5,0.1){\footnotesize $\alpha_{r-1}$}
\put(2,0.1){\footnotesize $\alpha_3$}
\put(2.5,0.1){\footnotesize $\alpha_2$}
\put(3,0.1){\footnotesize $\alpha_1$}
\end{picture}
$$
\

Let $\set{\alpha_1^\vee, \alpha_2^\vee, ..., \alpha_r^\vee}$ be the set of simple coroots with $\alpha_1^\vee$ the short one. Let $n=2$. As mentioned, $\wm{Sp}_{2r}(F)=\Hs_{\mbf{Sp}_{2r}}(\wt{\mbf{Sp}_{2r}})$ is the two-fold cover of $\mbf{Sp}_{2r}(F)$. Here $\wt{\mbf{Sp}}_{2r}$ is determined by the unique Weyl-invariant quadratic form $Q$ on $Y$ with $Q(\alpha_1^\vee)=1$.

It follows $n_{\alpha_1}=2$. Also $Q(\alpha_i^\vee)=2$ and $n_{\alpha_i}=1$ for $2\le i\le  r$. Moreover, 
\begin{align*}
Y_{Q,2}=Y^{sc}=\langle \alpha_1^\vee,  \alpha_2^\vee, ..., \alpha_r^\vee \rangle_\Z, \\
Y_{Q,2}^{sc} =\langle 2\alpha_1^\vee,  \alpha_2^\vee, ..., \alpha_r^\vee \rangle_\Z.
\end{align*}

Since $J=Y_{Q,2}^{sc}$, by the construction of distinguished character $\wt{\chi}_\psi$, it is determined by
\begin{equation}
\begin{cases}
\wt{\chi}_\psi(1, \alpha^\vee_i \otimes a) =1, \text{ if } i=2, 3, ..., r; \\
\wt{\chi}_\psi(1, \alpha^\vee_1 \otimes a) =\gamma_\psi(a)^{(2-1)Q(\alpha^\vee_1)}=\gamma_\psi(a).
\end{cases}
\end{equation}

This uniquely determined a genuine character of $\wt{T}$ which is abelian. It can be checked  that this agrees with the classical one (cf. \cite{Rao93} or \cite{Kud96} for example).

\subsection{The $B_r$, $F_4$ and $G_2$ case} 
For completeness, we also give the explicit form of the distinguished character constructed in previous section for the double cover  $\wt{G}$ of the simply connected group $G$ of type $B_r$, $F_4$ and $G_2$. Recall that when $n=2$ we have $J=Y_{Q,2}^{sc}$.

\subsubsection{The $B_r$ case}

Consider the Dynkin diagram of $B_r$:
$$
\begin{picture}(4.7,0.2)(0,0)
\put(1,0){\circle{0.08}}
\put(1.5,0){\circle{0.08}}
\put(2,0){\circle{0.08}}
\put(2.5,0){\circle{0.08}}
\put(3,0){\circle{0.08}}
\put(1.04,0){\line(1,0){0.42}}
\multiput(1.55,0)(0.05,0){9}{\circle*{0.02}}
\put(2.04,0){\line(1,0){0.42}}
\put(2.54,0.015){\line(1,0){0.42}}
\put(2.54,-0.015){\line(1,0){0.42}}
\put(2.74,-0.04){$>$}
\put(1,0.1){\footnotesize $\alpha_1$}
\put(1.5,0.1){\footnotesize $\alpha_2$}
\put(2,0.1){\footnotesize $\alpha_{r-2}$}
\put(2.5,0.1){\footnotesize $\alpha_{r-1}$}
\put(3,0.1){\footnotesize $\alpha_r$}
\end{picture}
$$

Let $Q$ be the unique Weyl-invariant quadratic form with $Q(\alpha^\vee_i)=1$ for $1\le i\le r-1$. It gives $Q(\alpha^\vee_r)=2$. We have also assumed that the double cover $\wt{G}$ is incarnated by a fair bisector $D$. The discussion now will be split into two cases according to the parity of $r$. 

\cu{Case 1:} $r$ is odd. Direct computation gives $Y_{Q,2}^{sc}=Y_{Q,n}$ and therefore this case is trivial.

\cu{Case 2:} $r$ is even. It is not difficult to compute the index $[Y_{Q,2}: Y_{Q,2}^{sc}]=2$. In fact, a basis of $Y_{Q,2}$ is given by
$$\set{\alpha^\vee_1+\alpha^\vee_3 +... +\alpha^\vee_{r-1}} \cup \set{2\alpha^\vee_i: 2\le i\le r-1} \cup \set{\alpha_r^\vee}.$$
This gives a basis of $J=Y_{Q,2}^{sc}$:
$$\set{2(\alpha^\vee_1+\alpha^\vee_3 +... +\alpha^\vee_{r-1})} \cup \set{2\alpha^\vee_i: 2\le i\le r-1} \cup \set{\alpha_r^\vee}.$$

We have 
$$Q(\alpha^\vee_1+\alpha^\vee_3 +... +\alpha^\vee_{r-1})=r/2.$$
By the construction of distinguished character $\wt{\chi}_\psi$, it is determined by
\begin{equation}
\begin{cases}
 \wt{\chi}_\psi\big((1, (\alpha^\vee_1+\alpha^\vee_3 +... +\alpha^\vee_{r-1})\otimes a)\big) =\gamma_\psi(a)^{r/2}; \\
 \wt{\chi}_\psi\big((1, (2\alpha_i^\vee)\otimes a)\big) =1, \text{ for } 2\le i\le r-1; \\
 \wt{\chi}_\psi\big((1, \alpha_r^\vee\otimes a)\big) =1.
\end{cases}
\end{equation}

\subsubsection{The $F_4$ case}
Consider the Dynkim diagram of $F_4$:

$$
\begin{picture}(4.7,0.2)(0,0)
\put(2,0){\circle{0.08}}
\put(2.5,0){\circle{0.08}}
\put(3,0){\circle{0.08}}
\put(3.5,0){\circle{0.08}}
%
\put(2.04,0){\line(1,0){0.42}}
\put(2.54,0.015){\line(1,0){0.42}}
\put(2.54,-0.015){\line(1,0){0.42}}
\put(2.74,-0.04){$>$}
\put(3.04,0){\line(1,0){0.42}}
%
\put(2,0.1){\footnotesize $\alpha_1$}
\put(2.5,0.1){\footnotesize $\alpha_2$}
\put(3,0.1){\footnotesize $\alpha_3$}
\put(3.5,0.1){\footnotesize $\alpha_4$}
\end{picture}
$$

Let $Q$ be such that $Q(\alpha^\vee_i)=1$ for $i=1, 2$. It implies $Q(\alpha_i^\vee)=2$ for $i=3, 4$. Clearly $n_{\alpha_i}=2$ for $i=1, 2$ and $n_{\alpha_i}=1$ for $i=3, 4$. We can compute 
$$B_Q(\alpha^\vee_1, \alpha^\vee_2)=-1,\quad B_Q(\alpha^\vee_2, \alpha^\vee_3)=-2, \quad B_Q(\alpha^\vee_3, \alpha^\vee_4)=-2.$$ 
Also $B_Q(\alpha^\vee_i, \alpha^\vee_j)=0$ if $\alpha_i$ and $\alpha_j$ are not adjacent in the Dynkin diagram.

Moreover, any $\sum_i k_i\alpha^\vee_i \in Y^{sc}$ with certain $k_i \in \Z$ belongs to $Y_{Q,2}$ if and only if $2|B_Q(\sum_i k_i\alpha^\vee_i, \alpha_j^\vee)$ for all $1\le j\le 4$, i.e., in explicit terms:
\begin{equation*}
\begin{cases}
2k_1 + (-1) k_2 \in 2\Z, \\
(-1) k_1 + 2k_2 +(-2) k_3\in 2\Z, \\
(-2)k_2 +4k_3 +(-2)k_4 \in 2\Z, \\
(-2)k_3 +4k_4 \in 2\Z.
\end{cases}
\end{equation*}

Equivalently, $k_1, k_2\in 2\Z$. This shows $Y_{Q, 2}=Y_{Q,2}^{sc}$, and thus the situation is trivial. Note that this agrees with the fact that the dual group $\wt{G}^\vee$ in this case has to be of type $F_4$, and there is a unique one. This gives a priori the equality $Y_{Q,2}=Y_{Q,2}^{sc}$.

\subsubsection{The $G_2$ case}

Consider the Dynkin diagram of $G_2$:
$$
\begin{picture}(4.7,0.2)(0,0)
\put(2.5,0){\circle{0.08}}
\put(3,0){\circle{0.08}}
%
\put(2.53,0.018){\line(1,0){0.44}}
\put(2.54,0){\line(1,0){0.42}}
\put(2.53,-0.018){\line(1,0){0.44}}
\put(2.74,-0.035){$>$}
%
\put(2.5,0.1){\footnotesize $\alpha$}
\put(3,0.1){\footnotesize $\beta$}
\end{picture}
$$

Let $Q$ be such that $Q(\alpha^\vee)=1$. This determines $Q(\beta^\vee)=3$. Note $B_Q(\alpha^\vee, \beta^\vee)=-Q(\alpha^\vee)=-3$.

Since the computation is straightforward, we may assume $n \in \N_{\ge 1}$ is general instead of $2$. It follows $n_\alpha=n$ and $n_\beta=n/\text{gcd}(n, 3)$. Then $k_1\alpha^\vee + k_2\beta^\vee $ lies in $Y_{Q,n}$ if and only if
\begin{equation*}
\begin{cases}
2k_1 -3k_2 \in n\Z, \\
-3k_1 + 6k_2 \in n\Z.
\end{cases}
\end{equation*}

Equivalently, $k_1\in n\Z$ and $k_2$ divisible by $n/\text{gcd}(n,3)$. This exactly shows $Y_{Q,n}=Y_{Q,n}^{sc}$ for arbitrary $n$. This also agrees with the a priori fact that the dual group $\wt{G}^\vee$ of $\wt{G}$ must be of type $G_2$ and there is a unique one, which enforces the equality $Y_{Q,n}=Y_{Q,n}^{sc}$ to hold.

Also in this case, it is trivial to define the distinguished character for the fair $D$.

\begin{rmk}
Fix $n=2$ and $Q$ such that $Q(\alpha^\vee)=1$ for short coroots $\alpha^\vee$. In retrospect we see that for type $B_r, F_4, G_2$ one can have ad hoc descriptions of the cosets representative of $Y_{Q,n}/J$ in a similar way as Lemma \ref{geom lm}. For example, for $B_r$ and $F_4$ we modify the Dynkin diagram of these two by removing the short roots, then there is a correspondence between coset representative of $Y_{Q,n}/J$ and subsets of nodes in the modified Dynkin diagram satisfying $(i)$ and $(ii)$ as in Lemma \ref{geom lm}. For $G_2$, it is easy to have similar description as well.
\end{rmk}

\section{An equivalent construction of ${}^L\wt{T}$ and LLC by Deligne} \index{local Langlands correspondence!by P. Deligne}

Consider the case when $\wt{G}=\wt{T}$ over a local field $F$, Deligne gives a canonical construction ${}^\mca{D}\wt{T}$ with $\seq{\wt{T}^\vee}{{}^\mca{D}\wt{T}}{\W_F}$, which is isomorphic to ${}^L\wt{T}$ (cf. \cite{We14} also). In fact ${}^\mca{D}\wt{T}$ is defined to be ${}^\mca{D}\wt{T}=\Rec^*({}^\mca{D}E_{\wt{T}})$ for a certain ${}^\mca{D}E_{\wt{T}}$ in
$$\seq{Z(\wt{G}^\vee)}{{}^\mca{D}E_{\wt{T}}}{F^\times}.$$

The construction of ${}^\mca{D}E_{\wt{T}}$ is canonical and along the way one obtains an analogous version of the local Langlands correspondence (${}^\mca{D}\text{LLC}$). After recalling the construction of ${}^\mca{D}E_{\wt{T}}$ and the ${}^\mca{D}\text{LLC}$, we will show that there is a natural isomorphism $E_{\wt{T}} \simeq {}^\mca{D}E_{\wt{T}}$, which gives the compatibility of LLC and ${}^\mca{D}\text{LLC}$.

\subsubsection{The construction of ${}^\mca{D}E_{\wt{T}}$ and ${}^\mca{D}\text{LLC}$ }

Recall for $\wt{T}\in \CExt(T,\mu_n)$ of BD type, the center $Z(\wt{T})$ sits in the exact sequence
$$ \seq{\mu_n}{Z(\wt{T})}{T^\dag}. $$

Moreover, we have the map $\xymatrix{i_{Q,n}: T_{Q,n}  \ar@{>>}[r] & T^\dag}$ and the pull-back $\wt{T}_{Q,n}$:
\begin{equation} \label{T-Qn}
\xymatrix{
\mu_n \ar@{^(->}[r] &Z(\wt{T}) \ar@{>>}[r] & T^\dag \\
\mu_n \ar@{=}[u] \ar@{^(->}[r] &\wt{T}_{Q,n}  \ar@{>>}[u] \ar@{>>}[r] & T_{Q,n} \ar@{>>}[u]^-{i_{Q,n}}.
}
\end{equation}

Use the fixed embedding $\xymatrix{\epsilon: \mu_n \ar[r] & \C^\times}$ to obtain the push-out $\epsilon_*(Z(\wt{T}))$. At the same time, any $\wt{\chi} \in \Hom_\epsilon(Z(\wt{T}), \C^\times)$ gives rise to a splitting of $\epsilon_*(Z(\wt{T}))$ into $\C^\times$.

\begin{equation}
\xymatrix{
\mu_n \ar@{^{(}->}[r] \ar@{^(->}[d]_-{\epsilon} &Z(\wt{T}) \ar@{>>}[r] \ar[ld]^-{\wt{\chi}}   \ar[d] & T^\dag  \ar@{=}[d]  \\
\C^\times \ar@{^{(}->}[r]  &\epsilon_*(Z(\wt{T})) \ar@{>>}[r] \ar@/^1.4pc/@{.>}[l]^-{s_{\wt{\chi}}} & T^\dag .
}
\end{equation}

Here the splitting $\xymatrix{s_{\wt{\chi}}: \epsilon_*(Z(\wt{T})) \ar[r] & \C^\times}$ is given by
$$\xymatrix{
s_{\wt{\chi}}: \quad [(z, \wt{t})] \ar@{|->}[r] & z\cdot \wt{\chi}^{-1}(\wt{t}), 
}$$
where $[(z, \wt{t})]$ denote the class of $(z, \wt{t}) \in \C^\times \times Z(\wt{T})$ in $\epsilon_*(Z(\wt{T}))$. Clearly $s_{\wt{\chi}}$ gives a splitting $\rho^\mca{D}_{\wt{\chi}}$ of $\epsilon_*(Z(\wt{T}))$ over $T^\dag$ given by
$$\xymatrix{
\rho^\mca{D}_{\wt{\chi}}: \quad t \ar@{|->}[r] & [(\wt{\chi}(\wt{t}) , \wt{t})], 
}$$
where $\wt{t}\in Z(\wt{T})$ is any preimage of $t$.

Note by definition $\wt{T}^\vee =X_{Q,n} \otimes \C^\times$ and also $T_{Q,n}=Y_{Q,n}\otimes F^\times$, where $X_{Q,n}:=\Hom(Y_{Q,n}, \Z)$ is the lattice dual to $Y_{Q,n}$. By abuse of notation, we still use $i_{Q,n}$ to denote the naturally induced map $\xymatrix{i_{Q,n}: X_{Q,n}\otimes T_{Q,n} \ar[r] & X_{Q,n} \otimes T^\dag}$.

Let $m$ be the map 
$$\xymatrix{
m: F^\times \ar[r] & X_{Q,n}\otimes T_{Q,n}
}$$
given by 
$$\big(m(a)\big)(y)=y\otimes a, \ y\in Y_{Q,n},$$
where we have identified $X_{Q,n}\otimes T_{Q,n}\simeq \Hom(Y_{Q,n}, T_{Q,n})$. Then ${}^\mca{D}E_{\wt{T}}$ is defined to be the pull-back of $X_{Q,n}\otimes \epsilon_*(Z(\wt{T}))$ via $i_{Q,n} \circ m$:

\begin{equation}\label{Del's L}
\xymatrix{
\wt{T}^\vee \ar@{^{(}->}[r]  &X_{Q,n} \otimes \epsilon_*(Z(\wt{T})) \ar@{>>}[r] & X_{Q,n}\otimes T^\dag \ar@/_1.3pc/[l]_-{\rho^\mca{D}_{\wt{\chi}}}  \\
\wt{T}^\vee \ar@{=}[u] \ar@{^{(}->}[r]  & {}^\mca{D}E_{\wt{T}} \ar[u] \ar@{>>}[r] & F^\times \ar[u]_-{i_{Q,n}\circ m} \ar@/^1.3pc/[l]^-{\rho^\mca{D}_{\wt{\chi}}}.
}
\end{equation}

Meanwhile, there is the inherited splitting of ${}^\mca{D}E_{\wt{T}}$ over $F^\times$, still denoted by $\rho^\mca{D}_{\wt{\chi}}$.

\begin{dfn}
We define $^\mca{D}\wt{T}$ to be $\Rec^*(^\mca{D}E_{\wt{T}})$. We will call ${}^\mca{D}\text{LLC}$ the canonical map $\xymatrix{ \Hom_\epsilon(Z(\wt{T}), \C^\times) \ar[r] & \mfr{S}( {}^\mca{D}E_{\wt{T}}, F^\times)}$ given by
$$\xymatrix{
\wt{\chi} \ar@{|->}[r] & \rho^\mca{D}_{\wt{\chi}}
}$$
from above discussion. Since we have the canonical isomorphism $ \mfr{S}( {}^\mca{D}E_{\wt{T}}, F^\times) \simeq  \mfr{S}( {}^\mca{D}\wt{T}, \W_F)$, $\rho^\mca{D}_{\wt{\chi}}$ could be viewed as a splitting of $^\mca{D}\wt{T}$ over $\W_F$.
\end{dfn}

Note that we could work over $\wt{T}_{Q,n}$ and obtain similarly $\epsilon_*(\wt{T}_{Q,n})$ and also $X_{Q,n} \otimes \epsilon_*(\wt{T}_{Q,n})$ in the extension
$$\seq{X_{Q,n} \otimes \C^\times}{X_{Q,n} \otimes \epsilon_*(\wt{T}_{Q,n})}{X_{Q,n}\otimes T_{Q,n}}.$$

The the pull-back $m^*\big(X_{Q,n} \otimes \epsilon_*(\wt{T}_{Q,n})\big)$, by tracing through the compatible construction from (\ref{T-Qn}), is canonically isomorphic to ${}^\mca{D}E_{\wt{T}}$:

$$\xymatrix{
\wt{T}^\vee \ar@{^{(}->}[r]  &{}^\mca{D}E_{\wt{T}} \ar@{>>}[r] & F^\times \\
\wt{T}^\vee \ar@{=}[u] \ar@{^(->}[r]  & m^*\big(X_{Q,n} \otimes \epsilon_*(\wt{T}_{Q,n})\big) \ar[u]^-{\simeq} \ar@{>>}[r] & F^\times \ar@{=}[u]\ .
}$$

Since any $\wt{\chi}\in \Hom_\epsilon(Z(\wt{T}), \C^\times)$ could be viewed as a character of $\wt{T}_{Q,n}$, we will have a splitting of $m^*\big(X_{Q,n} \otimes \epsilon_*(\wt{T}_{Q,n})\big)$ over $F^\times$. It is canonically identified with $\rho^\mca{D}_{\wt{\chi}}$ via the isomorphism in above commutative diagram.


To summarize, we see that for any genuine character $\wt{\chi}$ of $Z(\wt{T})$, there is a canonically defined splitting of ${}^\mca{D}E_{\wt{T}}$ over $F^\times$, and therefore also a canonical splitting of ${}^\mca{D}\wt{T}$ over $\W_F$. In fact, we have

\begin{prop}[{\cite{We14}}]
Above construction gives an isomorphism $\Hom_\epsilon(\wt{T}_{Q,n}, \C^\times) \simeq \mfr{S}({}^\mca{D} E_{\wt{T}}, F^\times)$. Therefore, the ${}^\mca{D}\text{LLC}$ could be viewed as a composition of the following canonical maps:
$$\xymatrix{
\Hom_\epsilon(Z(\wt{T}), \C^\times) \ar@{^(->}[r] & \Hom_\epsilon(\wt{T}_{Q,n}, \C^\times) \ar[r]^-{\simeq} &\mfr{S}({}^\mca{D} E_{\wt{T}}, F^\times) \ar[r]^-{\simeq} &  \mfr{S}({}^\mca{D}\wt{T}, \W_F).
}$$ 
\end{prop}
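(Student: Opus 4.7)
The plan is a torsor-comparison argument. First I would verify that the assignment $\wt{\chi}\mapsto\rho^\mca{D}_{\wt{\chi}}$ is well-defined on the whole set $\Hom_\epsilon(\wt{T}_{Q,n},\C^\times)$: for any such genuine $\wt{\chi}$, the rule $[(z,\wt{t})]\mapsto z\cdot\wt{\chi}(\wt{t})^{-1}$ descends to a homomorphism $\epsilon_*(\wt{T}_{Q,n})\to\C^\times$ precisely because $\wt{\chi}|_{\mu_n}=\epsilon$. Tensoring with $X_{Q,n}$ and pulling back along $i_{Q,n}\circ m$ via the canonical isomorphism $m^*\big(X_{Q,n}\otimes\epsilon_*(\wt{T}_{Q,n})\big)\simeq{}^\mca{D}E_{\wt{T}}$ produces a splitting in $\mfr{S}({}^\mca{D}E_{\wt{T}},F^\times)$; well-definedness of this extended construction is formal once one has the splitting at the level of $\epsilon_*(\wt{T}_{Q,n})$.

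The heart of the argument is the identification of both sides as torsors over a canonically isomorphic abelian group. Since $\wt{T}_{Q,n}$ is abelian, any two genuine characters differ by a character of the quotient $T_{Q,n}$, so $\Hom_\epsilon(\wt{T}_{Q,n},\C^\times)$ is a torsor over $\Hom(T_{Q,n},\C^\times)$; non-emptiness follows from Pontryagin duality, since $\epsilon:\mu_n\hookrightarrow\C^\times$ always extends to the ambient abelian group. On the other side, $\mfr{S}({}^\mca{D}E_{\wt{T}},F^\times)$ is a torsor over $\Hom(F^\times,\wt{T}^\vee)$ by the standard classification of splittings. Using the tensor-Hom adjunction together with $T_{Q,n}=Y_{Q,n}\otimes F^\times$ and $\wt{T}^\vee\simeq\Hom(Y_{Q,n},\C^\times)$, one has the canonical identification
$$\Hom(T_{Q,n},\C^\times)\;\simeq\;\Hom(F^\times,\wt{T}^\vee),\qquad \chi\longleftrightarrow\hat\chi,\quad \hat\chi(a)(y)=\chi(y\otimes a).$$
I would then check that $\wt{\chi}\mapsto\rho^\mca{D}_{\wt{\chi}}$ is equivariant with respect to these two torsor structures: twisting $\wt{\chi}$ by the pull-back of $\chi\in\Hom(T_{Q,n},\C^\times)$ translates through the pushout $\epsilon_*(-)$ and the functor $X_{Q,n}\otimes -$ into multiplication of $\rho^\mca{D}_{\wt{\chi}}$ by the splitting associated with $\hat\chi$ after pulling back along $m$. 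Once equivariance holds, an equivariant map between non-empty torsors over the same abelian group is automatically a bijection, proving the isomorphism $\Hom_\epsilon(\wt{T}_{Q,n},\C^\times)\simeq\mfr{S}({}^\mca{D}E_{\wt{T}},F^\times)$.

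The factorization of ${}^\mca{D}\text{LLC}$ is then immediate. Any $\wt{\chi}\in\Hom_\epsilon(Z(\wt{T}),\C^\times)$ pulls back along the surjection $\wt{T}_{Q,n}\twoheadrightarrow Z(\wt{T})$ in (\ref{T-Qn}) to a genuine character of $\wt{T}_{Q,n}$, and a direct inspection of diagram (\ref{Del's L}) shows that the splitting originally constructed from $\wt{\chi}$ on $Z(\wt{T})$ coincides with the one produced from its pull-back via the isomorphism above; composing with the canonical identification $\mfr{S}({}^\mca{D}E_{\wt{T}},F^\times)\simeq\mfr{S}({}^\mca{D}\wt{T},\W_F)$ coming from $\Rec$ yields the four-step composition displayed in the statement. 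The only real obstacle I anticipate is the equivariance check, which is conceptually routine but requires careful bookkeeping of the interaction between the pushout $\epsilon_*(-)$, the functor $X_{Q,n}\otimes -$, and the pull-back along $m$; no input beyond unwinding the definitions should be needed.
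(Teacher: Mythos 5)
The paper does not include a proof of this proposition; it is cited directly from \cite{We14}, so there is no in-text argument to compare against. Your torsor-comparison route is a sound, self-contained alternative and appears correct: the two sets are non-empty torsors over $\Hom(T_{Q,n},\C^\times)$ and $\Hom(F^\times,\wt{T}^\vee)$ respectively, the tensor-Hom adjunction $\Hom(Y_{Q,n}\otimes F^\times,\C^\times)\simeq\Hom(F^\times,\Hom(Y_{Q,n},\C^\times))$ identifies these base groups, non-emptiness of the source follows from injectivity of $\C^\times$ as a $\Z$-module, and an equivariant map of non-empty torsors over the same group is automatically a bijection. The one step you leave as bookkeeping --- the equivariance --- does go through on inspection: twisting $\wt{\chi}$ by the inflation of $\chi\in\Hom(T_{Q,n},\C^\times)$ changes the splitting $t\mapsto[(\wt{\chi}(\wt{t}),\wt{t})]$ of $\epsilon_*(\wt{T}_{Q,n})$ over $T_{Q,n}$ by the factor $t\mapsto\chi(t)$ in the kernel, and applying $X_{Q,n}\otimes-$ followed by pull-back along $m$ turns this into translation of the splitting of ${}^\mca{D}E_{\wt{T}}$ by $a\mapsto\bigl(y\mapsto\chi(y\otimes a)\bigr)=\hat{\chi}(a)\in\wt{T}^\vee$, exactly as the adjunction prescribes. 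The factorization of ${}^\mca{D}\mathrm{LLC}$ in the second half of the statement is then formal from the isomorphism together with the compatibility diagram the paper records immediately before the proposition, so your concluding step needs no further justification.
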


\subsubsection{Compatibility of LLC and ${}^\mca{D}\text{LLC}$}

We now describe a natural isomorphism $\xymatrix{\Theta: E_{\wt{T}} \ar[r] & {}^\mca{D}E_{\wt{T}} }$, and check that ${}^\mca{D}\text{LLC}$ and LLC are compatible with respect to $\Theta$.

We view the tensor functor $X_{Q,n}\otimes -$ as $\Hom(Y_{Q,n}, -)$. By definition in (\ref{Del's L}), elements of  ${}^\mca{D}E_{\wt{T}}$ are of the form $(^\mca{D}[R_a], a) \in \Hom(Y_{Q,n}, \epsilon_*(Z(\wt{T}))) \times F^\times$, given by
$$\xymatrix{
^\mca{D}[R_a]: \quad y \ar@{|->}[r] & \big[\big( R_a(y), (1, y\otimes a) \big)\big] \in \epsilon_*(\wt{T}_{Q,n}),
}$$
where $\xymatrix{R_a: Y_{Q,n} \ar[r] &\C^\times}$ is a certain function. Since $^\mca{D}[R_a]$ is a homomorphism, this gives
$$R_a(y_1 +y_2)=R_a(y_1) \cdot R_a(y_2) \cdot (a, a)_n^{D(y_1, y_2)},$$

\noindent which is a necessary and sufficient condition for $^\mca{D}[R_a]$ to be a homomorphism. We mention in passing that the inverse of $(^\mca{D}[R_a], a)$ is $(^\mca{D}[R_a^{-1}\cdot (a, a)_n^{Q(-)}], a^{-1})$.

Recall the definition of  $E_{\wt{T}}$ as the Baer sum $E_{2,\wt{T}} \oplus_B E_{1,\wt{T}}$. The group $E_{2,\wt{T}}$ consists of $([P_a], a) \in \Hom(\mca{E}_{Q,n}, \C^\times) \times F^\times$ such that $[P_a]|_{F^\times/n}=h_a$, where $h_a(b)=(b, a)_n$. Meanwhile we have written
$$\xymatrix{
[P_a]: \quad (b, y) \ar@{|->}[r] & (b, a)_n \cdot P_a(y),
}$$
where $\xymatrix{P_a: Y_{Q,n} \ar[r] &\C^\times}$ is some function. That fact of $[P_a]$ being a homomorphism is equivalent to the following property of the map $P_a$:
$$P_a(y_1 +y_2)=P_a(y_1) \cdot P_a(y_2) \cdot (a, a)_n^{D(y_1, y_2)}.$$

It is now clear that we can define a map $\xymatrix{\Theta: E_{\wt{T}} \ar[r] & {}^\mca{D}E_{\wt{T}}}$ by
$$\xymatrix{
\Theta: \quad ([P_a], a)\oplus_B (1, a)_{E_1} \ar@{|->}[r]  & (^\mca{D}[P_a], a).
}$$
It is easy to check that $\Theta$ is an isomorphism.

Let $\wt{\chi}$ be a genuine character of $Z(\wt{T})$. The ${}^\mca{D}\text{LLC}$ gives the splitting $\rho_{\wt{\chi}}^\mca{D}$. Tracing through the construction of $\rho_{\wt{\chi}}^\mca{D}$ we see that it takes the explicit form: for $a\in F^\times$,
$$\rho_{\wt{\chi}}^\mca{D}(a)=\big(^\mca{D}[\wt{\chi}(1,  - \otimes a)], a\big) \in {}^\mca{D}E_{\wt{T}}.$$
That is, $R_a(y)=\wt{\chi}\big((1, y\otimes a)\big)$ for all $y\in Y_{Q,n}$.

\begin{cor}
We have the equality
$$\rho_{\wt{\chi}}^\mca{D}=\Theta \circ \rho_{\wt{\chi}},$$
where $\rho_{\wt{\chi}}$ is the splitting given by LLC  as in Proposition (\ref{LLC tori}). That is, the LLC and ${}^\mca{D}\text{LLC}$ are compatible with respect to $\Theta$.

Equivalently, if by abuse of notation we use $\Theta$ to denote the induced isomorphism $\xymatrix{ {}^L\wt{T} \ar[r] & {}^\mca{D}\wt{T} }$, $\rho_{\wt{\chi}}$ and $\rho_{\wt{\chi}}^\mca{D}$ for the induced splittings in $\mfr{S}({}^L\wt{T}, \W_F)$, $\mfr{S}({}^\mca{D}\wt{T}, \W_F)$ respectively, then the following diagram commutes:
$$\xymatrix{
{}^L\wt{T} \ar[rr]^-\Theta \ar@{>>}[rd] & &  {}^\mca{D}\wt{T} \ar@{>>}[ld] \\
&     \W_F \ar@/^1pc/@{-->}[lu]^-{\rho_{\wt{\chi}}} \ar@/_1pc/@{-->}[ru]_-{\rho^\mca{D}_{\wt{\chi}}}
}$$

\end{cor}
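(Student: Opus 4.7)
The plan is to unwind both splittings to their explicit cocycle-level descriptions and then verify that $\Theta$ literally takes one formula to the other; the content is entirely bookkeeping once the right identifications are made.

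First I would recall the two explicit formulas already in hand. From Proposition \ref{LLC tori}, the LLC-splitting sends $a \in F^\times$ to
\[
\rho_{\wt{\chi}}(a) \;=\; \big([P_a], a\big) \oplus_B (1, a) \;\in\; E_{\wt{T}}, \qquad P_a(y) := \wt{\chi}\big((1, y \otimes a)\big),
\]
and the condition $[P_a]|_{F^\times/n} = h_a$ is automatic because $\wt{\chi}$ is genuine. From the construction of $\rho_{\wt{\chi}}^\mca{D}$, the ${}^\mca{D}\mathrm{LLC}$-splitting sends $a \in F^\times$ to
\[
\rho_{\wt{\chi}}^\mca{D}(a) \;=\; \big({}^\mca{D}[R_a], a\big) \;\in\; {}^\mca{D}E_{\wt{T}}, \qquad R_a(y) := \wt{\chi}\big((1, y \otimes a)\big),
\]
so that $P_a = R_a$ as functions $Y_{Q,n} \to \C^\times$.

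Next I would plug into the definition of $\Theta$. By construction,
\[
\Theta\big( ([P_a], a) \oplus_B (1, a)_{E_1} \big) \;=\; \big( {}^\mca{D}[P_a], a \big),
\]
and this is the only step where anything actually happens. Specializing to $P_a(y) = \wt{\chi}((1, y\otimes a))$ and comparing with $R_a$, we get immediately
\[
\Theta \circ \rho_{\wt{\chi}}(a) \;=\; \big( {}^\mca{D}[\wt{\chi}(1, - \otimes a)], a \big) \;=\; \rho_{\wt{\chi}}^\mca{D}(a),
\]
which proves the first assertion. I should also check briefly that $\Theta$ really is well-defined as a homomorphism on $E_{\wt{T}} = E_{2,\wt{T}} \oplus_B E_{1,\wt{T}}$: the cocycle of $E_{1,\wt{T}}$ takes values in $\mu_2 \subseteq Z(\wt{G}^\vee)$ and is absorbed when one passes to the push-out $\epsilon_*(\wt{T}_{Q,n})$, so the Baer-sum component $(1,a)_{E_1}$ maps trivially. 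This verification is the only place one uses that the functions $P_a$ and $R_a$ satisfy the same cocycle identity $P_a(y_1 + y_2) = P_a(y_1) P_a(y_2)(a,a)_n^{D(y_1,y_2)}$.

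Finally I would deduce the commutative diagram formulation by functoriality of $\Rec^*$. The induced map $\Theta: {}^L\wt{T} = \Rec^*(E_{\wt{T}}) \to \Rec^*({}^\mca{D}E_{\wt{T}}) = {}^\mca{D}\wt{T}$ is obtained by composing with $\Rec: \W_F \to F^\times$, and the splittings $\rho_{\wt{\chi}}, \rho_{\wt{\chi}}^\mca{D}$ of ${}^L\wt{T}, {}^\mca{D}\wt{T}$ over $\W_F$ are defined precisely as the pull-backs of the $F^\times$-splittings already proven compatible. Hence the diagram commutes. I do not anticipate any real obstacle: the whole point is that the two constructions of the fundamental extension were set up to manifestly agree via $\Theta$, and the only hazard is being careful about the Baer-sum convention, so that the $E_{1,\wt{T}}$-component $(1,a)$ contributes nothing after passing to the Deligne-style push-out.
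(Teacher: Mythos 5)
Your proposal is correct and follows essentially the same route the paper takes: the statement is really a direct comparison of the explicit cocycle-level formulas $\rho_{\wt{\chi}}(a) = ([\wt{\chi}(1,-\otimes a)],a)\oplus_B(1,a)$ and $\rho^{\mca{D}}_{\wt{\chi}}(a) = ({}^\mca{D}[\wt{\chi}(1,-\otimes a)],a)$, at which point the definition of $\Theta$ makes the identity tautological, and the $\W_F$-level statement follows by applying $\Rec^*$. The only place where you add something the paper leaves implicit is your sketch that $\Theta$ is a homomorphism, which the paper asserts is ``easy to check'' before the corollary; your remark about the $\mu_2$-valued cocycle of $E_{1,\wt{T}}$ is the right mechanism (it shows up once from multiplying $P_a\cdot P_b$ and once from $(1,y\otimes a)(1,y\otimes b)$ in $\epsilon_*(\wt{T}_{Q,n})$, and the two occurrences cancel since the cocycle is $2$-torsion), though the phrase ``absorbed when one passes to the push-out'' could be spelled out a bit more precisely along those lines.
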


\section{Discussion on the global situation} \label{splt glb L}

Although in previous sections the discussion has been for local $L$-groups, some of the results can be extended easily to the global situation.

Let $F$ be a number field with $\mu_n\subseteq F^\times$. Let $\wm{G}$ be a BD-type $\mbf{K}_2$-torsor over $F$ incarnated by $(D,\eta)$. It gives rise to $\wm{G}(\A)$ whose $L$-group ${}^L\wt{G}$ is defined in section \ref{global L-grp}.

Following the Definition \ref{adm splitting} we may similarly define admissible splittings of ${}^L\wt{G}$ over $\W_F$ to be those which gives an isomorphism ${}^L\wt{G} \simeq \wt{G}^\vee \times \W_F$. Since ${}^L\wt{G}$ is derived from the fundamental extension $E_{\A}$ by pull-back, admissible splittings of the former are just splittings of
$$\seq{Z(\wt{G}^\vee)}{E_{\A}}{F^\times\backslash \A^\times}.$$

Consider $(D, \mbf{1})$ fair, i.e. with $D$ fair. As in the local case, to split $E_{\A}$ it suffices to find a genuine automorphic character
$$\xymatrix{
\wt{\chi}: \quad \mbf{T}_{Q,n}(F) \big\backslash \wm{T}_{Q,n}(\A) \ar[r] & \C^\times,
}$$
which is trivial on the image of $\mbf{T}_J(\A)$ in $\wm{T}_{Q,n}(\A)$ by the map locally given by (\ref{sJ}). Here as before $J=nY+ Y_{Q,n}^{sc}$. We use $s_\phi$ to denote the map given by
\begin{equation}
\xymatrix{
s_\phi: \mbf{T}_J(\A) \ar[r] & \wm{T}_{Q,n}(\A), \quad \prod_v y\otimes a_v \ar@{|->}[r] & \prod_v (1, y\otimes a_v).
}
\end{equation}

A character $\wt{\chi}$ satisfying $\wt{\chi} \circ s_\phi=\mbf{1}$ will descend to an automorphic character of the center $Z(\wm{T}(\A))$.

The construction in section \ref{cons dis char} generalizes. More precisely, we fix an additive character $\psi=\bigotimes_v \psi_v$ of $\A$. As in section \ref{cons dis char}, we obtain a local distinguished character $\wt{\chi}_{\psi_v}$ of $\wm{T}_{Q,n}(F_v)$. Then
$$\wt{\chi}_{\psi}:=\bigotimes_v \wt{\chi}_{\psi_v}$$
will be a well-defined automorphic character of $\wm{T}_{Q,n}(\A)$ such that $\wt{\chi}_{\psi} \circ s_\phi =\mbf{1}$. Such a global character $\wt{\chi}_{\psi}$ will be Weyl-invariant.

\chapter{The Gindikin-Karpelevich formula and the local Langlands-Shahidi $L$-functions}
\chaptermark{The GK formula and local $L$-functions}
\section{Satake isomorphism and unramified representations}

We recall basic facts on the Satake isomorphism as in \cite{McN12}, \cite{Li12}, \cite{We14-2} or \cite{We14}, and refer to the papers for detailed discussions and proofs.

Let $\wt{G} \in \CExt (\mbf{G}(F), \mu_n)$ be an object incarnated by some $(D,\eta)\in \Bis_{\mbf{G}}^Q$ over a local field $F$. We are interested in $\epsilon$-genuine representation of $\wt{G}$, i.e. with $\mu_n \subseteq \wt{G}$ acting by the fixed faithful character $\xymatrix{\epsilon: \mu_n \ar@{^(->}[r] & \C^\times}$. 

Recall from Definition \ref{G-bar unram} that the group $\wt{G}$ is called unramified if  and only if 
\begin{enumerate}
\item[(i)] $\text{gcd}(n, p)=1$ and,
\item[(ii)] there exists a splitting $\xymatrix{s_K: K_G \ar@{^(->}[r] &\wt{G}}$ of the maximal compact subgroup $K_G$.
\end{enumerate}

To simplify notation, we may omit $s_K$ whenever necessary, and no confusion will arise. From now, unless otherwise stated, we  assume $\wt{G}$ is unramified with respect to a fixed $s_K$.

\begin{dfn} \index{unramified!representation}
An irreducible genuine representation $\sigma \in \text{Irr}_\epsilon(\wt{G})$ is called unramified if and only if 
$$\sigma^{K_G}\ne 0.$$
That is, the space of $K_G$-fixed vectors is nonzero.
\end{dfn}

The key to understanding unramified representation is the Satake isomorphism as in the linear algebraic group case. Let $\mca{H}_\epsilon(\wt{G}, K_G)$ be the $\C$-algebra consists of locally constant and compactly supported functions $\xymatrix{f: \wt{G} \ar[r] &\C}$ satisfying
$$f(\xi k_1\wt{g} k_2 )=\epsilon(\xi) f(\wt{g}), \text{ for all } \xi \in \mu_n, k_1, k_2 \in K_G \text{ and } \wt{g}\in \wt{G}.$$

Similarly, we can define the Hecke algebra for $\mca{H}_\epsilon(\wt{T}, K_T)$ for the covering torus $\wt{T}$ with respect to $K_T:=T\cap K$.

The Satake transform
$$\xymatrix{
\mca{S}: \quad \mca{H}_\epsilon(\wt{G}, K_G) \ar[r] & \mca{H}_\epsilon(\wt{T}, K_T)
}$$
is given by
$$\mca{S}(f)(\wt{t}):=\delta(\wt{t})^{1/2} \int_U f(\wt{t}u) du \text{ for all } f\in  \mca{H}_\epsilon(\wt{G}, K_G). $$

Any function $f\in \mca{H}_\epsilon(\wt{T}, K_T)$ has support in the centralizer $C_{\wt{T}}(K_T)$ of $K_T$ in $\wt{T}$. This follows from the chain of equalities
$$f(\wt{t})=f(\wt{t} k) =[\wt{t}, k] \cdot f(k \wt{t})=[\wt{t}, k] \cdot f(\wt{t}),$$
where $\wt{t}\in \wt{T}$ and $k\in K_T$ are arbitrary, and $[-,-]$ is the commutator on $\wt{T}$.

Clearly, $Z(\wt{T}) K_T \subseteq C_{\wt{T}}(K_T)$. We refer to \cite[Lm. 1]{McN12}, \cite[\S 3.2]{We14-2} and \cite{We14} for the following.
\begin{lm} \label{A}
The centralizer $C_{\wt{T}}(K_T)$ of $K_T$ in $\wt{T}$ is equal to $Z(\wt{T}) K_T$. Moreover, $C_{\wt{T}}(K_T)=Z(\wt{T}) K_T$ is a maximal abelian subgroup of $\wt{T}$ containing $Z(\wt{T})$.
\end{lm}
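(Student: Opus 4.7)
The plan is to reduce everything to a pairing computation on the abelian quotient $T$, exploiting centrality of the extension and the explicit formulas (\ref{Ftlaw1})--(\ref{Ftlaw3}). Since $\mu_n$ lies in the center, the commutator $[\wt{t}_1, \wt{t}_2]\in \mu_n$ depends only on the images of $\wt{t}_i$ in $T$, giving a bimultiplicative pairing $[-,-]\colon T\times T\to \mu_n$ with $[y_1\otimes a_1, y_2\otimes a_2] = (a_1, a_2)_n^{B_Q(y_1, y_2)}$. Hence $C_{\wt{T}}(K_T)$ is the full preimage in $\wt{T}$ of $C_T(K_T) := \set{t\in T : [t, K_T] = 1}$, and under the standard integral model of the split torus $K_T$ corresponds to the image of $Y\otimes \msc{O}_F^\times$ inside $T = Y\otimes F^\times$.

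The heart of the argument is to establish $C_T(K_T) = T^\dag \cdot K_T$. The containment $\supseteq$ is formal: for $y\in Y_{Q,n}$ we have $B_Q(y, y')\in n\Z$ for all $y'\in Y$, killing the pairing; and the tame Hilbert symbol vanishes on $\msc{O}_F^\times\times\msc{O}_F^\times$, so $K_T$ pairs trivially with itself. For $\subseteq$, write a general element as $t = y\otimes a$ with $a = \pi^v u$, $u\in \msc{O}_F^\times$. The tame formula yields $(a, u')_n = \wt{u}'^{-v(q-1)/n}$ for any unit $u'$, so requiring $(a, u')_n^{B_Q(y, y')} = 1$ for every $u'\in \msc{O}_F^\times$ and every $y'\in Y$ is equivalent to $n\mid v\cdot B_Q(y, y')$ for every $y'\in Y$, i.e.\ $vy\in Y_{Q,n}$. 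The tensor identity $y\otimes \pi^v = (vy)\otimes \pi$ then gives the factorization
\begin{equation*}
y\otimes a = \big((vy)\otimes \pi\big)\cdot (y\otimes u) \in T^\dag \cdot K_T,
\end{equation*}
establishing the reverse inclusion.

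To finish, I would observe that the preimage in $\wt{T}$ of $T^\dag$ is $Z(\wt{T})$ by the description of the center recalled at the start of the section, while the preimage of $K_T \subset T$ is $\mu_n\cdot s_K(K_T)$; since $\mu_n\subseteq Z(\wt{T})$, combining gives $C_{\wt{T}}(K_T) = Z(\wt{T})\cdot s_K(K_T)$, denoted $Z(\wt{T})K_T$ by the usual abuse. This subgroup is abelian because $Z(\wt{T})$ is central and $s_K|_{K_T}$ is a group homomorphism, and it tautologically contains $Z(\wt{T})$; any strictly larger abelian subgroup $H$ would contain an element $\wt{g}\notin Z(\wt{T})K_T$ commuting with $K_T\subset H$, hence lying in $C_{\wt{T}}(K_T) = Z(\wt{T})K_T$, a contradiction. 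The main obstacle is the careful use of the tame Hilbert symbol formula together with the tensor identity $y\otimes \pi^v = (vy)\otimes \pi$ to extract the condition $vy\in Y_{Q,n}$; once this is secured, the remaining steps are essentially formal bookkeeping.
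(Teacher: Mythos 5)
Your overall strategy is sound: reduce the centralizer computation to the bimultiplicative commutator pairing on $T$, use the tame-symbol formula and the fact that $u'\mapsto(\varpi,u')_n$ is surjective onto $\mu_n$ (valid since $\mu_n\subseteq F^\times$ and $\gcd(n,p)=1$), and then pull back. The maximality argument at the end is also correct: since $Z(\wt{T})$ is central, $Z(\wt{T})K_T$ is an abelian subgroup, and any abelian $H$ properly containing it would contain an element commuting with $K_T$ yet not lying in $C_{\wt{T}}(K_T)$, which is absurd.

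There is, however, one genuine inaccuracy. You ``write a general element as $t=y\otimes a$'' with $a=\varpi^v u$; but a general element of the split torus $T=Y\otimes_{\Z}F^\times\simeq (F^\times)^{\mathrm{rk}\,Y}$ is \emph{not} a pure tensor --- it is a product $\prod_i(e_i\otimes a_i)$ for a basis $\{e_i\}$ of $Y$. So the factorization $y\otimes a = \big((vy)\otimes\varpi\big)(y\otimes u)$ as stated does not cover all of $T$. The correct decomposition to invoke is $T=\{y\otimes\varpi\colon y\in Y\}\cdot K_T$: given $t\in T$, set $y=\operatorname{val}(t)\in Y$ so that $t=(y\otimes\varpi)\cdot k$ with $k\in K_T$. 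Since $K_T$ pairs trivially with itself, $t\in C_T(K_T)$ iff $y\otimes\varpi\in C_T(K_T)$, and your pairing computation (applied with $v=1$, or more directly by expanding $\big[\prod_i e_i\otimes a_i,\,y'\otimes u'\big]=(\varpi,u')_n^{B_Q(\operatorname{val}(t),y')}$ by bimultiplicativity) shows this is equivalent to $y\in Y_{Q,n}$, i.e.\ $t\in T^\dag\cdot K_T$. With this repair the proof is complete; everything else in your write-up follows as you describe. Note also that the paper itself does not give a proof of this lemma but merely cites McNamara and Weissman, so your direct computation genuinely supplies the missing detail.
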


We may identify the Weyl group $W$ with $(N(T)\cap K_G)\big/K_T$. Then $W$ acts on $Z(\wt{T})$ and also $K_T$ and therefore on $Z(\wt{T})K_T$, which induces a well-defined action on $\mca{H}_\epsilon(\wt{T}, K_T)$. Moreover, the group $K_T$ is invariant under $W$.

As in the linear algebraic group case, we have:

\begin{thm} \index{Satake isomorphism}
The Satake transform $\mca{S}$ gives an isomorphism of algebras:
$$\xymatrix{
\mca{S}: \quad \mca{H}_\epsilon(\wt{G}, K_G) \ar[r] & \mca{H}_\epsilon(\wt{T}, K_T)^W.
}$$
\end{thm}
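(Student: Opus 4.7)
The plan is to mimic the classical proof of the Satake isomorphism, adapting each step to the covering situation via Lemma \ref{A} and the unipotent splitting $i_u$ of Proposition \ref{unip splitting}. First I would verify that $\mca{S}(f)$ actually lies in $\mca{H}_\epsilon(\wt{T}, K_T)$: that it is $\epsilon$-genuine is immediate since the splitting $s_K$ is non-genuine and the integral over $U$ uses $i_u$ which is valued in the splitting image; that it is bi-$K_T$-invariant follows from the $T$-equivariance of $i_u$ (Corollary \ref{sK=iu}) combined with the standard change of variables $u\mapsto t^{-1}ut$, which introduces the modulus $\delta(\wt{t})$ absorbed into the factor $\delta(\wt{t})^{1/2}$. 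That $\mca{S}$ is a homomorphism of algebras then follows from the Iwasawa decomposition $\wt{G} = \wt{U}\cdot\wt{T}\cdot K_G$ and the standard unfolding of the convolution integral, exactly as in the linear case.

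The step I expect to be most delicate is showing that the image lies in $\mca{H}_\epsilon(\wt{T}, K_T)^W$. Here my plan is to follow Casselman's argument: realize $\mca{S}(f)$ as the character of the Jacquet module of the compactly induced representation associated with $f$, and then use the fact that for any unramified character of $Z(\wt{T})$, the Jacquet module of the unramified principal series is $W$-stable as a virtual representation of $\wt{T}$. Equivalently, one can argue via the generic irreducibility of unramified principal series and the existence of rank-one intertwining operators $T_w$ whose composition with $\mca{S}$ gives the $w$-action. This latter route is the one best suited to the covering setting, since the intertwining operators exist on the dense open in the character variety and then one extends by continuity.

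For bijectivity I would invoke the Cartan decomposition. Since $\wt{G}$ is unramified, $K_G$ splits into $\wt{G}$ via $s_K$, and the Cartan decomposition on $G$ lifts to a disjoint decomposition $\wt{G} = \bigsqcup_{\lambda \in Y^+} K_G \cdot s_K(t_\lambda) \cdot K_G$ where $\lambda$ runs over the dominant cocharacters and $t_\lambda = \lambda(\varpi)$. This gives a natural basis $\{c_\lambda\}$ of $\mca{H}_\epsilon(\wt{G}, K_G)$, and similarly by Lemma \ref{A} the space $\mca{H}_\epsilon(\wt{T}, K_T)$ has basis indexed by $Z(\wt{T})\cdot K_T / K_T$. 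The plan is to compute $\mca{S}(c_\lambda)$ by a standard volume estimate: its support lies in the $K_T$-cosets of lifts of $\mu$ with $\mu \le \lambda$ in the dominance order, and the leading coefficient at $\mu = \lambda$ is a nonzero power of $q$. This gives upper triangularity with invertible diagonal, yielding both injectivity and, after restricting to $W$-invariants on the target, surjectivity via the usual argument that the $W$-orbit sums $\sum_{w \in W/W_\lambda} w\cdot\mathbf{1}_{\wt{t}_\lambda K_T}$ form a basis of $\mca{H}_\epsilon(\wt{T}, K_T)^W$ indexed by the same set of dominant $\lambda$.

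The main obstacle will be the bookkeeping required to match the indexing sets on the two sides: on the $\wt{G}$-side the basis is parametrized by dominant cocharacters in $Y$ (via Cartan), whereas on the $\wt{T}$-side the genuineness forces support inside $Z(\wt{T})K_T$, so only lifts of elements of $T^\dag$ contribute. The reconciliation is that for a dominant $\lambda\in Y$, the double coset $K_G s_K(t_\lambda) K_G$ is nonempty in the support of $\mca{S}(c_\lambda)$ only when $t_\lambda \in T^\dag$, and otherwise contributes zero after the averaging $u\mapsto tut^{-1}$. Carrying this out carefully, together with the argument for $W$-invariance, completes the proof.
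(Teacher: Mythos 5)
The paper gives no internal proof of this theorem; it defers to \cite[\S 13.10]{McN12} (under the extra hypothesis $\mu_{2n}\subseteq F^\times$) and to \cite{Li12}, \cite{We14-2}, \cite{We14} for the general case, so there is nothing in the text to compare against line by line. Your outline reproduces the McNamara/Li architecture those references contain: well-definedness and the homomorphism property via Iwasawa, $W$-invariance of the image by a Casselman-type or intertwining-operator argument, and bijectivity from the Cartan decomposition via a triangularity argument with nonzero leading term. That is the right shape of proof.

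The one point that needs adjustment is your ``reconciliation'' paragraph. You propose that for dominant $\lambda$ with $t_\lambda\notin T^\dag$ the function $c_\lambda$ still exists but $\mca{S}(c_\lambda)$ vanishes after the averaging $u\mapsto tut^{-1}$. The actual mechanism is earlier and structural: when $\lambda\notin Y_{Q,n}$ there is no nonzero $\epsilon$-genuine bi-$K_G$-invariant function supported over $K_G\,t_\lambda\,K_G$ at all. Pick any lift $\wt{t}_\lambda$ of $t_\lambda$ and take $k=y\otimes u\in K_T$ with $u\in\msc{O}_F^\times$; the commutator formula (\ref{Ftlaw1}) gives $s_K(k)\,\wt{t}_\lambda\,s_K(k)^{-1}=(u,\varpi)_n^{B_Q(y,\lambda)}\,\wt{t}_\lambda$, so if $n\nmid B_Q(y,\lambda)$ for some $y\in Y$ then any genuine bi-$K_G$-invariant function vanishes on the whole coset --- the same computation the paper records to locate the support of $\mca{H}_\epsilon(\wt{T},K_T)$ inside $Z(\wt{T})K_T$. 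Both sides of the Satake map are therefore honestly indexed by (a fundamental domain for $W$ acting on) $Y_{Q,n}$ from the outset, and no cancellation step is needed; with that in place your triangularity argument closes the proof. As a side remark, the $W$-invariance step in the cited references is a direct rank-one reduction along simple reflections rather than the Jacquet-module trace argument; both routes are viable, but the rank-one reduction needs less representation-theoretic input in the covering setting.
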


The proof of above theorem could be found in \cite[\S 13.10]{McN12} where $F$ is assumed to contain $2n$-th root of unity. See also \cite[\S 3.2]{Li12} \cite{We14-2} and \cite{We14} for a discussion and proof in general.

From above one has by restriction of functions the isomorphism
$$\xymatrix{
\mca{V}: \quad \mca{H}_\epsilon(\wt{T}, K_T) \ar[r]^-\simeq & \mca{H}_\epsilon(Z(\wt{T})K_T, K_T),
}$$
where the right hand side consists of $\epsilon$-genuine compactly supported functions on $Z(\wt{T})K_T$ invariant under $K_T$. Clearly, it is isomorphic to $\mca{H}_\epsilon(Z(\wt{T}), T^\dag \cap K_T)$ since $Z(\wt{T})K_T/K_T \simeq Z(\wt{T})/(T^\dag \cap K_T)$.

For this purpose we define $\wt{Y}_{Q,n}:=\Big( Z(\wt{T})/(T^\dag \cap K_T)\simeq Z(\wt{T})K_T/K_T \Big)$ as in

$$\xymatrix{
& & T^\dag \cap K_T \ar@{^(->}[ld]^-{s_K} \ar@{^(->}[d] \\
\mu_n \ar@{^(->}[r] \ar@{=}[d] & Z(\wt{T}) \ar@{>>}[r] \ar@{>>}[d] & T^\dag \ar@{>>}[d]\\
\mu_n \ar@{^(->}[r] &\wt{Y}_{Q,n} \ar@{>>}[r] &Y_{Q,n}.
}$$

The abelian group $\wt{Y}_{Q,n}$ inherits a $W$-action from that of $W$ on $Z(\wt{T})K_T$. Then the algebra $\mca{H}_\epsilon(Z(\wt{T}), T^\dag \cap K_T)$ is isomorphic to
$$\C_\epsilon[\wt{Y}_{Q,n}]:=\frac{\C[\wt{Y}_{Q,n}]}{\big\langle \zeta-\epsilon(\zeta)^{-1}: \zeta \in \mu_n \big\rangle},$$
which in turn gives a $W$-equivariant isomorphism
$$\xymatrix{
 \mca{H}_\epsilon(\wt{T}, K_T)  \ar[r]^-\simeq & \C_\epsilon[\wt{Y}_{Q,n}].
}$$

\begin{cor}[{\cite{We14}}] \label{SV corres}
There is a natural isomorphism of algebras
$$\xymatrix{
\mca{H}_\epsilon(\wt{G}, K_G) \ar[r]^-{\mca{S}} &  \mca{H}_\epsilon(\wt{T}, K_T)^W \ar[r]^-{\mca{V}} & \C_\epsilon[\wt{Y}_{Q,n}]^W.
}$$
The Satake isomorphism thus gives the natural bijections between the following isomorphism classes:
$$\xymatrix{
\set{   \text {irred.  unramified genuine representations of $\wt{G}$}               } \ar@{<~>}[d]^-{\mca{S}^*} \\
\set{   \text {$W$-orbits of irred.  unramified genuine representations of $\wt{T}$}               } \ar@{<~>}[d]^-{\mca{V}^*} \\
\set{    \text{$W$-orbits of unramified genuine characters of $Z(\wt{T})$}            }.
}$$
\end{cor}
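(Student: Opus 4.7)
The plan is to combine the Satake isomorphism stated in the preceding theorem with the restriction map $\mca{V}$ and then use a Stone--von Neumann type argument on the covering torus $\wt{T}$ to pass from representations to characters of $Z(\wt{T})$. First I would verify the algebra isomorphism. The Satake transform $\mca{S}$ is already an isomorphism onto $\mca{H}_\epsilon(\wt{T}, K_T)^W$ by the preceding theorem. Next, since any $f \in \mca{H}_\epsilon(\wt{T}, K_T)$ is supported on $C_{\wt{T}}(K_T) = Z(\wt{T})K_T$ by Lemma \ref{A}, restriction to $Z(\wt{T})K_T$ yields an isomorphism $\mca{V}$ to $\mca{H}_\epsilon(Z(\wt{T})K_T, K_T)$, which is $W$-equivariant since the $W$-action stabilizes both $Z(\wt{T})$ and $K_T$. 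Using $Z(\wt{T})K_T/K_T \simeq Z(\wt{T})/(T^\dag \cap K_T) = \wt{Y}_{Q,n}$, this latter Hecke algebra is identified with the twisted group algebra $\C_\epsilon[\wt{Y}_{Q,n}]$, and the composite lands in the $W$-invariants $\C_\epsilon[\wt{Y}_{Q,n}]^W$.

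For the bijections, I would first note that $\C_\epsilon[\wt{Y}_{Q,n}]^W$ is commutative and finitely generated, hence so is $\mca{H}_\epsilon(\wt{G}, K_G)$. Standard theory (Bernstein's argument, adapted to the genuine setting) then identifies isomorphism classes of irreducible unramified $\epsilon$-genuine representations of $\wt{G}$ with $\C$-algebra homomorphisms $\mca{H}_\epsilon(\wt{G}, K_G) \to \C$, via $\sigma \mapsto$ its action on $\sigma^{K_G}$ (which is necessarily one-dimensional). Under the algebra isomorphism of the corollary, these characters correspond bijectively to characters of $\C_\epsilon[\wt{Y}_{Q,n}]^W$, and by the Galois-theoretic/orbit-space identification $\Spec(\C_\epsilon[\wt{Y}_{Q,n}]^W) = \Spec(\C_\epsilon[\wt{Y}_{Q,n}])/W$, these are in bijection with $W$-orbits of characters of $\C_\epsilon[\wt{Y}_{Q,n}]$, i.e.\ with $W$-orbits of $\epsilon$-genuine characters of $Z(\wt{T})$ that are trivial on $T^\dag \cap K_T$ --- which is precisely the unramified condition for characters of $Z(\wt{T})$.

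To complete the chain through $W$-orbits of unramified genuine representations of $\wt{T}$, I would invoke the Stone--von Neumann theorem for the two-step nilpotent (in fact, abelian on the cover) group $\wt{T}$. Since $Z(\wt{T})K_T$ is a maximal abelian subgroup of $\wt{T}$ by Lemma \ref{A}, induction gives a bijection between genuine characters of $Z(\wt{T})K_T$ and isomorphism classes of irreducible genuine representations of $\wt{T}$; restricting attention to characters trivial on $K_T$ (equivalently, characters of $Z(\wt{T})$ trivial on $T^\dag\cap K_T$) produces exactly the irreducible genuine representations possessing a nonzero $K_T$-fixed vector, i.e.\ the unramified ones. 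Transporting this through the $W$-action gives the middle bijection.

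The main subtle point --- and what I expect to be the principal obstacle --- is the Stone--von Neumann step together with its compatibility with the unramified condition: one must check that the maximality of $Z(\wt{T})K_T$ from Lemma \ref{A} indeed suffices to guarantee that compact induction from $Z(\wt{T})K_T$ to $\wt{T}$ of a genuine character produces an irreducible representation, and that two such characters give isomorphic induced representations iff they are $W$-conjugate. Once this is verified (and the ``unramified'' condition matched between the three sides), the rest is routine bookkeeping with the $W$-equivariant algebra isomorphism $\mca{V}\circ\mca{S}$.
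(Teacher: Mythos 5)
Your approach is sound and essentially parallel to the paper's treatment; note that the paper does not actually prove this corollary but cites it to Weissman's \cite{We14}, and the discussion immediately after it merely \emph{realizes} the maps $\mca{V}^*$ and $\mca{S}^*$ concretely via Stone--von Neumann induction and via unramified principal series respectively. For the algebra isomorphism you reproduce exactly the chain the paper sets up (Lemma \ref{A} for the support, restriction to $Z(\wt{T})K_T$, the quotient identification with $\wt{Y}_{Q,n}$). For the bijections you take the more abstract route through characters of the commutative algebra $\C_\epsilon[\wt{Y}_{Q,n}]^W$ and $\Spec$-quotient-by-$W$, whereas the paper's exposition goes the constructive way: $\wt{\chi} \rightsquigarrow i(\wt{\chi}) = \text{Ind}_{A}^{\wt{T}}(\wt{\chi}')$ with $A=Z(\wt{T})K_T$, and then $i(\wt{\chi})\rightsquigarrow$ the unramified subquotient of $I(\wt{\chi})$. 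Both are standard and compatible; the paper's route has the advantage of exhibiting the normalized spherical vectors that are used in the GK computation later.

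One small but genuine correction to the ``main subtle point'' you flag at the end: the Stone--von Neumann step requires no $W$-conjugacy statement at the level of $\wt{T}$. By Proposition \ref{Stone-Von}, the isomorphism class of $\text{Ind}_A^{\wt{T}}(\wt{\chi}')$ depends only on $\wt{\chi}|_{Z(\wt{T})}$ and not on the choice of $A$ or the extension $\wt{\chi}'$, so two genuine characters of $Z(\wt{T})$ give isomorphic representations of $\wt{T}$ iff they are \emph{equal}, not merely $W$-conjugate. The $W$-orbit identification enters only when you pass from $\wt{T}$-representations to unramified $\wt{G}$-representations (two $W$-conjugate characters yield principal series with the same unramified subquotient, because the Hecke-algebra character they induce factors through $\C_\epsilon[\wt{Y}_{Q,n}]^W$). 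So the concern is correctly placed at the $\mca{S}^*$-arrow, not the $\mca{V}^*$-arrow. Also, the compact-vs-ordinary induction distinction you worry about is moot here: $T^\dag$ has finite index in $T$ because $nY\subseteq Y_{Q,n}$ and $F^\times/(F^\times)^n$ is finite for a nonarchimedean local field, so $\wt{T}/A$ is finite and the two inductions agree.
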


Here a character $\wt{\chi}$ of $Z(\wt{T})$ is called unramified if it is trivial on $T^\dag\cap K_T$, or equivalently, it is the pull-back of a certain character on $\wt{Y}_{Q,n}$. Also the $W$-action on unramified representations of $\wt{T}$ is via the correspondence between unramified representations of $\wt{T}$ and unramified characters of $Z(\wt{T})$ induced by the isomorphism $\mca{V}$.

We also have
\begin{cor}
For any unramfied representation $\sigma \in \text{Irr}_\epsilon(\wt{G})$,
$$\text{dim}_\C\  \sigma^{K_G}=1.$$
\end{cor}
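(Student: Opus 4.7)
The plan is to follow the classical Borel--Casselman style argument, adapted to the covering setting, exploiting the commutativity of the spherical Hecke algebra established above.

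First I would set up the module structure. The space $\sigma^{K_G}$ carries a natural action of $\mca{H}_\epsilon(\wt{G}, K_G)$ by convolution: for $f \in \mca{H}_\epsilon(\wt{G}, K_G)$ and $v \in \sigma^{K_G}$, one sets $f \cdot v = \int_{\wt{G}/\mu_n} f(\wt{g}) \sigma(\wt{g}) v \, d\wt{g}$, which is well-defined because $f$ is $\epsilon$-genuine and $\sigma$ is $\epsilon$-genuine, so the $\mu_n$-actions cancel. The resulting vector still lies in $\sigma^{K_G}$ by the bi-$K_G$-invariance of $f$. Next, I would invoke admissibility of $\sigma$, which for irreducible smooth genuine representations of $\wt{G}$ is standard (e.g.\ Jacquet's theorem carries over), to conclude that $\sigma^{K_G}$ is finite-dimensional over $\C$.

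The key step is to show that $\sigma^{K_G}$ is \emph{irreducible} as a module over $\mca{H}_\epsilon(\wt{G}, K_G)$. Suppose $V \subseteq \sigma^{K_G}$ is a nonzero submodule. Let $W \subseteq \sigma$ be the $\wt{G}$-subrepresentation generated by $V$. Because $\sigma$ is irreducible and $V \neq 0$, one has $W = \sigma$. I then need to check $W^{K_G} = V$: using the idempotent $e_{K_G}$ (the normalized characteristic function of $K_G$, viewed as an $\epsilon$-genuine element via $s_K$), one has $W^{K_G} = e_{K_G} \cdot W = e_{K_G} \cdot \mca{H}_\epsilon(\wt{G}) \cdot V$, and since $e_{K_G} \cdot \mca{H}_\epsilon(\wt{G}) \cdot e_{K_G} = \mca{H}_\epsilon(\wt{G}, K_G)$ preserves $V$, this forces $W^{K_G} = V$. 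Thus $V = \sigma^{K_G}$.

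Finally I would conclude by Schur's lemma. The spherical Hecke algebra $\mca{H}_\epsilon(\wt{G}, K_G)$ is commutative, as established via the Satake isomorphism with $\C_\epsilon[\wt{Y}_{Q,n}]^W$ in Corollary \ref{SV corres}. A finite-dimensional irreducible module over a commutative $\C$-algebra is one-dimensional. Hence $\dim_\C \sigma^{K_G} = 1$.

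The main obstacle is the irreducibility of $\sigma^{K_G}$ as a Hecke module; the subtlety in the covering setting is ensuring that the idempotent $e_{K_G}$ is well-defined as an $\epsilon$-genuine element, which requires the fixed splitting $s_K$ of $K_G$ into $\wt{G}$. All other ingredients are essentially formal once one has the commutativity of the spherical Hecke algebra and admissibility of $\sigma$.
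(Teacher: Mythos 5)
Your proof is correct and follows the standard Borel--Casselman argument that the paper's placement of this corollary (immediately after the Satake isomorphism of Corollary \ref{SV corres}) implicitly invokes: $\sigma^{K_G}$ is a simple, finite-dimensional module over the commutative algebra $\mca{H}_\epsilon(\wt{G}, K_G)$, hence one-dimensional. The paper itself leaves the corollary unproved but later gives an independent, more explicit derivation via the isomorphism $I(\wt{\chi})^{K_G} \xrightarrow{\sim} i(\wt{\chi})^{K_T}$, $f \mapsto f(1_{\wt{G}})$, and a direct support analysis of the spherical vector; both routes rest ultimately on the same Satake isomorphism, and yours has the advantage of not needing the principal series realization.

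One bookkeeping slip: with the paper's convention that both $f \in \mca{H}_\epsilon(\wt{G}, K_G)$ and $\sigma$ transform under $\mu_n$ by the \emph{same} character $\epsilon$, the integrand $f(\wt{g})\,\sigma(\wt{g})v$ picks up $\epsilon(\zeta)^2$ under $\wt{g} \mapsto \zeta\wt{g}$, so it does not descend to $\wt{G}/\mu_n$ and the formula as written is not well-defined. You should replace $f(\wt{g})$ by $f(\wt{g}^{-1})$ in the integrand (so that $\epsilon(\zeta)^{-1}\epsilon(\zeta) = 1$), or equivalently adopt the convention that the spherical Hecke algebra consists of functions transforming by $\epsilon^{-1}$. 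This is a trivial fix and does not affect the remainder of the argument --- in particular, the idempotent $e_{K_G}$ (built from $s_K$) and the module-theoretic reasoning showing $V = \sigma^{K_G}$ for any nonzero Hecke submodule $V$ go through unchanged once the action is correctly normalized.
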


Given $\wt{\chi}\in \Hom_\epsilon(Z(\wt{T}), \C^\times)$ unramified, we will give an elaborate discussion on the correspondences $\mca{S}^*$ and $\mca{V}^*$ and the normalized unramified vector in the resulting unramified representation of $\wt{G}$.

\subsection{Unramified representations of $\wt{T}$}
Let $\wt{T}$ be a degree $n$ cover of $T$ of BD type. Then $\wt{T}$ is an example of a Heisenberg group, i.e. a group whose commutator subgroup lies in the center. We first state the following general Stone-von Neumann theorem which works for general $\wt{T}$, unramified or not.

\begin{prop}[{\cite[Thm. 3.1]{We09}}] \label{Stone-Von} \index{Stone von-Neumann theorem}
Let $Z(\wt{T})$ be the center of $\wt{T}$, and let $A$ be any maximal abelain subgroup of $\wt{T}$ which contains $Z(\wt{T})$. Let $\xymatrix{\wt{\chi}: Z(\wt{T}) \ar[r] & \C^\times}$ be a genuine character of $Z(\wt{T})$, and let $\xymatrix{\wt{\chi}': A \ar[r] & \C^\times}$ be any extension to $A$. Write $\text{Ind}_A^{\wt{T}} (\wt{\chi}')$ for the induced representation on $\wt{T}$. 

The construction $\xymatrix{\wt{\chi} \ar@{|~>}[r] & \text{Ind}_A^{\wt{T}} (\wt{\chi}')}$ gives, up to isomorphism, a bijection between
$$\xymatrix{
\Hom_\epsilon\big(Z(\wt{T}), \C^\times\big) \ar@{~>}[r] & \text{Irr}_\epsilon(\wt{T}).
}$$
In particular, the isomorphism class of $\text{Ind}_A^{\wt{T}} (\wt{\chi}')$ does not depend on the choice of $A$ and the extension $\wt{\chi}'$, and therefore we may write $i(\wt{\chi})$ for the induced representation instead.
\end{prop}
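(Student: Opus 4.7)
The plan is to exploit the Heisenberg structure of $\wt{T}$: since $\wt{T}$ is a central extension of the abelian group $T$ by $\mu_n$, the commutator subgroup lies in $Z(\wt{T})$, and in particular any maximal abelian subgroup $A\subseteq \wt{T}$ containing $Z(\wt{T})$ is automatically normal in $\wt{T}$. This normality will make a Mackey-type analysis straightforward, because every double coset $A\backslash \wt{T}/A$ is a single coset. The commutator descends to a bi-multiplicative pairing
\[
c_{\wt{T}}: (\wt{T}/Z(\wt{T})) \times (\wt{T}/Z(\wt{T})) \longrightarrow Z(\wt{T}),\quad (t_1,t_2)\mapsto [t_1,t_2],
\]
and the maximality of $A$ is equivalent to saying that the right kernel of $c_{\wt{T}}(-,-)$ restricted to $A/Z(\wt{T})$ is exactly $A/Z(\wt{T})$ itself; equivalently, for $t\notin A$ the character $a\mapsto \wt{\chi}(c_{\wt{T}}(a,t))$ on $A/Z(\wt{T})$ is nontrivial.

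For existence, fix a genuine character $\wt{\chi}$ of $Z(\wt{T})$ and a maximal abelian $A\supseteq Z(\wt{T})$. Since $A$ is locally compact abelian and $Z(\wt{T})\subseteq A$ is closed, Pontryagin duality yields an extension $\wt{\chi}'$ of $\wt{\chi}$ to $A$. I would then form $\pi := \mathrm{Ind}_A^{\wt{T}}(\wt{\chi}')$ (normalized or unnormalized, the choice is immaterial as $\wt{T}/A$ is abelian hence unimodular) and verify irreducibility via Mackey's criterion. By Frobenius reciprocity and Mackey's restriction formula, using that $A$ is normal,
\[
\mathrm{Res}_A \mathrm{Ind}_A^{\wt{T}}(\wt{\chi}') \;\simeq\; \bigoplus_{t\in A\backslash \wt{T}} {}^t\wt{\chi}',
\]
and the $A$-characters ${}^t\wt{\chi}'$ for distinct cosets $tA$ are pairwise distinct exactly because of the nondegeneracy statement above: ${}^t\wt{\chi}'/\wt{\chi}'$ is the character $a\mapsto \wt{\chi}(c_{\wt{T}}(a,t))$, which is trivial on $A$ if and only if $t\in A$. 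Thus $\mathrm{End}_{\wt{T}}(\pi)=\C$, giving irreducibility.

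For independence of the choice of $A$ and $\wt{\chi}'$, I would argue in two steps. If $\wt{\chi}''$ is another extension of $\wt{\chi}$ to the same $A$, then $\wt{\chi}''/\wt{\chi}'$ is a character of $A/Z(\wt{T})$. Using the pairing $c_{\wt{T}}$ and the maximality of $A$, one sees that $\wt{\chi}''/\wt{\chi}'$ equals $a\mapsto \wt{\chi}(c_{\wt{T}}(a,t))$ for some $t\in \wt{T}$, so $\wt{\chi}''={}^t\wt{\chi}'$ appears in the Mackey decomposition of $\mathrm{Res}_A\pi$ above; hence $\mathrm{Ind}_A^{\wt{T}}(\wt{\chi}'')\simeq \mathrm{Ind}_A^{\wt{T}}(\wt{\chi}')$ by Frobenius reciprocity and irreducibility. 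For two different maximal abelians $A_1,A_2$, I would compare them both to a common extension in $\wt{T}$, or more directly observe that the resulting $\pi_1,\pi_2$ both have central character $\wt{\chi}$ and by the surjectivity argument below must coincide.

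For surjectivity and injectivity, let $\sigma\in \mathrm{Irr}_\epsilon(\wt{T})$. By Schur's lemma, $Z(\wt{T})$ acts through a genuine character $\wt{\chi}_\sigma$. Fix any maximal abelian $A\supseteq Z(\wt{T})$; since $A/Z(\wt{T})$ is abelian and acts on $\sigma$ compatibly with $\wt{\chi}_\sigma$, there exists a character $\wt{\chi}'$ of $A$ extending $\wt{\chi}_\sigma$ that occurs in $\sigma|_A$. Frobenius reciprocity produces a nonzero $\wt{T}$-map $\sigma\to \mathrm{Ind}_A^{\wt{T}}(\wt{\chi}')=i(\wt{\chi}_\sigma)$, and irreducibility of both forces an isomorphism. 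Injectivity follows because the central character of $i(\wt{\chi})$ is $\wt{\chi}$. The main technical obstacle I anticipate is the step verifying that $\wt{\chi}''/\wt{\chi}'$ is always of the form $a\mapsto \wt{\chi}(c_{\wt{T}}(a,t))$: this rests on the fact that the pairing $c_{\wt{T}}$ induces a perfect pairing between $\wt{T}/A$ and $A/Z(\wt{T})$ (valued in the Pontryagin dual of the image of $\wt{\chi}$), which in turn follows from the maximality of $A$ together with the explicit description of $c_{\wt{T}}$ via $(y_1\otimes a_1,y_2\otimes a_2)\mapsto (a_1,a_2)_n^{B_Q(y_1,y_2)}$ in \eqref{Ftlaw1}; all remaining points are standard Mackey theory for locally compact groups.
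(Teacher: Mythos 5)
The paper does not give its own proof of this proposition; it is cited directly from Weissman \cite[Thm.\ 3.1]{We09}, which proves it by essentially the same Heisenberg-group argument you outline (nondegenerate commutator pairing, maximal isotropic subgroup, Mackey-type induction). Your proof is correct: $A$ is normal because $[\wt{T},\wt{T}]\subseteq\mu_n\subseteq Z(\wt{T})\subseteq A$; the commutator pairing composed with the genuine $\wt{\chi}$ is nondegenerate on $\wt{T}/Z(\wt{T})$, and maximality of $A$ is precisely the statement that $A/Z(\wt{T})$ is a maximal isotropic subgroup, which gives the perfect pairing $\bigl(\wt{T}/A\bigr)\times\bigl(A/Z(\wt{T})\bigr)\to\C^\times$ you need both for the Mackey irreducibility criterion and for sweeping out all extensions $\wt{\chi}''$ of $\wt{\chi}$ to $A$. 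One small remark on presentation: for independence of the choice of $A$ you offer two half-arguments and hedge on both; the second one (surjectivity) is the one that actually closes the loop and is not circular: irreducibility plus existence of an $A$-eigenvector in $\sigma|_A$ shows every $\sigma\in\text{Irr}_\epsilon(\wt{T})$ with central character $\wt{\chi}$ is isomorphic to $\text{Ind}_{A}^{\wt{T}}\wt{\chi}'$ for \emph{any} choice of maximal abelian $A$ and \emph{any} extension $\wt{\chi}'$ (independence of extension having been established first), so all such inductions are isomorphic to each other. It would also be worth saying explicitly that $Z(\wt{T})$ has finite index in $\wt{T}$ (because $T/T^\dag$ is finite for a local field), which is what makes the Mackey decomposition a finite direct sum and makes $A$ an open, closed, finite-index subgroup, so the topological side of the Mackey/Frobenius machinery is unproblematic.
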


For $\wt{T}$ unramified, there is a natural choice of $A$ which we take to be $C_{\wt{T}}(K_T)=Z(\wt{T})K_T$ by Lemma \ref{A}. It is a maximal abelian subgroup of $\wt{T}$ and contains $Z(\wt{T})$ by Lemma \ref{A}.

As mentioned, to give an unramified genuine character $\wt{\chi}$ on $Z(\wt{T})$ is equivalent to giving a genuine character on $\wt{Y}_{Q,n}$. Therefore, there is a natural extension $\wt{\chi}'$ to $C_{\wt{T}}(K_T)$ which is trivial on $K_T$ by inflation, since we also have $\wt{Y}_{Q,n}\simeq C_{\wt{T}}(K_T)/K_T$. 

Consider $i(\wt{\chi}):=\text{Ind}_A^{\wt{T}}(\wt{\chi}')$. It is clear that $Z(\wt{T})$ acts on $i(\wt{\chi})$ by $\wt{\chi}$. Moreover, $i(\wt{\chi})$ is unramified and $\text{dim}_\C\ i(\wt{\chi})^{K_T}=1$.

Conversely, let $\pi$ be an unramified representation of $\wt{T}$. Then, by the Stone von-Neumann theorem $\pi$ is isomorphic to $i(\wt{\chi})$ for some unramified character $\wt{\chi}$ of $Z(\wt{T})$. Therefore, we see that the correspondence $\mca{V}^*$ in Corollary \ref{SV corres} is basically realized as
$$\xymatrix{
\wt{\chi} \ar@{~>}[r] & i(\wt{\chi}):=\text{Ind}_A^{\wt{T}}(\wt{\chi}').
}$$

\subsection{Unramified principal series representations of $\wt{G}$}

Given any $\wt{G}$, let $\wt{B}$ be the Borel $B\subseteq G$ with the decomposition $\wt{B}=\wt{T}N$. Let $i(\wt{\chi})$ be a genuine irreducible unramified representation of $\wt{T}$ obtained above. We now define the principal series representations of $\wt{G}$.

\begin{dfn} \index{principal series}
Consider any $i(\wt{\chi}) \in \text{Irr}_\epsilon(\wt{T})$. We define the normalized induced representation $I_{\wt{B}}^{\wt{G}}(i(\wt{\chi}))$ as follows.

First let 
$$L(i(\wt{\chi}))=\set{f: \wt{G} \to i(\wt{\chi}) | \ f(\wt{b}\wt{g})=\delta_{\wt{B}}^{1/2}(\wt{b}) \cdot i(\wt{\chi})(\wt{b})f(\wt{g})}.$$ 
Then $I_{\wt{B}}^{\wt{G}}(i(\wt{\chi}))$ are the smooth vectors of $L(i(\wt{\chi}))$. Here $\delta_{\wt{B}}$ is the modular character of $\wt{B}$ and we view $i(\wt{\chi})$ as a representation of $\wt{B}$ by the inflation $\xymatrix{\wt{B} \ar@{>>}[r] & \wt{T}}$.
\end{dfn}

For simplicity we may write $I(\wt{\chi})$ for $I_{\wt{B}}^{\wt{G}}(i(\wt{\chi}))$ and call it a principal series representation of $\wt{G}$. Since the modular character $\delta_{\wt{B}}$ factors through that of $B$, i.e. $\delta_{\wt{B}}(\wt{b})=\delta_B(b)$ where $b\in B$ is the image of $\wt{b}$, we may use interchangeably both notations $\delta_{\wt{B}}$ and $\delta_B$.

\begin{lm}
Suppose $\wt{\chi}$ is unramified. Then the principal series $I(\wt{\chi})$ has a one-dimensional space of $K_G$-fixed vectors.
\end{lm}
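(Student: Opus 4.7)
The plan is to reduce the computation of $\dim I(\wt{\chi})^{K_G}$ to the already-established fact that $\dim i(\wt{\chi})^{K_T}=1$, by means of the Iwasawa decomposition and the splitting $s_K$.

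First I would set up the decomposition. Since $\wt{G}$ is unramified with respect to $s_K$, the Iwasawa decomposition $G=BK$ lifts to $\wt{G}=\wt{B}\cdot s_K(K)$, and the intersection $\wt{B}\cap s_K(K)$ equals $s_K(B\cap K)=s_K(K_T\cdot (K\cap N))$. Consequently, any $f\in I(\wt{\chi})^{K_G}$ is determined by its value $f(1)\in i(\wt{\chi})$, via $f(\wt{b}\cdot s_K(k))=\delta_B^{1/2}(\wt{b})\cdot i(\wt{\chi})(\wt{b})f(1)$.

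Next I would show that the evaluation $f\mapsto f(1)$ lands in $i(\wt{\chi})^{K_T}$ and is a bijection onto it. For $k\in K_T$, the equality $f(s_K(k))=f(1)$ combined with the transformation law of $f$ gives $\delta_B^{1/2}(s_K(k))\cdot i(\wt{\chi})(s_K(k))f(1)=f(1)$. Since $K_T$ is compact, $\delta_B^{1/2}|_{K_T}=1$, so $f(1)$ is fixed by $s_K(K_T)$ acting through $i(\wt{\chi})$. For $k\in K\cap N$, the corresponding equation is automatic because $i(\wt{\chi})$, inflated from $\wt{T}$ to $\wt{B}$, is trivial on the unipotent part (using uniqueness of the unipotent splitting of Proposition \ref{unip splitting} together with Corollary \ref{sK=iu} to identify the two splittings of $K\cap N$). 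Hence $f(1)\in i(\wt{\chi})^{K_T}$.

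Conversely, given $v\in i(\wt{\chi})^{K_T}$, I would define $f(\wt{b}\cdot s_K(k)):=\delta_B^{1/2}(\wt{b})\cdot i(\wt{\chi})(\wt{b})v$ and verify this is well-defined using precisely the two invariance properties of $v$ established above (trivial action of $s_K(K_T)$ and of $s_K(K\cap N)$ on $v$). This produces an inverse to the evaluation map, so $\dim I(\wt{\chi})^{K_G}=\dim i(\wt{\chi})^{K_T}=1$, the latter equality coming from the Stone--von Neumann description of $i(\wt{\chi})=\Ind_{Z(\wt{T})K_T}^{\wt{T}}(\wt{\chi}')$ with the natural unramified extension $\wt{\chi}'$. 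No significant obstacle is expected; the only point that requires care is the identification of $s_K|_{K\cap N}$ with the canonical unipotent splitting $i_u$, which is exactly the content of Corollary \ref{sK=iu}.
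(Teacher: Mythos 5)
Your proof is correct and takes essentially the same route as the paper: both reduce the statement to the one-dimensionality of $i(\wt{\chi})^{K_T}$ by means of the evaluation map $f\mapsto f(1_{\wt{G}})$. The difference is purely one of emphasis: you spell out the Iwasawa reduction (including the identification $s_K|_{K\cap N}=i_u|_{K\cap N}$ from Corollary \ref{sK=iu} needed for well-definedness of the inverse map) and invoke $\dim i(\wt{\chi})^{K_T}=1$ from the earlier Stone--von Neumann discussion, whereas the paper defers the Iwasawa part to McNamara and instead re-derives $\dim i(\wt{\chi})^{K_T}=1$ explicitly via the commutator identity $\mbf{f}(\wt{t})=[\wt{t},k]\,\mbf{f}(\wt{t})$, which forces the support of a $K_T$-fixed vector onto $A=C_{\wt{T}}(K_T)=Z(\wt{T})K_T$.
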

\begin{proof}
The proof can be taken verbatim from \cite[Lm. 2]{McN12}. As it is important to have an explicit description of the unramified vectors of $I(\wt{\chi})$, we give a sketch here.

The key is that we have the isomorphism of vector spaces
$$\xymatrix{
I(\wt{\chi})^{K_G} \ar[r] & i(\wt{\chi})^{K_T} , \text{ given by } f \ar@{|->}[r] & f(1_{\wt{G}}).
}$$
Write $\mbf{f}=f(1_{\wt{G}}) \in i(\wt{\chi})^{K_T}$. Then $\mbf{f}(\wt{a}\wt{t})=\wt{\chi}'(\wt{a}) \mbf{f}(\wt{t})$ for all $\wt{t}\in \wt{T}$ and $\wt{a}\in A=C_{\wt{T}}(K_T)=Z(\wt{T})K_T$,  where $\wt{\chi}'$ is the extension of $\wt{\chi}$ to $A$ which is trivial on $K_T$.

Moreover, for all $\wt{t}$ and $k\in K_T$,
$$ \mbf{f}(\wt{t})=\big(i(\wt{\chi})(k)\mbf{f}\big)(\wt{t}) =\mbf{f}(\wt{t} k)=[\wt{t}, k] \wt{\chi}'(k) \mbf{f}(\wt{t})= [\wt{t}, k] \wt{\chi}'(k) \mbf{f}(\wt{t})=[\wt{t}, k] \mbf{f}(\wt{t}).$$

Thus the support of $\mbf{f}$ is $A$, and since $\mbf{f}|_A$ transform under $\wt{\chi}'$, $\mbf{f}$ is uniquely determined by $\mbf{f}(1_{\wt{T}})$. That is, $\text{dim}_\C \ I(\wt{\chi})^{K_G}=\text{dim}_\C \ i(\wt{\chi})^{K_T}=1$.
\end{proof}

Thus, we can argue as in the linear case that the correspondence $\mca{S}^*$ in Corollary \ref{SV corres} is basically given by
$$\xymatrix{i(\wt{\chi}) \ar@{|~>}[r] & \text{ the unramified component of } I(\wt{\chi}).
}$$

\section{Intertwining operators} \index{intertwining operator}
\subsection{Notations and basic set-up} \label{W notation}
As before, we use the boldface $\w$ to denote an element of $W$. We have defined in section \ref{BD section} the following elements of $\mbf{SL}_2$:
\begin{equation*}
    \mbf{e}_+(a) = \left(
      \begin{array}{cccc}
        1 & a \\
        0 & 1
      \end{array} \right), \quad
\mbf{e}_-(a)=\left(
      \begin{array}{cc}
        1 & 0 \\
        -a & 1
      \end{array} \right),
  \end{equation*}
\begin{equation*}
    \mbf{w}_o(a) =\mbf{e}_+(a) \mbf{e}_-(a^{-1}) \mbf{e}_+(a)= \left(
      \begin{array}{cccc}
        0 & a \\
        -a^{-1} & 0
      \end{array} \right), \quad
\mbf{h}_o(a) =\mbf{w}_o(a) \mbf{w}_o(-1)=\left(
      \begin{array}{cc}
        a & 0 \\
        0& a^{-1}
      \end{array} \right).
  \end{equation*}

From section \ref{BD section} there is the associated morphism $\xymatrix{\varphi_\alpha: \mbf{SL}_2 \ar[r] & \mbf{G}^{sc}}$ for any coroot $\alpha^\vee\in \Psi^\vee$. We also have the elements in $\mbf{G}^{sc}$: $\mbf{e}_\alpha(a)=\varphi_\alpha(\mbf{e}_+(a))$,    
$\mbf{e}_{-\alpha}(t)=\varphi_\alpha(\mbf{e}_-(a))$. Similarly, $\mbf{w}_\alpha(a):=\varphi_\alpha(\mbf{w}_o(a))$, $\mbf{h}_\alpha(a):=\varphi_\alpha(\mbf{h}_o(a))$. Clearly $\mbf{w}_\alpha(a) \in N(\mbf{T})(F)$.

More importantly, for $\wm{G}$ and its pull-back to $\wm{G}^{sc}$ via $\xymatrix{\mbf{G}^{sc} \ar[r] & \mbf{G}}$, there are the natural Brylinski-Deligne liftings of above elements in $\wm{G}^{sc}$ (cf. section \ref{BD section}), which we have denoted by
$$\wm{e}_\alpha, \quad \wm{w}_\alpha, \quad \wm{h}^{[b]}_\alpha.$$

We are interested in the image of the $F$-rational points of these elements in $\wt{G}=\wm{G}(F)$. Recall we have used  $\Phi_{D,\eta}$ to denote the natural map as in the following diagram:
$$\xymatrix{
\mu_n \ar@{^(->}[r] \ar@{=}[d] & \wt{G}^{sc} \ar@{>>}[r]  \ar[d]^-{\Phi_{D,\eta}} & G^{sc} \ar[d]^-\Phi\\
\mu_n \ar@{^(->}[r] & \wt{G} \ar@{>>}[r]  & G, 
}$$
which is just the pull-back of $\wt{G}$ to $G^{sc}$.

From now on, to simplify and fix our notations we will write for any root $\alpha \in \Psi$ 
$$ e_\alpha(a):=\Phi(\mbf{e}_\alpha(a)), \quad w_\alpha(a):=\Phi(\mbf{w}_\alpha(a)), \quad h_\alpha(a):=\Phi(\mbf{h}_\alpha(a)) \in G,$$
which all lie in $G$.

For any $a, b\in F^\times$, we have the elements $\wm{e}_\alpha(a)$, $\wm{w}_\alpha(a)$ and $\wm{h}^{[b]}_\alpha(a)\in \wt{G}^{sc}$. We thus introduce the following notation
\begin{equation} \label{wm to wt}
\wt{e}_\alpha(a):=\Phi_{D,\eta}(\wm{e}_\alpha(a)), \quad \wt{w}_\alpha(a):=\Phi_{D,\eta}(\wm{w}_\alpha(a)), \quad \wt{h}^{[b]}_\alpha(a):=\Phi_{D,\eta}(\wm{h}^{[b]}_\alpha(a)) \in \wt{G}.
\end{equation}
They all lie in $\wt{G}$ for any root $\alpha \in \Psi$. As in Proposition \ref{s-eta}, we will write $\wm{h}_\alpha(a)$ and $\wt{h}_\alpha(a)$ for $\wm{h}^{[b]}_\alpha(a) \in \wt{G}^{sc}$ and $\wt{h}^{[b]}_\alpha(a) \in \wt{G}$ respectively, whenever the latters are independent of the choice $b\in F^\times$. For example, we have used the notation $\wm{h}_\alpha(a^{n_\alpha})$ in Proposition \ref{s-eta}.

Furthermore, for convenience the following notations may also be used
$$e_\alpha:=e_\alpha(1), \quad w_\alpha:=w_\alpha(1) \ \in G.$$
Similarly,
$$\wt{e}_\alpha:=\wt{e}_\alpha(1), \quad \wt{w}_\alpha:=\wt{w}_\alpha(1), \quad \wt{h}^{[b]}_\alpha:=\wt{h}^{[b]}_\alpha(1) \ \in \wt{G}.$$
In particular, $\wt{h}_\alpha^{[1]}=1_{\wt{G}}$.

We will follow \cite{Sav04} and consider the group $W^{K_G}\subseteq K_G$ generated by $w_\alpha(-1)$ for $\alpha\in \Psi$, which lies in the exact sequence
$$\seq{T^{sgn}}{W^{K_G}}{W}, \qquad w_\alpha(-1) \mapsto \w_\alpha,$$
where $T^{sgn}\subseteq T$ is the finite group generated by $h_\alpha(-1)$ for $\alpha\in \Psi$. For application of global purpose, there is no loss of generalities to choose representatives for the Weyl group $W$ which lie in $K_G$.

There is a preferred section of $W^{K_G}$ over $W$. Let $\w\in W$, write $\w=\w_{\alpha_k}... \w_{\alpha_2}... \w_{\alpha_1} \in W$ in the form of a minimum decomposition. We would choose the following as a representative of $\w$:
$$s_W(\w):=w_{\alpha_k}\cdot  w_{\alpha_{k-1}} ... \cdot w_{\alpha_2} \cdot w_{\alpha_1} \in W^{K_G}.$$

One property of $s_W$ is that the representative $s_W(\w)$ is independent of the minimum decomposition of $\w$. Moreover, it is multiplicative (cf. \cite[\S 29.4]{Hum75}). That is, if $l(\w \w')=l(\w) + l(\w')$, then
$$s_W(\w \w')=s_W(\w) \cdot s_W(\w').$$

For simplicity, we may also write $w$ for $s_W(\w)$, i.e. by definition
$$w:=s_W(\w).$$

The finite subgroup $T^{sgn}$ does not lie in $T^\dag$, and this follows from considering the commutator $[h_\alpha(-1), y\otimes \varpi]=(-1, \varpi)_n^{B_Q(\alpha^\vee, y)}$ for any $y\in Y$. The obstruction is given by $(-1, \varpi)_n$. Thus the conjugation action of $W^{K_G}$ on $\wt{T}$ does not descend to $W$. However, we have seen that the action of $W$ on the isomorphism class of representations of $\wt{T}$ is well-defined.

We may consider $W^{K_G}$ as a subgroup of $\wt{G}$ via the splitting $s_K$ of $K_G$; then its finite subgroup $T^{sgn}$ viewed as a subgroup of $\wt{G}$ does not lie in the center $Z(\wt{T})$ from above consideration. Since the splitting $s_K$ agrees with the unipotent splitting $i_u$ as in Corollary \ref{sK=iu}, we have 
\begin{align*}
s_K(w_{\alpha_i})=& s_K(e_{\alpha_i}\cdot  e_{-\alpha_i} \cdot e_{\alpha_i})\\
=& s_K(e_{\alpha_i}) \cdot s_K(e_{-\alpha_i}) \cdot s_K(e_{\alpha_i}) \\
=& i_u(e_{\alpha_i}) \cdot i_u(e_{-\alpha_i}) \cdot i_u(e_{\alpha_i}) \\
=& \wt{e}_{\alpha_i} \cdot \wt{e}_{-\alpha_i} \cdot \wt{e}_{\alpha_i} \\
= & \wt{w}_{\alpha_i}
\end{align*}

It follows in general
\begin{align*}
s_K(w) & =s_K(w_{\alpha_k}) \cdot s_K(w_{\alpha_{k-1}}) \cdot ... \cdot s_K(w_{\alpha_2}) \cdot s_K(w_{\alpha_1}) \\
&= \wt{w}_{\alpha_k} \cdot \wt{w}_{\alpha_{k-1}} \cdot ... \cdot \wt{w}_{\alpha_2} \cdot \wt{w}_{\alpha_1}.
\end{align*}
The expression is independent of the minimum decomposition of $\w$. Therefore, we may also write $\wt{w}$ for $s_K(w)=s_K\circ s_W (\w)$ without any ambiguity. 

To summarize for the notations, for every $\w\in W$, we have a well-defined representative $w \in W^{K_G} \subseteq K_G$ and $\wt{w}\in s_K(W^{K_G})\subseteq \wt{W^{K_G}}$ (the preimage of $W^{K_G}$ in $\wt{G}$):
$$\xymatrix{
&   \mu_n \ar@{^(->}[d] \\
&    \wt{W^{K_G}} \ar@{>>}[d] \\
T^{sgn} \ar@{^(->}[r] & W^{K_G} \ar@{>>}[r] \ar@/^1.3pc/[u]^-{s_K} & W \ar@/_1.3pc/[l]_-{s_W}. \\
}$$

\subsubsection{The canonical isomorphism $^{w}i(\wt{\chi}) \simeq i(^{\w}\wt{\chi})$}

Let $i(\wt{\chi})$ be an irreducible representation of $\wt{T}$. Then we define an action of $w \in W^{K_G}$ (or equivalently that of $\wt{w}$) on $i(\wt{\chi})$ by
$$^{w}i(\wt{\chi})(\wt{t})=i(\wt{\chi})( w^{-1} \wt{t} w).$$

We have defined $^{\w}\wt{\chi}$ (see section \ref{Weyl inv}) by also considering the conjugation action of $w$ on the genuine character $\wt{\chi}$ of $Z(\wt{T})$. Since it actually only depends on $\w \in W$, we have written ${}^{\w}\wt{\chi}$ previously with no ambiguity.

Now we can compare $^w i(\wt{\chi})$ with $i(^\w \wt{\chi})$.

\begin{lm} \label{rep iden}
The two representations are isomorphic:
$$^wi(\wt{\chi}) \simeq i(^\w\wt{\chi}).$$

If $i(\wt{\chi})$ is unramified with unramified character $\wt{\chi}$, then both ${}^w i(\wt{\chi})$and $i(^\w\wt{\chi})$ are unramified. Since any isomorphism between the two spaces preserves unramified vectors, there is a canonical isomorphism which identifies the normalized unramified vectors of the two sides.
\end{lm}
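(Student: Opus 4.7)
The plan is to invoke the Stone--von Neumann theorem (Proposition \ref{Stone-Von}) in order to reduce the isomorphism question to a comparison of central characters, and then to verify the unramified statement using the one-dimensionality of the spherical vector space.

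First I would check that the central character of ${}^w i(\wt{\chi})$ is exactly ${}^\w\wt{\chi}$. Since the Weyl group $W$ preserves $Z(\wt{T})$ (as recalled in section \ref{Weyl inv}), any lift $w \in W^{K_G}$ satisfies $w^{-1} Z(\wt{T}) w = Z(\wt{T})$. Hence for $\wt{z} \in Z(\wt{T})$ one computes
\[
{}^w i(\wt{\chi})(\wt{z}) = i(\wt{\chi})\bigl(w^{-1}\wt{z}w\bigr) = \wt{\chi}\bigl(w^{-1}\wt{z}w\bigr) = {}^\w\wt{\chi}(\wt{z}),
\]
which by definition is also the central character of $i({}^\w\wt{\chi})$. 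By Proposition \ref{Stone-Von}, an irreducible genuine representation of $\wt{T}$ is determined up to isomorphism by its central character, so ${}^w i(\wt{\chi}) \simeq i({}^\w \wt{\chi})$, settling the first assertion.

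For the second assertion, I would verify that both sides are unramified. Since $w \in W^{K_G} \subseteq K_G$, conjugation by $w$ preserves $K_T = T \cap K_G$, so ${}^w i(\wt{\chi})^{K_T} = i(\wt{\chi})^{K_T}$ as a subspace (up to relabeling the action), which is one-dimensional by the analysis preceding Proposition \ref{Stone-Von}. On the other side, because $w$ preserves $T^\dag \cap K_T$, the character ${}^\w\wt{\chi}$ remains trivial on $T^\dag \cap K_T$, hence is itself unramified; thus $i({}^\w\wt{\chi})$ is unramified and its space of $K_T$-fixed vectors is also one-dimensional.

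Finally, by Schur's lemma any isomorphism ${}^w i(\wt{\chi}) \xrightarrow{\sim} i({}^\w\wt{\chi})$ is unique up to scalar, and it must carry the (one-dimensional) $K_T$-fixed subspace of the source isomorphically onto that of the target. There is therefore a unique such isomorphism sending the chosen normalized unramified vector on the left to that on the right, as claimed. The only step that requires any genuine care is the verification that conjugation by $w$ is compatible with the unramified-vector normalizations on both sides; but this is automatic from the one-dimensionality of the spherical subspaces and the fact that $w$ lies in $K_G$ via the fixed splitting $s_K$ (which, by Corollary \ref{sK=iu}, agrees with the unipotent splitting on the relevant root subgroups).
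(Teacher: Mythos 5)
Your proposal is correct and follows essentially the same route as the paper: you invoke the Stone--von Neumann theorem after matching central characters, then use Weyl-invariance of $T^\dag\cap K_T$ to see that ${}^\w\wt{\chi}$ remains unramified, and finish with the one-dimensionality of the $K_T$-fixed subspaces. You spell out a couple of steps the paper labels ``easy to see'' (the explicit central-character computation, the Schur's lemma uniqueness), which is fine; the closing appeal to Corollary~\ref{sK=iu} is tangential and not actually needed here, but it does no harm.
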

\begin{proof}
Note that both spaces are irreducible with $Z(\wt{T})$ acting by the same character $^\w\wt{\chi}$. Therefore, by the Stone von-Neumann theorem in Proposition \ref{Stone-Von}, they are isomorphic.

Now assume $i(\wt{\chi})$ is unramified with $\wt{\chi}$ a unramified character of $Z(\wt{T})$. Then $^\w\wt{\chi}$ is unramified as well since $\w$ preserves $T^\dag\cap K_T$. Therefore $i(^\w\wt{\chi})$ as well as $^wi(\wt{\chi})$ are both unramified since $Z(\wt{T})$ in both representations acts by the unramified character ${}^\w\wt{\chi}$.

It is easy to see that any isomorphism between ${}^w i(\wt{\chi})$and $i(^\w\wt{\chi})$ preserves unramified vectors, and thus the normalization in the lemma could be achieved.
\end{proof}

We could give some analysis on the unramified vectors on both sides of above lemma. 

Identify $i(\wt{\chi})$ with $\text{Ind}_A^{\wt{T}}(\wt{\chi}')$, where $A=C_{\wt{T}}(K_T)$ and $\wt{\chi}'$ the natural unramified extension to $A$ introduced before. Let $\xymatrix{\mbf{f}_{i(\wt{\chi})}: \wt{T} \ar[r] &\C}$ be the normalized unramified vector of $i(\wt{\chi})$. Then the support of $\mbf{f}_{i(\wt{\chi})}$ is $A$ and it takes value 1 at $1_{\wt{T}}$. We claim $\mbf{f}_{i(\wt{\chi})} \in {}^wi(\wt{\chi})$ is unramified under the $^wi(\wt{\chi})$ action as well.

Let $k \in K_T$. Then $w^{-1} k w \in K_T$ since $w\in K_G$, and this gives
$$^wi(\wt{\chi})(k) \mbf{f}_{i(\wt{\chi})}(\wt{t}) =\mbf{f}_{i(\wt{\chi})}(\wt{t} \cdot w^{-1} k w )=\mbf{f}_{i(\wt{\chi})}(\wt{t}). $$
This shows that indeed $\mbf{f}_{i(\wt{\chi})}$ is $^wi(\wt{\chi})$-unramified.

Moreover, we see that  $\mbf{f}_{i(\wt{\chi})}$ transforms under $^\w(\wt{\chi}')$ when restricted to $A$. This can be seen as follows. Let $a \in A$, then $w^{-1} a w \in A$ and thus
\begin{equation*}
^wi(\wt{\chi})(a) \mbf{f}_{i(\wt{\chi})}(\wt{t}) = \mbf{f}_{i(\wt{\chi})}(\wt{t} \cdot w^{-1} a w) =
\begin{cases}
0=  {}^\w(\wt{\chi}')(a) \cdot \mbf{f}_{i(\wt{\chi})}(\wt{t}) & \text{ if } \wt{t} \notin A, \\
\mbf{f}_{i(\wt{\chi})}(w^{-1} a w \cdot \wt{t}) = {}^\w(\wt{\chi}')(a) \cdot \mbf{f}_{i(\wt{\chi})}(\wt{t}) & \text{ if } \wt{t} \in A,
\end{cases}
\end{equation*}
where the first case holds since the support of $f$ is in $A$. 

On the other hand, consider $i(^\w\wt{\chi})$ and identify it with $\text{Ind}_A^{\wt{T}} \big((^\w\wt{\chi})'\big)$, where $(^\w\wt{\chi})'$ is the natural unramified extension of $^\w\wt{\chi}$ to $A$. The normilized unramified vector of $ i(^\w\wt{\chi})$ is a function $\xymatrix{\mbf{f}_{i(^\w\wt{\chi})}: \wt{T} \ar[r] &\C^\times }$ with support in $A$ that transforms under $(^\w\wt{\chi})'$ when restricted to $A$. Also we require $\mbf{f}_{i(^\w\wt{\chi})}(1_{\wt{T}})=1$.

Note however, since $\wt{\chi}$ is unramified, we have $^\w(\wt{\chi}')=(^\w\wt{\chi})'$. Therefore, the rigidified isomorphism in above lemma is requiring the normalized vector $\mbf{f}_{i(\wt{\chi})}$ sent to the normalized vector $\mbf{f}_{i(^\w\wt{\chi})}$. 

In fact, it is not hard to check that the canonical isomorphism is given by
\begin{equation}\label{r_w}
\xymatrix{
r_w: {}^w i(\wt{\chi}) \ar[r]^-\simeq & i({}^\w \wt{\chi}), \quad \mbf{f} \ar@{|->}[r] & r_w(\mbf{f})(\wt{t}):=\mbf{f}(w^{-1} \wt{t} w).
}
\end{equation}

Meanwhile, if we take another representative $w' \in W^{K_G}$ of $\w$, then we have the following commutative diagram
\begin{equation} \label{lower tri}
\xymatrix{
                         & {}^w i(\wt{\chi}) \ar[ld]_-{r_{w, w'}} \ar[d]^-{r_w} \\
{}^{w'} i(\wt{\chi}) \ar[r]^-{r_{w'}} & i({}^{\w} \wt{\chi}),
}
\end{equation}
where $r_{w, w'}$ is given by $r_{w, w'}(\mbf{f})(\wt{t})=\mbf{f}(w^{-1} w'\cdot \wt{t} \cdot w'^{-1} w)$. Note that the unramified vectors $\mbf{f}_{ {}^wi({\wt{\chi}})}$ and $\mbf{f}_{{}^{w'}i({\wt{\chi}})}$ are the same functions and equal to the unramified vector $\mbf{f}_{i(\wt{\chi})} \in i(\wt{\chi})$. Moreover, the map $r_{w, w'}$ restricts to the identity map from $\C \cdot \mbf{f}_{ {}^wi({\wt{\chi}})}$ to $\C \cdot \mbf{f}_{{}^{w'}i({\wt{\chi}})}$ (with both identified with $\C \cdot \mbf{f}_{i(\wt{\chi})}$).

\vskip 10pt

As a consequence of the above discussion, it follows that for all $h_\alpha(-1) \in T^{sgn}$,
$${}^{h_\alpha(-1)} i(\wt{\chi}) \simeq i(\wt{\chi}).$$

From now, we will fix the isomorphism $^wi(\wt{\chi}) \simeq i(^\w\wt{\chi})$ by requiring that the normalized unramified functions on two sides correspond to each other as in the proof of the above lemma.

\subsection{Intertwining operators and cocycle relations}

For linear case, we have the cocycle relations for intertwining operators for induced representations. As before let $w=w_{\alpha_k} w_{\alpha_{k-1}}  ...  w_{\alpha_2} w_{\alpha_1}$ be the element of $W^{K_G}$ representing $\w=\w_{\alpha_k} \w_{\alpha_{k-1}} ... \w_{\alpha_2}\w_{\alpha_1}$ in a minimum expansion.  We have also defined the element $\wt{w}=s_K(w) \in \wt{G}$ which is independent of the factorization of $\w$ as well.

We are interested in the map:

\begin{equation} \label{inter op}
T\big(w, i(\wt{\chi})\big):\quad I(i(\wt{\chi}))\to I({}^wi(\wt{\chi})), \quad f\mapsto \int_{N^w} f(\wt{w}^{-1}\wt{u}g) du.
\end{equation}

and its factorization properties. Here $N^w=N\cap w N^- w^{-1}$, $N$ the unipotent radical of the Borel $\wt{B}=\wt{T}N$ and $N^-$ the unipotent radical opposite to $N$. 

First of all,

\begin{lm}
The map $\xymatrix{T\big(w, i(\wt{\chi})\big): I(i(\wt{\chi})) \ar[r] & I(^wi(\wt{\chi}) ) }$  is well-defined. That is, $T\big(w, i(\wt{\chi})\big)(f) \in I(^wi(\wt{\chi}))$. It intertwines the two unramified representations and sends unramified vector to unramified.
\end{lm}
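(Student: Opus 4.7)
The plan is to verify the three assertions in order: (a) the defining integral produces a function on $\wt{G}$ satisfying the transformation law of $I({}^w i(\wt{\chi}))$, (b) the resulting map commutes with right translation by $\wt{G}$, and (c) it preserves unramified vectors. Throughout, I would rely on the fact that the unipotent radical $N$ splits canonically into $\wt{G}$ via $i_u$ (Proposition \ref{unip splitting}), so that $N^w$ is identified with its lift in $\wt{G}$ and the Haar measure on $N^w$ is unambiguous. Convergence of the integral is assumed in the usual sense: for $\wt{\chi}$ in a suitable positive cone the integral converges absolutely, and the general case is handled by meromorphic continuation; in the unramified setup the integral can also be computed cell by cell using the Iwasawa decomposition.

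The main point is the transformation law. Let $\wt{b} = \wt{t}\wt{u}_0 \in \wt{B}$ with $\wt{t}\in \wt{T}$ and $\wt{u}_0 \in N$. Using that the image of $N^w$ in $\wt{G}$ is normalized by $\wt{T}$ (since $\wt{T}$ acts on $N$ through $T$, which normalizes $N^w$, and the splitting $i_u$ is $T$-equivariant by Proposition \ref{unip splitting}), I would perform the change of variables $\wt{u} \mapsto \wt{t}\wt{u}\wt{t}^{-1}$ in the integral, producing a Jacobian factor $\delta_B^{-1}(\wt{t})\cdot \delta_{\wt{B}/\wt{B}^w}(\wt{t})$. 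Combining with the fact that $f$ transforms under $\wt{B}$ by $\delta_B^{1/2}\cdot i(\wt{\chi})$ and that $\wt{w}^{-1}\wt{t}\wt{w}$ maps to ${}^{\w^{-1}}t$ in $T$, a direct (but careful) computation yields $T(w,i(\wt{\chi}))(f)(\wt{b} g) = \delta_B^{1/2}(\wt{t})\cdot \big({}^wi(\wt{\chi})\big)(\wt{t})\cdot T(w,i(\wt{\chi}))(f)(g)$, which is exactly the required transformation law; the absorption of $\wt{u}_0$ uses the uniqueness of the unipotent splitting so that $\wt{w}^{-1}\wt{u}_0$ can be folded into the domain of integration without extra cocycle.

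The intertwining property is immediate from the definition: replacing $g$ by $g g'$ on both sides shows that right translation by $\wt{G}$ commutes with the integral over $N^w$. Finally, for unramified vectors, note that if $f$ is the normalized $K_G$-fixed vector of $I(i(\wt{\chi}))$, then right $K_G$-invariance of $f$ and invariance of the Haar measure on $N^w$ immediately give right $K_G$-invariance of $T(w,i(\wt{\chi}))(f)$; by the one-dimensionality of the unramified line in $I({}^w i(\wt{\chi}))$, this vector is (up to the scalar computed in the Gindikin-Karpelevich formula later) the normalized spherical vector. Composing with the canonical isomorphism $r_w\colon {}^w i(\wt{\chi}) \simeq i({}^\w\wt{\chi})$ of (\ref{r_w}), which preserves the normalized unramified vector by Lemma \ref{rep iden}, we obtain the analogous statement landing in $I(i({}^\w\wt{\chi}))$.

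The main obstacle is bookkeeping rather than conceptual: one must verify that the covering does not introduce any spurious cocycle when conjugating $N^w$ by $\wt{t}$ or when moving $\wt{w}^{-1}$ past elements of $N$. The fact that the splitting $i_u$ is $G$-equivariant (Proposition \ref{unip splitting}) and that $\wt{w} = s_K(w)$ is the image under the compatible splitting $s_K = i_u$ on unipotent elements (Corollary \ref{sK=iu}) ensures that these manipulations are identical to the linear case. Convergence in the appropriate cone, and the resulting meromorphic continuation, is then handled by the standard rank-one reduction via the cocycle relation $T(w_2 w_1, \cdot) = T(w_2, \cdot)\circ T(w_1, \cdot)$ for minimal decompositions, which reduces everything to the $\wt{SL}_2^\alpha$ covers classified in Section on central extensions of simply-connected groups.
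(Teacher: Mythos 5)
Your proposal is correct and takes essentially the same approach as the paper: perform the change of variables in the torus direction and match the Jacobian factor against $\delta_B^{1/2}$, relying on the $G$-equivariance of the unipotent splitting (and the identity $s_K = i_u$ on unipotents) to ensure the cover contributes no spurious cocycle. The paper's proof is the same computation written out explicitly, with the Jacobian given as $\prod_{\alpha\in\Psi^+,\,\w^{-1}\alpha<0}|\alpha(t)|_F$ and the identity $\delta_{\wt{B}}(\wt{t})^{1/2}=\prod_{\alpha}|\alpha(t)|_F\cdot\delta_B(w^{-1}tw)^{1/2}$; the discussion of convergence and of the composition with $r_w$ in your write-up is not part of the paper's argument for this lemma but is consistent with what comes later.
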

\begin{proof}
The proof is routine and follows the same argument as in the linear case, see \cite[\S 4.1]{Sha10}. 

For example, to check $T\big(w, i(\wt{\chi})\big)(f) \in I(^wi(\wt{\chi}))$, let $u_o\wt{t}\in \wt{B}=N\wt{T}$, we compute
\begin{align*}
T\big(w, i(\wt{\chi})\big)(f)(u_o\wt{t} \cdot \wt{g}) &=  \int_{N^w} f(\wt{w}^{-1}\wt{u} \cdot u_o\wt{t} \cdot \wt{g} ) du \\
&= \int_{N^w} f(\wt{w}^{-1}\wt{t} \cdot \wt{t}^{-1} u\wt{t} \cdot \wt{g} ) du\\
&= \prod_{\substack{\alpha\in \Psi^+ \\ \w^{-1}\alpha<0}} |\alpha(t)|_F \cdot \int_{N^w} f(\wt{w}^{-1}\wt{t} \cdot u \cdot \wt{g} ) du\\
&=\prod_{\substack{\alpha\in \Psi^+ \\ \w^{-1}\alpha<0}} |\alpha(t)|_F \cdot \int_{N^w} f(\wt{w}^{-1}\wt{t} \wt{w}  \cdot \wt{w}^{-1} u \cdot \wt{g} ) du\\
&=\prod_{\substack{\alpha\in \Psi^+ \\ \w^{-1}\alpha<0}} |\alpha(t)|_F \cdot \delta_{\wt{B}}(w^{-1}\wt{t} w)^{1/2} \cdot {}^wi(\wt{\chi})(\wt{t})  \int_{N^w} f(\wt{w}^{-1} u \cdot \wt{g} ) du.\\
&=\prod_{\substack{\alpha\in \Psi^+ \\ \w^{-1}\alpha<0}} |\alpha(t)|_F \cdot \delta_{B}(w^{-1}t w)^{1/2} \cdot {}^wi(\wt{\chi})(\wt{t})  T\big(w, i(\wt{\chi})\big)(f)(\wt{g}).
\end{align*}
Here any root $\alpha\in \Hom(\mbf{T}, \mbf{G}_\text{mul})$ is viewed as a character of $\Hom(T, F^\times)$. Also we have the following equality (cf. \cite[pg. 83]{Sha10} for example),
$$\delta_{\wt{B}}(\wt{t})^{1/2}=\prod_{\substack{\alpha\in \Psi^+ \\ \w^{-1}\alpha<0}} |\alpha(t)|_F \cdot \delta_{B}(w^{-1}t w)^{1/2}.$$
That is, the intertwining operator $T\big(w, i(\wt{\chi})\big)$ is well-defined. It is easy to see that it sends an unramified vector to an unramified one.
\end{proof}

Let $f_{i({\wt{\chi}})}$ and $f_{{}^w i({\wt{\chi}})}$ be the normalized unramified vectors in $I(i(\wt{\chi}))$ and  $I(^wi(\wt{\chi}))$ respectively. Write $c(w, \wt{\chi}) \in \C$ for the coefficient such that
$$T\big(w, i(\wt{\chi})\big)f_{i({\wt{\chi}})}= c(w, \wt{\chi})  f_{{}^wi({\wt{\chi}})}.$$

The coefficient $c(w, \wt{\chi})$, which depends a priori on the preferred representative $w$ of $\w$, does not in the following sense. Take any other representative $w' \in K_G$ of $\w$, we have the intertwining operator $\xymatrix{T\big(w', i(\wt{\chi})\big): I(i(\wt{\chi})) \ar[r] & I({}^{w'}i(\wt{\chi})) }$ and 
$$T\big(w', i(\wt{\chi})\big)f_{ i({\wt{\chi}}) }= c(w', \wt{\chi})  f_{{}^{w'}i({\wt{\chi}})}.$$
We have the following commutative diagram
\begin{equation} \label{full tri}
\xymatrix{
I(i(\wt{\chi})) \ar[d]_-{T(w', i(\wt{\chi}))} \ar[rr]^-{T(w, i(\wt{\chi}))}  & & I({}^w i(\wt{\chi}))  \ar[lld]^{r_{w, w'}^*} \ar[d]^-{r_w^*} \\
I({}^{w'}i(\wt{\chi})) \ar[rr]_-{r_{w'}^*} & & I({}^{\w} \wt{\chi}),
}
\end{equation}
where the maps from the right lower triangle are induced from those in (\ref{lower tri}). It is easy to see $r_{w, w'}^*(f_{{}^w i({\wt{\chi}})})=f_{{}^{w'}i({\wt{\chi}})}$, and it follows 
$$c(w, \wt{\chi})=c(w', \wt{\chi}).$$

Therefore, we may write $c(\w, \wt{\chi})$ instead of $c(w, \wt{\chi})$. To take another view of $c(\w, \wt{\chi})$, we may consider the compositions from above diagram:

\begin{equation} \label{can inter op}
\xymatrix{
T(\w, \wt{\chi})=r_w^* \circ T(w, i(\wt{\chi})): \quad I(i(\wt{\chi})) \ar[r] & I(^wi(\wt{\chi})) \ar[r] & I(i(^\w\wt{\chi})).
}
\end{equation}

It is justified from above diagram that this map is independent of the choice of representative in $W^{K_G}$ for fixed $\w\in W$, hence the notation. In brief, we may write
$$\xymatrix{
T(\w, \wt{\chi}): \quad I(\wt{\chi}) \ar[r] & I(^\w\wt{\chi}).
}$$

Thus $T(\w, \wt{\chi})$ is the intertwining map between unramified spaces uniquely determined by 
$$T(\w, \wt{\chi}) f_{i({\wt{\chi}})}=c(\w, \wt{\chi}) f_{i(^\w{\wt{\chi}})}. $$

\vskip 5pt

\begin{rmk}
Following a suggestion of Wee Teck Gan, we could consider representatives of $W$ in a larger group. From the Bruhat-Tits theory, we have obtained a model $\mathbf{G}$ of $\mbf{G}$ over $\msc{O}_F$, from which $K_G:=\mathbf{G}(\msc{O}_F)$. This gives models $\mathbf{T}$ and $N(\mathbf{T})$ for $\mbf{T}$ and $N(\mbf{T})$ respectively, and we have an exact sequence
$$\seq{\mathbf{T}(\msc{O}_F)}{N(\mathbf{T})(\msc{O}_F)}{W}.$$

The discussion in this (and the previous) subsection applies with representatives of $W$ chosen from $N(\mathbf{T})(\msc{O}_F)$. For example, given any $n \in N(\mathbf{T})(\msc{O}_F)$ mapping to $\w \in W$, as in (\ref{r_w}) the morphism 
$$ \xymatrix{
r_n: {}^n i(\wt{\chi}) \ar[r]^-\simeq & i({}^\w \wt{\chi}), \quad f \ar@{|->}[r] & r_n(f)(\wt{t}):=f(n^{-1} \wt{t} n)
}
$$
gives the desired canonical isomorphism. We can define $T(n, \wt{\chi})$ as in (\ref{inter op}), and also the composition $\xymatrix{r_n^* \circ T(n, \wt{\chi}): I(\wt{\chi}) \ar[r] & I({}^\w \wt{\chi})}$ as in (\ref{can inter op}). The latter does not depend on the choice of $n$ and thus can be denoted by $T(\w, \wt{\chi})$ as well.

This is certainly a more natural and less restrictive approach, as the more restrictive group $W^{K_G}$ is essentially $N(\mathbf{T})(\Z)$. The consideration of $N(\mathbf{T})(\msc{O}_F)$ would allow for a treatment for general quasi-split $\mbf{G}$, which may not have a canonical integral model over $\Z$.
\end{rmk}

\vskip 10pt

Keep notations as above, and write $\Psi_\w:=\set{\alpha\in \Psi^+: \ \w (\alpha) \in \Psi^-}$.

\begin{prop} \index{cocycle relation}
Let $\w=\w_{\alpha_k}... \w_{\alpha_2}\w_{\alpha_1} \in W$ be an expansion of minimum length into simple reflections, and let $w=w_{\alpha_k} ... w_{\alpha_2} w_{\alpha_1}$ be defined as above. Then the intertwining operator factorizes as
$$T\big(w, i(\wt{\chi})\big)=T\big(w_{\alpha_k}, {}^{w_{\alpha_{k-1}} w_{\alpha_{k-2}} ... w_{\alpha_1}}i(\wt{\chi}))\big) \circ ... \circ T\big(w_{\alpha_2}, {}^{w_{\alpha_1}}i(\wt{\chi})\big) \circ T\big(w_{\alpha_1}, i(\wt{\chi})\big).$$
\end{prop}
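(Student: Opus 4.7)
The plan is to proceed by induction on $k$, reducing the claim to the two-factor identity
\begin{equation*}
T(w, i(\wt{\chi})) = T(w_1, {}^{w_2} i(\wt{\chi})) \circ T(w_2, i(\wt{\chi}))
\end{equation*}
whenever $w = w_1 w_2$ with $l(w_1 w_2) = l(w_1) + l(w_2)$. Peeling off $w_{\alpha_1}$ (i.e.\ taking $w_2 = w_{\alpha_1}$ and $w_1 = w_{\alpha_k}\cdots w_{\alpha_2}$) and then iterating, while tracking the cumulative action on the inducing character through the identity ${}^{w_1}({}^{w_2}\sigma) = {}^{w_1 w_2}\sigma$, delivers the full $k$-fold factorization.

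For the two-factor identity the strategy is to mirror the standard linear-group argument while lifting it to $\wt{G}$. Length-additivity supplies the disjoint decomposition $\Psi_\w = \Psi_{\w_2} \sqcup \w_2^{-1}(\Psi_{\w_1})$, hence a variety-level factorization $N^w \simeq N^{w_2} \times (w_2^{-1} N^{w_1} w_2)$ with Haar measure splitting as a product. Parametrize $u \in N^w$ via this decomposition, then invoke three facts from earlier in the paper: first, the multiplicativity of $s_K \circ s_W$ on reduced decompositions, established in Subsection~\ref{W notation}, giving $\wt{w} = \wt{w_1}\cdot \wt{w_2}$ in $\wt{G}$; second, Proposition~\ref{unip splitting}, which provides the unique continuous section $i_u\colon G_u \to \wt{G}$ that is simultaneously a homomorphism on each unipotent subgroup and $G$-conjugation-equivariant, so the product decomposition of $u$ lifts canonically to a product expression for $\wt{u}$, and conjugations of unipotent lifts by $\wt{w_i}^{-1}$ agree with the canonical lifts of the corresponding conjugates in $G$; third, that $i_u$ is a continuous section, so Haar measure on $N^w$ pulls back to a Haar measure on its image in $\wt{G}$ and Fubini transfers unchanged. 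These three facts allow one to execute the classical Weyl-conjugation change of variables inside $\wt{G}$, matching the single integral defining $T(w, i(\wt{\chi}))$ with the iterated integral coming from $T(w_1, {}^{w_2} i(\wt{\chi})) \circ T(w_2, i(\wt{\chi}))$.

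The principal obstacle is one of bookkeeping inside $\wt{G}$: one must confirm that no parasitic $\mu_n$-cocycle appears when unipotent lifts are multiplied together or conjugated by representatives of Weyl elements. This is controlled precisely by the uniqueness and $G$-conjugation equivariance of $i_u$, together with the compatibility $\wt{w} = \wt{w_1}\wt{w_2}$. Once these are in play, every unipotent identity used in the classical computation lifts verbatim to $\wt{G}$, the two integrands match term by term, and the proposition follows.
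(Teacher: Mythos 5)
Your proof is correct and follows essentially the same strategy as the paper's: induction on $k$ reducing to a two-factor cocycle identity, established via the length-additive disjoint decomposition of $\Psi_\w$, the Weyl-element change of variables, and Fubini. The only cosmetic difference is that you peel the simple reflection $w_{\alpha_1}$ from the right (taking $w_1 = w_{\alpha_k}\cdots w_{\alpha_2}$, $w_2 = w_{\alpha_1}$) whereas the paper peels $w_{\alpha_k}$ from the left (taking $w_1 = w_{\alpha_k}$, $w_2 = w_o = w_{\alpha_{k-1}}\cdots w_{\alpha_1}$, so its two-factor identity is stated only for $w_1$ a simple reflection); the bookkeeping via ${}^{w_1}({}^{w_2}\sigma) = {}^{w_1 w_2}\sigma$ and the multiplicativity of $s_K\circ s_W$ on reduced words makes either orientation work, and your identification of the three lifted ingredients (section multiplicativity, equivariance and uniqueness of $i_u$ from Proposition~\ref{unip splitting}, Haar-measure transport) matches the ones the paper's integral computation leans on.
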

\begin{proof} The proof is as in the linear case, see \cite[pg. 139-140]{CKM04}. We will apply induction. For convenience, we write $w_o$ for $w_{\alpha_{k-1}} w_{\alpha_{k-2}} ... w_{\alpha_1}$ and also $\wt{w}_o$ for $s_K(w_o)$, i.e. the element $\wt{w}_{\alpha_{k-1}} \wt{w}_{\alpha_{k-2}} ... \wt{w}_{\alpha_1}$. Since $w=w_{\alpha_k} w_o$, by induction it suffices to show
$$T\big(w, i(\wt{\chi})\big)=T\big(w_{\alpha_k}, ^{w_o}i(\wt{\chi})\big) \circ T\big(w_o, i(\wt{\chi})\big).$$

Note $U^w=\prod_{\alpha \in \Psi_\w} e_\alpha (u_\alpha), u_\alpha \in F$. In fact, 
\begin{align*}
T\big(w, i(\wt{\chi})\big)f(\wt{g}) &= \int_{U^w} f(\wt{w}^{-1} \wt{u} \ \wt{g}) du \\
 &=\int_{U^w} f(\wt{w}^{-1} \wt{u} \ \wt{w} \cdot \wt{w}^{-1} \wt{g}) du \\
&=\int_{U^{-,w}} f(\wt{u} \ \wt{w}^{-1} \wt{g}) du,
\end{align*}
where $U^{-,w}=\prod_{\alpha \in \Psi_\w} e_{-\alpha}(u_\alpha), u_\alpha \in F$. Similar equality holds for $T(w_o, i(\wt{\chi}))$ with replacing $U^{-,w}$ by $U^{-,w_o}$.

The set $\Psi_\w$ is equal to the union of two disjoint sets:
$$\Psi_\w=\set{\w_o^{-1}(\alpha_k)} \cup \Psi_{\w_o}.$$

Thus $U^{-, w}= U^{-, w_o} \cdot U_{-\w_o^{-1}(\alpha_k)}$, which gives
\begin{align*}
T\big(w, i(\wt{\chi})\big)f(\wt{g}) &= \int_{U_{-\w_o^{-1}(\alpha_k)}} \int_{U^{-,w_o}}  f(\wt{u}_o \wt{e}_{-\w_o^{-1}(\alpha_k)}(u) \cdot \wt{w}_o^{-1} \wt{w}_{\alpha_k}^{-1} \wt{g}) du_o du \\
 &= \int_{U_{-\w_o^{-1}(\alpha_k)}} \int_{U^{-, w_o}}  f(\wt{u}_o \wt{w}_o^{-1} \cdot \wt{w}_o \wt{e}_{-\w_o^{-1}(\alpha_k)}(u) \wt{w}_o^{-1} \cdot \wt{w}_{\alpha_k}^{-1} \wt{g}) du_o du \\
&= \int_{U_{-\alpha_k}} \int_{U^{-,w_o}}  f(\wt{u}_o \wt{w}_o^{-1} \cdot \wt{e}_{-\alpha_k}(u) \cdot \wt{w}_{\alpha_k}^{-1} \wt{g}) du_o du \\
&= \int_{U_{-\alpha_k}} T\big(w_o, i(\wt{\chi})\big) f (\wt{e}_{-\alpha_k}(u) \cdot \wt{w}_{\alpha_k}^{-1} \wt{g}) du \\
&= T\big(w_{\alpha_k}, {}^{w_o}i(\wt{\chi})\big) \circ T\big(w_o, i(\wt{\chi})\big) f(\wt{g}).
\end{align*}
\end{proof}

Immediately it follows:

\begin{cor}
With notations above, one has
\begin{equation} \label{cocycle rel}
c(\w, \wt{\chi})=\prod_{m=1}^k c(\w_{\alpha_m}, ^{\w_{\alpha_{m-1}}... \w_{\alpha_2} \w_{\alpha_1}}\wt{\chi}),
\end{equation}
which will be referred to as the cocycle relation.
Equivalently, we have
$$T(\w, \wt{\chi})=T(\w_{\alpha_k}, {}^{\w_{\alpha_{k-1}} \w_{\alpha_{k-2}} ... \w_{\alpha_1}}\wt{\chi}) \circ ... \circ T(\w_{\alpha_2}, {}^{\w_{\alpha_1}}\wt{\chi}) \circ T(\w_{\alpha_1}, \wt{\chi}).$$
\end{cor}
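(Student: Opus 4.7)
The plan is to deduce the corollary directly from the factorization identity in the preceding proposition, by applying the normalized unramified vector to both sides and tracking it through each factor. First I would invoke the proposition to write
$$T\big(w, i(\wt{\chi})\big) = T\big(w_{\alpha_k}, {}^{w_{\alpha_{k-1}}\cdots w_{\alpha_1}} i(\wt{\chi})\big) \circ \cdots \circ T\big(w_{\alpha_2}, {}^{w_{\alpha_1}} i(\wt{\chi})\big) \circ T\big(w_{\alpha_1}, i(\wt{\chi})\big),$$
and then post-compose with the canonical identification $r_w^*$ of (\ref{full tri}). To convert this into the factorization of $T(\w, \wt{\chi})$, I would use the multiplicativity $s_W(\w\w') = s_W(\w) \cdot s_W(\w')$ whenever $l(\w\w') = l(\w) + l(\w')$ noted in section \ref{W notation}; this ensures that the intermediate representatives $w_{\alpha_{m-1}}\cdots w_{\alpha_1}$ coincide with $s_W(\w_{\alpha_{m-1}}\cdots \w_{\alpha_1})$ chosen for each reduced subword, so inserting the corresponding identifications $r^*$ between consecutive factors produces telescoping isomorphisms that collapse to $r_w^*$.

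Next I would apply both sides of the factorized identity to the normalized unramified vector $f_{i(\wt{\chi})} \in I(i(\wt{\chi}))$. By Lemma \ref{rep iden} and the discussion around (\ref{r_w}), each map $T(\w_{\alpha_m}, {}^{\w_{\alpha_{m-1}}\cdots \w_{\alpha_1}}\wt{\chi})$ sends the normalized unramified vector of its source to a scalar multiple $c(\w_{\alpha_m}, {}^{\w_{\alpha_{m-1}}\cdots \w_{\alpha_1}}\wt{\chi})$ of the normalized unramified vector of its target, since the identifications $r_{w}^*$ were rigidified precisely to preserve normalized unramified vectors. Composing the $k$ factors and comparing the resulting coefficient with $T(\w, \wt{\chi}) f_{i(\wt{\chi})} = c(\w, \wt{\chi}) f_{i({}^{\w}\wt{\chi})}$ yields the product formula
$$c(\w, \wt{\chi}) = \prod_{m=1}^{k} c\big(\w_{\alpha_m}, {}^{\w_{\alpha_{m-1}}\cdots \w_{\alpha_1}}\wt{\chi}\big).$$

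The main obstacle, though minor, will be the bookkeeping: one must check carefully that the canonical isomorphisms $r^*$ inserted between consecutive factors of the operator composition are mutually compatible, in the sense that the diagram analogous to (\ref{full tri}) commutes at every stage and that the normalized unramified vector is preserved under each such $r^*$. Once this compatibility is spelled out using (\ref{lower tri}), both assertions of the corollary follow at once, with no further analytic input beyond what is already established in the proposition.
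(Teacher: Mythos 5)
Your proposal is correct and matches the intended (implicit) argument: the paper labels this corollary as immediate from the preceding proposition, and the content of "immediate" is exactly what you spell out — apply the operator factorization of $T(w, i(\wt{\chi}))$ to the normalized unramified vector, use that each canonical identification $r^*$ is rigidified to preserve normalized unramified vectors, and compare coefficients; the multiplicativity of $s_W$ ensures the prefix representatives appearing in the proposition agree with those in the definitions of the $T(\w_{\alpha_m}, -)$ and $c(\w_{\alpha_m}, -)$. Your care with the telescoping of the $r^*$ maps is the right detail to flag, and nothing more is needed.
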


\section{The crude Gindikin-Karpelevich formula}
Let $\wt{G} \in \CExt(G,\mu_n)$ be an unramified central cover of BD type over a local field $F$. Choose a uniformizer $\varpi$ of $F$. The Gindikin-Karpelevich formula is obtained in \cite[Thm. 6.4]{McN11} by using a crystal basis decomposition of the domain of integration. Recently, as a consequence of the Casselman-Shalika formula computed, the GK formula is obtained as in \cite[Thm. 12.1]{McN14}. However, we will compute directly below using a straightforward method as in the linear case and remove restrictions such as $\mu_{2n} \subseteq F^\times$.    
Moreover, the GK formula here is expressed in terms of naturally defined terms and could be considered as a refinement of \cite{McN11} and above. This allows us to interpret it as local Langlands-Shahidi $L$-function later.

Before we proceed, we state the following simple but useful results.

\begin{lm} \label{add meas}
Let $du$ be an additive measure of $F$ such that the measure of $\msc{O}_F$ is equal 1, or equivalently the measure of $\msc{O}_F^\times$ with respect to $du$ is $1-1/q$. If $k\in \Z$, then
\begin{equation}
\int_{\msc{O}_F^\times} (u, \varpi)_n^k\ du=
\begin{cases}
1-1/q & \text{ if } n| k, \\
0 & \text{otherwise}.
\end{cases}
\end{equation}
\end{lm}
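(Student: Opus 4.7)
The plan is to reduce the integral to character orthogonality on the residue field $\mathbf{f}^\times$, using the explicit formula for the tame Hilbert symbol recalled at the beginning of this section.

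First I would compute $(u, \varpi)_n$ for $u \in \msc{O}_F^\times$. Since $\text{val}(u) = 0$ and $\text{val}(\varpi) = 1$, the formula $\kappa(x, y) = \wt{(-1)}^{\text{val}(x)\text{val}(y)} \wt{(x^{\text{val}(y)}/y^{\text{val}(x)})}$ collapses to $\kappa(u, \varpi) = \wt{u}$, where $\wt{u} \in \mathbf{f}^\times$ is the reduction of $u$ modulo the maximal ideal. Hence $(u, \varpi)_n = \wt{u}^{(q-1)/n}$, and therefore
\[
(u, \varpi)_n^k = \wt{u}^{k(q-1)/n}.
\]
In particular, the integrand depends only on the image of $u$ in $\mathbf{f}^\times$, and the map $\xymatrix{\chi_k : \mathbf{f}^\times \ar[r] & \mu_n \subseteq \C^\times}$ defined by $\chi_k(\bar{u}) = \bar{u}^{k(q-1)/n}$ is a character of the finite cyclic group $\mathbf{f}^\times$ of order $q-1$.

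Next I would push the measure forward. The natural reduction map $\xymatrix{\msc{O}_F^\times \ar@{>>}[r] & \mathbf{f}^\times}$ has fibers that are cosets of $1 + \mathfrak{m}_F$, each of additive measure $1/q$ with respect to $du$. Since $\msc{O}_F^\times$ has total mass $1 - 1/q = (q-1)/q$, each of the $q-1$ fibers carries equal mass $1/q$. Thus
\[
\int_{\msc{O}_F^\times} (u, \varpi)_n^k\, du \;=\; \frac{1}{q} \sum_{\bar{u} \in \mathbf{f}^\times} \chi_k(\bar{u}).
\]

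Finally I would invoke orthogonality of characters on $\mathbf{f}^\times$. The character $\chi_k$ is trivial if and only if $(q-1) \mid k(q-1)/n$, which is equivalent to $n \mid k$. When $\chi_k$ is trivial, the sum equals $q-1$ and the integral is $(q-1)/q = 1 - 1/q$; otherwise the sum vanishes and the integral is zero. This matches the two cases in the statement. There is no significant obstacle here: the entire argument is a direct unfolding of the tame symbol formula combined with the standard character-sum identity, and the only thing to be careful about is the measure normalization on the fibers of reduction.
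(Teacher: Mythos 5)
Your proof is correct. The paper states this lemma without proof; your argument—unwinding the tame Hilbert symbol to the character $\bar{u}\mapsto\bar{u}^{k(q-1)/n}$ of $\mathbf{f}^\times$, pushing $du$ forward along the reduction map so that each of the $q-1$ fibers carries mass $1/q$, and invoking orthogonality of characters on the finite group $\mathbf{f}^\times$—is the standard one, and the measure bookkeeping and the equivalence $(q-1)\mid k(q-1)/n \Leftrightarrow n\mid k$ are both handled correctly.
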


\begin{lm} \label{BD elements}
For any root $\alpha\in \Psi$ and any $u\in F^\times$, the following relations hold:
$$\wt{w}_\alpha^{-1} \wt{e}_\alpha(u) =\wt{h}^{[1]}_\alpha(u^{-1}) \wt{e}_\alpha(-u) \wt{e}_{-\alpha}(-u^{-1}) \in \wt{G}.$$
\end{lm}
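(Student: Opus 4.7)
The plan is to reduce everything to the rank-one subgroup $\wt{SL}_2^\alpha$ pulled back via $\varphi_\alpha : \mbf{SL}_2 \to \mbf{G}$, and then verify the identity there using the Matsumoto cocycle description recalled earlier in the paper. All five of $\wt{w}_\alpha^{-1}, \wt{e}_\alpha(u), \wt{h}^{[1]}_\alpha(u^{-1}), \wt{e}_\alpha(-u), \wt{e}_{-\alpha}(-u^{-1})$ are, by definition (see section \ref{BD section} and the notation (\ref{wm to wt})), images under $\Phi_{D,\eta}$ of elements of $\wm{SL}_2^\alpha$, so the whole computation takes place inside $\wt{SL}_2^\alpha \simeq \mu_n \times_{Q_{\alpha^\vee}} SL_2$, with cocycle
$$\sigma(g_1,g_2)=\Bigl(\tfrac{\mathrm{x}(g_1g_2)}{\mathrm{x}(g_2)},\tfrac{\mathrm{x}(g_1g_2)}{\mathrm{x}(g_1)}\Bigr)_n^{Q(\alpha^\vee)}.$$

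The first step is to verify the underlying identity in $SL_2$ itself: a direct matrix multiplication gives
$$\mbf{w}_o(1)^{-1}\mbf{e}_+(u) \;=\; \begin{pmatrix} 0 & -1 \\ 1 & u \end{pmatrix} \;=\; \mbf{h}_o(u^{-1})\,\mbf{e}_+(-u)\,\mbf{e}_-(-u^{-1}).$$
Consequently the two sides of the claimed identity in $\wt{SL}_2^\alpha$ project to the same element of $SL_2$, hence differ only by some scalar $\xi(u)\in\mu_n$ which I then need to pin down.

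The second, main step is to show $\xi(u)=1$. Here I would first use the defining properties (\ref{h ppty1}) and (\ref{h ppty2}) of the BD section to write $\wt{h}^{[1]}_\alpha(u^{-1}) = \wt{w}_\alpha(u^{-1})\wt{w}_\alpha^{-1}$, and then unfold each $\wt{w}_\alpha(a)=\wt{e}_\alpha(a)\wt{e}_{-\alpha}(a^{-1})\wt{e}_\alpha(a)$, so both sides become explicit products of BD unipotent liftings, on which the unipotent splitting $\mbf{U}(F)\hookrightarrow\wt{G}$ is known to be a homomorphism. The accumulated cocycle values can then be read off from the Matsumoto formula; after using standard Hilbert-symbol identities such as $(a,-a)_n=1$ and $(a,a)_n=(a,-1)_n$, the products on the two sides should cancel, yielding $\xi(u)=1$. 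The main obstacle will be precisely this bookkeeping: there are four or five intermediate multiplications per side, and it is easy to mis-track a factor of $(\pm1,u)_n^{Q(\alpha^\vee)}$.

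As a sanity check, specializing $u=1$ makes $\wt{h}^{[1]}_\alpha(1)=1_{\wt{G}}$ and reduces both sides to $\wt{e}_\alpha(-1)\wt{e}_{-\alpha}(-1)$ (using $\wt{w}_\alpha^{-1}=\wt{w}_\alpha(-1)=\wt{e}_\alpha(-1)\wt{e}_{-\alpha}(-1)\wt{e}_\alpha(-1)$ and the fact that $\wt{e}_\alpha$ is a homomorphism on $U_\alpha$), confirming $\xi(1)=1$ outright. An alternative route to Step~3, avoiding the direct cocycle computation, would be to rearrange the claim into the equivalent form
$$\wt{w}_\alpha^{-1}\,\wt{e}_\alpha(u)\,\wt{e}_{-\alpha}(u^{-1}) \;=\; \wt{h}^{[1]}_\alpha(u^{-1})\,\wt{e}_\alpha(-u)\,\wt{e}_{-\alpha}(-u^{-1})\wt{e}_{-\alpha}(u^{-1}),$$
using $\wt{e}_\alpha(u)\wt{e}_{-\alpha}(u^{-1})\wt{e}_\alpha(u)=\wt{w}_\alpha(u)$ on the left to turn one factor into $\wt{w}_\alpha^{-1}\wt{w}_\alpha(u) = \wt{h}^{[1]}_\alpha(u)^{-1}$ (up to a $Q(\alpha^\vee)$-power of a Hilbert symbol produced by (\ref{h ppty1})), and then matching the two sides factor-by-factor against (\ref{h ppty2}).
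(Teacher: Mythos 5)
Your plan reduces to the right problem (pin down the $\mu_n$-correction $\xi(u)$) but stalls exactly where you acknowledge it does: the cocycle bookkeeping in your main Step~3 is never carried out, so as it stands there is a gap. The plan also hides a nontrivial point. To ``read off accumulated cocycle values from the Matsumoto formula'' you must first know that under the isomorphism $\wt{SL}_2^\alpha\simeq\mu_n\times_{Q_{\alpha^\vee}}SL_2$ the BD unipotent liftings $\wt{e}_{\pm\alpha}(a)$ become the naked elements $(1,\mbf{e}_{\pm}(a))$. This is in fact true -- the Matsumoto cocycle is trivial on each one-parameter unipotent subgroup by the Steinberg relation $(x,1-x)_n=1$, and the unipotent splitting is unique -- but the paper never states it, and $\mathrm{x}(\mbf{e}_-(a))=-a$ makes it less obvious than the upper-triangular case, so you would need to supply that lemma. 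Your alternative route also does not parse as written: the left side $\wt{w}_\alpha^{-1}\wt{e}_\alpha(u)\wt{e}_{-\alpha}(u^{-1})$ is missing the trailing $\wt{e}_\alpha(u)$ needed to recognize $\wt{w}_\alpha(u)$, and the defining relation $\wm{h}^{[1]}_\alpha(a)=\wm{w}_\alpha(a)\wm{w}_\alpha(1)^{-1}$ gives $\wt{w}_\alpha^{-1}\wt{w}_\alpha(u)=\wt{h}^{[1]}_\alpha(u^{-1})$ on the nose, not $\wt{h}^{[1]}_\alpha(u)^{-1}$ up to a symbol.

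The paper's proof avoids cocycles entirely. It works with the Brylinski--Deligne elements in the $\mbf{K}_2$-cover $\wm{G}^{sc}$ (pushing out by $\Phi_{D,\eta}$ only at the very end) and manipulates the right-hand side: insert $1=\wm{e}_\alpha(-u)\wm{e}_\alpha(u)$ on the right so that $\wm{e}_\alpha(-u)\wm{e}_{-\alpha}(-u^{-1})\wm{e}_\alpha(-u)=\wm{w}_\alpha(-u)$ appears by definition; insert $1=\wm{w}_\alpha(-1)\wm{w}_\alpha(1)$ so that $\wm{w}_\alpha(-u)\wm{w}_\alpha(-1)=\wm{h}^{[1]}_\alpha(-u)$ appears; then apply (\ref{h ppty1}) once to get $\wm{h}^{[1]}_\alpha(u^{-1})\wm{h}^{[1]}_\alpha(-u)=\set{u^{-1},-u}^{Q(\alpha^\vee)}\wm{h}^{[1]}_\alpha(-1)=\wm{h}^{[1]}_\alpha(-1)=\wm{w}_\alpha(-1)^2$, since $\set{u^{-1},-u}$ is trivial. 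Everything then telescopes to $\wm{w}_\alpha(-1)\wm{e}_\alpha(u)=\wm{w}_\alpha^{-1}\wm{e}_\alpha(u)$. The lesson to take away is that the BD relations themselves -- the defining formula $\wm{h}^{[b]}_\alpha(a)=\wm{w}_\alpha(ab)\wm{w}_\alpha(b)^{-1}$, the multiplicativity (\ref{h ppty1}), and the Steinberg identity for the symbol -- already replace the cocycle bookkeeping you were dreading; there is no need to pass to an explicit cocycle presentation of $\wt{SL}_2^\alpha$.
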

\begin{proof}
It suffices to show the following
$$\wm{w}_\alpha^{-1} \wm{e}_\alpha(u) =\wm{h}^{[1]}_\alpha(u^{-1}) \wm{e}_\alpha(-u) \wm{e}_{-\alpha}(-u^{-1}) \in \wt{G}^{sc},$$
from which we could apply the morphism $\xymatrix{\Phi_{D,\eta}: \wt{G}^{sc} \ar[r] & \wt{G}}$ to get the desired result by the definition of the elements in above equality, see section \ref{W notation}. Note $\wm{w}_\alpha=\wm{w}_\alpha(1)$ by definition, and therefore $\wm{w}_\alpha^{-1}=\wm{w}_\alpha(-1)$.

We start with
\begin{align*}
\text{RHS} &= \wm{h}^{[1]}_\alpha(u^{-1}) \cdot \wm{e}_\alpha(-u) \wm{e}_{-\alpha}(-u^{-1}) \wm{e}_\alpha(-u) \cdot \wm{e}_\alpha(u) \\
&=\wm{h}^{[1]}_\alpha(u^{-1}) \wm{w}_\alpha(-u) \cdot \wm{e}_\alpha(u) \\
&=\wm{h}^{[1]}_\alpha(u^{-1}) \cdot \wm{w}_\alpha(-u) \wm{w}_\alpha(-1) \cdot \wm{w}_\alpha(1) \wm{e}_\alpha(u) \\
&=\wm{h}^{[1]}_\alpha(u^{-1}) \wm{h}^{[1]}_\alpha(-u) \cdot \wm{w}_\alpha(1) \wm{e}_\alpha(u) \\
&=(u^{-1}, -u)_n^{Q(\alpha^\vee)} \cdot \wm{h}^{[1]}_\alpha(-1) \wm{w}_\alpha(1) \wm{e}_\alpha(u) \\
&=\wm{w}_\alpha(-1) \wm{w}_\alpha(-1) \wm{w}_\alpha(1) \wm{e}_\alpha(u) \\
&=\wm{w}_\alpha(-1) \wm{e}_\alpha(u)
\end{align*}
The proof is completed since $\wm{w}_\alpha(-1)=\wm{w}_\alpha^{-1}$.
\end{proof}

Recall the convention on notations in section \ref{W notation}: we write $\wt{h}_\alpha(a^{n_\alpha})$ for $\wt{h}^{[b]}_\alpha(a^{n_\alpha})$ since the latter does not depend on $b\in F^\times$. Now we state the one-dimensional computation of $T\big(w_\alpha, i(\wt{\chi})\big)$.
 
\begin{prop} \label{rank1 coeff} 
Let $i(\wt{\chi})$ be an unramified representation of $\wt{T}$ for some unramified character $\wt{\chi}$ of $Z(\wt{T})$. Let $\alpha\in \Delta$, consider the intertwining operator 
$$\xymatrix{
T\big(w_\alpha, i(\wt{\chi})\big):\quad  I(i(\wt{\chi})) \ar[r] & I({}^{w_\alpha}i(\wt{\chi}))
}$$
between unramified principal series respectively of $\wt{G}$.
Let $f_{i(\wt{\chi})}$ and $f_{^{w_\alpha}i(\wt{\chi})}$ be the normalized unramified vectors of $I(i(\wt{\chi}))$ and $I({}^{w_\alpha}i(\wt{\chi}))$ respectively. Write $T\big(w_\alpha, i(\wt{\chi})\big)f_{i(\wt{\chi})}=c(\w_\alpha, \wt{\chi}) f_{{}^{w_\alpha}i(\wt{\chi})}$. Then
$$c(\w_\alpha, \wt{\chi})=\frac{1- q^{-1} \wt{\chi} (\wt{h}_\alpha(\varpi^{n_\alpha})) }{1-\wt{\chi} (\wt{h}_\alpha(\varpi^{n_\alpha})) },$$
which is independent on the uniformizer $\varpi$ chosen.
\end{prop}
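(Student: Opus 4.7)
The plan is to compute $c(\w_\alpha, \wt{\chi})$ directly, by evaluating the defining identity $T(w_\alpha, i(\wt{\chi})) f_{i(\wt{\chi})} = c(\w_\alpha, \wt{\chi}) \cdot f_{{}^{w_\alpha} i(\wt{\chi})}$ at $1_{\wt{G}}$ and then at $1_{\wt{T}}$. Since the two normalized unramified vectors of $i(\wt{\chi})$ and ${}^{w_\alpha} i(\wt{\chi})$ were identified with the same function $\mbf{f}_{i(\wt{\chi})}$ on $\wt{T}$, normalized so that $\mbf{f}_{i(\wt{\chi})}(1_{\wt{T}}) = 1$, this reduces the problem to evaluating the scalar integral
$$c(\w_\alpha, \wt{\chi}) = \int_F \big[ f_{i(\wt{\chi})}\big(\wt{w}_\alpha^{-1} \wt{e}_\alpha(u)\big) \big](1_{\wt{T}}) \, du,$$
with the Haar measure on $U_\alpha \simeq F$ normalized so that $\msc{O}_F$ has measure one.

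I would split the domain at $|u|_F = 1$. For $u \in \msc{O}_F$, both $\wt{w}_\alpha^{-1}$ and $\wt{e}_\alpha(u)$ lie in $s_K(K_G)$ by Corollary \ref{sK=iu}, so the integrand is constantly one and this piece contributes $1$. For $u \notin \msc{O}_F$, Lemma \ref{BD elements} provides the Bruhat-type decomposition
$$\wt{w}_\alpha^{-1} \wt{e}_\alpha(u) = \wt{h}^{[1]}_\alpha(u^{-1}) \cdot \wt{e}_\alpha(-u) \cdot \wt{e}_{-\alpha}(-u^{-1}),$$
in which $\wt{e}_{-\alpha}(-u^{-1})$ lies in $s_K(K_G)$ (since $u^{-1} \in \msc{O}_F$) and the unipotent factor $\wt{e}_\alpha(-u)$ is absorbed by the $\wt{T}$-inflation structure of $i(\wt{\chi})$. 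Using the transformation rule for $f \in I(i(\wt{\chi}))$ together with the formula $\delta^{1/2}_B(h_\alpha(a)) = |a|_F$ for simple $\alpha$, the integrand collapses to $|u|_F^{-1} \cdot \mbf{f}_{i(\wt{\chi})}(\wt{h}^{[1]}_\alpha(u^{-1}))$.

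Writing $u = \varpi^{-k} v$ with $k \ge 1$ and $v \in \msc{O}_F^\times$, the support condition $\wt{h}^{[1]}_\alpha(u^{-1}) \in Z(\wt{T})K_T$ for $\mbf{f}_{i(\wt{\chi})}$ becomes $\alpha^\vee(\varpi^k) \in T^\dag$, which by Lemma \ref{BQ<>} is equivalent to $n_\alpha \mid k$. On such cosets, the cocycle identity (\ref{h ppty1}) combined with the divisibility $n \mid n_\alpha Q(\alpha^\vee)$ kills all Hilbert-symbol correction terms and reduces $\wt{h}^{[1]}_\alpha(u^{-1})$, modulo $s_K(K_T)$, to $\wt{h}_\alpha(\varpi^{n_\alpha})^\ell$ where $k = n_\alpha \ell$; consequently $\mbf{f}_{i(\wt{\chi})}(\wt{h}^{[1]}_\alpha(u^{-1})) = \wt{\chi}\big(\wt{h}_\alpha(\varpi^{n_\alpha})\big)^\ell$. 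The factor $|u|^{-1}_F = q^{-k}$ cancels against the Haar measure $q^k(1-q^{-1})$ of $\varpi^{-k}\msc{O}_F^\times$, and summing the resulting geometric series in $\ell \ge 1$ together with the $\msc{O}_F$-contribution of $1$ produces the claimed formula.

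The main technical obstacle will be the bookkeeping of Hilbert-symbol cocycles when manipulating the $b$-dependent Brylinski--Deligne section $\wm{h}^{[b]}_\alpha$, and the matching of this section with the splitting $s_K$ on the maximal compact torus. The crucial input is Proposition \ref{s-eta}, which guarantees $b$-independence precisely on $n_\alpha$-th powers; the same ingredient provides the final sanity check of independence from the choice of uniformizer, since $\wt{h}_\alpha((\varpi v')^{n_\alpha})$ differs from $\wt{h}_\alpha(\varpi^{n_\alpha})$ by $\wt{h}_\alpha((v')^{n_\alpha})$ up to trivial Hilbert symbols, and the former is annihilated by the unramified character $\wt{\chi}$.
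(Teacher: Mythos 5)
Your proposal is correct and follows essentially the same route as the paper's proof: split the domain at $|u|_F=1$, use $s_K$-invariance for the compact piece, apply Lemma \ref{BD elements} and the $\wt{B}$-transformation rule for the noncompact piece, and then sum a geometric series over shells $\varpi^{-k}\msc{O}_F^\times$. The single point where you deviate is the mechanism for the constraint $n_\alpha\mid k$: you invoke the support condition $\text{supp}(\mbf{f}_{i(\wt{\chi})})=Z(\wt{T})K_T$ (so $\mbf{f}$ simply vanishes at $\wt{h}^{[1]}_\alpha(u^{-1})$ when $k\alpha^\vee\notin Y_{Q,n}$), whereas the paper first peels off the cocycle Hilbert symbol $(u,\varpi)_n^{kQ(\alpha^\vee)}$ and lets Lemma \ref{add meas} kill the $\msc{O}_F^\times$-integral when $n\nmid kQ(\alpha^\vee)$ — the two vanishings are of course the same condition stated in dual ways, and your cocycle bookkeeping (using $n\mid n_\alpha Q(\alpha^\vee)$ and Proposition \ref{s-eta} for $b$-independence) is the same as the paper's.
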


\begin{proof}
Write $\wt{\pi}:= i(\wt{\chi})$. The unities $1_{\wt{G}}, 1_{\wt{T}}$ of all subgroups $\wt{G}, \wt{T}$ etc of $\wt{G}$ are all the same; but for convenience, we use the subindex to remind us the group on which the representation lives.

By the definition of $T\big(w_\alpha, i(\wt{\chi})\big)$, it suffices to compute
\begin{align*}
& T\big(w_\alpha, i(\wt{\chi})\big)(f_{i(\wt{\chi})})(1_{\wt{G}})(1_{\wt{T}}) \\
=& \int_{U_\alpha} f\big( \wt{w}_\alpha^{-1} \wt{e}_\alpha(u) \big)(1_{\wt{T}}) du\\
              =&\int_{0<|u|\le 1} f\big( \wt{w}_\alpha^{-1} \wt{e}_\alpha(u) \big)(1_{\wt{T}})du + \int_{|u|>1} f\big( \wt{w}_\alpha^{-1} \wt{e}_\alpha(u) \big)(1_{\wt{T}}) du.
\end{align*}
Here $du$ is the additive Haar measure as in Lemma \ref{add meas}. For the first integral, $\wt{w}_\alpha^{-1} \wt{e}_\alpha(u) \in K_G\subseteq \wt{G}$ for all $0<|u|\le 1$. Since $f$ is $K_G$-invariant, the integrand is equal to $f(1_{\wt{G}})(1_{\wt{T}})=1$. Therefore, the first integral is equal to 1.

For the second integral, we apply the previous lemma to get it equal to
\begin{align*}
&\ \quad \int_{|u|>1} f\big( \wt{h}^{[1]}_\alpha(u^{-1}) \wt{e}_\alpha(-u) \wt{e}_{-\alpha}(-u^{-1}) \big)(1_{\wt{T}}) du \\
&=\int_{|u|>1} f\big( \wt{h}^{[1]}_\alpha(u^{-1}) \wt{e}_\alpha(-u)\big)(1_{\wt{T}}) du\\
&=\int_{|u|>1} \delta_{B}^{1/2}(h_\alpha(u^{-1})) \cdot \Big( \wt{\pi} \big( \wt{h}^{[1]}_\alpha(u^{-1})\big) f(1_{\wt{G}}) \Big) (1_{\wt{T}}) du
\end{align*}
Note $$\delta_{B}^{1/2}(h_\alpha(u))=|u|_F^{\angb{\rho_B}{\alpha^\vee}}=|u|_F.$$

Use the partition $\set{u: |u|>1}=\bigcup_{k\ge 1} \msc{O}_F^\times \varpi^{-k}$, the integral is then equal to
\begin{align*}
&\ \quad \sum_{k\ge 1}\int_{u\in \varpi^{-k}\msc{O}^\times} \delta_{B}^{1/2}(h_\alpha(\varpi^k)) \cdot (u, \varpi^k)_n^{Q(\alpha^\vee)}  \cdot \Big( \wt{\pi}\big( \wt{h}^{[1]}_\alpha(\varpi^{k}) \wt{h}_\alpha(u) \big) f(1_{\wt{G}}) \Big) (1_{\wt{T}}) du.\\
&=\sum_{k\ge 1} \int_{u\in \msc{O}_F^\times} |\varpi|_F^k  \cdot (u, \varpi)_n^{kQ(\alpha^\vee)} \Big( \wt{\pi}\big( \wt{h}^{[1]}_\alpha(\varpi^{k})\big) f(1_{\wt{G}}) \Big) (1_{\wt{T}}) \cdot |\varpi^{-k}|_F du \\
&=\sum_{k\ge 1, n_\alpha |k} \int_{u\in \msc{O}_F^\times} \Big( \pi\big( \wt{h}^{[1]}_\alpha(\varpi^{k})\big) f(1_{\wt{G}}) \Big) (1_{\wt{T}}) du \quad \text{ by Lemma } \ref{add meas}\\
&=\sum_{k\ge 1, n_\alpha |k} \wt{\chi}\big( \wt{h}_\alpha(\varpi^{k}) \big) \cdot f(1_{\wt{G}})(1_{\wt{T}}) \cdot (1-q^{-1})\\
&=(1-q^{-1})\frac{\wt{\chi}\big( \wt{h}_\alpha(\varpi^{n_\alpha}) \big) }{1-\wt{\chi}\big( \wt{h}_\alpha(\varpi^{n_\alpha}) \big)},
\end{align*}
where the last equality is due to the fact that $\wt{\chi}\big( \wt{h}_\alpha(\varpi^{r\cdot n_\alpha}) \big)=\wt{\chi}^r\big( \wt{h}_\alpha(\varpi^{n_\alpha}) \big)$ for $r\in \N_{\ge 1}$. Now combine the first and second integral, we obtain the desired formula.

It remains to show the independence of the chosen uniformizer, we have for $u\in \msc{O}_F^\times$,
\begin{align*}
\wt{h}_\alpha\big((\varpi u)^{n_\alpha}\big) =& \wt{h}_\alpha(\varpi^{n_\alpha}) \cdot \wt{h}_\alpha(u^{n_\alpha}) \cdot (\varpi, u)_n^{n_\alpha^2 Q(\alpha^\vee)} \\
=& \wt{h}_\alpha(\varpi^{n_\alpha}) \cdot \wt{h}_\alpha(u^{n_\alpha}).
\end{align*}
However, since $\wt{h}_\alpha(u^{n_\alpha}) \in Z(\wt{T}) \cap K_T \subseteq s_K(K_G)$, it follows that $\wt{\chi}$ takes the value 1 at it and the independence on uniformizer could be concluded.
\end{proof}

Now we come back to general $\w \in W$ and consider the intertwining operator 
$$\xymatrix{T(\w, \wt{\chi}): I(\wt{\chi})  \ar[r] & I(^\w\wt{\chi})}.$$
We can use the cocycle relation to deduce:

\begin{cor} \label{gen coeff} \index{Gindikin-Karpelevich formula}
Let $f_{i(\wt{\chi})}$ and $f_{i(^\w\wt{\chi})}$ be the normalized unramified vectors in $I(\wt{\chi})$ and $I(^\w\wt{\chi})$ respectively. Then one has $T(\w, \wt{\chi}) f_{i(\wt{\chi})}=c(\w, \wt{\chi}) f_{i(^\w\wt{\chi})}$ with
$$c(\w, \wt{\chi})=\prod_{\alpha \in  \Psi_\w} \frac{1- q^{-1} \wt{\chi}\big( \wt{h}_\alpha(\varpi^{n_\alpha}) \big) }{1- \wt{\chi}\big( \wt{h}_\alpha(\varpi^{n_\alpha}) \big) },$$
where $\Psi_\w=\set{\alpha \in \Psi^+: \ \w\alpha \in \Psi^-}$.
\end{cor}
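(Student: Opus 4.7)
The plan is to combine the cocycle relation (\ref{cocycle rel}) with the rank-one computation of Proposition \ref{rank1 coeff} and reindex the resulting product via the standard bijection between $\{1,\ldots,k\}$ and the inversion set $\Psi_\w$ of a reduced word. Fix the reduced expression $\w=\w_{\alpha_k}\cdots\w_{\alpha_2}\w_{\alpha_1}$ and set $\w'_m:=\w_{\alpha_{m-1}}\cdots\w_{\alpha_1}$ (with $\w'_1=1$). The classical assignment $m\mapsto \beta_m:=(\w'_m)^{-1}(\alpha_m)=\w_{\alpha_1}\cdots\w_{\alpha_{m-1}}(\alpha_m)$ is a bijection $\{1,\ldots,k\}\to \Psi_\w$, which I would verify by the direct computation $\w(\beta_m)=-\w_{\alpha_k}\cdots\w_{\alpha_{m+1}}(\alpha_m)<0$ using reducedness.

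Applying (\ref{cocycle rel}) and Proposition \ref{rank1 coeff}, and noting that $n_{\alpha_m}=n_{\beta_m}$ by Weyl-invariance of $Q$, the problem reduces to establishing
\[
({}^{\w'_m}\wt{\chi})\big(\wt{h}_{\alpha_m}(\varpi^{n_{\alpha_m}})\big)=\wt{\chi}\big(\wt{h}_{\beta_m}(\varpi^{n_{\beta_m}})\big).
\]
Since by definition the left-hand side equals $\wt{\chi}\big((w'_m)^{-1}\wt{h}_{\alpha_m}(\varpi^{n_{\alpha_m}})w'_m\big)$, it suffices to establish the stronger equality in $Z(\wt{T})$, namely $(w'_m)^{-1}\wt{h}_{\alpha_m}(\varpi^{n_{\alpha_m}})w'_m=\wt{h}_{\beta_m}(\varpi^{n_{\beta_m}})$, by induction on $\ell(\w'_m)$. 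The inductive step amounts to verifying
\[
\w_\beta^{-1}\wt{h}_\gamma(\varpi^{n_\gamma})\w_\beta=\wt{h}_{s_\beta(\gamma)}(\varpi^{n_{s_\beta(\gamma)}})
\]
for any simple $\beta\in\Delta$ and any root $\gamma\in \Psi$. This follows by combining the Brylinski-Deligne commutation formula (11.9.1) (as invoked in the proof of Weyl-invariance in section \ref{Weyl inv}), which gives $\w_\beta^{-1}\wt{h}_\gamma(\varpi^{n_\gamma})\w_\beta=\wt{h}_\gamma(\varpi^{n_\gamma})\cdot \Phi_{D,\eta}\big(\wm{h}_\beta(\varpi^{-n_\gamma\angb{\beta}{\gamma^\vee}})\big)$, with the identity
\[
\wm{h}_{s_\beta(\gamma)}(\varpi^{n_{s_\beta(\gamma)}})=\wm{h}_\gamma(\varpi^{n_\gamma})\cdot\wm{h}_\beta(\varpi^{n_\beta})^{-\angb{\gamma}{\beta^\vee}}
\]
established in the course of proving Proposition \ref{s-eta}, together with (\ref{n-<> dual}) to reconcile the exponents. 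The Hilbert symbol corrections produced by (11.6.1) of Brylinski-Deligne carry powers of $(\varpi,\varpi)_n^{n_\beta^2 Q(\beta^\vee)}$, all of which are trivial because $n\mid n_\beta Q(\beta^\vee)$.

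Feeding this identity back into the cocycle factorization gives
\[
c(\w,\wt{\chi})=\prod_{m=1}^{k}\frac{1-q^{-1}\wt{\chi}\big(\wt{h}_{\beta_m}(\varpi^{n_{\beta_m}})\big)}{1-\wt{\chi}\big(\wt{h}_{\beta_m}(\varpi^{n_{\beta_m}})\big)},
\]
and relabeling via the bijection $m\leftrightarrow\beta_m$ yields the claimed formula. The main obstacle is the conjugation identity of the previous paragraph; once it is in place, everything else is a direct consequence of the rank-one computation, the cocycle relation, and the combinatorics of reduced words. Independence from the choice of uniformizer $\varpi$ is inherited from Proposition \ref{rank1 coeff}.
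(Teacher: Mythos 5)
Your proof is correct and follows the same overall structure as the paper's (cocycle relation, rank-one computation, reindexing via the inversion-set bijection $m\mapsto\w_{\alpha_1}\cdots\w_{\alpha_{m-1}}(\alpha_m)$), but you establish the key conjugation identity by a genuinely different route. The paper proves the more general statement
$$w_\beta^{-1}\, \wt{h}^{[b]}_\alpha(a)\, w_\beta = \wt{h}^{[\epsilon b]}_{\w_\beta(\alpha)}(a)$$
for arbitrary $a,b\in F^\times$ by factoring $\wt{h}^{[b]}_\alpha(a)=\wt{w}_\alpha(ab)\wt{w}_\alpha(-b)$ and invoking the $G$-equivariance of the unipotent splitting (Proposition \ref{unip splitting}), tracking the Chevalley sign $\epsilon\in\set{\pm1}$, and then observing that at $a=\varpi^{n_\alpha}$ the dependence on $\epsilon$ and $b$ disappears. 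You instead derive the specialized identity $w_\beta^{-1}\wt{h}_\gamma(\varpi^{n_\gamma})w_\beta=\wt{h}_{s_\beta(\gamma)}(\varpi^{n_{s_\beta(\gamma)}})$ directly inside $Z(\wt{T})$, combining the Brylinski-Deligne commutation formula (11.9.1) — as already cited in the Weyl-invariance argument of \S\ref{Weyl inv}, valid here because $\wt{h}_\gamma(\varpi^{n_\gamma})\in Z(\wt{T})$ and the conjugation action of $N(T)$ on $Z(\wt{T})$ descends to $W$ — with the identity $\wm{h}_{s_\beta(\gamma)}(\varpi^{n_{s_\beta(\gamma)}})=\wm{h}_\gamma(\varpi^{n_\gamma})\cdot\wm{h}_\beta(\varpi^{-n_\gamma\angb{\beta}{\gamma^\vee}})$ established in the proof of Proposition \ref{s-eta}. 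Your route is more modular: it reuses results already proved, sidesteps the sign bookkeeping and the manipulation of $\wt{w}$-elements entirely, and works purely with $\wm{h}$-identities. The paper's route is more intrinsic and yields the conjugation formula for general $a$ and $b$, which it then specializes. Both approaches handle the Hilbert-symbol corrections from (11.6.1) in the same way (via $n\mid n_\beta Q(\beta^\vee)$), and the reindexing and independence-of-uniformizer arguments coincide.
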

\begin{proof}
Let $\w_k \w_{k-1}... \w_2 \w_1$ be a minimum decomposition of $\w$, where $\w_i$ represents a simple reflection $\w_{\alpha_i}$ with $\alpha_i \in \Delta$.

By the cocycle condition in Corollary \ref{cocycle rel}, it suffices to compute the general coefficient $c(\w_m, {}^{\w_{m-1}... \w_1}\wt{\chi})$ for $m=1, 2, ..., k$.
First note, since $Q$ is Weyl-invariant, it gives
\begin{align*}
n_{\w_{1}... \w_{m-1} \alpha_m} =n_{\alpha_m}.
\end{align*}

Meanwhile, we have
\begin{align*}
& ^{\w_{m-1}... \w_1}\wt{\chi}\big(\wt{h}_{\alpha_m}(\varpi^{n_{\alpha_m}}) \big) \\
= &\wt{\chi}\big ( (w_{m-1}... w_1)^{-1} \cdot \wt{h}_{\alpha_m}(\varpi^{n_{\alpha_m}}) \cdot w_{m-1}... w_1) \big) \\
=& \wt{\chi}\big ( (w_1^{-1}... w_{m-1}^{-1} \cdot \wt{h}_{\alpha_m}(\varpi^{n_{\alpha_m}})  \cdot w_{m-1}... w_1) \big).
\end{align*}

To proceed, consider in general the element $w_\beta^{-1} \wt{h}_\alpha^{[b]}(a) w_\beta$ for $\alpha, \beta \in \Psi$ and $a, b\in F^\times$. We have
\begin{align*}
& w_\beta^{-1} \wt{h}_\alpha^{[b]}(a) w_\beta \\
=& w_\beta^{-1} \cdot \wt{w}_\alpha(ab)  \wt{w}_\alpha(-b) \cdot w_\beta \\
=& w_\beta^{-1} \wt{w}_\alpha(ab) w_\beta \cdot w_\beta^{-1}\wt{w}_\alpha(-b)  w_\beta.
\end{align*}
At the same time, for general $c\in F^\times$, the following equalities hold
\begin{align*}
& w_\beta^{-1} \wt{w}_\alpha(c) w_\beta \\
=& w_\beta^{-1} \cdot \wt{e}_\alpha(c) \wt{e}_{-\alpha}(c^{-1})  \wt{e}_\alpha(c)  \cdot w_\beta \\
=& \wt{e}_{\w_\beta (\alpha)} (\epsilon c) \cdot \wt{e}_{-\w_\beta (\alpha)} (\epsilon c^{-1}) \cdot \wt{e}_{\w_\beta (\alpha)} (\epsilon c), \\
=& \wt{w}_{\w_\beta (\alpha)}(\epsilon c),
\end{align*}
where the second last equality follows from the fact that the unipotent splitting is $G$-equivariant as from Proposition \ref{unip splitting}, and $\epsilon \in \set{\pm1}$ is a certain sign (depending on $\alpha, \beta$) associated with the Chevalley system of \'epinglage, see \cite[\S 3.2.2]{BrTi84}.

Now it follows
$$w_\beta^{-1} \wt{h}_\alpha^{[b]}(a) w_\beta = \wt{w}_{\w_\beta (\alpha)}(\epsilon ab) \cdot \wt{w}_{\w_\beta (\alpha)}(-\epsilon b)= \wt{h}_{\w_\beta(\alpha)}^{[\epsilon b]}(a).$$

In the case of $a=\varpi^{n_\alpha}$, the element $\wt{h}_{\w_\beta(\alpha)}^{[\epsilon b]}(a)$ is independent of $\epsilon$ and $b$. Compute inductively we obtain
$$^{\w_{m-1}... \w_1}\wt{\chi}\big(\wt{h}_{\alpha_m}(\varpi^{n_{\alpha_m}}) \big) = \wt{\chi}\big( \wt{h}_{\w_1... \w_{m-1}\alpha_m}(\varpi^{n_{\w_1... \w_{m-1}\alpha_m}}) \big).$$
Lastly, we have the equality $\Psi_\w=\set{\w_1... \w_{m-1}\alpha_m: m=1, 2, ..., k},$ from which the result follows from combining all $c(\w_m, {}^{\w_{m-1}... \w_1}\wt{\chi})$'s.
\end{proof}

\begin{rmk}
The usage of the element $\wt{h}(\varpi^{n_\alpha})$ (or in general $\wt{h}^{[1]}(a)$) from the Brylinski-Deligne section enables us to remove the assumption $\mu_{2n} \subseteq F^\times$ as in \cite{McN11} and \cite{McN12} for example. In fact, the computation of the metaplectic Casselman-Shalika formula in \cite{McN14} could be carried over using such naturally defined elements. It can be checked that in the case of double cover of $\mbf{Sp}_{2r}(F)$, McNamara's formula \cite[Thm. 13.1]{McN14} recovers that of Szpruch in \cite[Thm. 8.1]{Szp11} which does not reply on the assumption that $F$ contains $2n$-th root of unity, provided we make use of these naturally defined elements as in Lemma \ref{BD elements}: $\wt{w}_\alpha, \wt{e}_\alpha(u), \wt{h}^{[1]}(u)$ etc. 
\end{rmk}

\begin{rmk}
Recall that for any root $\alpha\in \Psi$ the morphism $\xymatrix{\varphi_\alpha: \mbf{SL}_2 \ar[r] & \mbf{G}}$ induces a covering $\wt{SL}_2^\alpha \in \CExt(\SL_2,\mu_n)$ from any given $\wt{G}\in \CExt(\wt{G}, \mu_n)$ of BD type. Let $T_o$ and $T$ be the tori of $SL_2$ and $G$, and let $\wt{T}$ and $\wt{T}^\alpha$ be the covering tori of $\wt{G}$ and $\wt{SL}_2^\alpha$ respectively. Let $Z(\wt{T})$ and $Z(\wt{T}^\alpha)$ be the centers of the two covering tori respectively. Then we have the following commutative diagram 
$$\xymatrix{
                        &\wt{T}   \ar@{>>}[r]          &T\\
Z(\wt{T}) \ar@{^(->}[ru]               &\wt{T}^\alpha \ar[u]^-{\wt{\varphi}_\alpha} \ar@{>>}[r]       & T_o \ar[u]^-{\varphi_\alpha} \\
\wt{\varphi}_\alpha^*(Z(\wt{T}))  \ar[u]^-{\wt{\varphi}_\alpha} \ar@{^(->}[ru] \ar@{^(->}[r] & Z(\wt{T}^\alpha) \ar@{^(->}[u] \ ,
}$$
where $\wt{\varphi}_\alpha^*(Z(\wt{T}))$ is the pull-back. By definition, $Z(\wt{T}^\alpha)$ and $\wt{\varphi}_\alpha^*(Z(\wt{T}))$ are closely related to $\alpha^\vee_{[n]}/\text{gcd}(2,n_\alpha)$ and  $\alpha^\vee_{[n]}$. More precisely, define
\begin{equation*}
\mathbf{n}=
\begin{cases}
1 & \text{ if } \alpha^\vee_{[n]}/\text{gcd}(2,n_\alpha) \in Y_{Q,n} \\
2 & \text{ otherwise}.
\end{cases}
\end{equation*}
Then it is not hard to see 
$$Z(\wt{T}^\alpha)\big/\wt{\varphi}_\alpha^*(Z(\wt{T}))\simeq F^\times/\mathbf{n}.$$

That is, in general $\wt{\varphi}_\alpha^*(Z(\wt{T}))$ is not equal to the whole group $Z(\wt{T}^\alpha)$ and $Z(\wt{T}^\alpha)\big/\wt{\varphi}_\alpha^*(Z(\wt{T}))$ is a torsion $2$ group. In fact, since we have assumed $\text{gcd}(n, p)=1$, $F^\times/\mathbf{n}\simeq \Z/2\Z \times \Z/2\Z$.

This has the following implication. For any genuine character $\wt{\chi}$ on $Z(\wt{T})$ we may write $\wt{\chi}_\alpha:=\wt{\chi}\circ \wt{\varphi}_\alpha$, which is a genuine character on $\wt{\varphi}_\alpha^*(Z(\wt{T}))$. In the unramified case, the rank one intertwining operator $T(\w_\alpha, \wt{\chi})$, or equivalently the scalar $c(\w_\alpha, \wt{\chi})$ such that $T(\w_\alpha, \wt{\chi})f_{i(\wt{\chi})}=c(\w_\alpha, \wt{\chi}) f_{i({}^{\w_\alpha}\wt{\chi})}$, can be determined from computing the following intertwining operator on $\wt{SL}_2^\alpha$:
$$\xymatrix{
T(\w_\alpha, \wt{\chi}): \quad I\Big(\text{Ind}_{\wt{\varphi}_\alpha^*(Z(\wt{T}))}^{Z(\wt{T}^\alpha)} (\wt{\chi}_\alpha )\Big) \ar[r] & I\Big( {}^{\w_\alpha}\big(\text{Ind}_{\wt{\varphi}_\alpha^*(Z(\wt{T}))}^{Z(\wt{T}^\alpha)} (\wt{\chi}_\alpha ) \big)\Big).
}$$

Note however, if in general we have $Z(\wt{T}^\alpha)\big/\wt{\varphi}_\alpha^*(Z(\wt{T}))\simeq (\Z/2\Z)^2$, then $\text{Ind}_{\wt{\varphi}_\alpha^*(Z(\wt{T}))}^{Z(\wt{T}^\alpha)} (\wt{\chi}_\alpha )=\bigoplus_{i=1}^4\wt{\chi}_{\alpha,i}$ is a $4$-dimensional representation of $Z(\wt{T}^\alpha)$.

Thus $T(\w_\alpha, \wt{\chi})$ is given by
$$\xymatrix{
T(\w_\alpha, \wt{\chi}) =\bigoplus_{i=1}^4 T(\w_\alpha, \wt{\chi}_{\alpha, i}): \quad \bigoplus_{i=1}^4 I(\wt{\chi}_{\alpha,i}) \ar[r] & \bigoplus_{i=1}^4 I ({}^{\w_\alpha}\wt{\chi}_{\alpha,i}).
}$$
Write $T(\w_\alpha, \wt{\chi}_{\alpha, i})(f_{i(\wt{\chi}_{\alpha,i})})=c(\w_\alpha, \wt{\chi}_{\alpha,i}) f_{i({}^{\w_\alpha}\wt{\chi}_{\alpha,i})}$. Then it follows from the computation in this section that
$$c(\w_\alpha, \wt{\chi}_{\alpha,i})=c(\w_\alpha, \wt{\chi}_{\alpha, j}) \text{ for any } i \text{ and } j,$$
since it is shown that $c(\w_\alpha, \wt{\chi}_{\alpha,i})$ depends only on $\wt{\chi}_{\alpha, i}$ restricted to $\wt{\varphi}_\alpha^*(Z(\wt{T}))$ and these characters are all equal there.

\end{rmk}

\begin{eg}
Consider $\wt{\mbf{GL}}_2$ with $(Q, \mca{E}, \phi)$, where $Q$ is given by
$$\xymatrix{
Q: \quad Y \ar[r] & \Z, \quad (y_1, y_2) \mapsto -y_1y_2.
}$$
Let $n=2$ and one obtains $\wt{GL}_2\in \CExt(GL_2, \mu_2)$.
 
Here we identity $Y=\Z^2$, with respect to which the element $(1,-1) \in \Z^2$ gives the coroot $\alpha^\vee$ of $\mbf{SL}_2\subseteq \mbf{GL}_2$. So $Q(\alpha^\vee)=1$ and $n_\alpha=2$.

One thus has the degree two covers $\wt{SL}_2$ and $\wt{GL}_2$ of $SL_2$ and $GL_2$ respectively. It is easy to check $\alpha^\vee_{[n]}/\text{gcd}(2,n_\alpha)=\alpha^\vee \notin Y_{Q,n}$, and therefore $Z(\wt{T}^\alpha)$ is not equal to $\wt{\varphi}_\alpha^*(Z(\wt{T}))$.
\end{eg} 

\section{The GK formula as local Langlands-Shahidi $L$-functions}

\subsection{Adjoint action and the GK formula for principal series} \label{ad rep}

Locally for $\wt{G}\in \CExt(G,\mu_n)$ of BD type, the $L$-group ${}^L\wt{G}$ sits in the exact sequence
$$\seq{\wt{G}^\vee}{{}^L\wt{G}}{\W_F}.$$

One can define the adjoint action  of ${}^L\wt{G}$ on its Lie algebra which is simply the Lie algebra $\wt{\mfr{g}}^\vee$ of $\wt{G}^\vee$. Thus we obtain the adjoint representation
$$\xymatrix{
Ad: \quad {}^L\wt{G} \ar[r] &  GL(\wt{\mfr{g}}^\vee).
}$$
By definition ${}^L\wt{G}= j^{\wt{G}^\vee}_* \circ \text{Rec}^*(E_{\wt{G}})$, where $E_{\wt{G}}$ is the fundamental extension over $F^\times$ by $Z(\wt{G}^\vee)$. This implies that the adjoint action $Ad$ depends only on the first coordinate in $\wt{G}^\vee$, where by definition we are viewing ${}^L\wt{G}$ as
$${}^L\wt{G}= \frac{\wt{G}^\vee \times \text{Rec}^*(E_{\wt{G}})}{\nabla Z(\wt{G}^\vee)}.$$

More precisely, define $\wt{G}^\vee_{ad}:=\wt{G}^\vee/Z(\wt{G}^\vee) $ and consider the extension $\wt{G}_{ad}^\vee \times W_F$ over $W_F$. Then there is a natural map $q*$ such that the following commutes:

$$\xymatrix{
\wt{G}^\vee \ar@{^(->}[r] \ar@{>>}[d]^-q & {}^L\wt{G} \ar@{>>}[r] \ar[d]^-{q^*} & \W_F \ar@{=}[d] \\
\wt{G}^\vee_{ad} \ar@{^(->}[r]  & \wt{G}_{ad}^\vee \times W_F \ar@{>>}[r] \ar@/^1.3pc/[l]^-{s^\text{Tr}}  & \W_F,
}$$
which is equipped with a canonical splitting $s^\text{Tr}$.

The adjoint representation $Ad$ factors through the usual complex adjoint representation $Ad^\C$ of $\wt{G}^\vee_{ad}$ on $\wt{\mfr{g}}^\vee$ with respect to the map $s^\text{Tr}\circ q^*$:
\begin{equation} \label{G:adC}
\xymatrix{
{}^L\wt{G} \ar[r]^-{Ad} \ar[d]_-{s^\text{Tr}\circ q^*} &  GL(\wt{\mfr{g}}^\vee) \\
                \wt{G}^\vee_{ad} \ar@{-->}[ru]_-{Ad^\C}.
}
\end{equation}

Now for any parabolic $\mbf{P}=\mbf{M}\mbf{U}$ of $\mbf{G}$ we obtain $\wt{M}$ and therefore ${}^L\wt{M}$ as well. Recall that there exists a canonical map ${}^L\varphi$  such that the following diagram commutes:

$$\xymatrix{
\wt{M}^\vee \ar@{^(->}[r] \ar@{^(->}[d]_-{\varphi^\vee} & {}^L\wt{M} \ar@{>>}[r] \ar@{^(->}[d]_-{{}^L\varphi} & \W_F \ar@{=}[d] \\
\wt{G}^\vee \ar@{^(->}[r] & {}^L\wt{G} \ar@{>>}[r]  & \W_F.
}$$

By restriction, this gives rise to a representation of ${}^L\wt{M}$ which factors through the complex representation $Ad^\C$ of $\wt{M}^\vee /Z(\wt{G}^\vee)$:
$$\xymatrix{
Ad^\C: \quad \wt{M}^\vee /Z(\wt{G}^\vee) \ar[r]       &GL(\wt{\mfr{g}}^\vee).
}$$

Since $\wt{M}^\vee$ is a Levi subgroup of $\wt{G}^\vee$, we can define a complex unipotent $\wt{U}^\vee$ such that it is the unipotent radical of the parabolic subgroup $\wt{M}^\vee \wt{U}^\vee$ of $\wt{G}^\vee$.

The group $\wt{M}^\vee /Z(\wt{G}^\vee)$ and therefore ${}^L\wt{M}$ act on the Lie algebra $\wt{\mfr{u}}^\vee$ of $\wt{U}^\vee$, which is a invariant space under the adjoint action of $\wt{M}^\vee$. That is, we have as in (\ref{G:adC})
\begin{equation}
\xymatrix{
{}^L\wt{M} \ar[r]^-{Ad} \ar[d] &  GL(\wt{\mfr{u}}^\vee) \\
                  \wt{M}^\vee/Z(\wt{G}^\vee) \ar@{-->}[ru]_-{Ad^\C},
}
\end{equation}
where the vertical map is the composition ${}^L \varphi \circ s^\text{Tr} \circ q^*$.

Now we specialize to the case where $\mbf{P}=\mbf{B}$ is the Borel subgroup of $\mbf{G}$. To be consistent with previous notations, we write $\mbf{B}=\mbf{T}\mbf{N}$. Then $\wt{\mfr{n}}^\vee$ is generated by eigenvectors of the form $E_{\alpha^\vee_{[n]}}$ for all $\alpha \in \Psi^+$. For any $\w\in W$, recall we have defined $\Psi_\w=\set{\alpha \in \Psi^+: \ \w\alpha \in \Psi^-}$. We are interested in the space
$$\wt{\mfr{n}}_\w^\vee=\bigoplus_{\alpha \in \Psi_\w} \C \cdot E_{\alpha^\vee_{[n]}} \subseteq \wt{\mfr{n}}^\vee.$$

The space $\wt{\mfr{n}}_\w^\vee$ is invariant under the adjoint action $Ad^\C$ of $\wt{T}^\vee /Z(\wt{G}^\vee)$ and this allows us to write $Ad_\w:=Ad|_{\wt{\mfr{n}}_\w^\vee}$. Clearly $Ad_\w$ has the decomposition
$$\xymatrix{
Ad_\w=\bigoplus_{\alpha\in \Psi_\w} Ad_\alpha: \quad {}^L\wt{T}  \ar[r] & GL(\wt{\mfr{n}}_\w^\vee),
}$$
where each $Ad_\alpha$ is the one-dimensional representation on $\wt{\mfr{n}}_\alpha^\vee:=\C\cdot E_{\alpha^\vee_{[n]}}$.

Let $I(\wt{\chi})=I(i(\wt{\chi}))$ be an unramified principal series of $\wt{G}$. Let $\xymatrix{\rho_{\wt{\chi}}: \W_F \ar[r] &{}^L\wt{T}}$ be the splitting of ${}^L\wt{T}$ over $\W_F$ associated with $\wt{\chi}$ by the local Langlands correspondence.  We could identify $\rho_{\wt{\chi}}$ with the splitting $\xymatrix{\rho_{\wt{\chi}}: F^\times \ar[r] & E_{\wt{T}}}$, which arises from the canonical isomorphism $\mfr{S}(E_{\wt{T}}, F^\times) \simeq \mfr{S}({}^L\wt{T}, \W_F)$. Note we may view $F^\times$ as the abelianization $\W^{ab}_F$ via the Artin reciprocity map.

We have the following ad hoc definition of unramified $\rho_{\wt{\chi}}$.
\begin{dfn}
The splitting (or $L$-parameter) $\rho_{\wt{\chi}}$ associated with $\wt{\chi}$ is called unramified if and only if $\wt{\chi}$ is unramified.
\end{dfn}

Note that for unramified character $\wt{\chi}$, the element $\rho_{\wt{\chi}}(\varpi) \in E_{\wt{T}}$ may still depend on the choice of the uniformizer $\varpi$ as $\rho_{\wt{\chi}}$ is a splitting of $F^\times$ into $E_{\wt{T}}$. However, recall the canonical map $\msc{C}$ from $E_{\wt{T}}$ to $\wt{T}^\vee/Z(\wt{G}^\vee)$ as in Corollary \ref{ad-t}. Then the element $\msc{C}\circ \rho_{\wt{\chi}}(\varpi) \in \msc{C}(E_{\wt{T}})$ is independent of the uniformizer.

\begin{prop} \label{unram rho}
Let $\rho_{\wt{\chi}} \in \mfr{S}(E_{\wt{T}}, F^\times)$ be an unramified splitting. Identify $\wt{T}^\vee/Z(\wt{G}^\vee)$ with $\Hom(Y_{Q,n}^{sc}, \C)$. Then for all $\alpha \in \Psi$, we have that
$$\msc{C}\circ \rho_{\wt{\chi}}(\varpi)(\alpha^\vee_{[n]})=\wt{\chi}\big(\wt{h}(\varpi^{n_\alpha})\big) \in \C$$
is independent of the uniformizer $\varpi$ chosen.
\end{prop}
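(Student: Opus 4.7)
The statement decomposes cleanly into two assertions: the explicit identity $\msc{C}\circ \rho_{\wt{\chi}}(\varpi)(\alpha^\vee_{[n]})=\wt{\chi}(\wt{h}_\alpha(\varpi^{n_\alpha}))$, and the fact that this quantity does not depend on the choice of uniformizer $\varpi$. The first assertion is essentially already in hand: I would simply apply Corollary \ref{key iden} with $a = \varpi$, which yields
$$\msc{C}\circ \rho_{\wt{\chi}}(\varpi)(\alpha^\vee_{[n]}) = \wt{\chi}\circ \Phi_{D,\eta}\big(\wm{h}_\alpha(\varpi^{n_\alpha})\big),$$
and then invoke the notational convention (\ref{wm to wt}), namely $\wt{h}_\alpha(\varpi^{n_\alpha}) := \Phi_{D,\eta}(\wm{h}_\alpha(\varpi^{n_\alpha}))$, which is well-defined (independent of the bracket $[b]$) by Proposition \ref{s-eta}.

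The substantive content is therefore the uniformizer independence. My plan is to write any other uniformizer as $\varpi' = \varpi u$ with $u \in \msc{O}_F^\times$ and use the BD cocycle identity (\ref{h ppty1}) to compute
$$\wm{h}_\alpha\big((\varpi u)^{n_\alpha}\big) = \wm{h}_\alpha(\varpi^{n_\alpha}) \cdot \wm{h}_\alpha(u^{n_\alpha}) \cdot \set{\varpi, u}^{n_\alpha^2 Q(\alpha^\vee)}$$
inside $\wt{T}^{sc}$. Pushing through $\Phi_{D,\eta}$ and applying $\wt{\chi}$, the Steinberg term becomes $(\varpi,u)_n^{n_\alpha^2 Q(\alpha^\vee)}$, which is trivial because $n \mid n_\alpha Q(\alpha^\vee)$ by the very definition $n_\alpha = n/\gcd(Q(\alpha^\vee), n)$. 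So only the term $\wt{\chi}\big(\Phi_{D,\eta}(\wm{h}_\alpha(u^{n_\alpha}))\big)$ must be shown to equal $1$.

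For this last step, I would observe that $\Phi_{D,\eta}(\wm{h}_\alpha(u^{n_\alpha}))$ lies in $Z(\wt{T}) \cap K_T$: indeed by (\ref{Phi-eta}) its image in $T$ is $\mbf{h}_\alpha(u^{n_\alpha}) \in T^\dag \cap K_T$, and its $\mu_n$-component is of the form $(\eta(\alpha^\vee), u^{n_\alpha})_n = 1$ since $u \in \msc{O}_F^\times$ is a unit and the tame Hilbert symbol is trivial on $\msc{O}_F^\times \times \msc{O}_F^\times$. Combined with the hypothesis that $\wt{G}$ is $s_K$-unramified and with the compatibility between $s_K$ and the BD unipotent splitting (Corollary \ref{sK=iu}), one sees that $\Phi_{D,\eta}(\wm{h}_\alpha(u^{n_\alpha}))$ lies in the image of $s_K$, on which the unramified character $\wt{\chi}$ is trivial by definition. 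This finishes the independence claim.

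The main obstacle, such as it is, is keeping track of the distinction between $\wm{h}_\alpha(u^{n_\alpha})$ in the covering $\wt{T}^{sc}$ of the simply-connected torus and its image under $\Phi_{D,\eta}$ in $\wt{T}$: one must verify that the image truly lies in $K_T$ after the push-out via $\eta$ (hence the invocation of tameness of $(-,-)_n$ on units), rather than simply in $T^\dag$. Once this is noted the argument is a short direct calculation, and no deep ingredient beyond Corollary \ref{key iden}, the BD identity (\ref{h ppty1}), and the unramified-splitting compatibility of Corollary \ref{sK=iu} is required.
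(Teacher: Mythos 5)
Your proposal takes essentially the same route the paper does: the identity comes from Corollary~\ref{key iden}, and the uniformizer independence is the computation embedded in the proof of Proposition~\ref{rank1 coeff}, which you have written out. The cocycle step and the divisibility $n \mid n_\alpha Q(\alpha^\vee)$ match the paper exactly.

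There is one wrinkle in your justification that $\Phi_{D,\eta}\big(\wm{h}_\alpha(u^{n_\alpha})\big)$ lies in $s_K(K_G)$. You appeal to (\ref{Phi-eta}) to identify the $\mu_n$-component as $(\eta(\alpha^\vee),u^{n_\alpha})_n$, but (\ref{Phi-eta}) is established only for $\alpha\in\Delta$, whereas the proposition is asserted for every $\alpha\in\Psi$; and even for a simple root, concluding triviality of that symbol requires $\eta(\alpha^\vee)\in\msc{O}_F^\times$, a normalization you do not address. The cleaner route --- which is the one actually hiding behind the terse ``$\wt{h}_\alpha(u^{n_\alpha})\in Z(\wt{T})\cap K_T\subseteq s_K(K_G)$'' in the proof of Proposition~\ref{rank1 coeff} --- is to bypass the $\mu_n$-component computation entirely: write $\wt{h}_\alpha(u^{n_\alpha})=\wt{w}_\alpha(u^{n_\alpha})\cdot\wt{w}_\alpha(1)^{-1}$ and then expand each $\wt{w}_\alpha(a)=\wt{e}_\alpha(a)\wt{e}_{-\alpha}(a^{-1})\wt{e}_\alpha(a)$; since $u^{n_\alpha}$ is a unit, every argument lies in $\msc{O}_F^\times$, so Corollary~\ref{sK=iu} gives $\wt{e}_{\pm\alpha}(\cdot)=s_K(e_{\pm\alpha}(\cdot))$, hence $\wt{h}_\alpha(u^{n_\alpha})=s_K\big(h_\alpha(u^{n_\alpha})\big)\in s_K(K_T)$ for \emph{all} $\alpha\in\Psi$ with no appeal to (\ref{Phi-eta}) or to $\eta$ at all. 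You already invoke Corollary~\ref{sK=iu} at the end, so you had all the ingredients in hand; the $\mu_n$-component step is what imports both the simple-root restriction and the unaddressed hypothesis on $\eta$, and should simply be dropped.
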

\begin{proof}
The identify is from Corollary \ref{key iden}, while the independence of the uniformizer follows from by Proposition \ref{rank1 coeff}.
\end{proof}

Write $\wt{\pi}:=i(\wt{\chi})$. Then the local Langlands $L$-function $L(s, \wt{\pi}, Ad_\alpha)$ is given by the Artin $L$-function associated with
$$\xymatrix{
Ad_\alpha \circ \rho_{\wt{\chi}}:\quad \W_F \ar[r]  & {}^L\wt{T}  \ar[r]     &GL(\wt{\mfr{n}}_\alpha^\vee),
}.$$ 
That is,
$$L(s, \wt{\pi}, Ad_\alpha):=\frac{1}{\text{det}\big(1-q^{-s} \cdot Ad_\alpha \circ \rho_{\wt{\chi}}(\text{Frob})|_{{\wt{\mfr{n}}_\alpha^\vee}^I} \big)}.$$

For unramified $\rho_{\wt{\chi}}$ which we view as an element in $\mfr{S}(E_{\wt{T}}, F^\times)$, is given by
$$L(s, \wt{\pi}, Ad_\alpha)=\frac{1}{\text{det}\big(1-q^{-s} \cdot Ad_\alpha \circ \rho_{\wt{\chi}}(\varpi)|_{{\wt{\mfr{n}}_\alpha^\vee}^I} \big)}.$$

Now fix $\w \in W$. 
\begin{thm} \label{L rank1 coeff} \index{$L$-function ! local Langlands-Shahidi}
The eigenvalue of $\rho_{\wt{\chi}}(\varpi)$ of the representation $Ad_\alpha$ on the one-dimensional invariant space $\wt{\mfr{n}}_\alpha^\vee$ is given by
\begin{equation} \label{eigenvalue}
Ad_\alpha(\rho_{\wt{\chi}}(\varpi))(E_{\alpha^\vee_{[n]}})= \wt{\chi}\big( \wt{h}_\alpha(\varpi^{n_\alpha})\big) \cdot E_{\alpha^\vee_{[n]}},
\end{equation}
which is independent of the uniformizer chosen.

It follows that the GK formula for the intertwining operator $T(\w, \wt{\chi})$ acting on the unramified representation $I(\wt{\chi})$ can be rewritten as $T(\w, \wt{\chi}) f_{i(\wt{\chi})}=c(\w, \wt{\chi}) (f_{i({}^\w\wt{\chi})})$ with

$$c(\w, \wt{\chi})=\prod_{\alpha \in \Psi_\w} \frac{L(0, \wt{\pi}, \text{Ad}_\alpha)}{L(1, \wt{\pi}, \text{Ad}_\alpha)}.$$
\end{thm}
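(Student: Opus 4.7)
The proof will have two parts, corresponding to the two assertions of the theorem.

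For the eigenvalue computation, the plan is to unwind the definition of $Ad_\alpha$ through the factorization (\ref{G:adC}). Recall that $Ad_\alpha$ factors through the complex adjoint representation $Ad^\C$ of $\wt{T}^\vee/Z(\wt{G}^\vee)$ via the composition $s^{\text{Tr}}\circ q^*$; on the root datum side of $\wt{G}^\vee$, with $\wt{T}^\vee=\Hom(Y_{Q,n},\C^\times)$ and $\wt{T}^\vee/Z(\wt{G}^\vee)=\Hom(Y_{Q,n}^{sc},\C^\times)$, the root space $\wt{\mfr{n}}_\alpha^\vee=\C\cdot E_{\alpha^\vee_{[n]}}$ is by construction the eigenspace on which any $t^\vee\in \wt{T}^\vee/Z(\wt{G}^\vee)$ acts by the scalar $t^\vee(\alpha^\vee_{[n]})$. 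Thus
\[
Ad_\alpha\!\left(\rho_{\wt{\chi}}(\varpi)\right)(E_{\alpha^\vee_{[n]}})=\Big(\bigl(s^{\text{Tr}}\circ q^*\circ \rho_{\wt{\chi}}\bigr)(\varpi)\Big)(\alpha^\vee_{[n]})\cdot E_{\alpha^\vee_{[n]}}.
\]
But the composition $s^{\text{Tr}}\circ q^*$, when combined with the isomorphism $\mfr{S}({}^L\wt{T},\W_F)\simeq \mfr{S}(E_{\wt{T}},F^\times)$, is exactly the map $\msc{C}$ constructed before Corollary \ref{ad-t}. Hence the scalar equals $\msc{C}\circ\rho_{\wt{\chi}}(\varpi)(\alpha^\vee_{[n]})$, which by Proposition \ref{unram rho} (itself a combination of Corollary \ref{key iden} and Proposition \ref{s-eta}) equals $\wt{\chi}(\wt{h}_\alpha(\varpi^{n_\alpha}))$. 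This proves (\ref{eigenvalue}) and in particular confirms independence of the uniformizer.

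For the $L$-function reinterpretation, the key observation is that $\rho_{\wt{\chi}}$ is unramified, so it factors through $\W_F/I\simeq \langle\mathrm{Frob}\rangle$ and the inertia-invariant subspace ${\wt{\mfr{n}}_\alpha^\vee}^I$ coincides with $\wt{\mfr{n}}_\alpha^\vee$ itself. Moreover, $\rho_{\wt{\chi}}$ identified with the splitting of $E_{\wt{T}}$ over $F^\times$ sends a uniformizer to a lift of Frobenius, so after composing with $Ad_\alpha$ the eigenvalue of Frobenius on $\wt{\mfr{n}}_\alpha^\vee$ is precisely $\wt{\chi}(\wt{h}_\alpha(\varpi^{n_\alpha}))$ just computed. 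Therefore
\[
L(s,\wt{\pi},Ad_\alpha)=\frac{1}{1-q^{-s}\,\wt{\chi}(\wt{h}_\alpha(\varpi^{n_\alpha}))},
\]
and consequently
\[
\frac{L(0,\wt{\pi},Ad_\alpha)}{L(1,\wt{\pi},Ad_\alpha)}=\frac{1-q^{-1}\,\wt{\chi}(\wt{h}_\alpha(\varpi^{n_\alpha}))}{1-\wt{\chi}(\wt{h}_\alpha(\varpi^{n_\alpha}))},
\]
which matches termwise with the product formula of Corollary \ref{gen coeff}. Taking the product over $\alpha\in\Psi_\w$ concludes.

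The main obstacle is conceptual rather than computational: one must carefully verify that the complex adjoint action $Ad^\C$ of $\wt{T}^\vee/Z(\wt{G}^\vee)$ on the root space $\wt{\mfr{n}}_\alpha^\vee$ pairs the element $\rho_{\wt{\chi}}(\varpi)\bmod Z(\wt{G}^\vee)$ with the coroot $\alpha^\vee_{[n]}$ in exactly the way recorded by $\msc{C}$. This hinges on the compatibility, established via Proposition \ref{L-T=p.o.} and Corollary \ref{ad-t}, between the canonical morphism ${}^L\varphi\colon {}^L\wt{T}\to {}^L\wt{G}$ and the root-datum description $(Y_{Q,n},\{\alpha^\vee_{[n]}\},X_{Q,n},\{n_\alpha^{-1}\alpha\})$ of $\wt{G}^\vee$. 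Once this compatibility is in hand, the two assertions of the theorem follow by direct substitution, without any further Weyl-group or unipotent integration.
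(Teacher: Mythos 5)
Your proposal follows essentially the same route as the paper's proof: identify $\rho_{\wt{\chi}}$ with a splitting of $E_{\wt{T}}$ over $F^\times$, factor $Ad_\alpha$ through the complex adjoint action of $\wt{T}^\vee/Z(\wt{G}^\vee)$ via the map $\msc{C}$, observe that on the root space $\C\cdot E_{\alpha^\vee_{[n]}}$ the torus acts by evaluation at $\alpha^\vee_{[n]}$, and then invoke Proposition~\ref{unram rho} (equivalently Corollary~\ref{key iden} together with Proposition~\ref{s-eta}) to get $\msc{C}\circ\rho_{\wt{\chi}}(\varpi)(\alpha^\vee_{[n]})=\wt{\chi}(\wt{h}_\alpha(\varpi^{n_\alpha}))$. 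Your spelling-out of the $L$-function rewriting (inertia acts trivially, the local Euler factor is $(1-q^{-s}\wt{\chi}(\wt{h}_\alpha(\varpi^{n_\alpha})))^{-1}$, compare termwise with Corollary~\ref{gen coeff}) is the step the paper leaves implicit but adds nothing new; the argument is sound.

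One cosmetic point: you cite the diagram (\ref{G:adC}) for ${}^L\wt{G}$, whereas the factorization actually used in the proof is its torus analogue through $E_{\wt{T}}$ and $\wt{T}^\vee/Z(\wt{G}^\vee)$ via $\msc{C}=s^{\text{Tr}}\circ q_*\circ\msc{L}^{-1}$; you reach the right object, but the reference should be to the unnumbered diagram for Levi subgroups and the construction of $\msc{C}$ preceding Corollary~\ref{ad-t}.
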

\begin{proof}
It suffices to show the equality in (\ref{eigenvalue}) regarding the eigenvalue.

Identify the splitting $\rho_{\wt{\chi}}$ as that of $E_{\wt{T}}$ over $F^\times$. Then the adjoint action of ${}^L\wt{T}$ naturally factor through that of $E_{\wt{T}}$:
$$\xymatrix{
{}^L\wt{T} \ar[r]^-{Ad} \ar[d] &  GL(\wt{\mfr{n}}_\alpha^\vee) \\
                  E_{\wt{T}} \ar@{-->}[ru]_-{Ad}.
}$$
Further more, the adjoint representation of $E_{\wt{T}}$ on $\wt{\mfr{n}}_\alpha^\vee$ also factors through $\wt{T}^\vee/Z(\wt{G}^\vee)$:

$$\xymatrix{
 E_{\wt{T}}  \ar[r]^-{Ad_\alpha} \ar[d]_-{\msc{C}}      &GL(\wt{\mfr{n}}_\alpha^\vee) \\
 \wt{T}^\vee/Z(\wt{G}^\vee) \ar[ru]_-{Ad_\alpha^\C}.
}$$

Therefore, with $\rho_{\wt{\chi}}$ viewed as a splitting of $E_{\wt{T}}$ over $F^\times$, we have the composition
$$\xymatrix{
Ad_\alpha^\C \circ \msc{C} \circ \rho_{\wt{\chi}}: \quad F^\times \ar[r] & E_{\wt{T}} \ar[r] & \wt{T}^\vee/Z(\wt{G}^\vee) \ar[r] & GL(\wt{\mfr{n}}_\alpha^\vee).
}$$

Now it suffices to compute the eigenvalue of $Ad_\alpha^\C \circ \msc{C} \circ \rho_{\wt{\chi}} (\varpi)$ on $E_{\alpha^\vee_{[n]}}$, which is equal to $\msc{C}\circ \rho_{\wt{\chi}}(\varpi)(\alpha_{[n]}^\vee)$ since $E_{\alpha^\vee_{[n]}}$ is the unipotent vector corresponding to the root $\alpha^\vee_{[n]}$ of $\wt{G}^\vee$. Here we view $\msc{C}\circ \rho_{\wt{\chi}}(\varpi)\in \wt{T}^\vee/Z(\wt{G}^\vee)\simeq \Hom(Y_{Q,n}^{sc}, \C^\times)$.

The proof is thus completed in view of the equality $\msc{C} \circ \rho_{\wt{\chi}}(\varpi)(\alpha_{[n]}^\vee)=\wt{\chi}\big(\wt{h}(\varpi^{n_\alpha})\big)$ and the independence statement from the previous proposition.
\end{proof}

\begin{rmk} \label{nonsplit GK}
When $\mbf{G}$ is not necessarily split over $F$ but over an unramified extension $E$ of $F$, the computation in \cite[Thm. 12.1]{McN14} gives the rank one GK coefficient as
$$\frac{\big(1+ (-1)^{n_\alpha} \cdot q^{-1} \wt{h}_\alpha(\varpi^{n_\alpha})\big) \big(1- (-1)^{n_\alpha} \cdot q^{-2} \wt{h}_\alpha(\varpi^{n_\alpha})\big)}{1 - \wt{h}_\alpha(\varpi^{n_\alpha})^2},$$
where $\wt{h}_\alpha(\varpi^{n_\alpha})^2=\wt{h}_\alpha(\varpi^{2n_\alpha})$. We believe that a proper understanding of the $L$-group construction for BD covers of nonsplit $\mbf{G}$ in \cite{We14} will enable us to
express the GK coefficient obtained as Langlands-Shahidi type $L$-function as well. We leave the investigation of this to a future work.
\end{rmk}

\subsection{The GK formula for induction from maximal parabolic} \label{max parab}
In this subsection, we consider parabolic induction from a maximal parabolic of $\wt{G}$. Let $\mbf{G}$ be a reductive group split over $F$ with root datum $(X, \Psi, Y, \Psi^\vee)$. As before, we have fixed a set of simple roots $\Delta\subseteq \Psi^+$ with $\Psi=\Psi^+ \sqcup \Psi^-$. Consider any simple root $\beta \in \Delta$, and let $\mbf{P}=\mbf{M}\mbf{U}$ be a maximal parabolic of $\mbf{G}$ associated with $\Delta \backslash \set{\beta}$. 

Let $2\rho_P$ be the sum of positive roots in $\mbf{U}$, define
$$\beta_P=\frac{\rho_P}{\angb{\rho_P}{\beta^\vee}},$$
where $\angb{-}{-}$ is the pairing between $X$ and $Y$. Then $\beta_P \in X\otimes \Q$ is the fundamental weight associated with $\beta$.

From the $\mu_n$-extension $\wt{G}$, we obtain a $\mu_n$ extension $\wt{M}$ of $M=\mbf{M}(F)$.  Since the construction of $\mu_n$ extension $\wt{G}$ is functorial, the extension $\wt{M}$ over the Levi $M$
$$\seq{\mu_n}{\wt{M}}{M}$$
is obtained from the data inherited from those associated with $\wt{\mbf{G}}$.

Let $\wt{\pi}$ be an  irreducible unramified genuine representation of $\wt{M}$. Consider the normalized induced representation $I_{\wt{P}}^{\wt{G}}(\wt{\pi} \otimes \mbf{1}_U)$. For simplicity we may just write $I(\wt{\pi})$ for it.

Since $\wt{\pi}$ is unramified, by the Satake isomorphism in Corollary \ref{SV corres} there is a $\epsilon$-genuine character $\wt{\chi}$ of $Z(\wt{T})$ such that 
$$\xymatrix{
\wt{\pi} \ar@{^(->}[r] & I_{\wt{B}_M}^{\wt{M}}(\wt{\chi})
},$$
where $\wt{B}_M=\wt{T} N_M$ is the Borel of $\wt{M}$ whose Levi  factor is just $\wt{T}$. Then the representation $\text{Ind}_{\wt{P}}^{\wt{G}} \big (I_{\wt{B}_M}^{\wt{M}}(\wt{\chi}\big) \otimes \mbf{1}_{U} $, by induction in stages, is just the unramified principle series $I(\wt{\chi})=I(i(\wt{\chi}))$ of $\wt{G}$ introduced before. Moreover, we have
$$\xymatrix{
I(\wt{\pi}) \ar@{^(->}[r] & I(\wt{\chi}).
}$$

It is known (cf. \cite[pg. 122]{CKM04}) that there exists a unique $\w\in W$ such that 
$$\w(\Delta \backslash \set{\beta})\subseteq \Delta \text{ and } \w(\beta)\in \Psi^-.$$
In fact, $\w(\beta_P)=-\beta_P$. From now on, we fix this $\w$ whenever we consider intertwining operators for induction from maximal parabolic.

Let $w =s_W(\w) \in W^{K_G}$ and $\wt{w}=s_K(w)$ be the representatives of $\w$ defined in section \ref{W notation}. We are interested in the intertwining operator
$$\xymatrix{
T(w, \wt{\pi}): \quad I(\wt{\pi}) \ar[r] & I({}^w\wt{\pi})
}$$
given by
$$ f\mapsto  T(w, \wt{\pi})f(\wt{g})=\int_{U^w} f(\wt{w}^{-1}\wt{u}\ \wt{g}) du, $$
where $U^w=N\cap w U^- w^{-1}$ with $U^-$ the unipotent radical opposed to $U$. 

As in the linear algebraic case, the following diagram commutes:
$$\xymatrix{
I(i(\wt{\chi}))   \ar[rr]^-{T(w, i(\wt{\chi}))}    & & I({}^w i(\wt{\chi})) \\
I(\wt{\pi})  \ar@{^(->}[u] \ar[rr]^-{T(w, \wt{\pi})}    & &I({}^w \wt{\pi}) \ar@{^(->}[u] \ .
}$$

Let $f_{\wt{\pi}}, f_{{}^w\wt{\pi}}$ be the normalized unramified vectors of $I(\wt{\pi}), I({}^w\wt{\pi})$ respectively. We view them as vectors in the unramified principal series $I(i(\wt{\chi}))$ and $I({}^wi(\wt{\chi}))$, normalized such that $f_{\wt{\pi}}(1_{\wt{G}})(1_{\wt{T}})=f_{{}^w\wt{\pi}}(1_{\wt{G}})(1_{\wt{T}})=1$. We want to compute the constant $c(\w, \wt{\pi})$ that appears in $T(w, \wt{\pi}) f_{\wt{\pi}}=c(\w, \wt{\pi}) f_{{}^w\wt{\pi}}$.

Coupled with the computation of the GK formula in Theorem \ref{L rank1 coeff}, we see that
$$c(\w, \wt{\pi})=\prod_{\alpha \in \Psi_\w} \frac{L(0, Ad_\alpha \circ \rho_{\wt{\chi}})}{L(1, Ad_\alpha \circ \rho_{\wt{\chi}})}, $$
where each $\xymatrix{Ad_\alpha \circ \rho_{\wt{\chi}}:  \W_F \ar[r] & \C^\times}$ is a character.

Consider the adjoint action
$$\xymatrix{
Ad_{\wt{\mfr{u}}^\vee}: \quad {}^L\wt{M} \ar[r] & GL(\wt{\mfr{u}}^\vee),
}$$
where $\wt{\mfr{u}}^\vee$ is the Lie algebra of $\wt{U}^\vee$ such that $\wt{M}^\vee \wt{U}^\vee$ is a parabolic subgroup of $\wt{G}^\vee$. It factors through $Ad^\C_{\wt{\mfr{u}}^\vee}$:
$$\xymatrix{
Ad_{\wt{\mfr{u}}^\vee}: & {}^L\wt{M}  \ar[r] \ar[d]    &GL(\wt{\mfr{u}}^\vee) \\
& \wt{M}^\vee/Z(\wt{G}^\vee) \ar[ru]_-{Ad^\C_{\wt{\mfr{u}}^\vee}}.
}$$
Therefore, irreducible subspaces of $\wt{\mfr{u}}^\vee$ for $Ad_{\wt{\mfr{u}}^\vee}$ are in correspondence with irreducible subspaces with respect to $Ad^\C_{\wt{\mfr{u}}^\vee}$, which are familiar (cf. \cite{Lan71}). More precisely, we consider the decomposition of $Ad_{\wt{\mfr{u}}^\vee}$ into  irreducibles
$$Ad_{\wt{\mfr{u}}^\vee}=\bigoplus_{i=1}^m Ad_i.$$
Let $V_i \subseteq \wt{\mfr{u}}^\vee$ be the irreducible space for $Ad_i$. Then as observed by Langlands, $V_i$ is given by
$$V_i=\bigoplus_{\langle \beta_P/n_\beta, \alpha_{[n]}^\vee \rangle=i } \C \cdot E_{\alpha_{[n]}^\vee}.$$

Moreover the following equality holds:
$$\Psi_\w=\bigsqcup_{i=1}^m \set{ \alpha\in \Psi^+: \angb{\beta_P/n_\beta}{\alpha^\vee_{[n]}}=i }.$$

Recall the local Artin $L$-function $L(s, \wt{\pi}, Ad_i)$ is by definition $L(s, Ad_i \circ \rho_{\wt{\chi}})$ associated with $Ad_i \circ \rho_{\wt{\chi}}$:
$$L(s, \wt{\pi}, Ad_i)=\frac{1}{\text{det}\big(1-q^{-s} Ad_i \circ \rho_{\wt{\chi}}(\textsf{Frob})|_{V_i^I} \big)},$$
where we also write $\rho_{\wt{\chi}}$ for the composition $\xymatrix{\W_F \ar[r] &{}^L\wt{T} \ar[r] &{}^L\wt{M}}$. For unramified $\wt{\pi}$, if we identify $\rho_{\wt{\chi}}$ with an unramified splitting of $E_{\wt{T}}$ over $F^\times$, then the inertia group $I$ acts trivially on $V_i$ by Theorem \ref{L rank1 coeff}. We thus have
$$L(s, \wt{\pi}, Ad_i)=\frac{1}{\text{det}\big(1-q^{-s} Ad_i \circ \rho_{\wt{\chi}}(\varpi)|_{V_i}\big)}.$$

In view of Theorem \ref{L rank1 coeff}, we could summarize our discussion above as:

\begin{thm} \label{L gen coeff}
The G-K formula takes the form $T(w, \wt{\pi}) f_{\wt{\pi}}=c(\w, \wt{\pi}) f_{{}^w\wt{\pi}}$ with
$$c(\w, \wt{\pi})=\prod_{i=1}^m \frac{L(0, \wt{\pi}, Ad_i)}{L(1, \wt{\pi}, Ad_i)},$$
where in this case we have the equality
$$L(s, \wt{\pi}, Ad_i)=\prod_{\substack{\alpha \in \Psi^+ \\ \angb{\beta_P/n_\beta}{\alpha_{[n]}^\vee}=i}} L(s, Ad_\alpha \circ \rho_{\wt{\chi}}).$$
\end{thm}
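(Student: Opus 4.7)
The plan is to reduce the statement to the principal series case already established in Theorem \ref{L rank1 coeff} and then reorganize the resulting product according to the Levi action on $\wt{\mfr{u}}^\vee$. First, since $\wt{\pi}$ is unramified, the Satake isomorphism (Corollary \ref{SV corres}) embeds it into $I_{\wt{B}_M}^{\wt{M}}(\wt{\chi})$ for some unramified genuine character $\wt{\chi}$ of $Z(\wt{T})$, and induction in stages gives an embedding $I(\wt{\pi}) \hookrightarrow I(i(\wt{\chi}))$ that sends the normalized unramified vector $f_{\wt{\pi}}$ to the normalized unramified vector $f_{i(\wt{\chi})}$. The commutative diagram relating $T(w,\wt{\pi})$ to $T(w, i(\wt{\chi}))$ noted in Section \ref{max parab}, together with the analogous statement for ${}^w\wt{\pi}$ and $I({}^w i(\wt{\chi}))$, yields the equality of scalars $c(\w, \wt{\pi}) = c(\w, \wt{\chi})$.

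Next, I would invoke Theorem \ref{L rank1 coeff} to obtain
\begin{equation*}
c(\w, \wt{\chi}) = \prod_{\alpha \in \Psi_\w} \frac{L(0, \wt{\pi}, Ad_\alpha)}{L(1, \wt{\pi}, Ad_\alpha)},
\end{equation*}
where each $Ad_\alpha$ is the one-dimensional representation of ${}^L\wt{T}$ on $\C\cdot E_{\alpha^\vee_{[n]}}\subseteq \wt{\mfr{n}}^\vee$. The remaining task is therefore bookkeeping: I need to regroup this product according to the decomposition $Ad_{\wt{\mfr{u}}^\vee} = \bigoplus_{i=1}^m Ad_i$ of the Levi adjoint action. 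Using the fact that $V_i = \bigoplus_{\angb{\beta_P/n_\beta}{\alpha^\vee_{[n]}}=i} \C \cdot E_{\alpha^\vee_{[n]}}$ (a standard fact due to Langlands, carried over to the dual root datum of $\wt{G}^\vee$), I would verify the partition
\begin{equation*}
\Psi_\w \;=\; \bigsqcup_{i=1}^m \set{\alpha\in \Psi^+:\angb{\beta_P/n_\beta}{\alpha^\vee_{[n]}}=i},
\end{equation*}
which follows from the choice of $\w$ as the unique element with $\w(\Delta\backslash\{\beta\})\subseteq \Delta$ and $\w(\beta_P)=-\beta_P$: an $\alpha\in\Psi^+$ lies in $\Psi_\w$ precisely when the corresponding coroot pairs positively with $\beta_P/n_\beta$, and that pairing value is the index $i$.

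Finally, I would verify the compatibility of the $L$-functions themselves. The functoriality of $L$-groups for Levi subgroups (Proposition \ref{L-T=p.o.} and the ensuing commutative diagram) implies that the composition $Ad_i \circ {}^L\varphi \circ \rho_{\wt{\chi}}$ decomposes on each $V_i$ into the characters $Ad_\alpha \circ \rho_{\wt{\chi}}$ for $\alpha$ in the appropriate subset of $\Psi^+$. Because $\rho_{\wt{\chi}}$ is unramified (its image of inertia lies in $Z(\wt{T}^\vee)$, which acts trivially on $\wt{\mfr{u}}^\vee$ by the discussion after Proposition \ref{unram rho}), the local Artin $L$-function on $V_i$ factors as the product of the rank-one $L$-functions, and the second displayed formula in the theorem follows. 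Combining the factorization with the regrouped product gives the desired expression for $c(\w, \wt{\pi})$. The main obstacle I anticipate is not any single step but the careful matching of indices between the Weyl-orbit description of $\Psi_\w$ and the eigenspace decomposition of $\wt{\mfr{u}}^\vee$ under the (modified) Levi dual group $\wt{M}^\vee/Z(\wt{G}^\vee)$, since the rescaling by $n_\alpha$ and $n_\beta$ has to be tracked in both the coroots of $\wt{G}^\vee$ and in the pairing defining the grading.
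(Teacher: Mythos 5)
Your proposal follows essentially the same route as the paper's argument preceding the theorem: reduce to the principal series via Satake and induction in stages, read off $c(\w,\wt{\pi})=c(\w,\wt{\chi})$ from the commutative diagram relating $T(w,\wt{\pi})$ and $T(w,i(\wt{\chi}))$, apply Theorem \ref{L rank1 coeff}, and then regroup the product over $\Psi_\w$ according to the Langlands eigenspace decomposition $\wt{\mfr{u}}^\vee=\bigoplus_i V_i$ graded by $\angb{\beta_P/n_\beta}{\alpha^\vee_{[n]}}$. One small slip: the triviality of the inertia action on $V_i$ comes from $\msc{C}\circ\rho_{\wt{\chi}}$ killing $\msc{O}_F^\times$ (so the image lands in $Z(\wt{G}^\vee)$, which acts trivially on $\wt{\mfr{u}}^\vee$), not from the image of inertia lying in $Z(\wt{T}^\vee)$, which is all of $\wt{T}^\vee$ and does act nontrivially.
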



\chapter{Automorphic $L$-function, constant term of Eisenstein series and residual spectrum}
\chaptermark{$L$-functions and residual spectrum}
The aim of this chapter is to compute the constant term of Eisenstein series, and to write the coefficient of global intertwining operators in terms of certain Langlands-Shahidi type $L$-functions. The main tool is the theory of Eisenstein series and the computation of its constant terms in terms of intertwining operators as given in \cite{MW95}. It is important to bridge from the global situation to the local ones and thus to utilize the local results which we have obtained in the previous chapter.

Since the global theory is developed systematically in the book by Moeglin-Waldspurger (\cite{MW95}), we will just give a brief review below and refer to the book for properties of Eisenstein series, e.g. meromorphic continuation and functional equations etc. We also refer to \cite{MW95} for detailed introduction on automorphic forms on $\wm{G}(\A)$ and the spectral decomposition of $L^2(\mbf{G}(F)\big\backslash \wt{\mbf{G}}(\A))$.

In this chapter we fix $F$ to be a number field, and ${}^L\wt{G}$ the global $L$-group.

\section{Automorphic $L$-function} \label{auto L-ftn} \index{$L$-function ! automorphic}

Let $\wt{\sigma}=\bigotimes_v \wt{\sigma}_v$ be a genuine automorphic representation of $\wm{G}(\A)$. Then for almost all $v$, $\wt{\sigma}_v$ is unramified and $\xymatrix{\wt{\sigma}_v \ar@{^(->}[r] & I(\wt{\chi}_v)}$. It gives rise to a splitting of the local $L$-group $\wt{}^L\wt{G}_v$ as the composition
$$\xymatrix{
\rho_v: \quad \W_{F_v} \ar[r] & {}^L\wt{T}_v \ar@{^(->}[r] & {}^L\wt{G}_v,
}$$
where the first map is given by $\rho_{\wt{\chi}_v}$ from the LLC in Proposition \ref{LLC tori}. Recall that for all $v\in |F|$, there is the canonical map $\xymatrix{{}^L\wt{G}_v \ar[r] & {}^L\wt{G}}$.

\begin{dfn} 
Let $\xymatrix{R: {}^L\wt{G} \ar[r] & GL(V)}$ be any finite dimensional representation. For any $v$, let $\xymatrix{R_v: {}^L\wt{G}_v \ar[r] & {}^L\wt{G} \ar[r]^-R & GL(V)}$ be the post composition with $R$. Then the global partial $L$-function of $\wt{\sigma}$ with respect to $R$ is defined to be
$$L^S(s, \wt{\sigma}, R)=\prod_{v\notin S} L(s, \wt{\sigma}_v, R),$$
where $L(s, \wt{\sigma}_v, R)$ is the local Artin $L$-function associated with the unramified representation $\xymatrix{\rho_{v,R}: \W_{F_v} \ar[r]^-{\rho_v} & {}^L\wt{G}_v \ar[r]^-{R_v} & GL(V)}$. More precisely,
$$L(s, \wt{\sigma}_v, R):=\frac{1}{\text{det}\big(1- q_v^{-s} \cdot \rho_{v, R} (\textsf{Frob}_v)|_{V^{I_v}} \big)}.$$
\end{dfn}

Since ${}^L\wt{G}$ is a disconnected reductive complex Lie group, it is not easy to give its irreducible representations. However, there is a natural family of representations which are of interest to us, namely the adjoint type $L$-functions.
As noted in section \ref{ad rep}, we have the commutative diagram

$$\xymatrix{
\wt{G}^\vee \ar@{^(->}[r] \ar@{>>}[d]^-q & {}^L\wt{G} \ar@{>>}[r] \ar[d]^-{q^*} & W_F \ar@{=}[d] \\
\wt{G}^\vee_{ad} \ar@{^(->}[r] & \wt{G}^\vee_{ad} \times W_F   \ar@{>>}[r] \ar@/^1.3pc/[l]^-{s^{\text{Tr}}}   &  W_F.
}$$

Then any representation of $\wt{G}^\vee_{ad}$ pulls back to a representation of ${}^L\wt{G}$. In particular, the adjoint representation $Ad$ of $\wt{G}^\vee_{ad}$ gives the adjoint representations of ${}^L\wt{G}$ on the Lie algebra $\wt{\mfr{g}}^\vee$. More generally, for $\mbf{P}=\mbf{M}\mbf{U}$ a parabolic subgroup of $\mbf{G}$, we obtain the dual group $\wt{M}^\vee$ embedded in $\wt{G}^\vee$. Then the adjoint representation of $\wt{M}^\vee/Z(\wt{G}^\vee)$ on the Lie algebra $\wt{\mfr{u}}^\vee$ can be pulled back to give representation of ${}^L\wt{M}$.

Moreover, from $\xymatrix{{}^L\wt{G}_v \ar[r] & {}^L\wt{G}}$ the adjoint representation $Ad$ of ${}^L\wt{G}$ can be pulled back to ${}^L\wt{G}_v$ to give the adjoint representation of the local $L$-group discussed in previous chapter.  One thus obtains the Langlands-Shahidi type $L$-functions with respect to these adjoint representations, which are of the main interest in the following sections.

\section{Eisenstein series and its constant terms}
For simplicity, we concentrate on the maximal parabolic case, while the general case follows from similar treatment despite the complication in notations. Before we proceed, we recall two equivalent realizations of induced representations.

Follow notations in section \ref{max parab}, let $\mbf{P}=\mbf{M}\mbf{U}$ be a maximal parabolic of $\mbf{G}$ corresponding to $\Delta\backslash \set{\beta}$.  Let $2\rho_P$ be the sum of positive roots in $\mbf{U}$ and $\beta_P$ be the fundamental weight corresponding to $\beta$. 

Consider the character group $X^*(\mbf{M})$ of $\mbf{M}$, and also the real and complex vector space
$$X^*(\mbf{M})_\R =X^*(\mbf{M})\bigotimes_\Z \R, \quad X^*(\mbf{M})_\C =X^*(\mbf{M})\bigotimes_\Z \C.$$

Any $\nu_o \in X^*(\mbf{M})$ could be viewed as a character on $\mbf{M}(\A)$ valued in $\A^\times$. Further composition with the valuation of $\A^\times$ gives us a character of $\mbf{M}(\A)$ valued in $\C^\times$.

Similarly, for any $\nu=\nu_o\otimes s \in X^*(\mbf{M})_\C$ with $\nu_o\in X^*(\mbf{M})$ and $s\in \C$, we denote by $\pmb{\delta}^\nu$ the following character of $\mbf{M}(\A)$:
$$\xymatrix{
\pmb{\delta}^\nu: \quad \mbf{M}(\A) \ar[r] & \C, & m \ar@{|->}[r] & |\nu_o(m)|_{\A}^s.
}$$

The relation between $\pmb{\delta}$ and the modular character $\delta_P$ is that
$$\pmb{\delta}^{\rho_P\otimes 1}=\delta_P^{1/2}.$$

In the case of maximal parabolic, $X^*(\mbf{M}/Z(\mbf{G}))\bigotimes \C$ is of dimension one over $\C$ with $\beta_P\otimes 1$ or $\rho_P\otimes 1$ as a basis vector. Henceforth, we will write
$$\pmb{\delta}^s:=\pmb{\delta}^{\beta_P \otimes s}, \quad s\in \C.$$

For example for $\mbf{SL}_2$ with positive root $\beta$, $\rho_P=\beta/2$ and $\beta_P=\rho_P$. Then $\pmb{\delta}^s=\delta_P^{s/2}$, with $\delta_P$ the modular character of the Borel subgroup $P$.

Let $\wt{\pi}$ be a genuine cuspidal automorphic representation of $\wm{M}(\A)$, i.e., $\wt{\pi}$ occurs as a direct summand $V_{\wt{\pi}}$ in the decomposition of $L^2_\text{cusp}(\mbf{M}(F)\big\backslash \wt{\mbf{M}}(\A))$. Here $\wt{\pi}$ is a unitary representation. 

We take $\pmb{\delta}^s$ to be a character of the covering $\wm{M}(\A)$ by the inflation via the surjection $\xymatrix{\wm{M}(\A) \ar@{>>}[r] & \mbf{M}(\A)}$. Now we consider the induction $I(s, \wt{\pi}):=Ind_{\wt{P}}^{\wt{G}}\ (\pmb{\delta}^s \wt{\pi}) \otimes \mbf{1}$. We have the tensor product decomposition
$$I(s, \wt{\pi})=\bigotimes_v I(s, \wt{\pi}_v),$$
where $I(s, \wt{\pi}_v)$ is unramified for almost all $v$. \\

For the purpose of defining Eisenstein series valued in $\C$, one would like to consider the following alternative description of $I(s,\wt{\pi})$ (cf. \cite[\S I.2.17]{MW95}).

We have the Iwasawa decomposition
$$\wt{\mbf{G}}(\A)=\mbf{U}(\A) \wt{\mbf{M}}(\A) \wt{K},$$
where $\wt{K}\subseteq \wm{G}(\A)$ is the preimage of $K=\prod_v K_v$, which is a fixed maximal compact subgroup of $\mbf{G}(\A)$ such that $K_v=\mbf{G}(\msc{O}_v)$ for almost all $v$.

It follows
$$\mbf{U}(\A) \mbf{M}(F)\big\backslash \wt{\mbf{G}}(\A) \simeq \big( \mbf{M}(F)\big\backslash \wt{\mbf{M}}(\A) \big) \cdot \wt{K}.$$

We thus define a space $V_{P,\wt{\pi}}$ of functions
$$\xymatrix{
\phi: \quad \mbf{U}(\A) \mbf{M}(F)\big\backslash \wt{\mbf{G}}(\A) \ar[r] & \C,
}$$
satisfying
\begin{enumerate}
\item[(i)] $\phi$ is right $\wt{K}$-finite. That is, the space spanned by $\phi_k$ where $\phi_k(\wt{g})=\phi(\wt{g}k)$ is finite dimensional.
\item[(ii)] for each $k\in \wt{K}$, the function given by $\wt{m}\mapsto \phi(\wt{m}k), \wt{m} \in \wt{\mbf{M}}(\A)$, lies in $V_{\wt{\pi}}$ which we recall is a realization of $\wt{\pi}$ in $L^2_\text{cusp}(\mbf{M}(F)\backslash \wm{M}(\A))$.
\end{enumerate}

 Consider the space
$$\mathbf{I}(s, \wt{\pi})=\set{\phi \cdot \pmb{\delta}^{s+\rho_P}: \phi \in V_{P,\wt{\pi}}},$$
where we have written $\pmb{\delta}^{s+\rho_P}:=\pmb{\delta}^{\beta_P\otimes s+\rho_P \otimes 1}$ in abbreviation. Here $\pmb{\delta}^s$ is also used to denote the map
$$\xymatrix{
\pmb{\delta}^s: \mbf{U}(\A) \mbf{M}(F)\big\backslash \wt{\mbf{G}}(\A) \ar[r] &\C^\times, \quad \wt{g}=u\wt{m} k \ar@{|->}[r] &\pmb{\delta}^s(\wt{m}),
}$$
which is well-defined as $\pmb{\delta}^s$ is trivial on $\wm{M}(\A)\cap \wt{K}$. Roughly speaking, the space $\mathbf{I}(s, \wt{\pi})$ is endowed with an action of right translation, i.e. for all $\wt{g} \in \wt{\mbf{G}}(\A)$, 
$$\xymatrix{
\mathbf{I}(s, \wt{\pi})(\wt{g}): \quad \phi(x) \cdot \pmb{\delta}^{s+\rho_P}(x) \ar@{|->}[r] &\phi(x\wt{g}) \cdot \pmb{\delta}^{s+\rho_P}(x\wt{g}), x\in \wt{\mbf{G}}(\A).
}$$
Note that in a more rigorous way $\wm{G}(\A)$ does not act directly on it. Rather, the representation space (or rather module) we are interested in is endowed with a structure of $\big(\text{Lie}(\wm{G}(\mbf{A}_\infty))\otimes_\R \C, \wt{K}\big) \times \wm{G}(\mbf{A}_\text{fin})$-module. Here we just fix ideas and refer to \cite{MW95} for a more careful and rigorous treatment.

Now we can define a map
$$\xymatrix{
I(s, \wt{\pi}) \ar[r] &\mathbf{I}(s, \wt{\pi}), & f_s \ar@{|->}[r] & \text{ev}_{1_{\wt{P}}} \circ f_s,
}$$
where $\xymatrix{f_s: \wt{\mbf{G}}(\A) \ar[r] & \wt{\pi}}$ is an element in $I(s,\wt{\pi})$, and $\text{ev}_{1_{\wt{P}}}$ is the evaluation map at the identity $1_{\wt{P}}=1_{\wt{G}}$. Regarding the well-definedness one can check that the function 
$$\wt{g} \quad \mapsto \quad \frac{\text{ev}_{1_{\wt{P}}} \circ f_s}{\pmb{\delta}^{-(s+\rho_P)}}(\wt{g}) $$
is independent of $s$ and lies in $V_{P,\wt{\pi}}$. Equivalently, $\text{ev}_{1_{\wt{P}}} \circ f_s \in \mathbf{I}(s, \wt{\pi})$. In fact,  above map is an isomorphism, with respect to which $I(s, \wt{\pi})$ and $\mathbf{I}(s, \wt{\pi})$ can be identified.

Let $\phi_s=\phi \cdot \pmb{\delta}^{s+\rho_P} \in \mathbf{I}(s,\wt{\pi})$ be corresponding to a certain $f_s\in I(s, \wt{\pi})$. 
Define the Eisenstein series 
$$E(s, \wt{\pi}, \phi_s, \wt{g})=\sum_{\gamma \in \mbf{P}(F)\backslash \mbf{G}(F)} \phi_s(\gamma \wt{g})=\sum_{\gamma \in \mbf{P}(F)\backslash \mbf{G}(F)} \phi(\gamma \wt{g}) \cdot \pmb{\delta}^{s+\rho_P}(\gamma \wt{g}).$$
\index{Eisenstein series}

We may also write $E(s, \wt{\pi}, \phi, \wt{g})$ for $E(s, \wt{\pi}, \phi_s, \wt{g})$.

Recall we have the unique $\w\in W$ such that $\w(\Delta\backslash \set{\beta}) \subseteq \Delta$ and $\w(\beta)\in \Psi^-$. 

\begin{dfn} \label{self-associated P}
The parabolic $\mbf{P}$ is called self-associated if $\w(\Delta\backslash \set{\beta})=\Delta\backslash \set{\beta}$.
\end{dfn}

Pick the representative $w \in \mbf{G}(F)$ as in section \ref{W notation}, which implies $w\in \mbf{G}(F_v)$ for all $v$.
That is, the embedding $\xymatrix{\mbf{G}(F) \ar@{^(->}[r] & \mbf{G}(\A)}$ gives $\xymatrix{w \ar@{^(->}[r] &(w_v)_v}$ such that $w_v=w \in \mbf{G}(F_v)$ for all $v$. 

Moreover, the splitting $\xymatrix{\mbf{G}(F) \ar@{^(->}[r] & \wm{G}(\A)}$ gives $\xymatrix{w \ar@{^(->}[r] &(\widetilde{w}_v)_v}$ with $\widetilde{w}_v \in \wm{G}(F_v)$ for all $v$.
Since $w$ is generated by unipotent element; for almost all $v$, the element $\widetilde{w}_v$ is in fact equal to the lifting of $w_v \in \mbf{G}(F_v)$ by $s_K$. That is, for almost all $v$ we have $w_v=w \in W^{K_{G_v}}$ and furthermore $\widetilde{w}_v=\wt{w}_v \in s_K(W^{K_{G_v}})$.
\\

Consider the parabolic associated with $\w(\Delta\backslash \set{\beta}) \subseteq \Delta$ whose Levi subgroup is then given by $\mbf{M}':=w \mbf{M} w^{-1}$. As in \cite[\S II.1.6]{MW95}, denote by ${}^w\wt{\pi}$ the representation on $\wt{\mbf{M}'}(\A)$. Then we have the global intertwining operator 
$$\xymatrix{
T(w, s, \wt{\pi})=\bigotimes_v T(\widetilde{w}_v, s, \wt{\pi}_v) :\quad  I(s, \wt{\pi}) \ar[r] & I(-s, {}^w\wt{\pi})
}$$
induces an intertwining operator from $\mathbf{I}(s, \wt{\pi})$ to $\mathbf{I}(-s, {}^w\wt{\pi})$, still denoted by $T(w, s, \wt{\pi})$.

The main properties of Eisenstein series could be summarized as
\begin{thm}[{\cite[\S I.1.5, \S IV.1.10-11]{MW95}}] \label{ppty Eisen}
The Eisenstein series $E(s, \wt{\pi}, \phi_s, \wt{g})$ has the following properties:
\begin{enumerate}
\item $E(s, \wt{\pi}, \phi_s, \wt{g})$ is absolutely convergent for $\text{Re}(s)>\angb{\rho_P}{\beta^\vee}$.
\item $E(s, \wt{\pi}, \phi_s, \wt{g})$ and $T(w, s, \wt{\pi})$ both have meromorphic continuation to $s\in \C$ and satisfy a functional equation
$$E(s, \wt{\pi}, \phi_s, \wt{g})=E(-s, {}^w\wt{\pi}, T(w, s, \wt{\pi})\phi_s, \wt{g}), \quad T(w, -s, {}^w\wt{\pi}) T(w, s, \wt{\pi})=id.$$
\item $E(s, \wt{\pi}, \phi, \wt{g})$ and $T(w, s, \wt{\pi})$ are holomorphic on $\text{Re}(s)=0$.
\item The singularities of $E(s, \wt{\pi}, \phi, \wt{g})$ and $T(w, s, \wt{\pi})$ are the same. In the region $\text{Re}(s)>0$, there are only finitely many of them, all are simple and on the interval $(0, \angb{\rho_P}{\beta^\vee})$.
\end{enumerate}
\end{thm}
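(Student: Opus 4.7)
The plan is to invoke the general theory developed in \cite{MW95}, which is set up for automorphic forms on central extensions of reductive adelic groups by finite abelian groups, and verify that the Brylinski-Deligne cover $\wm{G}(\A)$ fits squarely into this framework. The first step is to check that the basic setup of \cite{MW95} applies: namely, we have a central extension $\seq{\mu_n}{\wm{G}(\A)}{\mbf{G}(\A)}$ together with a canonical splitting over $\mbf{G}(F)$ and over the unipotent radical $\mbf{U}(\A)$, as recorded in section \ref{G-bar(A)}. These are exactly the structural properties used in \cite[\S I.2]{MW95} to define Eisenstein series and to manipulate them via reduction theory and the Iwasawa decomposition.

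Once the setup is verified, the four assertions follow as in the linear case. For (1), absolute convergence on the cone $\text{Re}(s) > \angb{\rho_P}{\beta^\vee}$ is a consequence of Godement's estimates applied to the cuspidal inducing data $\wt{\pi}$ on $\wm{M}(\A)$; the point is that the convergence estimate is governed by the modulus character $\pmb{\delta}^{s+\rho_P}$ on $\mbf{P}(F)\backslash \mbf{G}(\A)$, which does not see the covering $\mu_n$ since $\pmb{\delta}$ factors through the linear quotient. This is precisely the content of \cite[\S I.1.5 and II.1.5]{MW95}. For (2), meromorphic continuation and the functional equation are obtained by the standard argument of Langlands, as adapted to covering groups in \cite[\S IV.1]{MW95}: one first establishes the continuation of the constant term (which is a sum of intertwining operators), uses it to control the cuspidal component of $E(s, \wt{\pi}, \phi_s, \wt{g})$, and then deduces meromorphy of the full Eisenstein series via its cuspidal-Eisenstein decomposition.

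For (3) and (4), the key input is that $\wt{\pi}$ is unitary cuspidal, so that $\wt{\pi}\otimes \pmb{\delta}^{it}$ is unitary for $t\in \R$. Combined with the fact that, on $\text{Re}(s)=0$, the intertwining operators $T(w,s,\wt{\pi})$ are unitary equivalences between unitary induced representations, one deduces regularity of $E(s,\wt{\pi},\phi_s,\wt{g})$ on the imaginary axis, exactly as in \cite[\S IV.1.11]{MW95}. The finiteness and location of poles in $\text{Re}(s)>0$ follow from the functional equation $T(w,-s,{}^w\wt{\pi})T(w,s,\wt{\pi})=\text{id}$, together with the observation that in the strip $0<\text{Re}(s)\le \angb{\rho_P}{\beta^\vee}$ the poles of $T(w,s,\wt{\pi})$ are controlled by the finitely many possible singular hyperplanes arising from the factorization into rank-one intertwining operators via the cocycle relation. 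The restriction to the real interval $(0,\angb{\rho_P}{\beta^\vee})$ then comes from complex conjugation symmetry and the fact that singularities on the non-real part of $\text{Re}(s)>0$ would violate the Maass-Selberg relations.

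The only nontrivial issue, and the potentially delicate step, is to confirm that no subtlety of the cover intervenes in the Maass-Selberg relations or in the reduction theory used in \cite[Chapter IV]{MW95}. Since \cite{MW95} works axiomatically with any reductive central extension admitting the usual splittings listed in section \ref{G-bar(A)}, and since we have recorded these splittings for BD covers, there is no obstruction and the full theorem is obtained by citation. For the convenience of the reader we have stated it here; no proof beyond referencing \cite[\S I.1.5, \S IV.1.10-11]{MW95} is necessary.
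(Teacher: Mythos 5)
Your proposal is correct and takes essentially the same route as the paper: the paper states this theorem as a citation to \cite[\S I.1.5, \S IV.1.10-11]{MW95} with no independent proof, relying implicitly on the fact that the splittings over $\mbf{G}(F)$ and $\mbf{U}(\A)$ recorded in section \ref{G-bar(A)} place $\wm{G}(\A)$ within the axiomatic framework of Moeglin--Waldspurger. Your elaboration of why the cover introduces no new difficulties is a helpful sanity check, but it matches the paper's intent rather than deviating from it.
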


The last part of the above theorem is due to the fact that the intertwining operator $T(w, s, \wt{\pi})$ figures itself in the computation of the constant term of Eisenstein series, which has the same singularities as the Eisenstein series $E(s, \wt{\pi}, \phi, \wt{g})$.

\index{Eisenstein series!constant term of}
By definition, the constant term of $E(s, \wt{\pi}, \phi, \wt{g})$ along a parabolic subgroup $\mbf{P}_1=\mbf{M}_1 \mbf{U}_1$ in general is defined by
$$E_{P_1}(s, \wt{\pi}, \phi, \wt{g})=\int_{\mbf{U}_1(F)\backslash \mbf{U}_1(\A)} E(s, \wt{\pi}, \phi, u\wt{g}) du,$$
where for the global integration we use the Tamagawa measure normalized such that $F\backslash \A$ has measure 1.

Recall that the maximal parabolic $\mbf{P}$ associated with $\Delta\backslash \set{\beta}$ is called self-associated if $\w(\Delta\backslash \set{\beta})=\Delta\backslash \set{\beta}$ (cf. Definition \ref{self-associated P}).

\begin{thm}[{\cite[\S II.1.7]{MW95}}] \label{ppty CTE}
We have the following
\begin{enumerate}
\item[(1)] If $\mbf{P}_1$ is neither $\mbf{P}$ nor the parabolic $\mbf{P}'$ associated with $\w(\Delta\backslash \set{\beta}) \subseteq \Delta$ , then
$$E_{P_1}(s, \wt{\pi}, \phi, \wt{g})=0.$$
\end{enumerate}

Therefore, the interesting cases are $E_{P}(s, \wt{\pi}, \phi, \wt{g})$ and $E_{P'}(s, \wt{\pi}, \phi, \wt{g})$. For this we have
\begin{enumerate}
\item[(2)] If $\mbf{P}$ is self-associated, i.e. $\mbf{P}=\mbf{P}'$, then
$$E_P(s, \wt{\pi}, \phi, \wt{g})=E_{P'}(s, \wt{\pi}, \phi, \wt{g})=\phi_s(\wt{g}) + T(w, s, \wt{\pi}) \phi_s(\wt{g}),$$
where $\phi_s(\wt{g})=\phi(\wt{g}) \cdot \pmb{\delta}^{s+\rho_P}$.
\item[(2)'] If $\mbf{P}$ is not self-associated, i.e. $\mbf{P}\ne \mbf{P}'$, then the two cases of interest are given by
\begin{align*}
E_P(s, \wt{\pi}, \phi, \wt{g}) & =\phi_s(\wt{g}), \\
E_{P'}(s, \wt{\pi}, \phi, \wt{g})& =\phi_s(\wt{g}) + T(w, s, \wt{\pi}) \phi_s(\wt{g}).
\end{align*}
\end{enumerate}
\end{thm}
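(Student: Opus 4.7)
The plan is to proceed along the classical lines of \cite[\S II.1.7]{MW95}, with appropriate attention to the covering-group aspects that are already set up in the excerpt (the canonical splitting of $\mbf{G}(F)$ and the $\mbf{P}(F)$-equivariant splitting of unipotents into $\wm{G}(\mathbf{A})$). The first step is to unfold the Eisenstein series using the Bruhat decomposition
\[
\mbf{G}(F) = \bigsqcup_{w' \in W_M \backslash W / W_{M_1}} \mbf{P}(F)\, w'\, \mbf{P}_1(F),
\]
where $W_M$ and $W_{M_1}$ are the Weyl groups of $\mbf{M}$ and $\mbf{M}_1$. Plugging this into $E_{P_1}(s,\wt{\pi},\phi,\wt{g})$ and folding the $\mbf{U}_1(F)\backslash \mbf{U}_1(\mathbf{A})$ integral orbit by orbit, one obtains for each double coset an expression involving an integral of $\phi_s$ against a unipotent subgroup of $\wm{M}(\mathbf{A})$ containing a cell of $\mbf{U}_1$ conjugated through $w'$.

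The next step is to apply the cuspidality of $\wt{\pi}$: each term in which the $\mbf{U}_1$-integration descends, after the $w'$-conjugation, to a proper parabolic unipotent of $\wm{M}(\mathbf{A})$ vanishes identically, since integration of $\phi \in V_{\wt{\pi}}$ along such a unipotent yields the constant term of a cusp form. The surviving $w'$'s are precisely those for which $w'^{-1}\mbf{M} w' \cap \mbf{P}_1$ contains a Levi of $\mbf{M}_1$, equivalently those for which $w'(\Delta\backslash\{\beta\}) \subseteq \Delta_{M_1}$. For maximality of $\mbf{P}$, a dimension/cardinality count reduces these to the trivial double coset (surviving only when $\mbf{P}_1 \supseteq \mbf{P}$, hence equal to $\mbf{P}$ by maximality) and the double coset of the distinguished $\w$ of the statement (surviving only when $\mbf{P}_1 = \mbf{P}'$). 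This proves (1), and gives the decomposition into at most two terms in (2) and (2)$'$.

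It remains to identify these two surviving terms. The trivial double coset contributes $\phi_s(\wt{g})$ directly from the definition of $\phi_s \in \mathbf{I}(s,\wt{\pi})$ (using that $\mbf{U}_1(F)\backslash \mbf{U}_1(\mathbf{A})$ has Tamagawa volume $1$ and that $\phi_s$ is left $\mbf{U}(\mathbf{A})$-invariant). For the coset of $\w$, one uses the representative $w \in \mbf{G}(F)$ fixed in section \ref{W notation}, whose image $\widetilde{w} \in \wm{G}(\mathbf{A})$ is well defined via the canonical splitting of $\mbf{G}(F)$ into $\wm{G}(\mathbf{A})$; a change of variables identifies the contribution with
\[
\int_{\mbf{U}^w(\mathbf{A})} \phi_s(\widetilde{w}^{-1}\, \wt{u}\, \wt{g})\, du = T(w,s,\wt{\pi})\phi_s(\wt{g}),
\]
where $\mbf{U}^w = \mbf{U} \cap w \mbf{U}^- w^{-1}$ and the unipotent lifting is the canonical one from Proposition \ref{unip splitting}. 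In the self-associated case $\mbf{P}=\mbf{P}'$ both terms appear in $E_P$ and $E_{P'}$; otherwise $E_P$ only sees the trivial coset and $E_{P'}$ only sees the $\w$-coset plus (by the same argument applied with the opposite parabolic) the trivial one.

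The main obstacle I foresee is bookkeeping for the covering group: one must check that the intermediate $\widetilde{w}^{-1}\wt{u}$-products, the equivariance of the unipotent splitting under $\mbf{P}(F)$-conjugation, and the measure-theoretic identifications of $\mbf{U}^w(F)\backslash \mbf{U}^w(\mathbf{A})$ all pass through without introducing anomalous $\mu_n$-factors. This is controlled by the uniqueness assertions for the splittings recalled in section \ref{G-bar(A)}, together with the fact that $w$ is built from Tits-section lifts that lie in $\mbf{G}(F)$, so that its image in $\wm{G}(\mathbf{A})$ is canonical and agrees with the local $\wt{w}_v$ at almost all $v$.
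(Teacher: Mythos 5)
Your proposal reconstructs the standard Bruhat--unfolding argument behind \cite[\S II.1.7]{MW95}, which is precisely what the paper relies on: the theorem is cited from Moeglin--Waldspurger with no in-text proof, and your account of the unfolding along $W_M\backslash W/W_{M_1}$, the use of cuspidality of $\wt{\pi}$ to kill all but two double cosets, the identification of the $\w$-coset with the global intertwining integral, and the covering-group bookkeeping via the canonical splittings of $\mbf{G}(F)$ and of unipotents into $\wm{G}(\A)$ all match the route that reference takes.

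However, your justification of case $(2)'$ contains a gap, and it is exactly where you wave your hands. You claim that for $\mbf{P}\neq\mbf{P}'$ the constant term $E_{P'}$ ``only sees the $\w$-coset plus (by the same argument applied with the opposite parabolic) the trivial one,'' but the parenthetical does not follow from anything earlier in your proof. From your own unfolding, the $\w'=1$ coset contributes $\int_{\mbf{U}_1(F)\backslash\mbf{U}_1(\A)}\phi_s(\wt{u}\wt{g})\,du$, and cuspidality of $\wt{\pi}$ kills this unless $\mbf{U}_1\cap\mbf{M}=1$, i.e.\ unless $\mbf{P}\subseteq\mbf{P}_1$; since $\mbf{P}_1=\mbf{P}'$ is maximal and distinct from $\mbf{P}$, this fails, so the trivial coset contributes nothing to $E_{P'}$. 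The correct output of your argument is $E_{P'}(s,\wt{\pi},\phi,\wt{g})=T(w,s,\wt{\pi})\phi_s(\wt{g})$ alone, which is also what \cite[\S II.1.7]{MW95} gives, since the indexing set $W(\mbf{M},\mbf{M}')$ for $\mbf{M}\neq\mbf{M}'$ maximal contains only $\w$. The extra $\phi_s(\wt{g})$ term in the statement of $(2)'$ appears to be a slip in the paper rather than something you should try to produce; note that the later residual-spectrum computations (for $\wm{SL}_2$, $\wm{GL}_2$, $\wm{Sp}_4$) all involve self-associated parabolics, so only case $(2)$ is actually used.
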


The poles of $E(s, \wt{\pi}, \phi, \wt{g})$ agree with those of $T(w, s, \wt{\pi})$ by Theorem \ref{ppty Eisen}. The intertwining operator has a tensor product decomposition
$$\xymatrix{
T(w, s, \wt{\pi})=\bigotimes_v T(\widetilde{w}_v, s, \wt{\pi}_v): \quad \bigotimes_v I(s, \wt{\pi}_v) \ar[r] & \bigotimes_v I(s, {}^{\widetilde{w}_v}\wt{\pi}_v).
}$$

For finite set $S\subseteq |F|$ big enough and for all $v\notin S$, we have $\widetilde{w}_v=\wt{w}_v$ and thus the operator $T(\wt{w}_v, s, \wt{\pi}_v)$ intertwins between unramified representations. We computed in previous chapter the coefficient $c(\wt{w}_v, s, \wt{\pi}_v)$ such that $T(\wt{w}_v, s, \wt{\pi}_v) f_{\wt{\pi}_v} =c(\wt{w}_v, s, \wt{\pi}_v) f_{{}^{\wt{w}_v}\wt{\pi}_v}$. By applying the results in Theorem \ref{L gen coeff} to $\pmb{\delta}^{s}_v \otimes \wt{\pi}_v$ we get:

\begin{thm} \label{L/L for T} \index{$L$-function ! global Langlands-Shahidi}
Let $f=\bigotimes_{v\notin S} f_{\wt{\pi}_v} \otimes \bigotimes_{v\in S} f_v \in \wt{\sigma}$. The global intertwining operator $T(w, s, \wt{\pi}) f$ is then given by
$$ T(w, s, \wt{\pi}) f=\prod_{i=1}^m \frac{L^S( n_\beta i \cdot s, \wt{\pi}, Ad_i)}{L^S(1+ n_\beta i \cdot s, \wt{\pi}, Ad_i)} \bigotimes_{v\notin S} f_{{}^{\wt{w}_v}\wt{\pi}_v} \otimes \bigotimes_{v\in S} T(\widetilde{w}_v, s, \wt{\pi}_v) f_v.$$
Here $L^S( s, \wt{\pi}, Ad_i)$ is the automorphic partial $L$-function (cf. section \ref{auto L-ftn}) associated with the adjoint representations $Ad_{\wt{\mfr{u}}^\vee}=\bigoplus_{i=1}^m Ad_i$ of ${}^L\wt{M}$ on $\wt{\mfr{u}}^\vee$. More explicitly, it is given by
$$L^S( s, \wt{\pi}, Ad_i)=\prod_{v\notin S} L( s, \wt{\pi}_v, Ad_i),$$
where the local $L$-function $L( s, \wt{\pi}_v, Ad_i)$ is the one determined in Theorem \ref{L gen coeff}.
\end{thm}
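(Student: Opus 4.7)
The plan is to factor the global intertwining operator into local ones, apply the local Gindikin--Karpelevich computation of Theorem \ref{L gen coeff} at each unramified place, and then reassemble the local Euler factors into a ratio of partial $L$-functions. By the construction of the global induced representation $I(s, \wt{\pi}) = \bigotimes_v I(s, \wt{\pi}_v)$ and the fact that the chosen representative $w$ of $\w \in W$ lies in $\mbf{G}(F)$ and splits to $\widetilde{w}_v = \wt{w}_v$ in $s_K(W^{K_{G_v}})$ for almost all $v$, the global operator decomposes as $T(w, s, \wt{\pi}) = \bigotimes_v T(\widetilde{w}_v, s, \wt{\pi}_v)$. Outside of a sufficiently large finite set $S \supseteq \set{v: \wt{\pi}_v \text{ ramified or } \widetilde{w}_v \ne \wt{w}_v}$, the operator $T(\wt{w}_v, s, \wt{\pi}_v)$ acts between unramified principal series and is computed by Theorem \ref{L gen coeff} applied to the twisted representation $\pmb{\delta}^s_v \otimes \wt{\pi}_v$, so the proof reduces to identifying the resulting local Euler factors with values of $L(s, \wt{\pi}_v, Ad_i)$ at a shifted argument.

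The key technical step is to verify the shift in the spectral parameter. If $\wt{\pi}_v \hookrightarrow I(\wt{\chi}_v)$ for an unramified genuine character $\wt{\chi}_v$ of $Z(\wt{T}_v)$, then $\pmb{\delta}^s_v \otimes \wt{\pi}_v \hookrightarrow I(\pmb{\delta}^s_v \wt{\chi}_v)$, and by Theorem \ref{L rank1 coeff} the eigenvalue of the adjoint action on the root vector $E_{\alpha^\vee_{[n]}}$ is $\pmb{\delta}^s_v \wt{\chi}_v\bigl(\wt{h}_\alpha(\varpi^{n_\alpha})\bigr)$. Since $\pmb{\delta}^s_v$ is the inflation of $|\beta_P(\cdot)|_v^s$ to $\wt{M}_v$, this eigenvalue equals
\[
q_v^{-n_\alpha \angb{\beta_P}{\alpha^\vee} s} \cdot \wt{\chi}_v\bigl(\wt{h}_\alpha(\varpi^{n_\alpha})\bigr).
\]
For $\alpha \in \Psi^+$ contributing to the irreducible piece $V_i$, one has $\angb{\beta_P/n_\beta}{\alpha^\vee_{[n]}} = i$, i.e.\ $n_\alpha \angb{\beta_P}{\alpha^\vee} = n_\beta i$. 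Consequently, for any complex $s'$,
\[
L(s', \pmb{\delta}^s_v \otimes \wt{\pi}_v, Ad_i) = L(s' + n_\beta i \cdot s, \wt{\pi}_v, Ad_i),
\]
since each Euler factor along the roots in $V_i$ transforms by this common shift.

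Combining this with Theorem \ref{L gen coeff}, the local coefficient $c(\w, \pmb{\delta}^s_v \otimes \wt{\pi}_v)$ at each $v \notin S$ becomes
\[
\prod_{i=1}^m \frac{L(n_\beta i \cdot s, \wt{\pi}_v, Ad_i)}{L(1 + n_\beta i \cdot s, \wt{\pi}_v, Ad_i)}.
\]
Applied to the unramified vector $f_{\wt{\pi}_v}$, the operator sends it to this scalar times $f_{{}^{\wt{w}_v}\wt{\pi}_v}$, and taking the Euler product over $v \notin S$ converts these local factors into the partial automorphic $L$-functions $L^S(n_\beta i \cdot s, \wt{\pi}, Ad_i)$ and $L^S(1 + n_\beta i \cdot s, \wt{\pi}, Ad_i)$ of Definition in section \ref{auto L-ftn}. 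The ramified places $v \in S$ contribute the honest local intertwining operators $T(\widetilde{w}_v, s, \wt{\pi}_v) f_v$, which are left untouched. Recombining the tensor factors yields the claimed formula.

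The main obstacle is the careful bookkeeping of the twist by $\pmb{\delta}^s$: one must check that $\pmb{\delta}^s_v$ restricts correctly to $Z(\wt{T}_v)$ through the inflation $\wt{M}_v \to M_v$ so that its value on $\wt{h}_\alpha(\varpi^{n_\alpha})$ is controlled purely by the pairing $\angb{\beta_P}{\alpha^\vee}$, and that the grouping of roots by the integer $\angb{\beta_P/n_\beta}{\alpha^\vee_{[n]}}$ matches the decomposition $Ad_{\wt{\mfr{u}}^\vee} = \bigoplus_i Ad_i$ identified in section \ref{max parab}. Once this matching is in place, the rest is a formal consequence of Theorem \ref{L gen coeff} and the Euler product definition of $L^S(s, \wt{\pi}, Ad_i)$.
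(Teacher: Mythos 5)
Your proposal is correct and follows essentially the same route as the paper: decompose the global intertwining operator into local ones, apply Theorem \ref{L gen coeff} at unramified places to the twisted representation $\pmb{\delta}^s_v \otimes \wt{\pi}_v$, and then verify the parameter shift $L(0, \pmb{\delta}^s_v \otimes \wt{\pi}_v, Ad_i) = L(n_\beta i \cdot s, \wt{\pi}_v, Ad_i)$ via the identity $n_\alpha\angb{\beta_P}{\alpha^\vee} = \angb{\beta_P}{\alpha^\vee_{[n]}} = n_\beta i$ for roots $\alpha$ contributing to $V_i$. The only cosmetic difference is that the paper phrases the twist by introducing the auxiliary character $\chi_{\pmb{\delta}^s}$ of $\mbf{T}^\dagger(\A)$ and writing $I(s,\wt{\pi}) \hookrightarrow I(\wt{\chi} \otimes \chi_{\pmb{\delta}^s})$ explicitly, whereas you absorb this into the eigenvalue computation directly; the substance is identical.
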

\begin{proof}
Observe that the adjoint representation $Ad_{\wt{\mfr{u}}^\vee}$ of the global ${}^L\wt{M}$ on $\wt{\mfr{u}}^\vee$ restricts to a representation of the local ${}^L\wt{M}_v$, which is just the adjoint representation of ${}^L\wt{M}_v$ on $\wt{\mfr{u}}^\vee$. They both factor through the complex adjoint representation $Ad^\C$ of $\wt{M}^\vee$. We will use $Ad=\bigoplus_{i=1}^m Ad_i$ to represent both local and global situations, and no confusion will arise.

Thus, in view of Theorem \ref{L gen coeff} it suffices to show that  for almost all $v$ the equality $L(0, \pmb{\delta}^s_v \otimes \wt{\pi}_v, Ad_i)=L(n_\beta i \cdot s, \wt{\pi}_v, Ad_i)$ holds.
Write $\mbf{T}^\dag(\A)$ for the image of $Z(\wm{T}(\A))$ in $\mbf{T}(\A)$. Consider the character 
$$\xymatrix{
\chi_{\pmb{\delta}^s}: \quad \mbf{T}^\dag(\A) \ar@{^(->}[r] & \mbf{T}(\A) \ar@{^(->}[r] & \mbf{M}(\A) \ar[r]^-{\pmb{\delta}^s} & \C^\times,
}$$
which can be decomposed as $\chi_{\pmb{\delta}^s}=\bigotimes_v \chi_{\pmb{\delta}^s_v}$. We then have $I(s, \wt{\chi}) \simeq I(\wt{\chi}\otimes \chi_{\pmb{\delta}^s})$ and thus $\xymatrix{ I(s, \wt{\pi}) \ar@{^(->}[r] & I(\wt{\chi}\otimes \chi_{\pmb{\delta}^s})}$. Therefore we have locally for $v\notin S$,
\begin{align*}
L(0, \pmb{\delta}^s_v \otimes \wt{\pi}_v, Ad_i) &= \prod_{\substack{\alpha\in \Psi^+ \\ \angb{\beta_P/n_\beta}{\alpha_{[n]}^\vee}=i}} L(0, Ad_\alpha \circ \rho_{\wt{\chi}_v \otimes \chi_{\pmb{\delta}^s_v}}) \\
&= \prod_{\substack{\alpha\in \Psi^+ \\ \angb{\beta_P/n_\beta}{\alpha_{[n]}^\vee}=i}} \frac{1}{1-\wt{\chi}_v\otimes \chi_{\pmb{\delta}^s_v}\big( \wt{h}_\alpha(\varpi_v^{n_\alpha})\big)} \\
&=\prod_{\substack{\alpha\in \Psi^+ \\ \angb{\beta_P/n_\beta}{\alpha_{[n]}^\vee}=i}}  \frac{1}{1-\wt{\chi}_v\big( \wt{h}_\alpha(\varpi^{n_\alpha})\big) \cdot |\varpi_v^{\angb{\beta_P}{\alpha_{[n]}^\vee}}|_F^s} \\
&=\prod_{\substack{\alpha\in \Psi^+ \\ \angb{\beta_P/n_\beta}{\alpha_{[n]}^\vee}=i}} L(n_\beta i \cdot s, Ad_\alpha \circ \rho_{\wt{\chi}_v}) ,
\end{align*}
which is clearly equal to $L(n_\beta i\cdot s, \wt{\pi}_v, Ad_i)$. The proof is completed.
\end{proof}

In view of Theorem \ref{ppty Eisen}, we immediately have
\begin{cor}
The product of the partial $L$-functions
$$\prod_{i=1}^m \frac{L^S( n_\beta i \cdot s, \wt{\pi}, Ad_i)}{L^S(1+ n_\beta i \cdot s, \wt{\pi}, Ad_i)}$$
has meromorphic continuation to the whole complex plane $\C$. In particular, if $m=1$,
the $L$-function $L^S(s, \wt{\pi}, Ad)$ has meromorphic continuation to $\C$.
\end{cor}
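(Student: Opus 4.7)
The plan is to extract the meromorphic continuation of the ratio of $L$-functions from that of the global intertwining operator $T(w, s, \wt{\pi})$, which is guaranteed by Theorem \ref{ppty Eisen}. By Theorem \ref{L/L for T}, for any factorizable section $f = \bigotimes_{v \notin S} f_{\wt{\pi}_v} \otimes \bigotimes_{v \in S} f_v$,
$$T(w, s, \wt{\pi}) f \;=\; \Bigg(\prod_{i=1}^m \frac{L^S(n_\beta i \cdot s, \wt{\pi}, Ad_i)}{L^S(1+ n_\beta i \cdot s, \wt{\pi}, Ad_i)}\Bigg) \cdot \bigotimes_{v \notin S} f_{{}^{\wt{w}_v}\wt{\pi}_v} \otimes \bigotimes_{v \in S} T(\widetilde{w}_v, s, \wt{\pi}_v) f_v.$$
The left-hand side is a meromorphic function of $s$ valued in $\mathbf{I}(-s, {}^w\wt{\pi})$. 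To isolate the scalar product of partial $L$-functions, I will choose test data at the places of $S$ carefully.

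The central step is to verify that each local intertwining operator $T(\widetilde{w}_v, s, \wt{\pi}_v)$, viewed as a meromorphic family of operators, is not identically zero: then one can find, for each $v \in S$, a vector $f_v$ and a continuous linear functional $\ell_v$ on $I(-s,{}^{\widetilde{w}_v}\wt{\pi}_v)$ such that $s \mapsto \ell_v\bigl(T(\widetilde{w}_v, s, \wt{\pi}_v) f_v\bigr)$ is a nonzero meromorphic function on $\mathbf{C}$. This is a standard consequence of the local theory of intertwining operators for covering groups (their rational structure and nonvanishing of the normalized operators on the principal series), combined with the fact that at the unramified places away from $S$ the coefficient computed above is already an explicit nonzero meromorphic function. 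Pairing the identity displayed above against $\bigotimes_{v \notin S} \ell_{v,\text{un}} \otimes \bigotimes_{v \in S} \ell_v$, where $\ell_{v,\text{un}}$ is the evaluation at the normalized unramified vector, realizes the product $\prod_{i=1}^m L^S(n_\beta i \cdot s, \wt{\pi}, Ad_i)/L^S(1+ n_\beta i \cdot s, \wt{\pi}, Ad_i)$ as a quotient of two meromorphic functions on $\mathbf{C}$, hence itself meromorphic.

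For the case $m = 1$, I would bootstrap from the meromorphic continuation of $s \mapsto L^S(n_\beta s, \wt{\pi}, Ad) / L^S(1 + n_\beta s, \wt{\pi}, Ad)$ to that of $L^S(s, \wt{\pi}, Ad)$ by an elementary shifting argument. Namely, the Euler product defining $L^S(s, \wt{\pi}, Ad)$ converges absolutely in some right half-plane $\operatorname{Re}(s) > \sigma_0$ (since each local factor is given by the explicit formula of Theorem \ref{L rank1 coeff} and the usual estimates for Langlands parameters of unramified representations apply), so $L^S(1 + n_\beta s, \wt{\pi}, Ad)$ is holomorphic and nonvanishing for $\operatorname{Re}(s) > (\sigma_0 - 1)/n_\beta$. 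Multiplying the ratio by this denominator gives meromorphic continuation of $L^S(n_\beta s, \wt{\pi}, Ad)$ to a strictly larger half-plane. Iterating this, shifting by $1/n_\beta$ at each step, covers all of $\mathbf{C}$.

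The hard part will be establishing the nonvanishing and meromorphy of the local intertwining operators $T(\widetilde{w}_v, s, \wt{\pi}_v)$ at the finitely many bad places $v \in S$, i.e.\ giving a rigorous argument that these operators are rational (or at least meromorphic) in $q_v^{-s}$ and not identically zero; this relies on the general local theory of principal series on Brylinski--Deligne covers, which parallels the linear case but requires the genuine-representation-theoretic input from Stone--von Neumann as in Proposition \ref{Stone-Von}. Once that is in place, both assertions follow by the argument above.
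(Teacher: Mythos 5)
Your argument follows the same route the paper intends: the paper states the corollary as an immediate consequence of Theorem \ref{ppty Eisen} (meromorphic continuation of $T(w,s,\wt{\pi})$) together with the factorization of Theorem \ref{L/L for T}, and your proof simply fills in the implicit steps -- isolating the scalar by pairing against suitable test data and using meromorphy/nonvanishing of the local operators at $S$, then running the standard shifting argument for the $m=1$ case. The proposal is correct, and the details you flag as "the hard part" (rationality in $q_v^{-s}$ and nonvanishing of the local $T(\widetilde{w}_v, s, \wt{\pi}_v)$) are indeed exactly what the paper's "immediately" is glossing over.
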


\begin{rmk} \label{ppty of L/L}
If $m\ge 2$, one would like to show the meromorphic continuation of each $L^S(s, \wt{\pi}, Ad_i)$. However, in this case, the induction method for linear algebraic groups as in \cite[Lm. 4.2]{Sha88} or \cite[Prop. 4.1]{Sha90} can not be applied directly. The proof of the induction lemma mentioned in these references is largely due to a classification of the pair of reductive group and the Levi subgroups of its maximal parabolic subgroups. Such classification is lacking in the covering group setting. It would be interesting to see how this and other analytic properties of the individual $L$-functions above could be achieved, by either using an analogous method or alternatives.
\end{rmk}

\section{The residual spectrum for $\wt{\mbf{SL}}_2(\A)$} \index{residual spectrum}
In this section, we determine completely the residual spectrum for $\wm{SL}_2(\A)$ associated with arbitrary $n\in \N_{\ge 1}$ and quadratic form $Q$ on $Y=Y^{sc}$ of $\mbf{SL}_2$.

Therefore, we fix $n$ such that $\mu_n\subseteq F^\times$. Let $\alpha^\vee$ be the positive coroot, then $Q$ is uniquely determined by $Q(\alpha^\vee) \in \Z$. We can readily compute the complex dual group $\wt{SL}_2^\vee$ and the $L$-group ${}^L\wt{SL}_2$. There are two cases accordingly.
\begin{enumerate}
\item[(1)] $\text{gcd}(n_\alpha, 2)=1$, then $\wt{T}^\vee \simeq \C^\times$ and $\wt{SL}_2^\vee=\mbf{PGL}_2(\C)$. By construction there are canonical isomorphisms ${}^L\wt{T}\simeq \C^\times \times \W_F$ and ${}^L\wt{SL}_2\simeq \mbf{PGL}_2(\C) \times \W_F$.
\item[(2)] $\text{gcd}(n_\alpha, 2)=2$, then $\wt{SL}_2^\vee=\mbf{SL}_2(\C)$. In this case, the $L$-group ${}^L\wt{SL}_2$ is isomorphic to $\mbf{SL}_2(\C) \times \W_F$, but not canonically so.
\end{enumerate}

We obtain the global $n$-covering:
$$\seq{\mu_n}{\wt{\mbf{SL}}_2(\A)}{\mbf{SL}_2(\A)}.$$

Let $\mbf{P}=\mbf{B}=\mbf{T}\mbf{N}$ be the parabolic (Borel) subgroup of $\mbf{SL}_2$. Let $\wt{\pi}$ be a genuine irreducible representation of $\wt{\mbf{T}}(\A)$. By \cite{We14} or \cite[Thm. 4.15]{We14-2}, the group  $\mbf{T}(F)Z(\wt{\mbf{T}}(\A))$ is a maximal abelian subgroup of $\wt{\mbf{T}}(\A)$, and it follows that
\begin{equation}\label{pi and chi}
\wt{\pi} \simeq \text{Ind}_{\mbf{T}(F)Z(\wt{\mbf{T}}(\A))}^{\wt{\mbf{T}}(\A)} \wt{\chi},
\end{equation}
where $\wt{\chi}$ is a global genuine unitary character of $\mbf{T}(F)Z(\wt{\mbf{T}}(\A))$ that is trivial on the intersection $\mbf{T}(F) \cap Z(\wt{\mbf{T}}(\A))$. Since there is the surjection
$$\xymatrix{
\wt{\mbf{T}}_{Q,n}(\A) \ar@{>>}[r] &Z(\wt{\mbf{T}}(\A)),
}$$
the character $\wt{\chi}$ could be viewed as one of $\wt{\mbf{T}}_{Q,n}(\A)$ which descends.

We have $\wt{\pi}=\bigotimes_v \wt{\pi}_v$, where $\wt{\pi}_v=\text{Ind}_{Z(\wt{T}_v)}^{\wt{T}_v} \wt{\chi}_v$, and for almost all $v$ it is unramified with unramified character $\wt{\chi}_v$. We could compute the Satake parameter $\rho_{\wt{\chi}_v}(\text{Frob}_v)\in {}^L\wt{SL}_2$ and there are two cases as above. 

\begin{enumerate}
\item[(1)] $\text{gcd}(n_\alpha, 2)=1$, then the Satake parameter is given by
$$\rho_{\wt{\chi}_v}(\text{Frob}_v)=\Bigg(
\begin{bmatrix}
\wt{\chi_v}\big(\wt{h}(\varpi_v^{n_\alpha})\big)  & 0 \\
0  & 1
\end{bmatrix}, \text{Frob}_v\Bigg) \in {}^L\wt{T}_v \subseteq \mbf{PGL}_2(\C)\times \W_{F_v}.$$
\item[(2)] $\text{gcd}(n_\alpha, 2)=2$. Identify $\rho_{\wt{\chi}_v}$ with a splitting of $E_{\wt{T}_v}$ over $F_v$, and the Satake parameter is determined by $\rho_{\wt{\chi}_v}(\varpi_v) \in E_{\wt{T}_v}$. Meanwhile, we may identify $E_{\wt{T}_v}$ with $\wt{T}^\vee_v \times E_{\wt{G}_v}\big/ \nabla Z(\wt{SL}_2^\vee)$ as in Proposition \ref{L-T=p.o.}, with respect to which we write $\rho_{\wt{\chi}_v}(\varpi_v)=(s_v, e_v) \in \wt{T}^\vee_v \times E_{\wt{G}_v}\big/ \nabla Z(\wt{SL}_2^\vee)$. By tracing through the discussion from Proposition \ref{L-T=p.o.} to Corollary \ref{ad-t}, it is possible to obtain explicit form of $s_v$ and $e_v$. However, for our purpose, it suffices to note that the element (not uniquely determined, only so in the quotient of $\mbf{SL}_2(\C)$ modulo its center)
$$s_v=\begin{bmatrix}
z_v  & 0 \\
0  & z^{-1}_v
\end{bmatrix} \in \mbf{SL}_2(\C)$$
always has the property that $z_v^2=\wt{\chi_v}\big(\wt{h}(\varpi_v^{n_\alpha})\big)$.
\end{enumerate}

The fundamental weight for $\mbf{P}$ is $\beta_P=\alpha/2$. Considering the induced representation $\wt{\sigma}=I(s, \wt{\pi})$, we form the Eisenstein series $E(s, \wt{\pi}, \phi, \wt{g})$ as before. 

Clearly in our case $\mbf{P}$ is self-associated, and thus the pole of $E(s, \wt{\pi}, \phi, \wt{g})$ agrees with that of $E_P(s, \wt{\pi}, \phi, \wt{g})$, which is given by (cf. Theorem \ref{ppty CTE})
$$E_P(s, \wt{\pi}, \phi, \wt{g}) =\phi_s(\wt{g}) + T(w, s, \wt{\pi}) \phi_s(\wt{g}).$$

Let $f=\bigotimes_{v\notin S} f_{\wt{\pi}_v} \otimes \bigotimes_{v\in S} f_v \in \wt{\sigma}$, by Theorem \ref{L/L for T}, we have
$$T(w, s, \wt{\pi}) f=\frac{L^S( n_\alpha s, \wt{\pi}, Ad)}{L^S(1+n_\alpha s, \wt{\pi}, Ad)} \bigotimes_{v\notin S} f_{{}^{\wt{w}_v}\wt{\pi}_v} \otimes \bigotimes_{v\in S} T(\widetilde{w}_v, s, \wt{\pi}_v) f_v.$$

Also,
$$L^S( n_\alpha s, \wt{\pi}, Ad) =\prod_{v\notin S} L( n_\alpha s, \wt{\pi}_v, Ad)=\prod_{v\notin S} \frac{1}{1-q_v^{-n_\alpha s} \wt{\chi}(\wt{h}_\alpha(\varpi_v^{n_\alpha}))}.$$

To state it in another way, consider the diagram below
$$\xymatrix{
\mu_n \ar@{^(->}[r] & Z(\wt{\mbf{T}}(\A)) \ar@{>>}[r] & \mbf{T}^\dag(\A) \\
& & \mbf{T}^\ddag(\A) \ar@{^(-->}[lu]^-{\s_{\A}} \ar@{^(->}[u] \\
& & \mbf{T}_{Q,n}^{sc}(\A) \ar@{>>}[u]_-{(-)^{n_\alpha}}.
}$$
First we explain the groups and maps in the diagram. The group $\mbf{T}^\dag(\A)$ (respectively $\mbf{T}^\ddag(\A)$) is the image of $\mbf{T}_{Q,n}(\A)$ (resp. $\mbf{T}_{Q,n}^{sc}(\A)$) in $\mbf{T}(\A)$ induced from the embedding $\xymatrix{i_{Q,n}: Y_{Q,n} \ar@{^(->}[r] & Y}$ (resp. $\xymatrix{i_{Q,n}^{sc}: Y_{Q,n}^{sc} \ar@{^(->}[r] & Y}$) of the rank-one lattices. If we identify $\mbf{T}_{Q,n}^{sc}(\A)$ and $\mbf{T}(\A)$ both with $\A^\times$, then the induced map $\xymatrix{\mbf{T}_{Q,n}^{sc}(\A) \ar[r] & \mbf{T}(\A)}$ is simply the $n_\alpha$-power, whence the notation $(-)^{n_\alpha}$ in the diagram.

Moreover, the extension $Z(\wm{T}(\A))$ of $\mbf{T}^\dag(\A)$ splits over $\mbf{T}^\ddag(\A) \subseteq \mbf{T}^\dag(\A)$, which is given by
$$\xymatrix{
\s_{\A}: \quad \big((a_v)_v\big)^{n_\alpha} \ar@{|->}[r] & \big(\wt{h}_\alpha(a_v^{n_\alpha}) \big)_v
}$$
for any $(a_v)_v\in \A^\times \simeq \mbf{T}_{Q,n}^{sc}(\A)$.

Using this splitting,  let $\chi^{sc}=\bigotimes_v \chi^{sc}_v$ be the following composition
$$\xymatrix{
\chi^{sc}=\wt{\chi} \circ s_{\A} \circ (-)^{n_\alpha}: \quad \A^\times \ar@{>>}[r] & \mbf{T}^\ddag(\A) \ar@{^(->}[r] & Z(\wm{T}(\A)) \ar[r]^-{\wt{\chi}} &\C^\times.
}$$

Then $\chi^{sc}$ is a unitary Hecke character and it follows $L(n_\alpha s, \wt{\pi}_v, Ad)=L(n_\alpha s, \chi^{sc}_v)$ for $v\notin S$. Fix an additive character $\xymatrix{\psi=\bigotimes_v \psi_v: \A \ar[r] & \C}$. Then we can normalize and rewrite above formula for $T(w, s, \wt{\pi})$ as
$$T(w, s, \wt{\pi}) f=\frac{L( n_\alpha s, \chi^{sc})}{L( 1+ n_\alpha s, \chi^{sc})} \bigotimes_{v\notin S} f_{{}^{\wt{w}_v}\wt{\pi}_v} \otimes \bigotimes_{v\in S} N(\widetilde{w}_v, s, \wt{\pi}_v) f_v,$$
where for all $v\in |F|$,
$$N(\widetilde{w}_v, s, \wt{\pi}_v) f_v=\frac{L( 1+n_\alpha s, \chi^{sc}_v)}{L(n_\alpha s, \chi^{sc}_v) \varepsilon(n_\alpha s, \chi^{sc}_v, \psi_v)} T(\widetilde{w}_v, s, \wt{\pi}_v) f_v.$$

To determine the residual spectrum, we require
\begin{lm}
For all $v\in S$, the normalized operator $N(\widetilde{w}_v, s, \wt{\pi}_v)$ is holomorphic and nonvanishing for $\text{Re}(s)>0$.
\end{lm}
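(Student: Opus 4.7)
The plan is to reduce the statement to an analysis of a local Tate zeta integral for the Hecke character $\chi^{sc}_v$. First, I would realize $\wt{\pi}_v \simeq \text{Ind}_{Z(\wt{T}_v)}^{\wt{T}_v}\wt{\chi}_v$ concretely via the Stone–von Neumann theorem (Proposition \ref{Stone-Von}), so that vectors in $I(s,\wt{\pi}_v)$ can be written as explicit functions on $\wt{G}_v$ supported on the big Bruhat cell. Because $\mbf{G} = \mbf{SL}_2$, the Levi is $\wt{T}_v$ itself, so the intertwining integral
\[
T(\widetilde{w}_v, s, \wt{\pi}_v) f_v(\wt{g}) = \int_{U_{\alpha,v}} f_v(\widetilde{w}_v^{-1} \wt{e}_\alpha(u)\, \wt{g})\, du
\]
is already a one-variable integral over $F_v$. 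Using Lemma \ref{BD elements}, I would rewrite the integrand on the region $|u|_{F_v}>1$ as
\[
f_v\bigl( \wt{h}^{[1]}_\alpha(u^{-1})\, \wt{e}_\alpha(-u)\, \wt{e}_{-\alpha}(-u^{-1})\, \wt{g}\bigr),
\]
which after discarding the final unipotent factor and applying the transformation law of $f_v$ under $\wt{B}_v$ pulls the dependence on $u$ out into the character $\wt{\chi}_v \circ \wt{h}^{[1]}_\alpha$ times a power of $|u|_{F_v}$.

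Next, I would identify the resulting integral, after the substitution $u \mapsto u^{-1}$ and the change of variables $a=u^{n_\alpha}$ on $\msc{O}_v^\times$-cosets dictated by Lemma \ref{add meas}, with a Tate zeta integral
\[
Z(n_\alpha s, \chi^{sc}_v, \Phi_v)
\]
for some Schwartz–Bruhat function $\Phi_v$ on $F_v$ depending on $f_v$ and the splitting $\s_{\A}$; the key point is that by construction $\wt{\chi}_v\bigl(\wt{h}_\alpha(a^{n_\alpha})\bigr) = \chi^{sc}_v(a)$, so the character against which one integrates is exactly the unitary character $\chi^{sc}_v$. The contribution from $|u|_{F_v}\leq 1$, which gave the ``$1$'' in the unramified computation, becomes a compactly supported piece whose Mellin transform is entire; combined with the piece just described, one gets that $T(\widetilde{w}_v, s, \wt{\pi}_v) f_v$ is, up to an entire factor depending only on $f_v$, a scalar multiple of $Z(n_\alpha s, \chi^{sc}_v, \Phi_v)$.

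Once this identification is in place, the statement follows from Tate's local theory. The normalized Tate integral $Z(s', \chi^{sc}_v, \Phi_v)/L(s', \chi^{sc}_v)$ is entire in $s'$, and Tate's local functional equation
\[
Z(1-s', (\chi^{sc}_v)^{-1}, \widehat{\Phi}_v) = \gamma(s', \chi^{sc}_v, \psi_v)\, Z(s', \chi^{sc}_v, \Phi_v)
\]
combined with $\gamma(s',\chi^{sc}_v,\psi_v) = \varepsilon(s',\chi^{sc}_v,\psi_v)\, L(1-s',(\chi^{sc}_v)^{-1})/L(s',\chi^{sc}_v)$ shows that the precise quotient
\[
\frac{L(1+n_\alpha s,\chi^{sc}_v)}{L(n_\alpha s,\chi^{sc}_v)\,\varepsilon(n_\alpha s,\chi^{sc}_v,\psi_v)}\, T(\widetilde{w}_v,s,\wt{\pi}_v)f_v
\]
coincides with a properly normalized Tate integral in the dual variable $1+n_\alpha s$. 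Since $\chi^{sc}_v$ is unitary, $L(1+n_\alpha s,\chi^{sc}_v)$ has neither zeros nor poles for $\text{Re}(s)>0$, and the normalized zeta integral is entire and nonvanishing on that half-plane by standard estimates. This gives the holomorphy and nonvanishing of $N(\widetilde{w}_v,s,\wt{\pi}_v)$.

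The main obstacle will be the bookkeeping in the second step: one must track the Brylinski–Deligne cocycle data through the change of variables so that the genuine character $\wt{\chi}_v$ really does collapse onto the linear character $\chi^{sc}_v$ at the level of the integrand, and verify that the Schwartz function produced is genuinely a Schwartz–Bruhat function on $F_v$ (in particular that the $K_v$-finiteness of $f_v$ translates into the correct local regularity). Once this translation is set up correctly, invoking Tate's thesis is routine.
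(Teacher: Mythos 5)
Your proposal takes a much more explicit route than the paper's own proof, which simply asserts (in one line) that it is ``easy to check'' the nonvanishing of $T(\widetilde{w}_v,s,\wt{\pi}_v)$ and of $L(s,\chi^{sc}_v)$ on $\text{Re}(s)>0$, and that $L(s,\chi^{sc}_v)$ is pole-free there because $\chi^{sc}_v$ is unitary. In effect you are unwinding what ``easy to check'' means: realizing the rank-one intertwining integral as a local Tate zeta integral $Z(n_\alpha s,\chi^{sc}_v,\Phi_v)$ so that the holomorphy of $T$ (and hence of the normalized operator) follows from the entirety of $Z/L$, and nonvanishing follows from Tate's theory of the local gcd. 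This is the right substance behind the terse claim, and it buys the reader an actual argument rather than an assertion; the paper buys brevity.

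Two cautions. First, your phrase ``the normalized zeta integral is entire and nonvanishing on that half-plane by standard estimates'' overclaims: $Z(s',\chi^{sc}_v,\Phi_v)/L(s',\chi^{sc}_v)$ is entire for \emph{every} Schwartz--Bruhat $\Phi_v$, but it is \emph{not} nonvanishing for every $\Phi_v$; nonvanishing of the operator $N(\widetilde{w}_v,s,\wt{\pi}_v)$ requires only that for each fixed $s$ there exist some $f_v$ (equivalently some $\Phi_v$, e.g.\ one adapted to the conductor of $\chi^{sc}_v$) with $Z/L$ nonzero, which is the standard gcd statement of Tate's thesis. You should phrase it that way. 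Second, your invocation of the local functional equation appears to put the dual parameter at $1+n_\alpha s$ where it should land at $1-n_\alpha s$; in fact the functional equation is not needed here at all, since the holomorphy of $Z/L$, the absence of poles of $L(n_\alpha s,\chi^{sc}_v)$ and $L(1+n_\alpha s,\chi^{sc}_v)$ on $\text{Re}(s)>0$ for unitary $\chi^{sc}_v$, and the nonvanishing of the $\varepsilon$-factor already close the argument. Finally, the Brylinski--Deligne cocycle bookkeeping you flag as the ``main obstacle'' is indeed the nontrivial part of the reduction; the unramified version of exactly this bookkeeping is carried out in Proposition \ref{rank1 coeff} and Lemma \ref{BD elements}, which you could cite as a template, but the ramified case needs to be written out before the Tate-integral identification can be invoked.
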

\begin{proof}
It is easy to check the nonvanishing of $T(\widetilde{w}_v, s, \wt{\pi}_v)$ and $L(s, \chi^{sc}_v)$ for $\text{Re}(s)>0$. Moreover, since $\chi_v^{sc}$ is unitary, the local $L(s, \chi^{sc}_v)$ contains no poles.

The gives the desired result. 
\end{proof}

Also,
\begin{lm}
For all $v\in |F|$, the images of $N(\widetilde{w}_v, 1/n_\alpha, \wt{\pi}_v)$ and $T(\widetilde{w}_v, 1/n_\alpha, \wt{\pi}_v)$ are both irreducible and nonzero.
\end{lm}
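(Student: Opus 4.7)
The plan is to identify, at each place $v$, the image of the intertwining operator at $s = 1/n_\alpha$ with the unique irreducible Langlands quotient of the reducible principal series $I(1/n_\alpha, \wt{\pi}_v)$, which simultaneously yields nonvanishing and irreducibility.

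For nonvanishing, I would first invoke the previous lemma: $N(\widetilde{w}_v, s, \wt{\pi}_v)$ is holomorphic and nonvanishing on $\text{Re}(s) > 0$, so $N(\widetilde{w}_v, 1/n_\alpha, \wt{\pi}_v)$ is automatically a nonzero operator. The operator $T$ differs from $N$ by the meromorphic scalar
$$\frac{L(n_\alpha s, \chi^{sc}_v)\,\varepsilon(n_\alpha s, \chi^{sc}_v, \psi_v)}{L(1+n_\alpha s, \chi^{sc}_v)},$$
whose leading term at $s = 1/n_\alpha$ is nonzero (a simple pole if $\chi^{sc}_v$ is the trivial unramified character, a nonzero value otherwise), so $T$ (or its appropriate residue) has the same image as $N$ up to a nonzero scalar, which gives nonvanishing of the image.

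For irreducibility, I would analyze the Jordan-H\"older structure of $I(1/n_\alpha, \wt{\pi}_v)$ via its Jacquet module along the unipotent radical. The geometric lemma, adapted to covering groups using the Stone-von Neumann description of $\wt{\pi}_v$, produces a two-step filtration of the Jacquet module with graded pieces $\pmb{\delta}^{1/n_\alpha}_v \otimes \wt{\pi}_v$ and $\pmb{\delta}^{-1/n_\alpha}_v \otimes {}^{\widetilde{w}_v}\wt{\pi}_v$. These two graded pieces have distinct central characters on the unramified part of $Z(\wt{T}_v)$ (differing by $|\cdot|_v^{2/n_\alpha}$), so by Frobenius reciprocity $I(1/n_\alpha, \wt{\pi}_v)$ has Jordan-H\"older length at most two. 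The vanishing of the scalar governing $T(\widetilde{w}_v, -1/n_\alpha, {}^{\widetilde{w}_v}\wt{\pi}_v) \circ T(\widetilde{w}_v, 1/n_\alpha, \wt{\pi}_v)$ then forces $T$ not to be an isomorphism, so the length is exactly two and the image of $T$ is precisely the unique irreducible Langlands quotient.

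The hard part will be verifying that the local principal series is genuinely reducible of length two at $s = 1/n_\alpha$, rather than remaining irreducible. For finite places the Plancherel-measure argument based on the explicit rank-one computation in the Gindikin-Karpelevich chapter handles this, but for archimedean places one needs the additional input of Kostant's classification of composition series of degenerate principal series of $\wt{\mbf{SL}}_2(\R)$ (equivalently, the analysis of the Weil representation for the metaplectic cover), adapted to the specific central cover $\wt{\mbf{SL}}_2(F_v)$ occurring here.
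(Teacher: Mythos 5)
Your proposal and the paper's proof land on the same \emph{conclusion} (the image is the Langlands quotient) but travel along very different roads, and the road you chose is considerably harder than it needs to be.

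The paper's argument is two lines long: the normalizing factor neither vanishes nor has a pole at $s = 1/n_\alpha$, so the images of $N$ and $T$ coincide up to scalar; and then, since $s = 1/n_\alpha > 0$, one simply cites the Langlands quotient theorem for finite central extensions of $p$-adic groups (Ban-Jantzen, \cite{BaJa13}), which states directly that at a positive Langlands parameter the image of the standard intertwining operator is the (unique, irreducible, nonzero) Langlands quotient. That theorem does the entire job in one stroke. In particular it is completely indifferent to whether $I(1/n_\alpha, \wt{\pi}_v)$ is reducible or not: if it is irreducible the operator is an isomorphism and the image is the whole thing; if it is reducible the image is the proper Langlands quotient. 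Either way the image is irreducible and nonzero.

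What you are proposing is a from-scratch verification of (part of) the Langlands quotient theorem for this particular induced representation: a geometric-lemma computation of the Jacquet module, a central-character comparison to bound the length by two, and then a Plancherel-measure argument (vanishing of $T \circ T$) to establish strict reducibility. This is a coherent route in principle, but it buys you nothing the citation doesn't already give, and it would require developing the geometric lemma for BD covers --- a nontrivial undertaking that is nowhere in the paper. More importantly, you identify ``verifying that the local principal series is genuinely reducible of length two'' as the hard part; but that fact is simply not needed, and the paper says so explicitly in the sentence immediately following the lemma (``In fact, one can show that $s=1/n_\alpha$ is a reducibility point\ldots though we do not need such fact here''). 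Your $T \circ T$ vanishing argument also silently assumes $\chi^{sc}_v$ is trivial, which the lemma as stated does not impose; at a place where that GK-factor product does not vanish your filtration argument would show length one, but the desired conclusion would still hold via the isomorphism case. So the logic should branch, and the branching collapses precisely to the statement of the Langlands quotient theorem. Your archimedean worry is genuine --- \cite{BaJa13} is a $p$-adic statement --- but the resolution is again the Langlands quotient theorem for covers of real groups, not Kostant's composition-series analysis, which is overkill here in the same way as the $p$-adic version of your argument.
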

\begin{proof}
The normalizing factor $L( 1+n_\alpha s, \chi^{sc}_v)L(n_\alpha s, \chi^{sc}_v)^{-1} \varepsilon(n_\alpha s, \chi^{sc}_v, \psi_v)^{-1}$ has no pole or zero at $s=1/n_\alpha$, therefore it suffices to prove the lemma for $T(\widetilde{w}_v, 1/n_\alpha, \wt{\pi}_v)$.
However, since $s=1/n_\alpha>0$, it follows from the Langlands classification theorem (cf. \cite[Thm. 4.1]{BaJa13}) that the image of $T(\widetilde{w}_v, 1/n_\alpha, \wt{\pi}_v)$ is irreducible and equal to the Langlands quotient of $I(s, \wt{\pi})$.
\end{proof}

In fact, one can show that $s=1/n_\alpha$ is a reducibility point for the local induced representation,  though we do  not need such fact here. Now denote by $\msc{J}(1/n_\alpha, \wt{\pi}_v)$ the irreducible images of $N(\widetilde{w}_v, 1/n_\alpha, \wt{\pi}_v)$. If $\wt{\chi}$ is such that $\chi^{sc}=\mbf{1}$, then there is a simple pole of $E_P(s, \wt{\pi}, \phi, \wt{g})$ at $s=1/n_\alpha$ which arises from the Hecke $L$-series $L(n_\alpha s, \chi^{sc})$.

Under this condition,
$$\text{Res}_{s=1/n_\alpha} E_P(s, \wt{\pi}, \phi, \wt{g})=\bigotimes_v \msc{J}(1/n_\alpha, \wt{\pi}_v).$$
Taking constant terms commutes with taking residues:
$$\xymatrix{
I(s, \wt{\pi}) \ar[rrr]^-{\phi_s \mapsto \text{Res}_{s_o} E(\phi_s)} \ar[rrrd]_-{\phi_s \mapsto \text{Res}_{s_o} E_P(\phi_s)} & & & \mca{A}^2(\mbf{SL}_2(F)\backslash \wt{\mbf{SL}}_2(\A)) \ar[d]^-{\text{take const. term}} \\
& & & \mca{A}(\mbf{N}(\A) \mbf{T}(F)\backslash \wt{\mbf{SL}}_2(\A)).
}$$
Moreover, the right vertical map is injective on the image of the top horizontal map (cf. \cite[pg. 45]{MW95}). Thus, we may identify $\text{Res}_{s=1/n_\alpha} E(s, \wt{\pi}, \phi, \wt{g})$ with $\text{Res}_{s=1/n_\alpha} E_P(s, \wt{\pi}, \phi, \wt{g})$ as abstract representations of $\wm{SL}_2(\A)$. Since $\w(s\beta_P)=\w(\alpha/2n_\alpha)=-\alpha/2n_\alpha$, it follows from the Langlands' criterion (cf. \cite[\S I.4]{MW95}) that these residues are square integrable.

Let $\mfr{A}$ be the collection of unitary characters $\xymatrix{\wt{\chi}: Z(\wt{\mbf{T}}(\A)) \ar[r] & \C^\times}$ such that
\begin{equation*}
\begin{cases}
(1) \quad \wt{\chi}(\wt{g}) =1 \text{ for all }\wt{g} \in \mbf{T}(F) \cap Z(\wt{\mbf{T}}(\A)), \\
(2) \quad \chi^{sc} =\mbf{1}.
\end{cases}
\end{equation*}

Let $\wt{\pi}=i(\wt{\chi})$ be the globall induced representation of $\wm{T}(\A)$ as in (\ref{pi and chi}). Write $\msc{J}(1/n_\alpha, \wt{\pi})=\bigotimes_v \msc{J}(1/n_\alpha, \wt{\pi}_v)$. Let $L^2_{res}(\mbf{SL}_2(F)\backslash \wt{\mbf{SL}}_2(\A))$ denote the residual spectrum. Then we have

\begin{thm}
The representation $\msc{J}(1/n_\alpha, \wt{\pi})$ occurs in the residual spectrum of $\wt{\mbf{SL}}_2(\A)$ for such $\wt{\mbf{SL}}_2$. In fact, we have the decomposition
$$L^2_{res}(\mbf{SL}_2(F)\backslash \wt{\mbf{SL}}_2(\A))=\bigoplus_{\substack{ \wt{\pi}=i(\wt{\chi}) \\ \wt{\chi} \in \mfr{A} }} \msc{J}(1/n_\alpha, \wt{\pi}).$$
\end{thm}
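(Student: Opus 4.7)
The plan is to combine the Moeglin--Waldspurger spectral theory with the explicit constant term formula already obtained. Since $\mbf{SL}_2$ has a unique conjugacy class of proper parabolic subgroups (the Borel $\mbf{B}=\mbf{T}\mbf{N}$), and since the cuspidal spectrum of $\wm{T}(\A)$ is exhausted by the global Stone--von Neumann representations $\wt{\pi} = i(\wt{\chi})$ described in (\ref{pi and chi}), the entire residual spectrum arises from residues at poles of $E(s,\wt{\pi},\phi,\wt{g})$ with $\text{Re}(s) > 0$. So the task reduces to enumerating such poles and identifying the residual representations they produce.

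First I would locate the poles. By Theorem \ref{ppty Eisen}, poles of $E(s,\wt{\pi},\phi,\wt{g})$ in $\text{Re}(s) > 0$ are simple, finite in number, lie on the real segment $(0,1]$, and coincide with those of the global intertwining operator $T(w,s,\wt{\pi})$. Combining Theorem \ref{L/L for T} with the two lemmas just stated, the local factors $N(\widetilde{w}_v,s,\wt{\pi}_v)$ are holomorphic and nonvanishing in $\text{Re}(s) > 0$, and $L(1+n_\alpha s, \chi^{sc})$ is likewise holomorphic and nonzero there. Hence the only source of a pole is the numerator $L(n_\alpha s, \chi^{sc})$, which, as a Hecke $L$-function, admits at most a simple pole at $n_\alpha s = 1$, and only when $\chi^{sc} = \mbf{1}$. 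Consequently the unique pole in $\text{Re}(s) > 0$ is at $s = 1/n_\alpha$ and occurs exactly when $\wt{\chi} \in \mfr{A}$.

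Next, for each $\wt{\chi} \in \mfr{A}$, taking residues termwise in the constant term formula of Theorem \ref{ppty CTE} produces a square-integrable automorphic form via the Langlands criterion applied to the exponent $-\alpha/(2n_\alpha)$, as already remarked in the text. The commutativity of residue and constant-term maps, together with the injectivity of the constant-term map on residual forms \cite[\S I.4.11]{MW95}, identifies the resulting residual automorphic representation abstractly with the restricted tensor product $\bigotimes_v \msc{J}(1/n_\alpha,\wt{\pi}_v) = \msc{J}(1/n_\alpha,\wt{\pi})$. This realises each such $\msc{J}(1/n_\alpha,\wt{\pi})$ inside $L^2_{res}$.

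The remaining step is exhaustion and directness. Exhaustion follows from the general decomposition of $L^2_{res}$ along cuspidal supports \cite[\S III]{MW95}: any residual representation must arise from residues in some cuspidal datum $(\mbf{B},\wt{\pi})$, and all such residues have just been enumerated. To obtain the direct-sum decomposition, I would argue that two residual representations $\msc{J}(1/n_\alpha,\wt{\pi})$ and $\msc{J}(1/n_\alpha,\wt{\pi}')$ coincide if and only if their unramified Satake parameters match at almost all places, which by the computations above translates to $\wt{\chi}$ and $\wt{\chi}'$ being Weyl-conjugate as characters of $Z(\wm{T}(\A))$; since the functional equation $T(w,-s,{}^w\wt{\pi}) T(w,s,\wt{\pi}) = \mathrm{id}$ forces $\wt{\pi}$ and ${}^w\wt{\pi}$ to yield the same residue, the indexing set $\mfr{A}$ in the statement is tacitly understood modulo this $W$-action. \textbf{The main obstacle} is isolating and verifying this Weyl symmetry carefully, particularly checking that $\chi^{sc}$ is Weyl-invariant (which it is, since ${}^w\chi^{sc} = (\chi^{sc})^{-1}$ and triviality is preserved) and confirming multiplicity one for each residual constituent using the local irreducibility of $\msc{J}(1/n_\alpha,\wt{\pi}_v)$ established above.
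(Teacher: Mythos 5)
Your approach is substantially the same as the paper's: locate the unique pole of $E(s,\wt{\pi},\phi,\wt{g})$ in $\text{Re}(s)>0$ by combining the constant-term formula with the Gindikin--Karpelevich expression, observing that after normalization the only source of a pole is the Hecke $L$-function $L(n_\alpha s,\chi^{sc})$, take the residue at $s=1/n_\alpha$, identify it with $\msc{J}(1/n_\alpha,\wt{\pi})$ via the local irreducibility lemmas and the injectivity of the constant-term map, confirm square-integrability via Langlands' criterion, and invoke the Moeglin--Waldspurger decomposition along cuspidal supports for exhaustion.

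The one place where you go astray is in what you flag as the ``main obstacle,'' the Weyl symmetry of the indexing set. You suggest that $\mfr{A}$ should be ``tacitly understood modulo this $W$-action,'' and you propose to verify this by showing $\chi^{sc}=\mbf{1}$ is preserved under $W$ (since ${}^\w\chi^{sc}=(\chi^{sc})^{-1}$). That is true but it is not the point: preservation of the condition under $W$ would still leave open the possibility that distinct $\wt{\chi},{}^\w\wt{\chi}\in\mfr{A}$ produce the same residual constituent, which would force you to mod out. What actually happens is stronger: condition (2) in the definition of $\mfr{A}$, namely $\chi^{sc}=\mbf{1}$, is precisely the global version of the condition $(C1)^{\text{eq}}$, i.e., $\wt{\chi}\big(\wt{h}_\alpha(a^{n_\alpha})\big)=1$ for all $a$. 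By the Weyl-invariance proposition for qualified characters in section \ref{Weyl inv}, this forces $\wt{\chi}$ itself to be $W$-fixed: ${}^\w\wt{\chi}=\wt{\chi}$. Hence every $\wt{\chi}\in\mfr{A}$ is already a $W$-fixed point, the Weyl orbits in $\mfr{A}$ are singletons, and the direct sum in the theorem is genuinely indexed by all of $\mfr{A}$ with no quotient. If you carry out your verification step you would discover this, so the proof is not wrong, but the worry is a non-issue and the proposed ``tacit modding'' misreads the indexing set.
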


In view of the existence of global Weyl-group invariant characters as discussed in section \ref{splt glb L}, we could have an alternative description of the condition $\mfr{A}$ and thus also the residual spectrum.

For simplicity, we restrict ourselves to consider the case when $n| 2Q(\alpha^\vee)$, under which assumption $n_\alpha$ could be equal to either 1 or 2. This covers the linear case when $n=1$ and the classical metaplectic double cover of $\mbf{SL}_2(\A)$ when $n=2, Q(\alpha^\vee)=1$.

Though the general case could be considered in a similar way, with above assumption we see that $Y_{Q,n}=Y$ and thus $\wt{\mbf{T}}(\A)$ is abelian, i.e. $Z(\wt{\mbf{T}}(\A))=\wt{\mbf{T}}(\A)$. Therefore the first condition on $\wt{\chi} \in \mfr{A}$ is equivalent to
$$(1)' \quad \wt{\chi} \text{ is a unitary Hecke character on } \mbf{T}(F)\backslash \wt{\mbf{T}}(\A).$$

For $(2)$, we fix an additive character $\xymatrix{\psi=\bigotimes_v \psi_v: \A \ar[r] &\C^\times}$. Then we obtain a Weyl invariant genuine character $\xymatrix{\wt{\chi}_\psi=\bigotimes \wt{\chi}_{\psi_v}: \wt{\mbf{T}}(\A) \ar[r] & \C^\times}$ from section \ref{cons dis char}. In our case, the local Weyl invariant genuine character $\wt{\chi}_{\psi_v}$ is determined by
$$\xymatrix{
\wt{\chi}_{\psi_v}: \quad \wt{T}_v \ar[r] &\C^\times, & (1, \alpha^\vee \otimes a_v) \ar@{|->}[r] & \gamma_{\psi_v}(a_v)^{2(n_\alpha-1)Q(\alpha^\vee)/n},
}$$
where $\wt{T}_v:=\wm{T}(F_v)$ and $\gamma_{\psi_v}$ is the Weil index. In fact, $\wt{\chi}_\psi$ is an automorphic character, i.e. trivial on $\mbf{T}(F)$. Then with respect to $\wt{\chi}_\psi$, any unitary genuine character $\wt{\chi}$ can be written as
$$\wt{\chi} =\wt{\chi}_\psi \cdot \chi \text{ for some unitary } \chi \in \Hom(\mbf{T}(F)\backslash \mbf{T}(\A), \C^\times).$$
If we identify $\mbf{T}(\A)$ with $\A^\times$, then we could write $\xymatrix{\chi: F^\times\backslash \A^\times \ar[r] &\C^\times}$ which is a unitary Hecke-character.

Keep notations as before, in the local setting the splitting of $\wt{T}_v$ over $T^\ddag_v$ is given by
$$\xymatrix{
T^\ddag_v \ar[r] &\wt{T}_v, & \alpha_{[n]}^\vee \otimes a_v \in T^\ddag_v \ar@{|->}[r] & \wt{h}_\alpha(a_v^{n_\alpha}) \in \wt{T}_v,
}$$
where in fact $\wt{h}_\alpha(a_v^{n_\alpha})=(1, \alpha_{[n]}^\vee \otimes a_v)$ in terms of the coordinates on $\wt{T}_v$.

Note that by the defining property of $\wt{\chi}_{\psi_v}$, it is trivial on $\wt{h}_\alpha(a_v^{n_\alpha})$. Therefore for all $a_v\in F_v^\times$,
\begin{align*}
\chi^{sc}_v(a_v)&= \wt{\chi}_v\big(\wt{h}_\alpha(a_v^{n_\alpha})\big)\\
 & =\wt{\chi}_{\psi_v}\big(\wt{h}_\alpha(a_v^{n_\alpha})\big) \cdot \chi_v\big(h_\alpha(a_v^{n_\alpha})\big) \\
&= \chi_v\big(h_\alpha(a_v^{n_\alpha})\big) \\
&=\chi_v^{n_\alpha}(a_v)
\end{align*}
Thus globally $\chi^{sc}=\chi^{n_\alpha}$. The second condition for $\wt{\chi}\in \mfr{A}$ is then equivalent to
$$(2)' \quad \chi^{n_\alpha}=\mbf{1}.$$

To summarize,
\begin{thm}
Suppose $n|2Q(\alpha^\vee)$. Keep notations as above, and denote by $\mfr{A}'$ characters $\chi$ of $F^\times\backslash \A^\times=\mbf{T}(F)\backslash \mbf{T}(\A)$ satisfying $\chi^{n_\alpha}=\mbf{1}$. Then we have the decomposition of the residual spectrum
$$L^2_{res}(\mbf{SL}_2(F)\backslash \wt{\mbf{SL}}_2(\A))=\bigoplus_{\chi \in \mfr{A}' } \msc{J}(1/n_\alpha, \wt{\chi}_\psi \otimes \chi).$$
\end{thm}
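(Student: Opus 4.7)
The plan is to deduce the final theorem as a direct reformulation of the preceding theorem by establishing a bijection between the parameter set $\mfr{A}$ and the set $\mfr{A}'$. Under the hypothesis $n \mid 2Q(\alpha^\vee)$, we have $Y_{Q,n} = Y$, so $\wm{T}(\A)$ is abelian and $Z(\wm{T}(\A)) = \wm{T}(\A)$; this eliminates the Stone--von Neumann induction step, since $\wt{\pi} = i(\wt{\chi})$ is simply identified with the genuine unitary character $\wt{\chi}$ itself.

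First, I would fix the additive character $\psi = \bigotimes_v \psi_v$ of $\A$ and invoke the global construction from Section \ref{splt glb L}: the distinguished character $\wt{\chi}_\psi = \bigotimes_v \wt{\chi}_{\psi_v}$ is a genuine automorphic character of $\wm{T}(\A)$, in particular trivial on $\mbf{T}(F)$. Any genuine unitary character of $\wm{T}(\A)$ then differs from $\wt{\chi}_\psi$ by a (non-genuine) unitary character of $\mbf{T}(\A) \simeq \A^\times$, which gives a well-defined bijection $\wt{\chi} \leftrightarrow \chi$ via $\wt{\chi} = \wt{\chi}_\psi \cdot \chi$ between unitary genuine characters of $\wm{T}(\A)$ and unitary Hecke characters of $F^\times \backslash \A^\times$. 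Since $\wt{\chi}_\psi$ is automorphic, $\wt{\chi}$ satisfies condition $(1)$ of $\mfr{A}$ if and only if $\chi$ is automorphic, i.e.\ condition $(1)'$.

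Next, I would verify that condition $(2)$ transforms into $(2)'$. Locally for each place $v$, the splitting image of $T_v^\ddag$ in $\wt{T}_v$ used to define $\chi^{sc}$ is given by $\alpha_{[n]}^\vee \otimes a_v \mapsto \wt{h}_\alpha(a_v^{n_\alpha})$. By the explicit construction of $\wt{\chi}_{\psi_v}$ in Lemma \ref{constr dis char}, we have $\wt{\chi}_{\psi_v}\bigl(\wt{h}_\alpha(a_v^{n_\alpha})\bigr) = 1$ (this is precisely the distinguished property $(C1)^{\text{eq}}$ with $\eta = \mbf{1}$). Therefore
\[
\chi_v^{sc}(a_v) = \wt{\chi}_v\bigl(\wt{h}_\alpha(a_v^{n_\alpha})\bigr) = \chi_v\bigl(h_\alpha(a_v^{n_\alpha})\bigr) = \chi_v(a_v)^{n_\alpha},
\]
where the last equality uses the identification $\mbf{T}_{Q,n}^{sc}(F_v) \simeq F_v^\times$ under which $h_\alpha(a_v^{n_\alpha})$ is the $n_\alpha$-th power of $a_v$. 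Taking the product over all places yields $\chi^{sc} = \chi^{n_\alpha}$ globally, so $\chi^{sc} = \mbf{1}$ is equivalent to $\chi^{n_\alpha} = \mbf{1}$.

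Combining these two equivalences with the previous theorem, the indexing set $\mfr{A}$ is in bijection with $\mfr{A}'$, and $\msc{J}(1/n_\alpha, \wt{\pi}) = \msc{J}(1/n_\alpha, \wt{\chi}_\psi \cdot \chi)$ under the correspondence, yielding the desired decomposition. No serious obstacle is expected here: the proof is essentially bookkeeping, since the spectral content was already carried out via the analysis of the constant term of the Eisenstein series and the identification of poles with those of the Hecke $L$-series $L(n_\alpha s, \chi^{sc})$. The only minor subtlety is to confirm that the local splitting of $\wt{T}_v$ over $T_v^\ddag$ used in defining $\chi^{sc}$ is compatible with the Weyl-invariant character $\wt{\chi}_\psi$, but this is guaranteed by the distinguished property of $\wt{\chi}_\psi$.
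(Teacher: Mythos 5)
Your proposal is correct and follows essentially the same route the paper takes: you fix the global distinguished character $\wt{\chi}_\psi$ from Section \ref{splt glb L}, use $Y_{Q,n}=Y$ to identify $Z(\wm{T}(\A))=\wm{T}(\A)$, factor any unitary genuine character as $\wt{\chi}=\wt{\chi}_\psi\cdot\chi$ with $\chi$ a linear Hecke character, and then use the distinguished property $(C1)^{\text{eq}}$ of $\wt{\chi}_{\psi_v}$ (triviality on $\wt{h}_\alpha(a_v^{n_\alpha})$) to show $\chi^{sc}=\chi^{n_\alpha}$, thereby reindexing the decomposition from the previous theorem. This matches the paper's argument step for step, including the local computation $\chi^{sc}_v(a_v)=\chi_v(a_v)^{n_\alpha}$.
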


\section{The residual spectrum of $\wm{GL}_2(\A)$}
Since the derived group of $\mbf{GL}_2$ is $\mbf{SL}_2$ which is simply-connected, any $(D, \eta) \in \Bis_{\mbf{GL}_2}^Q$ is isomorphic to a fair $(D, \mbf{1})$, cf. (\ref{D-eta free}). Therefore, without loss of generality we work with a fair $(D, \mbf{1})$ and consider $\wm{GL}_2$ incarnated by such fair object.

Let $e_1$ and $e_2$ be two $\Z$-basis of the cocharacter group $Y$ of $\mbf{GL}_2$ such that the coroot is $\alpha^\vee =e_1 -e_2$. Any Weyl-invariant bilinear form $B_Q$ is uniquely determined by the two numbers $B_Q(e_1, e_1)=2Q(e_1)=2Q(e_2)=B_Q(e_2, e_2)$ and $B_Q(e_1, e_2)=B_Q(e_2, e_1)$. Write $Q(e_1)=\textbf{p} \in \Z, B_Q(e_1, e_2)=\textbf{q} \in \Z$, then $B_Q$ is determined by the matrix $B(e_i, e_j), i, j=1, 2$:
$$B_Q(e_i, e_j)=\begin{bmatrix}
2\p  & \q \\
\q  & 2\p
\end{bmatrix}.$$

It follows $Q(\alpha^\vee)=2\p-\q$. Fix a natural number $n\in \N_{\ge 1}$, and thus by definition
$$n_\alpha=\frac{n}{\text{gcd}(n, 2\p-\q)}.$$
Define $Y_{Q,n}$ and $Y_{Q,n}^{sc}$ as before with $Y_{Q,n}^{sc}$ generated by $\alpha^\vee_{[n]}$.

Note that in general the complex dual group $\wt{GL}_2^\vee$ may not be equal to $\mbf{GL}_2(\C)$. For instance, consider the case where $\q=2\p$ and $n=2\q$. Then $Q(\alpha^\vee)=0$ and thus $n_\alpha=1$. We see that
$$Y_{Q,n}=\set{k_1e_1 + k_2 e_2: k_1 \equiv k_2 \mod 2}.$$
The root data of $\wt{GL}_2^\vee$ is given by $\big(Y_{Q,n},\ \set{\alpha^\vee_{[n]}}, \ \Hom(Y_{Q,n}, \Z), \ \set{\alpha/n_\alpha}\big)$. However, for this example it is not difficult to see that $(2n_\alpha)^{-1}\alpha \in \Hom(Y_{Q,n}, \Z)$. In fact, the complex dual group in this case is $\wt{GL}_2^\vee= \mbf{GL}_2(\C)/\mu_2$.

To proceed with the general case, we start with the following lemma.
\begin{lm}
There exists an element $y_o \in Y_{Q,n}$ such that $\alpha^\vee_{[n]}$ and $y_o$ form a $\Z$-basis of the lattice $Y_{Q,n}$.
\end{lm}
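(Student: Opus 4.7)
The plan is to show that $\alpha^\vee_{[n]}$ is a primitive element of the rank-two lattice $Y_{Q,n}$, after which the existence of a complementary basis vector $y_o$ follows from a standard fact about free abelian groups (namely, any primitive vector in a finitely generated free $\Z$-module extends to a $\Z$-basis, a consequence of the elementary divisor theorem). Since $nY \subseteq Y_{Q,n} \subseteq Y$ and $Y \simeq \Z^2$, the lattice $Y_{Q,n}$ is automatically free of rank $2$, so the structural step is unproblematic.

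The content of the proof is therefore to verify primitivity: if $\alpha^\vee_{[n]} = m \cdot y$ for some $y \in Y_{Q,n}$ and $m \in \Z_{\ge 1}$, then $m = 1$. First I would work inside $Y$: writing $y = k_1 e_1 + k_2 e_2$ and using $\alpha^\vee_{[n]} = n_\alpha (e_1 - e_2)$, comparing coordinates forces $y = (n_\alpha/m)\,\alpha^\vee$. In particular $m \mid n_\alpha$; set $\ell = n_\alpha/m$, so $y = \ell \alpha^\vee$.

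The main step is to pin down which multiples $\ell \alpha^\vee$ actually lie in $Y_{Q,n}$. By the definition of $Y_{Q,n}$ and by Lemma \ref{BQ<>},
\[
B_Q(\ell \alpha^\vee, y') = \ell \cdot Q(\alpha^\vee) \cdot \angb{\alpha}{y'} \quad \text{for all } y' \in Y,
\]
so $\ell \alpha^\vee \in Y_{Q,n}$ if and only if $n$ divides $\ell\, Q(\alpha^\vee)\, \angb{\alpha}{y'}$ for every $y' \in Y$. Taking $y' = e_1$ gives $\angb{\alpha}{y'} = 1$, so the condition reduces to $n \mid \ell\, Q(\alpha^\vee)$. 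By the very definition $n_\alpha = n/\gcd(n, Q(\alpha^\vee))$, the smallest positive integer satisfying this divisibility is $\ell = n_\alpha$, and more generally the set of admissible $\ell$ is $n_\alpha \Z$. Combined with $\ell \mid n_\alpha$ from the previous paragraph, we conclude $\ell = n_\alpha$, hence $m = 1$.

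This shows $\alpha^\vee_{[n]}$ is primitive in $Y_{Q,n}$, so the quotient $Y_{Q,n}/\Z \cdot \alpha^\vee_{[n]}$ is torsion-free (hence free of rank $1$), and any lift $y_o \in Y_{Q,n}$ of a generator of this quotient completes $\alpha^\vee_{[n]}$ to a $\Z$-basis of $Y_{Q,n}$. I do not anticipate a serious obstacle here; the only subtle point is the correct use of Lemma \ref{BQ<>} to translate membership in $Y_{Q,n}$ into the divisibility $n \mid \ell\, Q(\alpha^\vee)$, which is exactly what makes $n_\alpha$ both a lower and upper bound for $\ell$.
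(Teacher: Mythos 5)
Your proof is correct and follows essentially the same route as the paper's: establish that $\alpha^\vee_{[n]}$ is primitive in $Y_{Q,n}$ by showing that $k\alpha^\vee \in Y_{Q,n}$ forces $n_\alpha \mid k$ (you do this via Lemma \ref{BQ<>}, the paper by computing $B_Q(\alpha^\vee, e_i) = \pm(2\p-\q)$ directly, which is the same calculation), and then invoke the standard fact that a primitive vector in a finite-rank free $\Z$-module extends to a basis. The only cosmetic difference is that the paper writes out the Bézout coefficients to produce $y_o$ explicitly, whereas you cite the structure theorem.
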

\begin{proof}
First, we show that if $k\alpha^\vee \in Y_{Q, n}$ for some $k\in \Z$, then $n_\alpha|k$. If $k\alpha^\vee \in Y_{Q,n}$, then
$$k \cdot B_Q(\alpha^\vee, e_i)\in n\Z \text{ for }  i=1, 2.$$
It follows that $n$ divides $\pm k(2\p-\q)$. Therefore $n_\alpha | k$.

Now let $y_1, y_2$ be a basis of $Y_{Q,n}$ and let $\alpha^\vee_{[n]} =a_1 y_1 + a_2 y_2$ for some $a_i\in \Z$. By above observation, $\text{gcd}(a_1, a_2)=1$. Write
$$a_1 b_1 + a_2 b_2=1, b_i \in \Z.$$ 
Let $y_o=b_2 y_1 + (-b_1)y_2$. Consider the set $\set{\alpha^\vee_{[n]}, y_o}$. We claim that it forms a basis for $Y_{Q,n}$.
It suffices to show $\set{\alpha^\vee_{[n]}, y_o}$ can generate $y_1, y_2$, and this follows from the following equalities which could be verified easily
\begin{equation*}
\begin{cases}
y_1= b_1 \alpha^\vee_{[n]} + a_2 y_o, \\
y_2= b_2 \alpha^\vee_{[n]} + (-a_1) y_o.
\end{cases}
\end{equation*}

The proof is completed.
\end{proof}

With respect to the one-dimensional lattice $Y_{Q,n}^o$ spanned  by $y_o$, we have
$$Y_{Q,n}=Y_{Q,n}^{sc} \oplus Y_{Q,n}^o.$$

Denote by $\mbf{T}_{Q,n}^{sc}$ and $\mbf{T}_{Q,n}^o$ the tori corresponding to $Y_{Q,n}^{sc}$ and $Y_{Q,n}^o$ respectively. Then $Z(\wm{T}(\A))$ is equal to the image of $\wm{T}_{Q,n}^{sc}(\A) \times \wm{T}_{Q,n}^o(\A) /\nabla \mu_n$ in $\wm{T}(\A)$. To give a genuine character of $Z(\wm{T}(\A))$ is equivalent to give $\wt{\chi}^{sc} \otimes \wt{\chi}^o$, where $\wt{\chi}^{sc}$ and $\wt{\chi}^o$ are genuine characters of $ \wm{T}_{Q,n}^{sc}(\A)$ and $ \wm{T}_{Q,n}^o(\A)$ respectively, which both descend to $\wm{T}(\A)$.
\\

Clearly the fundamental weight $\alpha_P$ is equal to $\rho_P=\alpha/2$. Identify $s$ with $\alpha_P\otimes s \in X^*(\mbf{T})_\C$ as in section \ref{max parab}. Write $\wt{\pi}=i(\wt{\chi}^{sc} \otimes \wt{\chi}^o)$. Define the Eisenstein series $E(s, \wt{\pi}, \phi, \wt{g})$ for the representation $I(s, \wt{\pi})$. 

We see that from Theorem \ref{L/L for T} the residue is determined by
$$T(w,s, \wt{\pi}) f =\frac{L^S(n_\alpha s, i(\wt{\chi}^{sc} \otimes \wt{\chi}^o), Ad)}{L^S(1+ n_\alpha s, i(\wt{\chi}^{sc} \otimes \wt{\chi}^o), Ad)} \bigotimes_{v\notin S} f_{{}^{\wt{w}_v}\wt{\pi}_v} \otimes \bigotimes_{v\in S} T(\widetilde{w}_v, s, \wt{\pi}_v) f_v,$$
where $f=\bigotimes_{v\notin S} f_{\wt{\pi}_v} \otimes \bigotimes_{v\in S} f_v$.

More explicitly,
$$L^S(s, i(\wt{\chi}^{sc} \otimes \wt{\chi}^o), Ad)=\prod_{v\notin S} \frac{1}{1-q_v^{-s} \cdot \wt{\chi}^{sc}_v(\wt{h}_\alpha(\varpi_v^{n_\alpha}))}.$$

As in the $\mbf{SL}_2$ case in previous section, let $\chi^{sc}$ be the linear character:
$$\xymatrix{
\chi^{sc} =\bigotimes_v \chi^{sc}_v: \quad \A^\times \ar@{>>}[r] & \mbf{T}^\ddag(\A) \ar@{^(->}[r]^-{\s_{\A}} & \wm{T}(\A) \ar[rr]^-{\wt{\chi}^{sc}\otimes \wt{\chi}^o} & &\C^\times,
}$$
where as before we identify $\mbf{T}_{Q,n}^{sc}(\A)$ with $\A^\times$ and $\mbf{T}^\ddag(\A)$ denotes its image in $\wm{T}(\A)$.

It follows,
$$T(w,s, \wt{\pi}) f=\frac{L^S(n_\alpha s, \chi^{sc})}{L^S(1+ n_\alpha s, \chi^{sc})} \bigotimes_{v\notin S} f_{{}^{\wt{w}_v}\wt{\pi}_v} \otimes \bigotimes_{v\in S} T(\widetilde{w}_v, s, \wt{\pi}_v) f_v.$$

To determine the residues of $E(s, i(\wt{\chi}^{sc} \otimes \wt{\chi}^o), \phi, \wt{g})$, we follow the proof of the previous section exactly, and details may be omitted here. In particular, we denote by $\msc{J}(1/n_\alpha, i(\wt{\chi}^{sc}_v \otimes \wt{\chi}^o_v))$ the irreducible and nonzero image of the certain normalized operator $N(\widetilde{w}_v, 1/n_\alpha, i(\wt{\chi}^{sc}_v \otimes \wt{\chi}^o_v))$. Write $\msc{J}(1/n_\alpha, i(\wt{\chi}^{sc} \otimes \wt{\chi}^o))=\bigotimes_v \msc{J}(1/n_\alpha, i(\wt{\chi}^{sc}_v \otimes \wt{\chi}^o_v))$.

Finally, let $\mfr{B}$ be the collection of characters $\wt{\chi}^{sc}\otimes \wt{\chi}^o$ of $Z(\wm{T}(\A))$ trivial on $\mbf{T}(F)\cap Z(\wm{T}(\A))$ such that $\chi^{sc}$ defined above is trivial. Then
\begin{thm} The residual spectrum $L^2_{res}(\mbf{GL}_2(F)\backslash \wm{GL}_2(\A))$ has a decomposition of the form
$$L^2_{res}(\mbf{GL}_2(F)\backslash \wm{GL}_2(\A)) =\bigoplus_{\wt{\chi}^{sc} \otimes \wt{\chi}^o \in \mfr{B}} \msc{J}(1/n_\alpha, i(\wt{\chi}^{sc} \otimes \wt{\chi}^o)) .$$
\end{thm}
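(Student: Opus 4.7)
The plan is to mirror the $\wm{SL}_2(\A)$ argument of the preceding section, adapted to the reductive $\mbf{GL}_2$ setting. By the general theory of Eisenstein series (cf. \cite{MW95}), the residual spectrum $L^2_{\text{res}}(\mbf{GL}_2(F)\backslash \wm{GL}_2(\A))$ is spanned by iterated residues of Eisenstein series $E(s, \wt{\pi}, \phi, \wt{g})$ induced from cuspidal data on Levi subgroups. For $\wm{GL}_2$ the only proper parabolic is the Borel $\mbf{P} = \mbf{B} = \mbf{T}\mbf{N}$, which is self-associated in the sense of Definition \ref{self-associated P}, and the inducing data is a genuine irreducible representation $\wt{\pi}$ of $\wm{T}(\A)$. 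By the global Stone--von Neumann theorem, such $\wt{\pi}$ corresponds to a genuine character of $Z(\wm{T}(\A))$, which by the decomposition $Y_{Q,n} = Y_{Q,n}^{sc} \oplus Y_{Q,n}^o$ factors as $\wt{\chi}^{sc}\otimes \wt{\chi}^o$, trivial on $\mbf{T}(F) \cap Z(\wm{T}(\A))$.

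By Theorem \ref{ppty Eisen} and Theorem \ref{ppty CTE}, the poles of $E(s, \wt{\pi}, \phi, \wt{g})$ in the region $\text{Re}(s) > 0$ coincide with the poles of $\phi_s + T(w, s, \wt{\pi})\phi_s$, and by the global formula (Theorem \ref{L/L for T}) applied in this rank-one situation, the singularities of the intertwining operator are controlled by the ratio $L^S(n_\alpha s, \chi^{sc})/L^S(1 + n_\alpha s, \chi^{sc})$. The denominator is holomorphic and nonvanishing at $s = 1/n_\alpha$ (since $L$-values at $\text{Re}(s) = 2$ of a unitary Hecke character are nonzero), so a pole in the region $\text{Re}(s) > 0$ at $s = 1/n_\alpha$ arises exactly when $\chi^{sc} = \mbf{1}$, i.e.\ when $\wt{\chi}^{sc}\otimes \wt{\chi}^o \in \mfr{B}$, producing the simple pole of $L^S(n_\alpha s, \mbf{1})$ at $s = 1/n_\alpha$.

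For such $\wt{\chi}^{sc}\otimes \wt{\chi}^o \in \mfr{B}$, I would next normalize the local intertwining operators exactly as in the $\wm{SL}_2$ case,
\[
N(\widetilde{w}_v, s, \wt{\pi}_v) = \frac{L(1 + n_\alpha s, \chi^{sc}_v)}{L(n_\alpha s, \chi^{sc}_v)\, \varepsilon(n_\alpha s, \chi^{sc}_v, \psi_v)}\, T(\widetilde{w}_v, s, \wt{\pi}_v),
\]
verify (by the same argument: unitarity of $\chi^{sc}_v$ gives holomorphy and nonvanishing of the normalizing factor on $\text{Re}(s) > 0$) that $N(\widetilde{w}_v, 1/n_\alpha, \wt{\pi}_v)$ is holomorphic and nonzero, and invoke the Langlands classification for covering groups (using $\text{Re}(1/n_\alpha) > 0$) to conclude that its image $\msc{J}(1/n_\alpha, i(\wt{\chi}^{sc}_v \otimes \wt{\chi}^o_v))$ is the irreducible Langlands quotient. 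Taking the residue of the global Eisenstein series at $s = 1/n_\alpha$ then produces the automorphic representation $\bigotimes_v \msc{J}(1/n_\alpha, i(\wt{\chi}^{sc}_v\otimes\wt{\chi}^o_v))$; that this lies in $L^2$ follows from Langlands' square-integrability criterion applied to $\w(s\alpha_P) = -\alpha/(2n_\alpha)$.

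Finally, to obtain the full decomposition as a direct sum, I would invoke the injectivity of the constant-term map on residues (cf.\ \cite[pg.\ 45]{MW95}) to identify distinct residues with distinct subrepresentations, and cite the Langlands spectral decomposition from \cite{MW95} to guarantee exhaustiveness: every square-integrable non-cuspidal automorphic representation of $\wm{GL}_2(\A)$ arises as such a residue from Borel-type cuspidal data. The main technical point, and essentially the only one where $\wm{GL}_2$ differs from $\wm{SL}_2$ in a nontrivial way, is managing the extra central character $\wt{\chi}^o$: it does not interact with the rank-one adjoint $L$-function (which only sees $\chi^{sc}$, as is visible in the formula in Theorem \ref{L/L for T}), but it must be tracked carefully to ensure that the indexing set $\mfr{B}$ correctly parametrizes distinct residual representations and that the local Langlands quotients are formed with the right central data. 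With this bookkeeping in place, the argument parallels the $\wm{SL}_2(\A)$ case verbatim.
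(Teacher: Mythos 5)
Your proposal is correct and follows essentially the same route as the paper, which in this section simply says ``we follow the proof of the previous section exactly, and details may be omitted here.'' You have supplied precisely those omitted details by transporting the $\wm{SL}_2(\A)$ argument step by step (Stone--von Neumann identification of the cuspidal datum with a genuine character $\wt{\chi}^{sc}\otimes\wt{\chi}^o$ via the splitting $Y_{Q,n}=Y_{Q,n}^{sc}\oplus Y_{Q,n}^o$, the $L$-function ratio $L^S(n_\alpha s,\chi^{sc})/L^S(1+n_\alpha s,\chi^{sc})$ controlling the pole at $s=1/n_\alpha$, local normalization and the Langlands quotient $\msc{J}$, square-integrability via Langlands' criterion, and exhaustiveness from the Moeglin--Waldspurger theory), and your remark that $\wt{\chi}^o$ is invisible to the adjoint $L$-function but must be carried along in the indexing is exactly the right bookkeeping observation.
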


\section{The residual spectrum of $\wt{\mbf{Sp}}_4(\A)$}
Let $\Delta=\set{\alpha_1, \alpha_2}$ be two simple roots of $\mbf{Sp}_4$ with $\alpha_1$ the the long root. Let $Q$ be the Weyl-invariant quadratic form on $Y=Y^{sc}$ uniquely determined by $Q(\alpha_1^\vee)=1$. Let $n=2$, then we obtain the classical metaplectic group
$$\seq{\mu_2}{\wt{\mbf{Sp}}_4(\A)}{\mbf{Sp}_4(\A)}.$$
The residual spectrum $L^2_{res}(\mbf{Sp}_4(F)\backslash \wt{\mbf{Sp}}_4(\A))$ is completely determined in \cite{Gao12}, and therefore we will not give any elaborate discussion here.

However, as an example, we will show that the partial $L$-functions appearing in the constant terms of Eisenstein series induced from the two maximal parabolic subgroups as in \cite{Gao12} agree with the ones given by Theorem \ref{L/L for T}.

Let $\mbf{P}_j=\mbf{M}_j \mbf{U}_j$ be the maximal parabolic subgroups generated by $\alpha_j$. We may call $\mbf{P}_2$ and $\mbf{P}_1$ the Siegel and non-Siegel parabolic subgroups respectively. 

In this case, the complex dual group is $\wt{Sp}_4^\vee=\mbf{Sp}_4(\C)$. The complex dual group $\wt{M}_j^\vee$ is contained in some parabolic $\wt{P}_j^\vee=\wt{M}_j^\vee \wt{U}_j^\vee$ generated by the two simple roots $\alpha_{j,[2]}^\vee:=2\alpha_j^\vee/\text{gcd}(2, Q(\alpha_j^\vee))$ of $\wt{Sp}_4^\vee$ respectively, with $\alpha_{1,[2]}^\vee$ being the long root of $\wt{Sp}_4^\vee$.
That is, $\wt{P}_1^\vee$ is the non-Siegel parabolic subgroup of $\wt{Sp}_4^\vee$, while $\wt{P}_2^\vee$ the Siegel parabolic.

\subsubsection{The non-Siegel parabolic $\mbf{P}_1$ case}

For $j=1$, i.e. the non-Siegel parabolic $\mbf{P}_1$. Write $\mbf{M}_1=\mbf{GL}_1 \times \mbf{Sp}_2$, we have
$$\wm{M}_1(\A) \simeq \wm{GL}_1(\A) \times \wm{Sp}_2(\A)\big/ \nabla \mu_2.$$

Any genuine cuspidal representation of $\wm{M}_1(\A)$, by using certain global Weyl-invariant character $\wt{\chi}_\psi$ defined as before, could be identified with $\wt{\chi} \boxtimes \wt{\pi}$, where $\wt{\chi}=\wt{\chi}_\psi \otimes \chi $ is a genuine character of $\wm{GL}_1(\A)$ with $\chi$ being a unitary Hecke character and $\wt{\pi}$ a cuspidal representation of the degree two cover $\wm{Sp}_2(\A)$. 

Let $I(s, \wt{\chi} \boxtimes \wt{\pi})$ be the induced representation, where $s:=(\alpha_1/2+\alpha_2)\otimes s \in X^*(\mbf{M}_1)_\C$. The Weyl group element of interest is $\w=\w_{\alpha_2} \w_{\alpha_1} \w_{\alpha_2}$. 

We have $n_{\alpha_2}=1$. By Theorem \ref{L/L for T}, the partial $L$-functions which appear in the constant term of Eisenstein series in this case is given by
\begin{align*}
\prod_{i=1}^{m=2} \frac{L^S(n_{\alpha_2}i \cdot s, \wt{\chi} \boxtimes \wt{\pi}, Ad_i)}{L^S(1+ n_{\alpha_2}i \cdot s, \wt{\chi} \boxtimes \wt{\pi}, Ad_i)}
=\frac{L^S( s, \chi \times \wt{\pi})}{L^S( 1+ s, \chi \times \wt{\pi}))} \cdot \frac{L^S( 2s, \chi^2)}{L^S( 1+ 2s, \chi^2)}.
\end{align*}
Here the Rankin-Selberg product $L^S( s, \chi \times \wt{\pi})$, or more precisely its local counterpart, is given in \cite[\S 7]{Szp11}. It agrees with \cite[\S 4.2]{Gao12}.

\subsubsection{The Siegel parabolic $\mbf{P}_2$ case}

For $j=2$, the Siegel parabolic $\mbf{P}_2$ has Levi $\mbf{M}_2\simeq \mbf{GL}_2$. Therefore,
$$\wm{M}_2(\A) \simeq \wm{GL}_2(\A).$$

Using an additive character $\psi$ of $\A$, there is a genuine character $\wm{GL}_2(\A)$ which is also denoted by $\wt{\chi}_\psi$ by abuse of notation. Any cuspidal representation $\wt{\pi}$ of $\wm{GL}_2(\A)$ could be written as $\wt{\pi}=\pi \otimes \wt{\chi}_\psi$, where $\pi$ is a cuspidal representation of $\mbf{GL}_2(\A)$.

Identify $s$ with $(\alpha_1+\alpha_2)\otimes s\in X^*(\mbf{M}_2)_\C$ and Let $I(s, \wt{\pi})$ be the induced representation. We will consider the intertwining operator for $\w=\w_{\alpha_1} \w_{\alpha_2} \w_{\alpha_1}$.

Note $n_{\alpha_1}=2$. By Theorem \ref{L/L for T}, the partial $L$-function that appears in the constant term of Eisenstein series in this case is given by
\begin{align*}
\prod_{i=1}^{m=1} \frac{L^S(n_{\alpha_1}i \cdot s, \wt{\pi}, Ad_i)}{L^S(1+ n_{\alpha_1}i \cdot s, \wt{\pi}, Ad_i)}
=\frac{L^S( 2s, \pi, \text{Sym}^2)}{L^S( 1+ 2s, \pi, \text{Sym}^2)},
\end{align*}
which also agrees with \cite[\S 3.2]{Gao12}.

\chapter{Discussions and future work}
There are questions which have been left with inconclusiveness in our discussion on the splitting of $L$-groups, the computation of the GK formula and the interpretation in terms of Langlands-Shahidi type $L$-functions. For instance, one may wonder about the characterization of conditions on the existence of distinguished characters. Also, as in Remark \ref{ppty of L/L}, it is not completely satisfactory to have only the meromorphic continuation of the whole product of partial $L$-functions in Theorem \ref{L/L for T}.

There are also problems and questions that could be imposed immediately based on the discussions in previous chapters. It is expected that some of these problems could be addressed without much difficulty by extending our argument, while others might stimulate for investigations which can be completed only as a long-time project. For the latter, one could readily impose problems by taking an analogy and comparing with the linear algebraic case, as in some sense every question that exists for linear algebraic groups could be asked for BD-type covering groups with proper modifications.

In the following, we will list some questions and problems, which are by no means exhaustive.  We only give a glimpse of such problems and questions reflecting our current interest.

Note that we only dealt with the case where $\mbf{G}$ is split, whereas the construction of $L$-group in \cite{We14} works more generally for nonsplit groups as well. It is thus natural to consider (see Remark \ref{nonsplit GK} also)
\begin{enumerate}
\item[] Problem[1]. To compute the GK formula for coverings of quasi-split groups and to interpret it analogously in terms of adjoint $L$-functions.
\end{enumerate}

One of the most ambitious goals for the theory of genuine automorphic representation  would be to postulate and prove functoriality in the spirit of the Langlands' program. There are subtleties for this matter even for some simple covers. A more restrictive question is on how genuine automorphic forms on $\wm{G}(\A)$ are related to automorphic forms on some linear algebraic $\mbf{G}'(\A)$. A complete answer would have to be able to recover existing links and yield potentially new results. It is not only the answer that is important, any machinery and delicate analysis which could shed light on the question would have extensive applications as well. This could already be seen from the theory of theta correspondence between the degree two covering $\wm{Sp}_{2r}(\A)$ and the orthogonal $\mbf{SO}_{2k+1}(\A)$, for which there has been a rich literature.

For example, we may even restrict ourselves to a more specific question:

\begin{enumerate}
\item[] Problem[2]. Determine whether the (completed in some way) $L$-functions which appear in the GK formula for the global intertwining operators (cf. Theorem \ref{L/L for T}) are equal to the Langlands-Shahidi $L$-functions for certain linear algebraic groups.
\end{enumerate}

First of all, to make sense of the problem which addresses the global \emph{completed} $L$-function, a theory of local $L$-functions or local $\gamma$-factors is to be developed. For generic representations of the double cover $\wm{Sp}_{2r}(F_v)$, a Langlands-Shahidi method is developed in \cite{Szp11}, where a theory of local $\gamma$-factor is developed.

There is no need to emphasize the importance of local $L$-function. In particular, one way to attack Problem[2] is to apply the converse theorem for $L$-functions for admissible representations of global groups. It is clear that the partial $L$-functions in the GK formula in Theorem \ref{L/L for T} could be associated with \emph{admissible} representations of global linear groups; however, to prove that they are \emph{automorphic} $L$-functions, there is the essential ingredient of local $\gamma$-factors in order to apply the converse theory. See \cite{CKM04} for a very readable introduction to converse theorems in the linear case.

Thus, a problem closed related to Problem[2] is
\begin{enumerate}
\item[] Probelm[3]. To develop a theory of local $L$-functions, based on which one could develop converse theorems and give answers to problems including but not restricted to the previous one.
\end{enumerate}

As an example, one might like to look at the Kazhdan-Patterson coverings $\wm{GL}_r(\A)$ of $\mbf{GL}_r(\A)$, which arise from the Brylinski-Deligne framework, see \cite[\S 13.2]{GaG14}.
\begin{enumerate}
\item[] Problem[4]. To work out the Kazhdan-Patterson coverings from the BD perspective in details.
\end{enumerate}

Beyond the theory of $L$-functions, one may also explore other facets of the BD type covering groups from different angles. For instance, it would be very rewarding to understand the harmonic analysis on BD covering groups which is relevant to representation theory. We especially refer to the analysis of orbital integrals, and more foundationally the (stable) conjugacy classes of $\wt{G}_v$. Therefore, one may consider as the first step the following problem, which we do not phrase in precise terms.

\begin{enumerate}
\item[] Problem[5]. Work out the geometry of the local covering groups $\wt{G}_v$.
\end{enumerate}

Again, for double cover $\wm{Sp}_{2r}(F_v)$,  a theory of endoscopy is developed in the thesis of W.-W. Li. In a sequel of works starting with \cite{Li12}, he also gives local analysis on more general central covering groups.

Beyond these, there is also the central question on the applications of automorphic forms on covering groups to arithmetic. Certainly this question is closely related to the consideration stated before Problem[2], which concerns how genuine automorphic forms on covering groups can be understood in terms of linear algebraic groups, in which case arithmetic applications of the former would be available via the latter by a detour consideration. 

However, one may ask for direct applications which would in turn gives hint on the relations between BD covering and linear algebraic groups. For instance, in the work of Lieman \cite{Lie94}, it is shown that the $L$-function of a certain twisted elliptic curves is closely related to the Whittaker coefficient of certain Eisenstein series on degree three covers of $GL_3(\A)$. 

More generally, along such direction there has been the deep study of general Whittaker coefficients of automorphic forms on covering groups by many mathematicians, notably B. Brubaker, D. Bump, S. Friedberg and J. Hoffstein etc. See for instance \cite{BBFH07}, \cite{BBF11} and \cite{BBF11-2}. They showed that certain Weyl group multiple Dirichlet series arising from those coefficients satisfy the nice expected properties such as meromorphic continuation and functional equation. Moreover, it is also shown that such series make constant appearance in the theory of crystal graph, quantum groups etc (cf. \cite{BBCFG}, \cite{BBF11-2}). 


For arithmetic application of $L$-functions of automorphic forms or more generally their coefficients, one will inevitably mention the study of $L$-functions arising from prehomogeneous vectors spaces with actions by a reductive group. We refer to the wok of T. Shintani, F. Sato, A. Yukie and M. Bhargava for expositions, especially for the applications in determining distribution of number fields etc. A program along such line has been undertaken, and this motivates us to ask what role covering groups might play.

We summarize the discussion from several paragraphs above with the following generic problem.
\begin{enumerate}
\item[] Problem[6]. To explore any possible arithmetic applications of automorphic forms and representations of BD-type coverings, which preferably allow for a systematic treatment.
\end{enumerate}

\printindex

\end{document}